%
%
\documentclass{article}
\usepackage{amsmath,amsthm,amssymb,amscd}
\usepackage[all,cmtip,color]{xy}
\usepackage[german,english]{babel}
\usepackage{amsmath}
\usepackage{amssymb}
\usepackage[backref]{hyperref}
\usepackage{mathrsfs} 
\usepackage{amssymb}
\usepackage{todonotes}
\usepackage{amssymb}
\usepackage[all,cmtip]{xy}
\setcounter{MaxMatrixCols}{30}%
\input diagxy
\usepackage[backref]{hyperref}
\usepackage[margin = 4.5 cm]{geometry}
\usepackage{bm}
\usepackage{bbm}
\usepackage{comment}	

\usepackage{enumerate,colonequals}

\usepackage{afterpage}

\newcommand\blankpage{%
    \null
    \thispagestyle{empty}%
    \addtocounter{page}{-1}%
    \newpage}

\newcommand{\BA}{{\mathbb{A}}}

\newcommand{\BC}{{\mathbb{C}}}

\newcommand{\BF}{{\mathbb{F}}}
\newcommand{\BG}{{\mathbb{G}}}

\newcommand{\BL}{{\mathbb{L}}}

\newcommand{\BN}{{\mathbb{N}}}

\newcommand{\BP}{{\mathbb{P}}}
\newcommand{\BQ}{{\mathbb{Q}}}
\newcommand{\BR}{{\mathbb{R}}}

\newcommand{\BV}{{\mathbb{V}}}

\newcommand{\BZ}{{\mathbb{Z}}}

\newcommand{\FA}{{\mathcal{A}}}
\newcommand{\FB}{{\mathcal{B}}}
\newcommand{\FC}{{\mathcal{C}}}

\newcommand{\FF}{{\mathcal{F}}}

\newcommand{\FI}{{\mathcal{I}}}
\newcommand{\FJ}{{\mathcal{J}}}

\newcommand{\FM}{{\mathcal{M}}}

\newcommand{\FO}{{\mathcal{O}}}
\newcommand{\FP}{{\mathcal{P}}}

\newcommand{\FV}{{\mathcal{V}}}

\newcommand{\bfc}{{\mathbf{c}}}
\newcommand{\bfv}{{\mathbf{v}}}
\newcommand{\bfw}{{\mathbf{w}}}
\newcommand{\bfl}{{\bm{\lambda}}}
\newcommand{\Hom}{{\text{Hom}}}
\newcommand{\g}{{\mathfrak{g}}}
\newcommand{\ft}{{\mathfrak{t}}}
\newcommand{\fb}{{\mathfrak{b}}}
\newcommand{\m}{{\mathfrak{m}}}
\newcommand{\h}{{\mathscr{H}}}
\newcommand{\p}{{\mathscr{P}}}
\newcommand{\sslash}{\mathbin{/\mkern-6mu/}}

\newcommand{\unt}{\mathop{\mathbbm{1}}\nolimits}

\newcommand{\proj}{{\text{Proj}}}

\newcommand{\kvar}{\mathop{\rm KVar}\nolimits}
\newcommand{\kexp}{\mathop{\rm KExpVar}\nolimits}
\newcommand{\spec}{\mathop{\rm Spec}\nolimits}
\newcommand{\expm}{\mathscr{E}xp\mathscr{M}}
\newcommand{\lla}{\left\langle }
\newcommand{\rra}{\right\rangle}
\newcommand{\n}{{\text{nil}}}
\newcommand{\re}{{\text{reg}}}
\newcommand{\im}{{\text{Im}}}
\newcommand{\Ad}{{\text{Ad}}}
\newcommand{\tr}{{\text{tr}}}
\newcommand{\Id}{{\text{Id}}}
\newcommand{\Char}{{\text{Char}}}
\newcommand{\diag}{{\text{diag}}}
\newcommand{\res}{{\text{res}}}
\newcommand{\Res}{\mathop{\rm Res}\nolimits}
\newcommand{\irr}{{\text{irr}}}
\newcommand{\ol}{\overline}
\newcommand{\End}{{\text{End}}}
\newcommand{\Hilb}{{\text{Hilb}}}

\newcommand{\GL}{\mathop{\rm GL}\nolimits}
\renewcommand{\phi}{\varphi}

\newcommand{\spn}{\mathop{\rm span}\nolimits}
\newcommand{\codim}{\mathop{\rm codim}\nolimits}
\newcommand{\rk}{\mathop{\rm rk}\nolimits}
\newcommand{\crk}{\mathop{\rm crk}\nolimits}
\newcommand{\Lie}{\mathop{\rm Lie}\nolimits}
\newcommand{\PGL}{\mathop{\rm PGL}\nolimits}
\newcommand{\Stab}{\mathop{\rm Stab}\nolimits}
\newcommand{\Rep}{\mathop{\rm Rep}\nolimits}
\newcommand{\Aut}{\mathop{\rm Aut}\nolimits}

\theoremstyle{plain}
\newtheorem{thm}{Theorem}[section]

\newtheorem{lemma}[thm]{Lemma}
\newtheorem{prop}[thm]{Proposition}
\newtheorem{defprop}[thm]{Definition/Proposition}
\newtheorem{cor}[thm]{Corollary}

\newtheorem{ass}[thm]{Assumption}

\newtheorem{conj}[thm]{Conjecture}

\theoremstyle{definition}
\newtheorem{defn}[thm]{Definition}
\newtheorem{expl}[thm]{Example}
\newtheorem{rem}[thm]{Remark}

\begin{document}

\pagenumbering{gobble}
\title{Motivic and $p$-adic Localization Phenomena}
\author{\\\\ PhD Thesis \\ Dimitri Wyss \\\\ Adviser: \\ Tam\'as Hausel }
\maketitle 
 \afterpage{\blankpage}
\newpage

\pagenumbering{roman}

\tableofcontents
 \afterpage{\blankpage}
\newpage

\section*{Abstract}
\addcontentsline{toc}{section}{Abstract}

In this thesis we compute motivic classes of hypertoric varieties, Nakajima quiver varieties and open de Rham spaces in a certain localization of the Grothendieck ring of varieties. Furthermore we study the $p$-adic pushforward of the Haar measure under a hypertoric moment map $\mu$. This leads to an explicit formula for the Igusa zeta function $\FI_\mu(s)$ of $\mu$, and in particular to a small set of candidate poles for $\FI_\mu(s)$. We also study various properties of the residue at the largest pole of $\FI_\mu(s)$. Finally, if $\mu$ is constructed out of a quiver $\Gamma$ we give a conjectural description of this residue in terms of indecomposable representations of $\Gamma$ over finite depth rings.

The connections between these different results is the method of proof. At the heart of each theorem lies a motivic or $p$-adic volume computation, which is only possible due to some surprising cancellations. These cancellations are reminiscent of a result in classical symplectic geometry by Duistermaat and Heckman on the localization of the Liouville measure, hence the title of the thesis. \newline

\textbf{Keywords:} Hypertoric Varieties, Nakajima Quiver Varieties, Open de Rham Spaces, Igusa Zeta Functions, Motivic Fourier Transform, Duistermaat-Heckman Theorem.

\newpage

\section*{Zusammenfassung}
\addcontentsline{toc}{section}{Zusammenfassung}
In dieser Dissertation berechnen wir die motivischen Klassen von hypertorischen Variet\"aten, Nakajima K\"ocher Variet\"aten und offenen de Rahm R\"aumen. Ausserdem studieren wir das $p$-adische Haar Mass unter dem Vorschieben einer hypertorischen Momentenabbildung $\mu$. Dies f\"uhrt zu einer expliziten Formel f\"ur die Igusa Zeta Funktion $\FI_\mu(s)$ von $\mu$ und insbesondere zu einer kleinen Menge von m\"oglichen Polen von $\FI_\mu(s)$. Wir untersuchen ausserdem verschiedene Eigenschaften des Residuums am gr\"ossten Pol von $\FI_\mu(s)$. Im Fall, wenn $\mu$ aus einem K\"ocher $\Gamma$ konstruiert ist, erkl\"aren wir schliesslich eine Vermutung, welche dieses Residuum mit der Anzahl nicht-aufteilbarer Darstellungen von $\Gamma$ \"uber Ringe endlicher Tiefe in Verbindung bringt.

Diese verschiedenen Resultate sind alle durch eine \"ahnliche Beweismethode verbunden. Im Zentrum jedes Theorems steht eine motivische oder $p$-adische Volumenberechnung, welche nur durch \"uberraschende Vereinfachungen m\"oglich ist. Diese Vereinfachungen erinnern an ein Resultat in klassischer symplektischer Geometrie von Duistermatt und Heckman \"uber die Lokalisierung des Liouville Masses, was den Title der Dissertation erkl\"art.\newline

\textbf{Stichworte:} Hypertorische Variet\"aten, Nakajima K\"ocher Variet\"aten, Offene de Rham R\"aume, Igusa Zeta Funktionen, Motivische Fourier Transformation, Duistermaat-Heckman Theorem.

\newpage

\section*{Acknowledgements}
\addcontentsline{toc}{section}{Acknowledgements}

First and foremost I would like to thank my adviser Tam\'as Hausel, who guided me during 4 years through this difficult area of mathematics and helped me in countless aspects of this thesis and my career in general. 

Chapter \ref{ch3} would not have been possible without my coauthor and friend Michael Lennox Wong, who stayed patient despite my many silly questions. 

In a similar way I'm thankful to my two mathematical brothers Daniele Boccalini and Riccardo Grandi, who were an excellent company in and outside the office. And even though not mathematically related, I would also count Alexandre Peyrot and Martina Rovelli to my closest academic family.

The members of the Hausel geometry group all have contributed in some way or an other to my thesis, mathematically and morally. I would therefore like to express my gratitude to  Martin Mereb, Zongbin Chen, Mario Garcia Fernandez, Ben Davison, Szil\'ard Szab\'o, Alexander Noll, Yohan Brunebarbe, Anton Mellit, Penghui Li, Iordan Ganev and Andras Sandor.

It cannot be stressed enough how big of a help Pierrette Paulou was in many situations during my PhD, she was truly a blessing for our geometry group at EPFL.

At many points during the past 4 years I was lucky enough to get advice, inspiration or precise answers to problems I could not solve from Bal\'{a}zs Szendr\"{o}i, Fran\c{c}ois Loeser, Andrew Morrison, Johannes Nicaise, Michael McBreen, Nicholas Proudfoot and Fernando Rodriguez Villegas. 

Finally I would like to thank my two host institutions \'Ecole Polytechnique F\'ed\'erale de Lausanne and the Institute of Science and Technology Austria for the excellent working conditions. This work was supported by the the EuropeanResearch Council [No. 320593]; and the Swiss National Science Foundation [No. 15362].

\newpage

\section*{Introduction}
\addcontentsline{toc}{section}{Introduction}

A classical theorem in symplectic geometry due to Duistermaat and Heckman \cite{DU82} states, that the volume of a symplectic reduction $M \sslash G$, where $M$ is a compact symplectic manifold and $G$ a compact Lie group, is determined by local data around the fixed points of the $G$-action on $M$. This thesis grew out of the question whether a similar localization theorem could hold in an algebraic context, were manifolds are replaced by algebraic varieties and the symplectic volume by a motivic or $p$-adic volume.

By the motivic volume of a variety we mean here simply its class in the Grothendieck ring of varieties, and the $p$-adic volume of a variety over some local field $F$ comes from the natural Haar measure on $F$. As these two volumes have very little in common with the real symplectic volume, coming from integrating the Liouville form, there is no way to apply Duistermaat's and Heckman's ideas directly. Instead we will use their theorem as a guiding principle, which we apply in each of the chapters \ref{ch2}, \ref{ch3} and \ref{localpf} to different situations.

This approach turns out to be quite fruitful and we obtain in each case explicit formulas for the respective volumes. The (sometimes conjectural) interpretations of these volumes connect our computations with the geometry of various moduli spaces, non-abelian Hodge theory, representations of quivers and even singularity theory.

The thesis is divided in 4 chapters. Chapter \ref{general} contains background material for the later chapters and no original work apart from Definition \ref{motcon} and Proposition \ref{fouconv}. In Section \ref{symg} we explain in more detail the motivation coming from symplectic geometry and the Duistermaat-Heckman theorem. In Section \ref{GCM} we recall the definition of the Grothendieck ring of varieties $\kvar$ and introduce the Grothendieck ring with exponential $\kexp$ \cite{CL, hk09}. The latter enables us to talk about motivic character sums, which leads to the notion of motivic Fourier transform. This Fourier transform is the key to exploit localization phenomena for our computations in the Chapters \ref{ch2} and \ref{ch3}. Section \ref{realize} then explains how to extract geometrical and topological information about a variety from its motivic volume. Finally, in Section \ref{hyarr} we discuss some basic combinatorics of hyperplane arrangements.

We now summarize the original results of this thesis, which are contained in the Chapters \ref{ch2}, \ref{ch3} and \ref{localpf}. More details on the individual sections are given at the beginning of each chapter. 
\subsubsection*{Motivic Classes of Symplectic Reductions of Vector Spaces}

In this chapter we study symplectic reductions of vector spaces and in particular the two prominent examples of hypertoric varieties and Nakajima quiver varieties. In Section \ref{genred} we construct these symplectic reductions algebraically using geometric invariant theory and explain how to compute in principle their motivic class (or volume) as an element in $\kvar$, or more precisely in the localization $\mathscr{M} = \kvar[\BL^{-1};(1-\BL^n)^{-1},n\geq 1]$, where $\BL=[\BA^1]$ denotes the class of $\BA^1$ in $\kvar$. The main ingredient here is Proposition \ref{prop1}, a Fourier transform argument inspired by the finite field computations in \cite{Hau1}, which exhibits some sort of localization phenomenon as explained in Remark \ref{motdh}.

Section \ref{htv} is about hypertoric varieties. In the generic case they are smooth algebraic varieties $M(\FA)$ constructed out of a hyperplane arrangement $\FA$ and their geometry is closely related to the combinatorics of $\FA$ \cite{BD00, HS02}. Using our motivic localization formalism  we prove a formula for the motivic class of $\FM(\FA)$ in terms of the intersection lattice of flats $L(\FA)$ and the M\"obius function $\nu_\FA$ (see Section \ref{hyarr} for details). 

\begin{thm}\label{them1} The motivic class of the generic hypertoric variety $\FM(\FA)$ in $\mathscr{M}$ is given by the formula
\begin{equation*} [\FM(\FA)] = \frac{\BL^{n-m}}{(\BL-1)^m}\sum_{F\in L(\FA)} \nu_\FA(F,\infty) \BL^{|F|},\end{equation*}
where $m$ denotes the rank of $\FA$ and $n=|\FA|$ the number of hyperplanes in $\FA$.
\end{thm}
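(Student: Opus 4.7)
The plan is to apply Proposition \ref{prop1}, the motivic Fourier transform formula for symplectic reductions of vector spaces, to the hypertoric setting. By the construction recalled in Section \ref{genred}, $\FM(\FA) = \mu^{-1}(\xi) \sslash K$, where $K \hookrightarrow (\BG_m)^n$ is a subtorus of dimension $m$ acting on $T^*\BA^n$ with weights $b_1, \dots, b_n \in \Lie(K)^*$, and $\mu : T^*\BA^n \to \Lie(K)^*$ is the moment map $\mu(z,w) = \sum_{i=1}^n b_i\, z_i w_i$. The $b_i$ cut out the hyperplane arrangement $\FA$ in $\Lie(K)$, so the intersection lattice $L(\FA)$ is naturally a lattice of linear subspaces of $\Lie(K)$ indexed by subsets $F \subseteq \{1, \dots, n\}$ (with $|F|$ the cardinality). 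For generic $\xi$ the $K$-action on $\mu^{-1}(\xi)$ is free, so $[\FM(\FA)] = [\mu^{-1}(\xi)]/(\BL-1)^m$ and it suffices to compute $[\mu^{-1}(\xi)]$.

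First, Proposition \ref{prop1} gives
\begin{equation*}
[\mu^{-1}(\xi)] \;=\; \BL^{-m} \int_{\Lie(K)} e(-\langle t, \xi \rangle) \int_{T^*\BA^n} e(\langle t, \mu(z,w) \rangle)\, dz\, dw\, dt .
\end{equation*}
Because $\mu$ decomposes coordinate-wise, the inner integral factors as $\prod_{i=1}^n \int_{\BA^2} e(\langle t, b_i\rangle\, z_i w_i)\, dz_i\, dw_i$. A Fubini computation using the motivic identity $\int_{\BA^1} e(\lambda u)\, du = \BL$ if $\lambda = 0$ and $0$ otherwise shows that each factor equals $\BL^2$ if $\langle t, b_i\rangle = 0$ and $\BL$ if $\langle t, b_i\rangle \neq 0$. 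Hence the inner integral equals $\BL^n \cdot \BL^{|S(t)|}$ with $S(t) = \{\, i : \langle t, b_i\rangle = 0 \,\}$, and substituting yields
\begin{equation*}
[\mu^{-1}(\xi)] \;=\; \BL^{n-m} \int_{\Lie(K)} e(-\langle t, \xi \rangle)\, \BL^{|S(t)|}\, dt .
\end{equation*}

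Next I would stratify $\Lie(K)$ by the flats of $\FA$: for each $F \in L(\FA)$, set $\ft_F = \{\, t \in \Lie(K) : \langle t, b_i \rangle = 0 \text{ for all } i \in F \,\}$ and let $\ft_F^\circ = \ft_F \setminus \bigcup_{F' > F} \ft_{F'}$. These locally closed strata partition $\Lie(K)$ and satisfy $|S(t)| = |F|$ on $\ft_F^\circ$, so the integral becomes $\sum_F \BL^{|F|} \int_{\ft_F^\circ} e(-\langle t, \xi \rangle)\, dt$. Motivic M\"obius inversion on $L(\FA)$ rewrites $\int_{\ft_F^\circ}$ as $\sum_{F' \geq F} \nu_\FA(F, F') \int_{\ft_{F'}}$, and for a linear subspace $V$ the motivic Fourier transform of the constant function $1$ gives $\int_V e(-\langle t, \xi \rangle)\, dt = \BL^{\dim V}$ if $\xi$ vanishes on $V$ and $0$ otherwise. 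For generic $\xi$ the first case occurs only when $V = 0$, i.e.\ only at the top flat $\infty$, so the sum collapses to $\int_{\ft_F^\circ} e(-\langle t, \xi \rangle)\, dt = \nu_\FA(F, \infty)$.

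Combining the three steps yields $[\mu^{-1}(\xi)] = \BL^{n-m} \sum_{F \in L(\FA)} \nu_\FA(F, \infty)\, \BL^{|F|}$, and dividing by $[K] = (\BL-1)^m$ produces the theorem. The main obstacle is the last step: one must carefully identify the stratification of $\Lie(K)$ with the intersection lattice $L(\FA)$, confirm that $|F|$ matches the cardinality of the underlying flat, and pin down the precise genericity condition on $\xi$ that forces the Fourier integral to vanish on every proper flat-subspace, so that M\"obius inversion collapses onto the single surviving coefficient $\nu_\FA(F, \infty)$ per flat.
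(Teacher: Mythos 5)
Your argument reproduces the paper's own proof of Theorem \ref{toricmain} essentially step for step: Proposition \ref{prop1} turns $[\mu^{-1}(\xi)]$ into an exponential sum over $\Lie(T^m)$, the stratification $\ft^m=\coprod_F\overset{\circ}{H}_F$ by flats produces $\sum_F\BL^{|F|}[\overset{\circ}{H}_F,\lla\cdot,\xi\rra]$, and the per-stratum exponential class collapses to $\nu_\FA(F,\infty)$ by inclusion--exclusion together with the orthogonality Lemma \ref{orth} (you phrase this as M\"obius inversion, the paper as directly verifying the recursive definition of $\nu_\FA$ — the same combinatorial fact). The two places you gloss over, which the paper treats carefully, are (i) the identification of the GIT quotient $\mu^{-1}(0)\sslash_\chi T^m$ with the affine quotient $\mu^{-1}(\xi)\sslash_0 T^m$ at generic $\xi$ (Proposition \ref{nakfamily}, using the Bialynicki-Birula argument), and (ii) the Zariski-local triviality of the resulting $T^m$-torsor, which is what lets one divide by $(\BL-1)^m$ in $\kvar$ rather than merely over a finite field (Proposition \ref{toricfam}).
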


In Section \ref{nqv} we repeat the same kind of computations for Nakajima quiver varieties. Here the input data is a quiver $\Gamma = (I,E,s,t)$, that is a finite vertex set $I$, a set of arrows $E\subset I\times I$ and maps $s,t:E\rightarrow I$ sending an arrow to its source and target. In \cite{nak94}\cite{nak98} Nakajima associates to $\Gamma$ and two dimension vectors $\bfv,\bfw \in \BN^I$ a smooth algebraic variety $\FM(\bfv,\bfw)$ called Nakajima quiver variety. Our main result here is again a formula for the class of $\FM(\bfv,\bfw)$ in  $\mathscr{M}$.	Explicitly let $\FP$ be the set of partitions. For $\lambda \in \FP$ we write $|\lambda|$ for its size and $m_k(\lambda)$ for the multiplicity of $k\in \BN$ in $\lambda$. Given any two partitions $\lambda,\lambda' \in \FP$ we define their inner product as $\lla \lambda,\lambda' \rra = \sum_{i,j\in \BN} \min(i,j)m_i(\lambda)m_j(\lambda').$ Then we prove
	
\begin{thm}\label{them2} For a fixed dimension vector $\bfw \in \BN^I$  the motivic classes of the Nakajima quiver varieties $\FM(\bfv,\bfw)$ in $\mathscr{M}$ are given by the generating function
	\begin{eqnarray*} \sum_{\bfv \in \BN^I} [\FM(\bfv,\bfw)] \BL^{d_{\bfv,\bfw}}T^\bfv = \frac{\sum_{\bfl \in \FP^I} \frac{\prod_{e\in E} \BL^{\lla \lambda_{s(e)},\lambda_{t(e)}\rra} \prod_{i\in I} \BL^{\lla 1^{w_i},\lambda_i \rra} }{\prod_{i\in I} \BL^{\lla \lambda_i,\lambda_i \rra} \prod_k \prod_{j=1}^{m_k(\lambda_i)} (1-\BL^{-j})}T^{|\bfl|}}{\sum_{\bfl \in \FP^I} \frac{\prod_{e\in E} \BL^{\lla \lambda_{s(e)},\lambda_{t(e)}\rra}}{\prod_{i\in I} \BL^{\lla \lambda_i,\lambda_i \rra} \prod_k \prod_{j=1}^{m_k(\lambda_i)} (1-\BL^{-j})}T^{|\bfl|}},\end{eqnarray*}
where $d_{\bfv,\bfw}$ denotes half the dimension of $\FM(\bfv,\bfw)$.
\end{thm}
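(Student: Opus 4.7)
The plan is to realize $\FM(\bfv,\bfw)$ as a symplectic reduction of a vector space and then apply the motivic Fourier-transform formula of Proposition \ref{prop1}. Explicitly, choose vector spaces $V_i, W_i$ with $\dim V_i=v_i,\dim W_i=w_i$, set $N(\bfv,\bfw)=\bigoplus_{e\in E}\Hom(V_{s(e)},V_{t(e)})\oplus\bigoplus_{i\in I}\Hom(W_i,V_i)$, and let $G_\bfv=\prod_{i\in I}\GL(v_i)$ act on $T^*N(\bfv,\bfw)$ with moment map $\mu$. Then $\FM(\bfv,\bfw)=\mu^{-1}(0)\sslash G_\bfv$ for a generic stability parameter, which places it in the framework of Section \ref{genred}. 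Proposition \ref{prop1} writes $[\FM(\bfv,\bfw)]\,\BL^{d_{\bfv,\bfw}}$ as a Fourier-type expression over $G_\bfv$, and because the ambient space is a cotangent bundle the integrand collapses on each $g\in G_\bfv$ to $\BL^{2\dim N(\bfv,\bfw)^g}$.

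The next step is a motivic Weyl integration on each factor $\GL(v_i)$. Conjugacy classes of $\GL(v_i)$ are parametrized by partitions $\lambda_i\vdash v_i$ (Jordan type), and the motivic volume of the class of type $\lambda_i$ is $[\GL(v_i)]$ divided by the motivic size of the centralizer, namely $\BL^{\lla\lambda_i,\lambda_i\rra}\prod_k\prod_{j=1}^{m_k(\lambda_i)}(1-\BL^{-j})$. The integral thus reindexes as a sum over tuples $\bfl=(\lambda_i)_{i\in I}\in\FP^I$ with $|\bfl|=\bfv$, supplying the centralizer denominators in the theorem. A direct Jordan-block computation of the fixed locus then gives
\begin{equation*}
\dim\Hom(V_{s(e)},V_{t(e)})^{g}=\lla\lambda_{s(e)},\lambda_{t(e)}\rra,\qquad \dim\Hom(W_i,V_i)^{g}=\lla 1^{w_i},\lambda_i\rra
\end{equation*}
for $g$ of Jordan type $\bfl$, where the framing uses that $G_\bfv$ acts trivially on $W_i$, so $W_i$ has Jordan type $1^{w_i}$. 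Packaging these exponents into the Fourier formula and summing in $T^\bfv$ produces the numerator $H_\bfw(T)$ of the asserted generating function.

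The denominator $H_0(T)$ is the same sum specialized to $\bfw=0$ and arises from the normalization by $[G_\bfv]$ via a Hua-type identity: the same Fourier computation without framing, written as a generating function in $T$, reassembles into $H_0(T)$, and dividing by it converts the stacky count $[\mu^{-1}(0)]/[G_\bfv]$ into the motivic class of the actual GIT quotient. The main technical obstacle will be making the motivic Weyl integration rigorous in the localized ring $\mathscr{M}$: the $\BF_q$-analogue of this computation is classical and underlies \cite{Hau1}, but here one must verify that Jordan-type strata in $\GL(v_i)$ carry well-defined motivic volumes with the expected centralizer factors. A secondary subtlety is the bookkeeping of the dimension shift $\BL^{d_{\bfv,\bfw}}$ and the non-existence of strictly semistable points, both of which are controlled by the genericity of the stability parameter.
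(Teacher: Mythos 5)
Your high-level plan matches the paper — express $\FM(\bfv,\bfw)$ as $\mu^{-1}(\unt_\bfv)\sslash G_\bfv$ and apply the motivic Fourier transform of Proposition \ref{prop1} — but there are two substantive gaps in the way you propose to carry it out.

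First, Proposition \ref{prop1} produces a motivic function $[a_\varrho,\lla -\pi,\unt_\bfv\rra]$ on the \emph{Lie algebra} $\g_\bfv=\bigoplus_i\mathfrak{gl}(V_i)$, not the group $G_\bfv$. You then propose to do "motivic Weyl integration on each factor $\GL(v_i)$" with conjugacy classes parametrized by partitions; but the conjugacy classes of $\GL(v_i)$ (or the adjoint orbits of $\mathfrak{gl}(v_i)$) are not in bijection with partitions of $v_i$ — only the nilpotent ones are. The paper handles this by first stratifying $a_\varrho$ according to the Jordan decomposition $X=X^\n\oplus X^\re$ of each $X_i$, splitting into a nilpotent factor and an invertible ("regular") factor (Lemma \ref{first}); only the nilpotent stratum is then parametrized by tuples of partitions. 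You also drop the exponential $\lla -\pi,\unt_\bfv\rra$, which is what the generic parameter $\unt_\bfv$ contributes and which does \emph{not} vanish on the regular part; the nilpotent stratum contributes trivially because $\tr X^\n=0$, which is exactly why that stratum alone produces a sum over $\FP^I$.

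Second, your explanation of the denominator is incorrect. Dividing by $[G_\bfv]$ already converts the fiber class $[\mu^{-1}(\unt_\bfv)]$ into $[\FM(\bfv,\bfw)]$ via Proposition \ref{qdef}; the denominator $H_0(T)$ is \emph{not} a stacky normalization. It arises because the decomposition $\Phi(\bfw)=\Phi_\n(\bfw)\cdot\Phi_\re$ isolates the regular factor $\Phi_\re$ (which is independent of $\bfw$, since the framing interacts only with the nilpotent part of $X$), and the identity $\Phi(\bm 0)=1$ from \cite[Lemma 3]{Hau2} forces $\Phi_\re=1/\Phi_\n(\bm 0)$. Without the nilpotent/regular factorization and the $\Phi(\bm 0)=1$ input, there is no route from your "Fourier-type expression" to the ratio of two partition-indexed generating functions that the theorem asserts.
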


We should mention that both Theorem \ref{them1} and \ref{them2} generalize known formulas for the number of points of the respective varieties over a finite field $\BF_q$, when we replace everywhere $\BL$ by $q$ \cite{Hau1,Hau2,PW07}. The real insight comes from the idea of using Grothendieck rings with exponentials in order to perform the Fourier transform computations motivically.

\subsubsection*{Open de Rham Spaces}
Chapter \ref{ch3} studies the motivic classes of open de Rham spaces and is part of a joint project with Tam\'as Hausel and Michael Wong \cite{HWW17}. In order to define an open de Rham space, we fix an effective divisor $D$ on $\BP^1$ and at each point in the support of $D$ some local data called a formal type. Then the open de Rham space $\FM_n$ is defined as the moduli space of connections on the trivial rank $n$ bundle on $\BP^1$ with poles along $D$ with prescribed formal types at the punctures. If the formal types are chosen generically, $\FM_n$ is a smooth affine variety and can be described as a symplectic fusion of coadjoint orbits for the groups $\GL_n(\BC[[z]]/z^k)$ for $k\geq 1$ \cite{Bo01}. 

The same motivic Fourier transform formalism already used in Chapter \ref{ch2} allows us to compute the class of $\FM_n$ in $\mathscr{M}$, at least when all the poles are prescribed to be of order at least $2$. We write $\FP_n$ for the set of partitions of size $n$. For a partition $\lambda = (\lambda_1\geq \dots \geq \lambda_l) \in \FP_n$ we define the numbers $l(\lambda) = l$ and $N(\lambda) = \sum_{j=1}^l \lambda_j^2$ and furthermore we put 
\[\Stab(\lambda) = \prod_{j=1}^l \GL_{\lambda_j}(\BC) \subset \GL_n(\BC).\] 

\begin{thm} If the formal types are chosen generically and all poles are prescribed to be of order at least $2$, the motivic class of $\FM_n$ in $\mathscr{M}$ is given by
\begin{align*}    [\FM_n] &= \\
\nonumber \frac{\BL^{\frac{\mathbf{k}}{2}(n^2-2n)+n(d-n)+1 } }{(\BL-1)^{nd-1}} &\sum_{\lambda \in \FP_n} \frac{(-1)^{l(\lambda)-1}(l(\lambda)-1)! (n!)^d}{\prod_{j} (\lambda_j!)^d \prod_{r \geq 1} m_r(\lambda)!} \BL^{\frac{N(\lambda)}{2}(\mathbf{k}-2d)} [\Stab(\lambda)]^{d-1},\end{align*}
where $\mathbf{k}$ denotes the degree of $D$ and $d$ the number of points in $D$.
\end{thm}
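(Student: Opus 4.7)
The starting point is Boalch's description \cite{Bo01}, recalled in the excerpt, of $\FM_n$ as a symplectic fusion of coadjoint orbits $\mathcal{O}_{A_i}$ of the jet groups $\GL_n(\BC[[z]]/z^{k_i})$, one for each of the $d$ punctures, under the diagonal residual $\GL_n$-action with moment map equal to the sum of the principal residues. My plan is to apply the motivic Fourier transform formalism of Proposition \ref{prop1} to this picture, exactly as in Chapter \ref{ch2}.

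First, I would write
\begin{equation*}
[\FM_n] = \frac{1}{[\GL_n]} \int_{\mathfrak{gl}_n^*} \prod_{i=1}^{d} \widehat{[\mathcal{O}_{A_i}]}(X)\, dX,
\end{equation*}
using motivic Fourier inversion to enforce the moment map condition, with $\widehat{[\mathcal{O}_{A_i}]}$ the motivic Fourier transform of the characteristic function of $\mathcal{O}_{A_i}$ along the residue direction and $dX$ the motivic Haar measure on $\mathfrak{gl}_n^*$. Since each $k_i \geq 2$, each $\mathcal{O}_{A_i}$ is an iterated affine bundle over a regular semisimple $\GL_n$-orbit in $\mathfrak{gl}_n^*$, and a direct calculation should show that at a regular semisimple $X$ the transform $\widehat{[\mathcal{O}_{A_i}]}(X)$ reduces to a pure power of $\BL$ times a simple character in the eigenvalues of $X$. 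This is the Duistermaat--Heckman-type cancellation at the heart of the argument: the integrals over the higher-jet directions collapse away from the semisimple locus, in the same spirit as Proposition \ref{prop1}.

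Second, I would stratify $\mathfrak{gl}_n^*$ by the conjugacy class of the semisimple part, indexed by partitions $\lambda \in \FP_n$ giving the block sizes of the centralizer $\Stab(\lambda)$. The motivic Weyl integration formula then yields
\begin{equation*}
[\FM_n] = \sum_{\lambda \in \FP_n} \frac{1}{\prod_r m_r(\lambda)!\,\cdot[\Stab(\lambda)]} \int_{X \in (\ft_\lambda^*)^{\re}} \prod_{i=1}^{d}\widehat{[\mathcal{O}_{A_i}]}(X)\, dX,
\end{equation*}
where $\ft_\lambda^* \subset \mathfrak{gl}_n^*$ is the block-diagonal Cartan-type subspace and $(\ft_\lambda^*)^{\re}$ its regular locus; the factor $\prod_r m_r(\lambda)!$ comes from relabelling equal-sized blocks and $[\Stab(\lambda)]$ from the volume of the conjugation orbit. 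Plugging in the simplified form of each $\widehat{[\mathcal{O}_{A_i}]}$ on this stratum produces the global $\BL$-prefactor, the $\lambda$-dependent power $\BL^{(N(\lambda)/2)(\mathbf{k}-2d)}$, and $d$ copies of $[\Stab(\lambda)]$ which combine with the denominator to leave $[\Stab(\lambda)]^{d-1}$. I would then evaluate the remaining integral over $(\ft_\lambda^*)^{\re}$: the character factors integrate to zero except where suitable coincidences among block eigenvalues occur, and genericity of the $A_i$ ensures that only the extreme coincidences survive. A M\"obius inversion on the refinement lattice of $\lambda$ produces the combinatorial coefficient $(-1)^{l(\lambda)-1}(l(\lambda)-1)!$, the surviving torus integrals contribute $(\BL-1)^{-(nd-1)}$, and the multinomial $(n!)^d/\prod_j (\lambda_j!)^d$ arises from the $d$ independent ways of distributing the prescribed eigenvalues of the $A_i$ among the blocks of $\lambda$.

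The main obstacle I expect is the first step: verifying that the motivic Fourier transform of a coadjoint orbit of the jet group $\GL_n(\BC[[z]]/z^{k_i})$ collapses on the regular semisimple locus to a pure power of $\BL$ times a simple character. This is where the orbit's non-linear higher-jet directions must mutually cancel, and the Fourier argument of Proposition \ref{prop1} has to be upgraded from a vector-space input to these coadjoint orbits; one expects an explicit parametrization using the Levi decomposition of the jet group and a careful tracking of the resulting affine-bundle structure. Once this Duistermaat--Heckman-style collapse is established, the remaining combinatorics (Weyl integration, M\"obius inversion, and bookkeeping of powers of $\BL$) follows the same pattern used for Theorems \ref{them1} and \ref{them2}.
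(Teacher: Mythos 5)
Your proposal follows essentially the same route the paper takes: the motivic Fourier transform of each jet-group coadjoint orbit $\FO_{C^i}$ collapses onto the semi-simple conjugacy classes when $k_i \geq 2$ (Theorem \ref{ftpuncturet}), the $d$ orbits are then combined via motivic convolution (Proposition \ref{fouconv}), and the resulting class in $\kexp$ is evaluated by stratifying over partition type $\lambda$ and applying an inclusion-exclusion over $\BA^{l(\lambda)}_\circ$ (Lemma \ref{finalc}) to produce the $(-1)^{l(\lambda)-1}(l(\lambda)-1)!$ factor. The technical obstacle you correctly isolate as the main one --- showing that the higher-jet directions of the orbit cancel away from the semi-simple locus --- is precisely what Lemma \ref{keycomp} establishes, via the parametrization $\FO_C \cong (G\times B_k^{od})/T$ of Lemma \ref{codecomp} and the splitting $B_k^{od} = B_+^{od}\oplus B_-^{od}$ that allows Lemma \ref{orth} to eliminate the $B_-^{od}$ variables; the only small bookkeeping deviation in your outline is the prefactor, which should be $(\BL-1)/[\GL_n]$ rather than $1/[\GL_n]$ since only $\PGL_n$ acts freely on $\mu_d^{-1}(0)$.
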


The proof uses motivic Fourier transform and convolution to reduce the computation to a simpler one at each individual pole. The condition that a pole has order at least $2$ ensures then, that the computation localizes from the whole Lie algebra $\mathfrak{gl}_n(\BC)$ to the subspace of semisimple elements.

 By passing to finite fields and using results from \cite{HLV11} the same formula holds under the weaker condition that only one pole has to be of order at least $2$. 

As we will show in \cite{HWW17}, these formulas are in agreement with the predictions of Hausel, Mereb and Wong \cite{HMW16} on the mixed Hodge polynomial of wild character varieties, which was the original motivation for studying the motivic class of $\FM_n$.

\subsubsection*{Push-forward Measures of Moment Maps over
Local Fields}

In Chapter \ref{localpf} we consider the same situation as in Chapter \ref{ch2}, but instead of the motivic measure we consider the Haar measure coming from fixing our base field to be a local field $F$ with ring of integers $\FO$. More precisely we consider as in Section \ref{htv} a hyperplane arrangement $\FA$ of rank $m$ consisting of $n$ hyperplanes and the corresponding hypertoric moment map 
\[ \mu: \FO^n\times \FO^n \rightarrow \FO^m.\]
Then $\mu$ is by definition given by a homogeneous polynomial of degree $2$ and as it turns out, a similar localization philosophy as we used in the previous chapters leads in this situation to a formula for the Igusa zeta function $\FI_\mu(s)$ of $\mu$. Here $s$ can be thought of as a complex variable, and in fact the general theory for those zeta functions will imply, that $\FI_\mu(s)$ is a rational function in $q^{-s}$, where $q$ denotes the cardinality of the the residue field of $F$ \cite{Ig00}. This rational function can be written in terms of the combinatorics of $\FA$ as follows (we refer to Section \ref{hyarr} for details on the notation). 

\begin{thm}\label{comon} For an essential hyperplane arrangement $\FA$ the Igusa zeta function $\FI_\mu(s)$ is given by

\begin{equation*} \FI_\mu(s) =  \frac{q^m-1}{q^m-q^{-s}} + \frac{1-q^{s}}{1-q^{-(s+m)}} \sum_{\infty = I_0 \supsetneq I_1 \supsetneq \dots \supsetneq I_r} q^{-\rk(\FA^{I_r})} \prod_{i=1 }^r \frac{\chi_{\FA_{I_{i-1}}^{I_{i}}}(q)}{q^{s+\delta_{I_i}}- 1},\end{equation*}
where the sum is over all proper chains of flats in $L(\FA)$ of length $r\geq 1$ and for every flat $I \in L(\FA)$ we put $\delta_I = n-|I|+ \rk(\FA_I)$.
\end{thm}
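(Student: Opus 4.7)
The plan is to carry out the Fourier-theoretic localization argument of Chapter \ref{ch2} in the $p$-adic setting. I start by writing
\[ \FI_\mu(s) = \int_{\FO^n \times \FO^n} \|\mu(x,y)\|^s \, dx \, dy \]
and expanding $\|t\|^s$ through the $\pi^k$-level indicator functions of $\FO^m$, which by Fourier inversion on $F^m$ can be represented as
\[ \mathbf{1}_{\pi^k \FO^m}(t) \;=\; q^{-km} \int_{\pi^{-k}\FO^m} \psi(\langle \xi, t\rangle)\, d\xi, \]
where $\psi$ is a standard additive character of $F$ of conductor $\FO$. Since the hypertoric moment map splits as $\mu(x,y) = \sum_{i=1}^n a_i x_i y_i$ with $a_1, \dots, a_n \in \BZ^m$ the normal covectors of $\FA$, the $(x,y)$-integral factorizes:
\[ \int_{\FO^n \times \FO^n} \psi(\langle \xi, \mu(x,y)\rangle)\, dx\, dy \;=\; \prod_{i=1}^n J(\langle \xi, a_i\rangle), \qquad J(c) \;:=\; \int_{\FO^2} \psi(c\, xy)\, dx\, dy. \]
A direct computation shows $J(c) = q^{\min(0,\, v(c))}$; in particular $J(c) = 1$ for $c \in \FO$, and otherwise $J$ depends only on $v(c)$.

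Second, I would stratify the $\xi$-integration by the flat
\[ I(\xi) \;:=\; \{\, i \in \{1, \dots, n\} : \langle \xi, a_i \rangle \in \FO \,\} \;\in\; L(\FA), \]
which records the set of $J$-factors that trivialize. The contribution from $\xi \in \FO^m$, where $I(\xi) = \infty$, assembles into the isolated summand $(q^m-1)/(q^m-q^{-s})$ of the claimed formula. Outside $\FO^m$, a further refinement by the common minimum valuation of the remaining $\langle \xi, a_i\rangle$, then the next minimum, and so on, produces a proper chain of flats $\infty = I_0 \supsetneq I_1 \supsetneq \dots \supsetneq I_r$ in $L(\FA)$, with $I_i$ indexing those coordinates on which $\xi$ has become "small" after $i$ refinement steps.

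Third, I evaluate the integral stratum by stratum. On the stratum attached to such a chain, the product of $J$-factors is a signed count whose evaluation by M\"obius inversion on the interval $[I_i, I_{i-1}] \subset L(\FA)$ produces the characteristic polynomial $\chi_{\FA_{I_{i-1}}^{I_i}}(q)$; summing over the valuations of $\xi$ compatible with the chain contributes the geometric series $(q^{s+\delta_{I_i}}-1)^{-1}$, with the exponent $\delta_{I_i} = n - |I_i| + \rk(\FA_{I_i})$ arising by balancing the $n - |I_i|$ forced Gaussian valuations against the $\rk(\FA_{I_i})$ free directions of $\xi$ modulo $\FO^m$. The innermost stratum yields the remaining factor $q^{-\rk(\FA^{I_r})}$, while the overall prefactor $(1-q^s)/(1-q^{-(s+m)})$ is supplied by the Fourier-inversion normalization on $F^m$ — this is where the essentiality hypothesis $\rk(\FA) = m$ enters, ensuring the normalization is non-degenerate.

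The main obstacle will be making Step 3 rigorous: verifying that the iterated inclusion--exclusion over the valuations of the $\langle \xi, a_i\rangle$ reproduces exactly the characteristic polynomials of the interval arrangements $\FA_{I_{i-1}}^{I_i}$, and that the exponents $\delta_{I_i}$ emerge in the precise form stated. The underlying computation is purely combinatorial (M\"obius inversion on $L(\FA)$ at each level of the chain), but the bookkeeping across nested sub-arrangements is intricate and must be tracked carefully to match the normalizations of the $p$-adic integrals at each stage.
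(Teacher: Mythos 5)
Your strategy is the right one, and it is essentially the paper's: express $\FI_\mu(s)$ via $p$-adic Fourier inversion, factor the resulting $\xi$-integral over the normals $a_i$, and stratify the $\xi$-domain by the valuations $v(\langle\xi,a_i\rangle)$, so that the combinatorics of $L(\FA)$ emerge through chains of flats. But there are two concrete problems.

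First, the local factor is miscomputed. For fixed $x\in\FO$ one has $\int_{\FO}\psi(cxy)\,dy=\chi_{\m}(cx)$ (this is Lemma~\ref{lmbl} with $\alpha=0$), and therefore
\begin{equation*}
J(c)=\h\bigl(\{x\in\FO : |cx|<1\}\bigr)=\min\{1,\;q^{-1}|c|^{-1}\},
\end{equation*}
so $J(c)=1$ only when $c\in\m$, while $J(c)=q^{-1}$ for $v(c)=0$ and $J(c)=q^{v(c)-1}$ for $v(c)<0$. Your asserted $J(c)=q^{\min(0,v(c))}$ is off by a factor $q^{-1}$ whenever $v(c)\le 0$, and — more importantly — your trivialization threshold ``$c\in\FO$'' should be ``$c\in\m$''. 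The set $I(\xi)$ must be defined by $\langle\xi,a_i\rangle\in\m$; this threshold feeds directly into the exponents $\delta_I=n-|I|+\rk(\FA_I)$, and using $\FO$ instead of $\m$ would shift them.

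Second, and more substantively, you identify Step 3 — converting the iterated refinement by valuations into the stated chain sum — as ``the main obstacle'' and then leave it unaddressed. That is where nearly all the work in the paper lives: one must compute the Haar volume of each valuation stratum $Z_{\FA,\bfc}$ (Lemma~\ref{volz}), which works by reducing mod $\m$ and invoking $|c(\FA)(\BF_q)|=\chi_{\FA}(q)$ from Theorem~\ref{stcha} to produce the characteristic polynomial factors; this then feeds a recursion in the rank of the arrangement (Proposition~\ref{gorec}), whose unwinding yields precisely the chain sum with its factors $\chi_{\FA_{I_{i-1}}^{I_i}}(q)/(q^{s+\delta_{I_i}}-1)$ and the leftover $q^{-\rk(\FA^{I_r})}$. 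Describing this as ``purely combinatorial bookkeeping'' to be done ``carefully'' is accurate but does not constitute a proof: without the volume lemma and the recursive structure, the claimed chain formula is not established. As it stands you have a correct outline with the crucial quantitative step missing.
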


As an immediate consequence of the theorem we see that the real parts of the poles of $\FI_\mu(s)$ are amongst the negative integers $-\delta_I$ for $I \in L(\FA)$. In Section \ref{poles} we derive a sufficient criterion for when $-\delta_I$ is an actual pole of $\FI_\mu(s)$. The interest in such an analysis comes from Igusas's long standing monodromy conjecture \cite{Ig88}, which describes the poles of $I_f(s)$ for any polynomial $f$ in terms of the singularities of $f=0$.

In an other direction consider the finite rings $\FO_\alpha = \FO/ \m^\alpha$, where $\m \subset \FO$ denotes the unique maximal ideal and $\alpha \geq 1$. Then assuming that $\FA$ is coloop-free, Theorem  \ref{comon} implies that the asymptotic number of solutions to $\mu = 0$ in $\FO_\alpha^{2n}$, that is 
\[ B_\mu = \lim_{\alpha \rightarrow \infty} \frac{|\{ x \in \FO_\alpha^{2n} \ |\ \mu(x) = 0\}|}{q^{\alpha(2n-m)}},\]
converges to a rational function in $q$. After multiplying $B_\mu$ by an explicit factor we obtain a polynomial $B'_\mu(q)$. Using the functional equation for Igusa zeta functions of homogeneous polynomials \cite{DM91}, we show that $B'_\mu(q)$ is palindromic and based on numerical evidence we conjecture furthermore, that $B'_\mu(q)$ has positive coefficients, Conjecture \ref{poscof}.

Finally, when $\FA$ is a induced from a quiver $\Gamma$ it is natural to ask for a relation between the number of solutions of the moment map equation $\mu = 0$ and the number of indecomposable representations of $\Gamma$ by a theorem of Crawley-Boevey and Van den Bergh \cite{CV04}. And indeed, the polynomial $B'_\mu(q)$ seems to agree with the asymptotic number of indecomposable representations of $\Gamma$ over $\FO_\alpha$ up to a factor, see Conjecture \ref{lastone}.

\newpage
 
\pagenumbering{arabic} 
\section{Generalities}\label{general}

\subsection{Symplectic geometry}\label{symg}


A real manifold $M$ is \textit{symplectic} if it admits a non-degenerate, closed $2$-form $\omega$. For such a form to exist it is certainly necessary, that $\dim M= 2n$ is even. Then $\omega$ will induce a natural measure $dL$ on $M$, called the \textit{Liouville measure}, given by integrating the volume form $\frac{\omega^{n}}{n!}$. 

Next consider a compact Lie group $G$ acting on $M$ by symplectomorphisms, that is $g^*\omega = \omega$ for every $g\in G$. Such an action is \textit{Hamiltonian} if there exists a map 
\[ \mu: M \rightarrow \g^* = \Lie(G)^*,\]
satisfying the following two properties:
\begin{itemize}
\item For every $X \in \g$ there is an equality of $1$-forms $\lla d\mu, X \rra = i_{\xi_X}\omega$, where $\xi_X$ denotes the vector field on $M$ generated by $X$.
\item $\mu$ is $G$-equivariant for the coadjoint action of $G$ on $\g^*$.
\end{itemize}
In this case $\mu$ is called a \textit{moment map} for the $G$-action on $M$. From the first condition we deduce in particular, that $\lambda \in \g^*$ is a regular value of $\mu$ if and only if $G$ acts locally freely on $\mu^{-1}(\lambda)$. 
Hence if $\lambda \in (\g^*)^G$ and $G$ acts freely on $\mu^{-1}(\lambda)$ we can consider the quotient manifold
\[ M \sslash_\lambda G = \mu^{-1}(\lambda) /G.\]
By a theorem of Marsden and Weinstein \cite{MW74} the manifold $M \sslash_\lambda G$ is again symplectic and hence admits in particular its own Liouville measure $dL_\lambda$. The dependence of $dL_\lambda$ on $\lambda \in (\g^*)^G$ is described by a theorem if Duistermaat and Heckman \cite{DU82}, which we explain now.

As this section serves mostly as a motivation we restrict ourselves to the case where $G=T$ is a torus, $M$ is compact and the fixed point locus $M^G$ is discrete. Then for any $p \in M^G$ we get a torus action on the tangent space $T_pM$. Equipping $T_pM$ with a compatible complex structure, we write $\lambda^p_1,\dots \lambda^p_n \in \ft^*= \Lie(T)^*$ for the weights of this action. 

\begin{thm}\cite[Theorem 4.1]{DU82}\label{dhthm} For any $t \in \ft$ with $\lla \lambda^p_j, t \rra \neq 0$ for all $p\in M^G, \ 1\leq j\leq n$ the Fourier transform of the push-forward measure $\mu_*(dL)$ is given by
\begin{equation}\label{dhf} \int_{\ft^*} e^{i\lla x,t \rra} \mu_*(dL) =  \int_M e^{i\lla \mu(p),t\rra} dL = (2\pi)^n\sum_{p\in M^G} \frac{e^{i\lla \mu(p),t\rra}}{ \prod_{j=1}^n \lla \lambda^p_j,t \rra}.  \end{equation}
\end{thm}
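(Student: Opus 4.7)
The plan is to realize the integral $\int_M e^{i\lla\mu,t\rra}\,dL$ as an equivariant integral of an equivariantly closed form and then invoke the Atiyah--Bott--Berline--Vergne localization theorem. Unwinding the definition of push-forward of a measure, the Fourier side equals
\[ \int_{\ft^*} e^{i\lla x,t \rra} \mu_*(dL) = \int_M e^{i\lla \mu(p),t \rra} dL = \int_M e^{i\lla \mu, t\rra}\frac{\omega^n}{n!}, \]
so it suffices to evaluate the right-hand integral on $M$.

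The key observation is that the inhomogeneous form $\eta_t := i(\omega + \lla\mu,t\rra) \in \Omega^0(M) \oplus \Omega^2(M)$ is closed in the Cartan model of equivariant de Rham cohomology. Indeed, using $d\omega = 0$ together with the defining moment-map identity $d\lla\mu,t\rra = i_{\xi_t}\omega$, one checks directly that $(d - i_{\xi_t})\eta_t = 0$. Hence $e^{\eta_t}$ is equivariantly closed, and picking out its top-degree component gives $\int_M e^{\eta_t} = i^n\int_M e^{i\lla\mu,t\rra}\,dL$.

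Next I would apply the Atiyah--Bott--Berline--Vergne fixed-point formula to $e^{\eta_t}$. The genericity assumption $\lla\lambda_j^p, t\rra \neq 0$ guarantees that the zero locus of the generating vector field $\xi_t$ coincides with the isolated set $M^G$, and the localization theorem yields
\[ \int_M e^{\eta_t} = \sum_{p \in M^G} \frac{e^{\eta_t|_p}}{e_T(T_pM)(t)}, \]
where $e_T(T_pM)$ denotes the equivariant Euler class of the tangent space at $p$. With the chosen compatible complex structure, $T_pM$ splits into $T$-eigenlines of weights $\lambda_1^p,\dots,\lambda_n^p$, so that $e_T(T_pM)(t)$ is a normalization constant times $\prod_{j=1}^n \lla\lambda_j^p,t\rra$, while $e^{\eta_t|_p} = e^{i\lla\mu(p),t\rra}$. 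Collecting the $(2\pi i)$-factors coming from the normalization of equivariant Chern classes and the $i^{-n}$ from the previous step reproduces the claimed $(2\pi)^n$ in front of the sum.

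The substantive step is of course the ABBV localization theorem itself, whose proof constructs an equivariant primitive for $e^{\eta_t}$ on the complement of small tubular neighborhoods of $M^G$ and evaluates the boundary contributions in terms of the normal-bundle data; once this is available, the Duistermaat--Heckman formula is essentially a computational consequence of the moment-map identity $d\lla\mu,t\rra = i_{\xi_t}\omega$. The main obstacle is not geometric but bookkeeping: one must fix compatible conventions for the equivariant differential, the complex structure on each $T_pM$, and the normalization of the equivariant Euler class in order to land precisely on the factor $(2\pi)^n$ rather than a sign- or $i$-twisted version of it.
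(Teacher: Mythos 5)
The paper does not supply its own proof of this theorem: it is stated with a bare citation to [DU82, Theorem 4.1] as motivation, and the text immediately afterwards remarks that the result was ``later reinterpreted by Atiyah and Bott as a localization theorem in equivariant cohomology [AB84].'' Your proposal is precisely that later reinterpretation. The sketch is correct as far as it goes: the moment-map identity $d\lla\mu,t\rra = i_{\xi_t}\omega$ together with $d\omega=0$ shows that $\eta_t = i(\omega+\lla\mu,t\rra)$ is closed for the Cartan differential $d-i_{\xi_t}$, so $e^{\eta_t}$ is equivariantly closed, and the genericity hypothesis $\lla\lambda_j^p,t\rra\neq 0$ ensures the zero set of $\xi_t$ is the discrete set $M^G$; the Atiyah--Bott--Berline--Vergne formula then yields the stated sum, with the equivariant Euler class of $T_pM$ producing the product $\prod_j\lla\lambda_j^p,t\rra$ in the denominator. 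You are right that the only substantive input is the ABBV theorem itself and that the rest is bookkeeping for the $(2\pi)^n$ normalization. However, this is a genuinely different route from the one in the cited source [DU82]: Duistermaat and Heckman's original argument makes no appeal to equivariant cohomology at all; they establish the exactness of stationary phase by proving directly, via Darboux-type normal forms near the level sets of $\mu$ and an analysis of how the reduced Liouville volume varies, that the push-forward density $\mu_*(dL)$ is piecewise polynomial, and then they deduce the fixed-point formula from that. The equivariant-cohomology route you take is shorter and more conceptual once ABBV is granted, and it is the perspective the paper actually wants (it motivates the ``motivic localization'' philosophy of the later chapters), but it replaces the elementary symplectic-geometry content of the original proof with a black-boxed localization theorem that historically came afterwards.
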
 

The slogan we take away from (\ref{dhf}) is, that the density of $\mu_*(dL)$ varies evenly on $\ft^*$ away from $\mu(M^G) \subset \ft^*$ and hence the only contributions in the Fourier transform come from $M^G$. Their theorem was later reinterpreted by Atiyah and Bott as a localization theorem in equivariant cohomology \cite{AB84}, which explains the term 'localization' in the title of this thesis.

In what follows we will however be interested in a more algebraic setting, where $M$ is a (smooth) algebraic variety over some field $k$. It still makes sense to talk about symplectic forms on $M$ and moment maps of an action of an algebraic group on $M$. In this generality the Liouville measure doesn't makes sense of course, instead one can consider the tautological motivic measure, which we introduce in Section \ref{GCM} below. Also if $k$ is a local field one can in some sense refine this motivic measure using the Haar measure on $k$. 

It turns out that in both situations there is a notion of Fourier transform with respect to these measures. Therefore it makes sense to ask whether there is some sort of localization theorem in this algebraic setting. The results in this thesis can be seen as a series of examples, which indicate that such a theorem might exist, even though there are still too many problems in order to make a precise conjecture. 




\subsection{Grothendieck rings, exponentials and Fourier transform}\label{GCM}

In this section we start by introducing various Grothendieck rings with exponentials following closely \cite{CLL}. This allows us to define a naive Fourier transform and prove a Fourier inversion formula for motivic functions. We should mention that nothing in this section is new, but rather a special case of the theory developed in \cite{CL}.

 Throughout this section let $k$ be any field. By a variety we will always mean a separated reduced scheme of finite type over $k$. The \textit{Grothendieck ring of varieties}, denoted by $\kvar$, is the quotient of the free abelian group generated by isomorphism classes of varieties modulo the relation
 \[ X - Z - U,\] 
for $X$ a variety, $Z\subset X$ a closed subvariety and $U=X\setminus Z$. The multiplication is given by $[X]\cdot [Y] = [X\times Y]$, where we write $[X]$ for the class of a variety $X$ in $\kvar$. \\
The \textit{Grothendieck ring with exponentials} $\kexp$ is defined similarly. Instead of varieties we consider pairs $(X,f)$, where $X$ is a variety and $f:X \rightarrow \BA^1=\spec(k[T])$ is a morphism. A morphism of pairs $u:(X,f) \rightarrow (Y,g)$ is a morphism $u:X \rightarrow Y$ such that $f = g \circ u$. Then $\kexp$ is defined as the free abelian group generated by isomorphism classes of pairs modulo the following relations.

\begin{enumerate}
\item[(i)] For a variety $X$, a morphism $f:X\rightarrow \BA^1$, a closed subvariety $Z\subset X$ and $U=X\setminus Z$ the relation
\[ (X,f) - (Z,f_{|Z})- (U,f_{|U}).\]
\item[(ii)] For a variety $X$ and $pr_{\BA^1}:X\times \BA^1\rightarrow \BA^1$ the projection onto $\BA^1$ the relation
\[ (X\times \BA^1,pr_{\BA^1}).\]
\end{enumerate} 
The class of $(X,f)$ in $\kexp$ will be denoted by $[X,f]$. We define the product of two generators $[X,f]$ and $[Y,g]$ as 
\[ [X,f]\cdot [Y,g] = [X\times Y, f\circ pr_X + g\circ pr_Y],\]
where $f\circ pr_X + g\circ pr_Y: X\times Y \rightarrow \BA^1$ is the morphism sending $(x,y)$ to $f(x)+g(y)$. This gives $\kexp$ the structure of a commutative ring.

Denote by $\BL$ the class of $\BA^1$ resp. $(\BA^1,0)$ in $\kvar$ resp. $\kexp$. The localizations of $\kvar$ and $\kexp$ with respect to the the multiplicative subset generated by $\BL$ and $\BL^n-1$, where $n\geq 1$ are denoted by $\mathscr{M}$ and $\expm$.

For a variety $S$ there is a straight forward generalization of the above construction to obtain the \textit{relative Grothendieck rings} $\kvar_S,\kexp_S,\mathscr{M}_S$ and $\expm_S$. For example generators of $\kexp_S$ are pairs $(X,f)$ where $X$ is a $S$-variety (i.e. a variety with a morphism $X\rightarrow S$) and $f:X \rightarrow \BA^1$ a morphism. The class of $(X,f)$ in $\kexp_S$ will be denoted by $[X,f]_S$ or simply $[X,f]$ if the base variety $S$ is clear from the context. 

There is a natural map
\begin{align*} \kvar_S &\rightarrow \kexp_S\\
							[X] &\mapsto [X,0]
\end{align*}
and similarly $\mathscr{M}_S \rightarrow \expm_S$, which are both injective ring homomorphisms by \cite[Lemma 1.1.3]{CLL}. Hence we do not need to distinguish between $[X]$ and $[X,0]$ for a $S$-variety $X$.

For a morphism of varieties $u: S \rightarrow T$ we have induced maps
\begin{align*} u_!&:\kexp_S \rightarrow \kexp_T, \ \ \ [X,f]_S \mapsto [X,f]_T\\
u^*&:\kexp_T \rightarrow \kexp_S, \ \ \ [X,f]_T \mapsto [X \times_T S, f \circ pr_X]_S. \end{align*}
In general $u^*$ is a morphism of rings and $u_!$ a morphism of additive groups. However it is straightforward to check that for any $u:S \rightarrow T$ and any $\phi\in \kexp_S$ we have
\begin{eqnarray}\label{genu} u_!(\BL \cdot\phi)=\BL\cdot u_!(\phi),
\end{eqnarray}
where $\BL$ denotes the class of $\BA^1\times S$ and $\BA^1\times T$ in $\kexp_S$ and $\kexp_T$ respectively.

Elements of $\kexp_S$ can be thought of as motivic functions on $S$. The evaluation of $\phi \in \kexp_S$ at a point $s: \spec(k) \rightarrow S$ is simply
\[s^*(\phi)\in \kexp_{\spec(k)}=\kexp.\]
Computations with these motivic functions can sometimes replace finite field computations. More precisely let $\BF_q$ be a finite field and fix a non-trivial additive character $\Psi:\BF_q \rightarrow \BC^\times$. Assume that $S$, $X\rightarrow S$ and $f:X \rightarrow \BA^1$ are also defined over $\BF_q$. Then the class of $(X,f) \in \kexp_S$ corresponds to the function 

\begin{equation}\label{real1} S(\BF_q) \rightarrow \BC, \ \ \ s \mapsto \sum_{x \in X_s(\BF_q)} \Psi(f(x)).\end{equation}
Furthermore for a morphism $u: S \rightarrow T$ the operations $u_!$ and $u^*$ correspond to summation over the fibres of $u$ and composition with $u$ respectively.

There is a slight technical disadvantage to working over finite fields. Namely given a fibration $f:X \rightarrow Y$ where each fiber is isomorphic to some fixed variety $F$, we cannot deduce in general 
\begin{eqnarray}\label{trif} [X]=[F][Y] \end{eqnarray}
in $\kvar$ or $\mathscr{M}$, whereas a similar relation clearly holds over a finite field. However (\ref{trif}) holds if the fibration is Zariski-locally trivial i.e. $Y$ admits an open covering $Y=\cup_{j} U_j$ such that $f^{-1}(U_j) \cong F\times U_j$. Indeed, in this case we have 
\[[X] = \sum_{j} [f^{-1}(U_j)] - \sum_{j_1 < j_2} [f^{-1}(U_{j_1} \cap U_{j_2})] +... = [F][Y].\] 

 Next we discuss an analogue of the crucial identity for computing character sums over finite fields
\begin{eqnarray*} \sum_{v\in V} \Psi(f(v)) = \begin{cases} q^{\dim(V)} &\text{ if } f=0\\
0 &\text{ else,}\end{cases}\end{eqnarray*}
where $V$ is a $\BF_q$ vector space and $f\in V^*$ a linear form.

To establish a similar identity in the motivic setting we let $V$ be a finite dimensional vector space over $k$ and $S$ a variety. We replace the linear form above with a family of affine linear forms i.e. a morphism $g=(g_1,g_2):X \rightarrow V^*\times k$, where $X$ is an $S$-variety. Then we define $f$ to be the morphism
\begin{align*} f:X\times V &\rightarrow k 	\\
								(x,v) &\mapsto \left\langle g_1(x),v\right\rangle + g_2(x).
\end{align*}
Finally, we put $Z= g_1^{-1}(0)$. 

\begin{lemma}\label{orth} With the notation above we have the relation
\[ [X\times V,f] = \BL^{\dim V}[Z,{g_2}_{|Z}] \]
in $\kexp_S$. In particular, if $X = \spec(k)$ and $f \in V^*$, we have $[V,f] = 0$ unless $f = 0$.
\end{lemma}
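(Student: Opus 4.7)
The plan is to split $X$ into $Z = g_1^{-1}(0)$ and its open complement $U = X\setminus Z$, handle the $Z$-piece by a trivial product computation in $\kexp_S$, and show the $U$-piece vanishes by a fibrewise affine change of variables on the $V$-factor.

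By the scissor relation $(i)$,
\[ [X\times V, f] \;=\; [Z\times V, f_{|Z\times V}] + [U\times V, f_{|U\times V}] \]
in $\kexp_S$. On $Z$ we have $g_1\equiv 0$, so $f_{|Z\times V} = {g_2}_{|Z}\circ pr_Z$. Using the ring structure of $\kexp_S$ and the identity $\BL^{\dim V} = [V\times S, 0]_S$, one gets
\[ [Z\times V,\, {g_2}_{|Z}\circ pr_Z] \;=\; [Z, {g_2}_{|Z}]\cdot [V\times S, 0] \;=\; \BL^{\dim V}\,[Z, {g_2}_{|Z}], \]
which is the desired main term. It remains to prove $[U\times V, f_{|U\times V}] = 0$.

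For this, pick a basis of $V$ and write $g_1 = (g_{1,1},\dots,g_{1,n})$ with $n = \dim V$. Since $g_1$ is nowhere zero on $U$, the locally closed subvarieties
\[ Y_j \;=\; \{\,x\in U \;:\; g_{1,1}(x)=\cdots=g_{1,j-1}(x)=0,\ g_{1,j}(x)\neq 0\,\},\qquad 1\leq j\leq n,\]
form a disjoint stratification of $U$. On each $Y_j$ the component $g_{1,j}$ is invertible, so the $Y_j$-morphism
\[ (x, v_1,\dots,v_n) \;\longmapsto\; \bigl(x,\,v_1,\dots,v_{j-1},\,\langle g_1(x),v\rangle + g_2(x),\,v_{j+1},\dots,v_n\bigr) \]
is an automorphism of $Y_j\times V$ (one solves for $v_j$ using the invertibility of $g_{1,j}$ on $Y_j$). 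Under this automorphism $f_{|Y_j\times V}$ becomes the projection of $V$ onto its $j$-th coordinate; identifying $Y_j\times V \cong (Y_j\times \BA^{n-1})\times \BA^1$ so that the $\BA^1$ factor records precisely this coordinate, relation $(ii)$ yields $[Y_j\times V, f_{|Y_j\times V}] = 0$. Summing over the stratification then gives $[U\times V, f_{|U\times V}] = 0$.

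The only delicate point is that the affine change of variables must define a global automorphism of $Y_j\times V$, which is exactly what the explicit formula and the invertibility of $g_{1,j}$ on $Y_j$ guarantee. For the particular case, if $X = \spec(k)$ and $f\in V^*$, then $g_1 = f$ and $g_2 = 0$, so $Z = \spec(k)$ when $f=0$ and $Z = \emptyset$ otherwise; the main formula then gives $[V,f] = 0$ whenever $f\neq 0$.
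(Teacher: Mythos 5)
Your proof is correct, and it is genuinely self-contained where the paper's is not. The paper's proof is a two-line citation argument: first it reduces to the case $S = X$ using \eqref{genu}, then it invokes \cite[Lemma 1.1.8]{CLL} to reduce the identity in $\kexp_X$ to a pointwise check over every $x \in X$, and finally it quotes \cite[Lemma 1.1.11]{CLL} for that pointwise statement. In effect the orthogonality computation is outsourced. Your argument instead derives the result directly from the two defining relations of $\kexp_S$: the scissor relation (i) splits off the $Z$-piece, which factors as $\BL^{\dim V}[Z,{g_2}_{|Z}]$ by the product structure; then on the open complement $U$ you stratify by the first nonvanishing coordinate $g_{1,j}$, and on each stratum $Y_j$ the explicit affine change of variables (invertible precisely because $g_{1,j}$ is a unit there) carries $f$ to the coordinate projection onto an $\BA^1$-factor, so relation (ii) kills each stratum. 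The one point worth flagging is that all your identifications must be as $S$-varieties, which they are since the coordinate change is the identity on the $Y_j$-factor and hence commutes with the structure maps to $S$; and you correctly use the fiber-product-over-$S$ description of the ring structure on $\kexp_S$ when writing $[Z\times V, {g_2}_{|Z}\circ pr_Z] = [Z,{g_2}_{|Z}]\cdot[V\times S,0]$. What your approach buys is transparency: it makes visible that the lemma is purely a consequence of the affine-bundle relation (ii) together with a coordinate change, at the modest cost of a choice of basis and a short stratification argument. What the paper's approach buys is brevity, and it places the lemma in the framework of \cite{CLL}, where the pointwise-determination principle (their Lemma 1.1.8) is available for many similar reductions.
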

\begin{proof} By using (\ref{genu}) we may assume $S=X$. Now because of \cite[Lemma 1.1.8]{CLL} it is enough to check for each point $x\in X$ the identity 
\[x^*([X\times V,f]) = x^*(\BL^{\dim V}[Z,{g_2}_{|Z}])\] and this is exactly Lemma $1.1.11$ of \textit{loc. cit.}
\end{proof}


Now we are ready to define a \textit{naive motivic Fourier transform} for functions on a finite dimensional $k$-vectorspace $V$ and prove an inversion formula. All of this is a special case of \cite[Section 7.1]{CL}.

\begin{defn}\label{nft} Let $p_V:V\times V^* \rightarrow V$ and $p_{V^*}:V\times V^* \rightarrow V^*$ be the obvious projections. 
\textit{The naive Fourier transformation} $\mathcal{F}_V$ is defined as 
\begin{align*} \mathcal{F}_V:\kexp_{V} &\rightarrow \kexp_{V^*} \\
								\phi &\mapsto p_{V^*!}(p_V^*\phi \cdot[V\times V^*,\lla,\rra]). \end{align*}
								Here $\left\langle ,\right\rangle:V\times V^* \rightarrow k$ denotes the natural pairing. We will often write $\mathcal{F}$ instead of $\mathcal{F}_V$ when there's no ambiguity.
\end{defn}

Of course the definition is again inspired by the finite field version, where one defines for any function $\phi: V \rightarrow \BC$ the Fourier transform at $w \in V^*$ by
\[ \FF(\phi)(w) = \sum_{v \in V } \phi(v) \Psi(\lla w,v\rra).\]

Notice that $\mathcal{F}$ is a homomorphism of groups and thus it is worth spelling out the definition in the case when $\phi = [X,f]$ is the class of a generator in $\kexp_V$. Letting $u:X\rightarrow V$ be the structure morphism we simply have 
\begin{eqnarray}\label{ftgen}
\mathcal{F}([X,f]) = [X \times V^*, f\circ pr_X+\left\langle u\circ pr_X,pr_{V^*}\right\rangle].
\end{eqnarray} 

Now we are ready to prove an inversion formula for the naive Fourier transform.
\begin{prop}\label{finv} For every $\phi \in \kexp_{V}$  we have the identity
\[\mathcal{F}(\mathcal{F}(\phi)) = \BL^{\dim(V)} \cdot i^*(\phi),\]
where $i:V\rightarrow V$ is multiplication by $-1$.
\end{prop}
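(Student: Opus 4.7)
The plan is to verify the identity on an additive generator $\phi = [X,f]$ with structure morphism $u : X \to V$; by additivity of both $\FF \circ \FF$ and $i^*$ this suffices. Applying (\ref{ftgen}) once gives
\begin{equation*}
\FF([X,f]) = \bigl[X \times V^*,\ f \circ pr_X + \lla u \circ pr_X,\, pr_{V^*} \rra\bigr]_{V^*}
\end{equation*}
with structure morphism $pr_{V^*}$. Applying $\FF_{V^*}$ a second time, while identifying $V^{**}$ with $V$ and using the symmetry of the natural pairing to write the new cross term as $\lla pr_V,\, pr_{V^*}\rra$, then produces
\begin{equation*}
\FF(\FF([X,f])) = \bigl[X \times V^* \times V,\ f \circ pr_X + \lla u \circ pr_X + pr_V,\, pr_{V^*} \rra\bigr]_V
\end{equation*}
with structure morphism $pr_V$.

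Next, I would view the total function as a family of affine linear forms in the variable $w \in V^*$, with parameters $(x,v) \in X \times V$, so that Lemma \ref{orth} applies. In the notation of that lemma I take $S = V$, let its ``$V$'' be our $V^*$, let its ``$X$'' be $X \times V$ with structure morphism $pr_V : X \times V \to V$, and define $g_1 : X \times V \to V = (V^*)^*$ by $g_1(x,v) = u(x) + v$ and $g_2 : X \times V \to k$ by $g_2(x,v) = f(x)$. The lemma then yields in $\kexp_V$
\begin{equation*}
\FF(\FF([X,f])) = \BL^{\dim V}\,[Z,\, g_{2|Z}]_V, \qquad Z = g_1^{-1}(0) = \{(x,-u(x)) : x \in X\}.
\end{equation*}

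Finally, I would identify the factor $[Z, g_{2|Z}]_V$ with $i^*[X,f]$. The isomorphism $Z \to X$, $(x,-u(x)) \mapsto x$, carries $g_{2|Z}$ to $f$ and the structure morphism $Z \to V$ to $x \mapsto -u(x) = i \circ u$; this is exactly the data of the fibre product $X \times_{V,i} V \cong X$ together with the function $f$, i.e.\ the class $i^*[X,f]$. The main obstacle is purely bookkeeping: keeping track of which base variety each class lives over through the two Fourier transforms, identifying $V^{**}$ with $V$ coherently so that the two pairings combine into a single affine form, and recognizing the zero locus of $g_1$ as the fibre product defining $i^*$.
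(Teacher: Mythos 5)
Your proof is correct and takes essentially the same route as the paper's: reduce to a generator $[X,f]$, iterate \eqref{ftgen} to obtain the class of $X \times V \times V^*$ with the combined affine form, apply Lemma~\ref{orth} to collapse the $V^*$ factor, and identify the resulting zero locus $Z \cong X$ over $V$ via $i \circ u$. The only difference is that you spell out the intermediate Fourier transform and the instantiation of Lemma~\ref{orth} explicitly, where the paper compresses these into a single line.
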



\begin{proof} Since $\mathcal{F}$ is a group homomorphism it is enough to prove the lemma for $\phi = [X,f]$ with $X\stackrel{u}{\rightarrow}V $.  Iterating (\ref{ftgen}) we get
\[\mathcal{F}(\mathcal{F}([X,f]))= [X\times V \times V^*, f\circ pr_X + \lla u\circ pr_X+pr_V,pr_{V^*}\rra].\]
Now we can apply Lemma \ref{orth} with $Z=\{ (x,v)\in X \times V\  |\ u(x) +v=0\}$ to obtain
\[[X\times V \times V^*, f\circ pr_X + \lla u\circ pr_X+pr_V,pr_{V^*}\rra] = \BL^{\dim V^*} [Z, f\circ pr_X].\]
Notice that $Z$ is a $V$-variety via projection onto the second factor and hence the projection onto the first factor induces a $V$-isomorphism $Z \cong (X\stackrel{i\circ u}{\rightarrow} V)$, which gives the desired result. 	
\end{proof}

Finally, we introduce a motivic version of convolution. 
\begin{defn}\label{motcon} Let $R:\kexp_V\times \kexp_V \rightarrow \kexp_{V\times V}$ be the natural morphism sending two varieties over $V$ to their product, and $s: V \times V \rightarrow V$ the sum operation. The \textit{convolution product} is the associative and commutative operation
\begin{align*} *: \kexp_V\times \kexp_V &\rightarrow \kexp_V\\
									(\phi_1,\phi_2) &\mapsto \phi_1 * \phi_2 = s_! R(\phi_1,\phi_2).
									\end{align*}
\end{defn}
As expected the Fourier transform interchanges product and convolution product.
\begin{prop} \label{fouconv} For $\phi_1,\phi_2 \in \kexp_V$ we have
\[ \FF(\phi_1 *\phi_2) = \FF(\phi_1)\FF(\phi_2).\]
\end{prop}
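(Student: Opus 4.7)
The plan is to reduce immediately to generators using bilinearity, and then check the equality by directly unwinding both sides. Since convolution is bilinear and $\mathcal{F}$ is additive, it suffices to take $\phi_i = [X_i, f_i]$ with structure morphisms $u_i : X_i \to V$ for $i=1,2$. The only real content is keeping track of what is a morphism to $V$ versus to $V^*$, and of how the ring structure on $\kexp_{V^*}$ interacts with $V^*$-fibre products.

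For the left hand side, by Definition~\ref{motcon} we have
\[ \phi_1 * \phi_2 = \bigl[X_1 \times X_2,\ f_1\circ pr_{X_1} + f_2\circ pr_{X_2}\bigr], \]
viewed as a $V$-variety via $u_1\circ pr_{X_1} + u_2\circ pr_{X_2}$. Applying formula (\ref{ftgen}) then yields
\[ \mathcal{F}(\phi_1 * \phi_2) = \bigl[X_1\times X_2 \times V^*,\ f_1\circ pr_{X_1} + f_2\circ pr_{X_2} + \lla u_1\circ pr_{X_1} + u_2\circ pr_{X_2},\, pr_{V^*}\rra \bigr]. \]

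For the right hand side, (\ref{ftgen}) gives $\mathcal{F}(\phi_i) = [X_i\times V^*,\ f_i\circ pr_{X_i} + \lla u_i\circ pr_{X_i}, pr_{V^*}\rra]$ as a $V^*$-variety via $pr_{V^*}$. The product in the ring $\kexp_{V^*}$ is the fibre product over $V^*$ with the sum of the two morphisms to $\BA^1$, and since
\[(X_1\times V^*)\times_{V^*}(X_2\times V^*)\ \cong\ X_1\times X_2\times V^*,\]
we obtain exactly the same class as above, using bilinearity of the pairing $\lla -,- \rra$ to combine the two $\lla u_i\circ pr_{X_i}, pr_{V^*}\rra$ terms.

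There is no genuine obstacle; the only thing to be careful about is that the ring structure on $\kexp_{V^*}$ involves the fibre product over $V^*$ (not an absolute product), which is precisely what makes the two copies of $V^*$ coming from $\mathcal{F}(\phi_1)$ and $\mathcal{F}(\phi_2)$ collapse to a single factor and match the single $V^*$ produced by applying $\mathcal{F}$ after convolution. Once this is observed, the equality of the two generators is immediate.
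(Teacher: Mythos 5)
Your proof is correct and follows essentially the same route as the paper: reduce to generators by bilinearity, apply \eqref{ftgen} to both sides, and identify the two results using bilinearity of the pairing together with the fact that multiplication in $\kexp_{V^*}$ is the fibre product over $V^*$. The only difference is notational ($X_1,X_2,u_1,u_2$ versus $X,Y,u,v$), plus your helpful explicit remark about why the two copies of $V^*$ collapse.
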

\begin{proof} As both $\FF$ and $*$ are bilinear it is enough to consider generators $[X,f],[Y,g] \in \kexp_V$ with structure morphisms $u:X\rightarrow V, v:Y \rightarrow V$ respectively. Using \eqref{ftgen} we can then directly compute
\begin{align*} \FF( [X,f] &*[Y,g] ) = \FF(s_![X\times Y,f\circ pr_X + g\circ pr_Y])\\
																			&= [X\times Y \times V^*,f\circ pr_X + g\circ pr_Y + \lla s \circ (u,v) \circ pr_{X\times Y},pr_{V^*}\rra ]\\
																			&= [X \times V^*,f\circ pr_X + \lla u\circ pr_X,pr_{V^*}\rra][Y \times V^*, g\circ pr_Y+ \lla v\circ pr_Y,pr_{V^*}\rra] \\
																			&= \FF[X,f]\FF[Y,g].\end{align*}
\end{proof}

We will use the convolution product to study equations in a product of varieties i.e. consider $V$-varieties $u_i:X_i \rightarrow V$ say for $i=1,2$. Then it follows from the definition of $*$, that for any $v: \spec(k) \rightarrow V$ the class of $\{ (x_1,x_2) \in X_1 \times X_2 \ |\ u_1(x_1) +u_2(x_2) = v\}$ is given by $v^*([X_1] *[X_2])$. Through Proposition \ref{fouconv} we can compute the latter by understanding the Fourier transforms $\FF(X_1), \FF(X_2)$ separately.

\subsection{Realization morphisms}\label{realize}

As we will see later, the rings $\kvar$ and $\kexp$ and their localizations $\mathscr{M}, \expm$ are quite convenient to compute in. However it is a priori unclear how much geometric information we can extract out of these computations. Typically this is done via morphisms to other rings, which are better understood. We  will not give all the details of the constructions we mention here and refer to \cite{Po11} and also \cite[Appendix]{HR08}.

As a first example, when $k=\BF_q$ is a finite field, (\ref{real1}) defines such a morphism $\#: \kexp \rightarrow \BC$. Notice that $\BL$ and $\BL^n-1$ get mapped to $q$ and $q^n-1$ under $\#$, hence $\#$ even lifts to $\expm$.

Since our focus is more on the topology of complex varieties, we will be interested in realization morphism over a fields of characteristic $0$. Assuming the transcendence degree of $k/\BQ$ is at most the one of $\BC/\BQ$ we can embed $k$ into $\BC$ and consider any variety over $k$ as a variety over $\BC$. This way the $\BC$ points of any variety inherit a topology from $\BC$ and taking the Euler characteristic of that space gives a ring homomorphism
\[  e: \kvar \rightarrow \BZ. \]

Notice that in this case we do not have an extension to $\mathscr{M}$, as for example $e(\BL-1) = 0$. 

The most interesting realization morphism for us relies on two natural filtrations, called the weight and the Hodge filtration, on the compactly supported cohomology $H^*_c(X,\BC)$ of any complex algebraic variety $X$, which were constructed by Deligne \cite{De71, De74}. Taking the dimensions of the graded pieces we obtain the (compactly supported) mixed Hodge numbers of $X$
\[ h_c^{p,q;i}(X) = \dim_\BC \left(Gr^H_p Gr^W_{p+q} H^i_c(X,\BC)\right). \]
Out of these numbers we form the \textit{$E$-polynomial}, a refined Euler characteristic defined by
\begin{equation}\label{epd} E(X;x,y) = \sum_{p,q,i \geq 0} (-1)^i  h_c^{p,q;i}(X) x^py^q.\end{equation}
As it turns out we obtain this way a morphism
\begin{align}\label{reale} \kvar &\rightarrow \BZ[x,y]\\
						\nonumber		[X] &\mapsto E(X;x,y).
\end{align}

\begin{expl} If $X=\BA^1$ is the affine line, then $H_c^*(X,\BC) = H_c^2(X,\BC) \cong \BC$ and the generator is of type $(1,1)$, hence $E(X;x,y) = xy$. Since $E$ descends to a morphism on $\kvar$ we can use the cut and paste relations to compute further
\begin{align*} E(\BA^n) &= E(\BA^1)^n = (xy)^n,\\
 E(\BP^n) &= E(\BP^{n-1}) + E(\BA^n) = (xy)^n+(xy)^{n-1}+ \dots + xy + 1.\end{align*}
\end{expl}

From the example we see in particular that $E$ will extend to a morphism 
\[E:\mathscr{M} \rightarrow \BZ \left[x,y,\frac{1}{xy};\frac{1}{(xy)^n-1},n\geq 1\right].\]
However we do not know how to extend $e$ or $E$ to $\kexp$, which is an interesting question in its own right. 

We finish with a lemma explaining how to extract some simple topological information out of the $E$-polynomial.

\begin{lemma}\cite[Lemma 5.1.2]{HLV13}\label{connoc} Let $X$ be smooth and equidimensional of dimension $d$. Then $E(X;t,t)$ is a polynomial of degree $2d$ and the coefficient of $t^{2d}$ is the number of connected components of $X$.
\end{lemma}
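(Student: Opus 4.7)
The plan is to bound the degree of $E(X;t,t)$ using the standard weight bounds on compactly supported cohomology of smooth varieties, and then identify the top coefficient with the Hodge structure on $H^{2d}_c$.

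First I would recall Deligne's weight bounds: for a smooth variety $Y$, the weights appearing in $H^j(Y,\BC)$ lie in $[j, 2j]$. Combining this with Poincar\'e duality $H^i_c(X,\BC) \cong H^{2d-i}(X,\BC)^\vee(-d)$ for smooth equidimensional $X$ of dimension $d$, the weights on $H^i_c(X,\BC)$ lie in $[2i-2d,\, i]$. In terms of the mixed Hodge numbers this says $h_c^{p,q;i}(X) = 0$ unless $2i - 2d \leq p+q \leq i$, and moreover $H^i_c = 0$ for $i > 2d$.

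Next I would use these bounds to read off the degree. Evaluating \eqref{epd} at $x = y = t$ gives
\[ E(X;t,t) = \sum_{p,q,i} (-1)^i h_c^{p,q;i}(X)\, t^{p+q}, \]
and by the above $p+q \leq i \leq 2d$, so the polynomial has degree at most $2d$. For the $t^{2d}$ coefficient, the constraints $p+q \leq i$ and $i \leq 2d$ force $i = 2d$ and $p+q = 2d$; the remaining constraint $2i-2d \leq p+q$ is then an equality, and by Hodge symmetry (or directly from $p+q = 2d$ with $0 \leq p,q \leq d$) this pins down $p = q = d$. Hence the coefficient of $t^{2d}$ in $E(X;t,t)$ equals $h_c^{d,d;2d}(X) = \dim_\BC H^{2d}_c(X,\BC)$, the sign $(-1)^{2d} = 1$ being harmless.

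Finally I would identify this top cohomology. Decomposing $X = \bigsqcup_{\alpha \in \pi_0(X)} X_\alpha$ into connected components, each $X_\alpha$ is smooth and of pure dimension $d$, so $H^{2d}_c(X_\alpha,\BQ) \cong \BQ(-d)$ (a pure Hodge structure of type $(d,d)$). Summing over components gives $H^{2d}_c(X,\BQ) \cong \BQ(-d)^{\pi_0(X)}$, whose dimension is exactly the number of connected components of $X$. The only step requiring care is the weight estimate for smooth non-proper varieties, but this is Deligne's theorem and needs no further argument.
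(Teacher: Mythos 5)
Your proof is correct, and it is the standard argument (Deligne's weight bounds on $H^\bullet_c$ of a smooth variety, Poincar\'e duality to pin the top degree to type $(d,d)$, then $H^{2d}_c(X_\alpha,\BQ)\cong\BQ(-d)$ for each smooth connected component). Note that the paper itself supplies no proof here — it simply cites \cite[Lemma 5.1.2]{HLV13} — so there is no in-paper argument to compare against; your write-out matches what the cited reference does, and the only step worth being explicit about is the bound $p,q\leq d$ on the Hodge filtration of $H^{2d}_c$ of a $d$-dimensional variety, which you correctly invoke to force $p=q=d$ once $p+q=2d$.
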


\subsection{Hyperplane arrangements}\label{hyarr}

A general reference for the theory of hyperplane arrangements is \cite{St04} and we'll also use conventions from \cite{PW07}.

Let $k$ be a field. Generally speaking a hyperplane arrangement in a finite dimensional $k$-vector space $V$ is a union of finitely many affine hyperplanes in $V$. For us it will be important to consider hyperplanes together with a fixed normal vector, which will always be integral.

More precisely we fix a basis of $V$ and consider it's dual $V^*$ with the dual basis. If we write $m$ for the dimension of $V$, we have a natural morphism $\BZ^m \rightarrow V^*$, which is injective if $\Char(k) = 0$. Now consider non-zero vectors $a_1,\dots,a_n \in \BZ^m$ and denote their images in $V^*$ by the same letters (if $\Char(k)>0$ we'll always assume that the images of the $a_i$'s are non-zero). We further fix elements $r_1,\dots,r_n \in k$ and denote by $\FA_{\mathbf{a},\mathbf{r}}$ the hyperplane arrangement in $V$ consisting of the hyperplanes
\[ H_i = \{ v \in V\ |\ \lla v,a_i \rra = r_i\} \text{ for } 1 \leq i \leq n.\]

Most of the time we will just write $\FA$ for an arrangement and assume implicitly a choice $(\mathbf{a},\mathbf{r})$. 

\begin{defn} A hyperplane arrangment $\FA$ is called
\begin{itemize}
\item \textit{central}, if all hyperplanes are linear subspaces i.e. $r_i = 0$ for all $i$,
\item \textit{essential}, if $\spn\{a_1,\dots,a_n\} = V^*$,
\item \textit{unimodular}, if every collection of $m$ linearly independent vectors $a_{i_1},\dots,a_{i_m}$ spans $\BZ^m$ over the integers,
\end{itemize}
\end{defn}

The rank of $\FA$ is defined as $\rk(\FA)= \dim \spn\{a_1,\dots,a_n\}$. Most of the time we consider essential arrangements, in which case $\rk(\FA) = \dim V$. 

For a subset $F \subset \{1,\dots,n\}$ put $H_F = \bigcap_{i\in F} H_i$. The subset $F$ is called a \textit{flat} if $H_F \neq \emptyset$ and $F=\{i\ | \ H_F \subset H_i\}$. The set of all flats $L(\FA)$ is a partially ordered set with the relation $F \leq G$ if $F \subset G$. The unique minimal element in $L(\FA)$ is the empty flat $\emptyset$ with $H_\emptyset = V$ and a unique maximal element exists if and only if $\FA$ is central, in which case we denote it by $\infty$. The rank $\rk(F)$ of a flat $F$ is defined as the dimension of $\spn_{i\in F}\{a_i\}$ and the corank as $\crk(F) = m - \rk(F)$. 

The M\"obius function of the poset $L(\FA)$ 
\begin{equation}\label{mobius} \nu_\FA:L(\FA) \times L(\FA) \rightarrow \BZ\end{equation}
is defined inductively by 	
\begin{align*} \nu_\FA(F,F) &= 1, \text{ for every } F \in L(\FA).\\
								\sum_{F \leq G \leq F'} \nu_\FA(F,G) &= 0, \text{ for all } F < F'.
\end{align*}
We also abbreviate $\nu_\FA(F)=\nu_\FA(\emptyset,F)$ for every $F \in L(\FA)$.

We give now a description of $\nu_\FA$ in terms of chains. A \textit{chain} of flats in $L(\FA)$ is a sequence $F_0 \subsetneq F_1 \subsetneq \dots \subsetneq F_l$ of flats where all the inclusions are strict. The integer $l$ is called the \textit{length} of the chain. For two flats $F \subset F'$ write $N_l(F,F')$ for the number chains $F_0 \subsetneq F_1 \subsetneq \dots \subsetneq F_l$ of length $l$ with $F_0=F$ and $F_l=F'$.

\begin{lemma}\label{nuchain} For any two flats  $F \subset F'$ in $L(\FA)$ we have
\[ \nu_\FA(F,F')  = \sum_{l \geq 0} (-1)^l N_l(F,F').\]
\end{lemma}
\begin{proof} We simply define $\widetilde{\nu}_\FA(F,F') = \sum_{l \geq 0} (-1)^l N_l(F,F')$ and show that it satisfies the same recursion as $\nu_\FA$. First there's only one chain of length $0$ from $F$ to $F$ for any $F \in L(\FA)$ and hence $\widetilde{\nu}_\FA(F,F)=1$. More generally for any $F \subset F'$ we have 
\[ N_l(F,F') = \sum_{F \leq G < F'} N_{l-1}(F,G)\]
and hence 
\begin{align*}\widetilde{\nu}_\FA(F,F') &= \sum_{l \geq 0} (-1)^l N_l(F,F')\\
& = -\sum_{l\geq 0} \sum_{F \leq G < F'} (-1)^{l-1} N_{l-1}(F,G) = -\sum_{F \leq G < F'}\widetilde{\nu}_\FA(F,G). \end{align*}
\end{proof}

An other important invariant of $L(\FA)$ is the \textit{characteristic polynomial} $\chi_\FA(t) \in \BZ[t]$ given by
\begin{equation}\label{chadef} \chi_\FA(t) = \sum_{F \in L(\FA)} \nu_\FA(F) t^{\crk(F)}.\end{equation}

By definition it is monic of degree $m$. A priory $\chi_\FA(t)$ depends on the field $k$, but it turns out that the dependence is rather mild. We define the $k$-variety $c(\FA) = V \setminus \bigcup_{H\in \FA} H$. 

\begin{thm}\cite[Theorem 5.15]{St04}\label{stcha} For any ground field $k$ we have in $\kvar_k$ the equality
\[ [c(\FA)] = \chi_\FA(\BL).\]
In particular, for $k=\BF_q$ a finite field we have
\[ |c(\FA)(\BF_q)| = \chi_\FA(q).\]
Furthermore, away from finitely many positive characteristics $L(\FA)$ and $\chi_\FA(t)$ are independent of the field $k$. 
\end{thm}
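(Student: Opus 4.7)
The plan is to stratify the ambient space $V$ by locally closed strata indexed by the flats of $\FA$ and then invert a triangular system of equations via the Möbius function on $L(\FA)$. For each flat $F \in L(\FA)$ I would define
\[ c_F = H_F \setminus \bigcup_{i \notin F} H_i = \{v \in V : \{i : v \in H_i\} = F\}. \]
The first thing to check is that, for any $v \in V$, the set $F(v) = \{i : v \in H_i\}$ is automatically a flat: since $v \in H_{F(v)}$ we have $H_{F(v)} \neq \emptyset$, and if $i$ is any index with $H_{F(v)} \subset H_i$ then $v \in H_i$ so $i \in F(v)$, with the reverse inclusion being tautological. Consequently one obtains a locally closed decomposition $V = \bigsqcup_{F \in L(\FA)} c_F$, and more generally for every flat $F$ a decomposition $H_F = \bigsqcup_{G \geq F} c_G$ inside $\kvar_k$.

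Since $H_F$ is an affine subspace of codimension $\rk(F)$, one has $[H_F] = \BL^{\crk(F)}$, so the above stratification yields the identity
\[ \BL^{\crk(F)} = \sum_{G \geq F} [c_G] \quad \text{in } \kvar_k \text{ for every } F \in L(\FA). \]
This is a triangular linear system in the unknowns $[c_G]$, indexed by the poset $L(\FA)$, and standard Möbius inversion on $L(\FA)$ (with the Möbius function $\nu_\FA$ from \eqref{mobius}) inverts it to give
\[ [c_F] = \sum_{G \geq F} \nu_\FA(F,G) \BL^{\crk(G)}. \]
Specializing to $F = \emptyset$ gives $c_\emptyset = c(\FA)$ and $\nu_\FA(\emptyset,G) = \nu_\FA(G)$, so comparison with \eqref{chadef} yields $[c(\FA)] = \chi_\FA(\BL)$ as required. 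The statement over $\BF_q$ follows at once by applying the point-counting realization $\kvar_{\BF_q} \to \BZ$, $[X] \mapsto |X(\BF_q)|$, which sends $\BL$ to $q$.

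For the final assertion about independence of the ground field, I would note that both $L(\FA)$ and the Möbius values $\nu_\FA(F,G)$ are determined purely by which subsets $F \subset \{1,\dots,n\}$ satisfy $H_F \neq \emptyset$ and $F = \{i : H_F \subset H_i\}$; these conditions are in turn controlled by the vanishing of finitely many explicit $\BZ$-polynomial expressions in the coordinates of the $a_i$ and the $r_i$ (namely the minors of the matrix $(a_i)$ and the compatibility determinants of the affine systems defining $H_F$). Since each such integer expression is nonzero in $\BQ$ iff it is nonzero modulo $p$ for all but finitely many primes $p$, the combinatorics of $L(\FA)$ and thus $\chi_\FA(t)$ stabilize to their $\BQ$-value in all but finitely many positive characteristics. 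The main obstacle in this approach is really only the bookkeeping needed to identify the stratification with one indexed by $L(\FA)$; once that is in place the rest is formal Möbius inversion applied in the Grothendieck ring.
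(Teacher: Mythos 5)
Your proof is correct and is precisely the Möbius-inversion-over-$L(\FA)$ argument that the paper has in mind: the paper's proof simply cites Stanley's Theorem 5.15 and remarks that "the same proof works also motivically," and your write-up is exactly that proof, lifted to $\kvar_k$ by replacing point counts with classes and $q$ with $\BL$. The field-independence argument via nonvanishing of finitely many integer minors is likewise the content of Stanley's Proposition 5.13, which the paper cites for the last statement.
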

\begin{proof} In \cite[Theorem 5.15]{St04} the finite field case is proven, but the same proof works also motivically. The last statement is Proposition 5.13 in \textit{loc. cit.}
\end{proof}

\begin{cor}\label{fq1} The characteristic polynomial of any central hyperplane arrangement is divisible by $(t-1)$.
\end{cor}
\begin{proof} The complement $c(\FA)$ of a central arrangement $\FA$ admits a $k^\times$-action by scaling. Hence if $k=\BF_q$ is a finite field, this implies that $|c(\FA)(\BF_q)|$ is divisible by  $|\BF_q^\times|=q-1$ and then statement now follows from Theorem \ref{stcha}.
\end{proof}

Assume now that $\FA$ is central. For a flat $F$ the \textit{localization} $\FA_F$ is defined as 
\[ \FA_F = \{ H_i/H_F\ |\ i \in F\},\]
which is an arrangement in $V/H_F$ and we have $\rk(\FA_F) = \rk(F)$. The intersection lattice $L(\FA_F)$ can be identified with the sub-lattice of $L(\FA)$ consisting of flats contained in $F$.

Dually, the \textit{restriction} $\FA^F$ is defined as
\[ \FA^F = \{ H_i \cap H_F\ |\ i\notin F\},\]
which is an arrangement in the vector space $H_F$. If $\FA$ is essential we also have $\rk(\FA^F) = \crk(F)$. Similarly $L(\FA^F)$ can be identified with the sub-lattices of $L(\FA)$ consisting of flats containing $F$. For two flats  $F \subset F'$ we write $\FA_{I'}^I= (\FA_{I'})^I \cong (\FA^I)_{I'}$.

\begin{prop}\label{surpos} Let $\FA$ be a central hyperplane arrangement and $F \subset F'$ two flats in $L(\FA)$. Then we have
\[ (-1)^{\rk(\FA^I_{I'})}\nu_{\FA}(I,I') > 0.\]
\end{prop}
\begin{proof} The statement depends only on $L(\FA)$, hence we may assume, that $\FA$ is essential i.e. $\rk(\FA) = m$. Furthermore by replacing $\FA$ with $\FA^I_{I'}$ we can assume $I=\emptyset, I' = \infty$. Then by definition we have $\nu_{\FA}(\emptyset,\infty) = \chi_\FA(0)$. 

For any $1\leq i\leq n$ we denote by $F^i \in L(\FA)$ the flat corresponding to the single hyperplane $H_i$ and by $\FA \setminus F^i$ the arrangement we obtain by removing $F^i$ from $\FA$. Then $\chi_\FA$ satisfies the deletion-restriction relation
\[ \chi_\FA(t) = \chi_{\FA\setminus F^i}(t) - \chi_{\FA^{F^i}}(t). \]
This is proven in \cite[Lemma 2.2]{St04} or can also be seen from Theorem \ref{stcha}. The statement now follows by induction on $\rk(\FA)=m$. If $m=0$ there is nothing to prove. For higher rank the deletion-restriction equation for any $1 \leq i\leq n$ implies 
\[ (-1)^{m} \nu_{\FA}(\emptyset,\infty) = (-1)^{\rk(\FA)}\chi_{\FA\setminus F^i}(0) + (-1)^{m-1}\nu_{\FA^{F^i}}(\emptyset,\infty), \]
as $\FA^{F^i}$ is again essential. Now if $\FA\setminus F^i$ is not essential we have $\chi_{\FA\setminus F^i}(0)=0$ and we are done by the induction hypothesis, as $\rk(\FA^{F^i}) = m-1$. Otherwise we repeat the deletion restriction argument for $\FA\setminus F^i$ until we reach a non-essential arrangement.
\end{proof}

\newpage
\section{Motivic Classes of Symplectic Reductions of Vector Spaces}\label{ch2}

In this chapter we give formulas for the classes of certain hypertoric and Nakajima quiver varieties in the localized Grothendieck ring $\mathscr{M}$ introduced in \ref{GCM}. These classes are given by a polynomial expression in $\BL = [\BA^1]$ and are the motivic analogue of the point count of these varieties obtained by Hausel in \cite{Hau1,Hau2}. 

Both hypertoric and Nakajima quiver varieties arise as symplectic GIT-reduction of an algebraic symplectic vector space and many of the arguments work in this general setting, which is the content of the first section. In particular, we prove in Proposition \ref{prop1} a motivic analogue of \cite[Proposition 1]{Hau1}, which was the main reason to introduce the naive Fourier transform in \ref{GCM}. We also explain, how one can interpret this as an example of 'motivic localization' in the spirit of Theorem \ref{dhthm}.

In the subsequent Sections \ref{htv} and \ref{nqv} we start by defining hypertoric and Nakajima quiver varieties respectively and then compute their motivic classes explicitly. Having established Proposition \ref{prop1} this simply amounts to redo carefully Hausel's finite field computations in the Grothendieck ring of varieties.

Throughout the whole chapter $k$ will denote an algebraically closed field of characteristic $0$

\subsection{Symplectic reductions of vector spaces}\label{genred}
The following constructions are well known and appear for example in \cite{Pro07}. For a detailed account on GIT quotients and the relation with symplectic reduction we refer to \cite{MFK94} and for our purposes also \cite{Kin94}. 

Let $G$ be a reductive algebraic group over $k$ with Lie algebra $\mathfrak{g}$ and $\rho:G \rightarrow\GL(V)$ a representation, where $V$ is some finite dimensional $k$-vector space. We will always assume that $\rho$ is injective and hence the action of $G$ on $V$ effective. The derivative of $\rho$ is the Lie algebra representation $\varrho:\mathfrak{g} \rightarrow \mathfrak{gl}(V)$. The action of $G$ on $V$ induces a Hamiltonian action on $T^*V \cong V \times V^*$ which has a moment map $\mu:V\times V^* \rightarrow \g^*$ given by

\begin{eqnarray}\label{mmap} \left\langle \mu(v,w),X \right\rangle = \left\langle  \varrho(X)(v),w \right\rangle,
\end{eqnarray}
for $(v,w) \in V\times V^*$ and $X \in \mathfrak{g}$.

Now for any character $\chi\in \Hom(G,\BG_m)$ and any $\xi \in (\g^*)^G$ we'll be interested in the GIT quotient
\[ \FM_{\chi,\xi} = \mu^{-1}(\xi) \sslash_\chi G.\]
Here the right hand side is defined as
\[\mu^{-1}(\xi) \sslash_\chi G = \proj \bigoplus_{m=0}^\infty k[\mu^{-1}(\xi)]^{G,\chi^m},\]
with
\[k[\mu^{-1}(\xi)]^{G,\chi^m} = \{ f \in k[\mu^{-1}(\xi)] \ |\ f \circ \rho(g) = \chi(g)^mf \ \ \  \forall g\in G\}.\]

In particular, for $\chi = 0$ the trivial character $\FM_{0,\xi}$ is the affine variety defined by
\[ \FM_{0,\xi} = \spec k[\mu^{-1}(\xi)]^G.\]
 
For general $\chi$ there is always an affinization map 
\begin{equation}\label{affmap}\FM_{\chi,\xi} \rightarrow \FM_{0,\xi},\end{equation}
which is proper.

There is a more geometric way to describe $\FM_{\chi,\xi}$ in terms of $\chi$-(semi)stable points. Instead of giving the definition of (semi)stable points we will directly state 'Mumford's numerical criterion' characterizing them. For the purpose of this thesis Proposition \ref{kinmum} below may thus be taken as a definition. 

A \textit{one-parameter subgroup} of $G$ is an inclusion $\lambda:\BG_m \hookrightarrow G$. For any character $\chi: G \rightarrow \BG_m$ we define the paring $\lla \chi,\lambda \rra \in \BZ$ by $\chi(\lambda(t)) = t^{\lla \chi,\lambda \rra }$.

\begin{defprop}\cite[Proposition 2.5]{Kin94}\label{kinmum} A point $(v,w) \in V \times V^*$ is called $\chi$-semistable and $\chi$-stable respectively if every one-parameter subgroup $\lambda$ of $G$, for which $\lim_{t \rightarrow 0} \lambda(t)(v,w)$ exists, satisfies $\lla \chi,\lambda \rra \geq 0$ and $\lla \chi,\lambda \rra > 0$ respectively.
\end{defprop}

For any subvariety $Z \subset V\times V^*$ we will write $Z^{\chi-ss} \subset Z$ for the open subvariety of semi-stable points. We also define an equivalence relation on $(V\times V^*)^{\chi-ss}$ by
\[(v_1,w_1) \sim (v_2,w_2) \ \Leftrightarrow \ \overline{G(v_1,w_1)} \cap \overline{G(v_2,w_2)} \neq \emptyset \ \text{ inside } \ (V\times V^*)^{\chi-ss}. \]

With this notation we can describe (the points of) $\FM_{\chi,\xi}$ as
\begin{equation}\label{quotrep} \FM_{\chi,\xi} = \mu^{-1}(\xi)^{\chi-ss} / \sim.\end{equation}

The varieties of interest for us will be of the form $\FM_{\chi,0}$. However for computing motivic classes the affine varieties $\FM_{0,\xi}$ are much more tractable. The following proposition connects the two under certain assumptions essentially by an argument of Nakajima \cite[Appendix]{CV04}.

\begin{prop}\label{nakfamily} Let $\xi \in (\g^*)^G$ be non-zero and $l_\xi \subset \g^*$ the linear subspace spanned by $\xi$. If $G$ acts freely on $\mu^{-1}(l_\xi)^{\chi-ss}$ 
 we have in the localized Grothendieck ring $\mathscr{M}$ the equality 
\begin{equation}\label{family1} [\FM_{\chi,0}] = [ \FM_{\chi,\xi}].\end{equation}
If furthermore $G$ acts freely on $\mu^{-1}(\xi)$ we have
\begin{equation}\label{family2} [\FM_{\chi,0}] = [\FM_{0,\xi}].\end{equation}
\end{prop}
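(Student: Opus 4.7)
The plan is to study the total family
\[
\pi: \mathcal{X}_\chi := \mu^{-1}(l_\xi)^{\chi-ss}/G \longrightarrow l_\xi \cong \BA^1,
\]
and compute $[\mathcal{X}_\chi]$ in two different ways. The key tool is the $\BG_m$-action on $V \times V^*$ given by $t \cdot (v,w) = (tv,w)$: it commutes with the $G$-action, preserves $\chi$-semistability (it commutes with every one-parameter subgroup of $G$), and scales the moment map with weight one, $\mu(tv,w) = t\mu(v,w)$. Under the free action hypothesis $\mathcal{X}_\chi$ is a geometric quotient and $\pi$ inherits a $\BG_m$-equivariant structure with weight one on the base.

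First I would stratify $\BA^1 = \{0\} \sqcup \BG_m$. The fiber of $\pi$ over $0$ is $\FM_{\chi,0}$, while over $\BG_m$ the base action is simply transitive, so the equivariance together with the section $c \mapsto c\xi$ of $\pi$ trivializes $\pi^{-1}(\BG_m) \cong \FM_{\chi,\xi} \times \BG_m$. Cutting and pasting then gives
\[
[\mathcal{X}_\chi] = [\FM_{\chi,0}] + [\FM_{\chi,\xi}](\BL - 1) \quad \text{in } \mathscr{M}. \qquad (\ast)
\]
Next, using the $\BG_m$-action on the total space itself, the idea is that the limit map $y \mapsto \lim_{t \to 0} t \cdot y$ combined with a Bialynicki-Birula type decomposition of $\FM_{\chi,0}$ under the induced $\BG_m$-action exhibits $\pi$ as a Zariski-locally trivial $\BA^1$-bundle over $\FM_{\chi,0}$, yielding
\[
[\mathcal{X}_\chi] = \BL \cdot [\FM_{\chi,0}]. \qquad (\ast\ast)
\]
Combining $(\ast)$ and $(\ast\ast)$ gives $(\BL-1)\cdot([\FM_{\chi,0}] - [\FM_{\chi,\xi}]) = 0$, and since $\BL - 1$ is invertible in $\mathscr{M}$ this proves (\ref{family1}).

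For (\ref{family2}) the additional hypothesis that $G$ acts freely on $\mu^{-1}(\xi)$ forces every $G$-orbit in $\mu^{-1}(\xi)$ to be closed in $V \times V^*$ (reductive action with trivial stabilizer on a closed subvariety). By a Hilbert-Mumford argument this makes every such point $\chi$-semistable for any $\chi$, and the quotient map identifies $\FM_{\chi,\xi} = \mu^{-1}(\xi)/G = \FM_{0,\xi}$; combined with (\ref{family1}) this yields (\ref{family2}).

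The principal obstacle is establishing $(\ast\ast)$: one must verify that the naive limit $(v,w) \mapsto (0,w)$ remains $\chi$-semistable so that the limit map descends algebraically to a morphism $\mathcal{X}_\chi \to \FM_{\chi,0}$, and that the resulting structure is Zariski-locally trivial rather than merely analytically trivial. This is essentially the content of Nakajima's argument in \cite[Appendix]{CV04}, where the corresponding statement over $\BC$ is proved via Ehresmann's theorem; the motivic upgrade is helped by the fact that $\BL - 1$ has been inverted in $\mathscr{M}$, so only the motivic class of the $\BA^1$-bundle structure is needed, not an actual global trivialization.
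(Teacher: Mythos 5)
Your overall strategy is the same as the paper's: stratify the total family $\mathcal{X}_\chi \to l_\xi$ over the base to get $(\ast)$, compare with a $\BG_m$-action and Bialynicki--Birula to get $(\ast\ast)$, and divide by $\BL-1$; and for \eqref{family2}, observe that free action on $\mu^{-1}(\xi)$ collapses stability conditions. The cosmetic difference (you scale only $v$, giving weight $1$ on $l_\xi$; the paper scales both $v$ and $w$, giving weight $2$) is immaterial.

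However, your framing of the obstacle in $(\ast\ast)$ misidentifies what actually needs to be checked. You write that one must verify that ``the naive limit $(v,w)\mapsto(0,w)$ remains $\chi$-semistable,'' but in general it does \emph{not}: in the basic example $\FM_\chi(A)\cong T^*\BP^{n-1}$ of Example \ref{copn}, semistability requires $v\neq 0$, so the limit of the representative leaves the semistable locus entirely. The point (and this is where the paper earns its keep) is that the $\BG_m$-limit is taken \emph{in the quotient} $\mathcal{X}_\chi$, not on representatives, and the limiting orbit can be a different one. Existence of this limit is not automatic; the paper deduces it from properness of the affinization map $\mathcal{X}_\chi \to \mu^{-1}(l_\xi)\sslash_0 G$, the target being affine with a contracting $k^\times$-action toward a fixed point, so limits exist there and lift by properness. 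Relatedly, your phrasing that the limit map ``exhibits $\pi$ as a Zariski-locally trivial $\BA^1$-bundle over $\FM_{\chi,0}$'' is a type error (and also claims more than is needed or proved): the limit map lands in the fixed locus $(\FM_{\chi,0})^{k^\times}$, which is a proper subvariety of $\FM_{\chi,0}$ in general. What one actually gets from Bialynicki--Birula is a cell decomposition of $\mathcal{X}_\chi$ and of $\FM_{\chi,0}$ over the common fixed locus, where each $\mathcal{X}_\chi$-cell has exactly one more attracting dimension (the $\pi$-direction has positive weight); summing gives $[\mathcal{X}_\chi]=\BL[\FM_{\chi,0}]$ with no global bundle structure asserted. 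One also needs a $k^\times$-invariant quasi-affine covering to invoke BB, which the paper supplies via Sumihiro. So the correct to-do list is: (i) limits exist in the quotient (properness of affinization), (ii) the fixed locus of $\mathcal{X}_\chi$ is contained in $\FM_{\chi,0}$, (iii) BB applies, (iv) compare attracting weights. Your last paragraph gestures at the right reference but frames the missing step incorrectly. Your argument for \eqref{family2} via closed orbits and Hilbert--Mumford is fine and equivalent to the paper's one-line observation that the affinization map becomes an isomorphism.
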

\begin{proof} Since $G$ acts in particular freely on $\mu^{-1}(0)^{\chi-ss}$, we see as in Section \ref{symg} that $\mu^{-1}(0)^{\chi-ss}$ is smooth. Also all $G$-orbits have the same dimension and are thus closed. Hence by (\ref{quotrep}) $\FM_{\chi,0}$ is the quotient space of a smooth manifold by a free action and is thus smooth itself. A similar argument also shows that the total space of the family $\mathfrak{F}:\mu^{-1}(l_\xi) \sslash_\chi G \rightarrow l_\xi$ is smooth. 

Using the bilinearity of $\mu$ we obtain identifications $\mathfrak{F}^{-1}(0) \cong \FM_{\chi,0}$ and $\mathfrak{F}^{-1}(l_\xi \setminus 0) \cong \FM_{\chi,\xi} \times k^\times$, hence in $\kvar$
\[ [\mu^{-1}(l_\xi) \sslash_\chi G] = [\FM_{\chi,0}] + [\FM_{\chi,\xi}](\BL-1).\]

Now there is a natural contracting $k^\times$-action on $\mu^{-1}(l_\xi)$ which descends to the quotient $\mu^{-1}(l_\xi) \sslash_\chi G$ and covers the weight $2$ action on $l_\xi$ via $\mathfrak{F}$. It follows from considering the affinization map \ref{affmap}, that every $k^\times$-orbit under this action has a unique limit point in $(\mu^{-1}(l_\xi) \sslash_\chi G)^{k^\times}$. As we clearly have 
\[(\mu^{-1}(l_\xi) \sslash_\chi G)^{k^\times} = (\mathfrak{F}^{-1}(0))^{k^\times} = (\FM_{\chi,0})^{k^\times},\]
we deduce from the Bialynicki-Birula Theorem \cite[Theorem 4.1]{bi73} 
\[  [\mu^{-1}(l_\xi) \sslash_\chi G] = \BL[\FM_{\chi,0}],\]
where we also used  \cite[Corollary 2]{su74} to guarantee the existence of a $k^\times$-invariant quasi-affine open covering.

By comparing the two expressions for $[\mu^{-1}(l_\xi) \sslash_\chi G]$ we obtain (\ref{family1}) after inverting $\BL-1$.

Finally, \ref{family2} follows simply because the affinization $\FM_{\chi,\xi} \rightarrow \FM_{0,\xi}$ is an isomorphism if $G$ acts freely on $\mu^{-1}(\xi)$.
\end{proof}

In our cases of interest we can compute $[\FM_{0,\xi}]$ from the class of the fiber of the moment map $[\mu^{-1}(\xi)]$ (see Propositions \ref{toricfam} and \ref{qdef}), so we finish this section by explaining how to use the naive Fourier transform \ref{nft} to compute the latter. 

We define
\[a_{\varrho} = \left\{ (v,X) \in V \times \mathfrak{g} \ |\ \varrho(X)v=0 \right\},\]
which is a $\g$-variety via he projection onto the second factor $\pi:a_\varrho \rightarrow \g$. Analogous to \cite[Proposition 1]{Hau1} we have

\begin{prop}\label{prop1} Consider $V\times V^*$ as a $\g^*$-variety via $\mu$. Then we have in $\kexp_\g$ the equality
\begin{equation}\label{mmft1}\mathcal{F}([V\times V^*]) = \BL^{\dim V} [a_\varrho].\end{equation}
In particular, for any $\xi \in \g^*$ the identity
\begin{equation}\label{mmft2} [\mu^{-1}(\xi)] = \BL^{\dim V- \dim\g}[a_\varrho,\lla -\pi,\xi\rra]\end{equation}
holds in $\expm$. 
\end{prop}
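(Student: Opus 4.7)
The plan is to deduce both formulas from Lemma \ref{orth} combined, for the second one, with Fourier inversion.

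For \eqref{mmft1}, I would first unwind the definition of $\FF$ applied to the generator $[V\times V^*]$ in $\kexp_{\g^*}$, whose structure morphism is $\mu$ and whose underlying function is $0$. Formula \eqref{ftgen} then gives
\[ \FF([V\times V^*]) = [V \times V^* \times \g,\ (v,w,X) \mapsto \lla \mu(v,w), X\rra ] \]
in $\kexp_\g$. Using the defining relation \eqref{mmap} of the moment map, the exponent rewrites as $\lla \varrho(X)v, w\rra$, which is linear in $w \in V^*$ with coefficient $g_1(v,X) = \varrho(X)v \in V = (V^*)^*$ and zero constant term, parametrized by the $\g$-variety $V \times \g$. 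Lemma \ref{orth} (applied to the vector space $V^*$ over the base $S = \g$) then produces $\BL^{\dim V^*}[Z]$, where $Z = \{(v,X) : \varrho(X)v = 0\} = a_\varrho$. This is exactly \eqref{mmft1}.

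For \eqref{mmft2}, I would apply $\FF$ once more to \eqref{mmft1} and invoke Fourier inversion (Proposition \ref{finv}) to obtain
\[ \BL^{\dim \g}\, i^* [V\times V^*] \;=\; \FF\FF([V\times V^*]) \;=\; \BL^{\dim V}\, \FF([a_\varrho]) \]
in $\kexp_{\g^*}$, where $i : \g^* \to \g^*$ is multiplication by $-1$. Unwinding the right-hand side via \eqref{ftgen} yields $\BL^{\dim V}[a_\varrho \times \g^*,\, \lla \pi \circ pr_{a_\varrho}, pr_{\g^*}\rra]$, with $\pi : a_\varrho \to \g$ the projection. Pulling back both sides along the point $-\xi : \spec(k) \to \g^*$ turns the left-hand side into $\BL^{\dim \g}[\mu^{-1}(\xi)]$ (since the fiber of $-\mu$ at $-\xi$ is $\mu^{-1}(\xi)$) and the right-hand side into $\BL^{\dim V}[a_\varrho, \lla -\pi, \xi\rra]$; dividing by $\BL^{\dim\g}$ in $\expm$ gives \eqref{mmft2}.

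The main subtlety is bookkeeping: one must identify $(V^*)^* = V$, keep track of whether the relevant base is $\g$ or $\g^*$ at each step, and pair the pullback $i^*$ with the point $-\xi$ rather than $\xi$ so that the sign is absorbed into the $\lla -\pi, \xi\rra$ appearing in the statement. Once these conventions are fixed, both identities reduce essentially to a single orthogonality relation in $V^*$, mirroring the finite-field character-sum computation of \cite[Proposition 1]{Hau1}. As an alternative, \eqref{mmft2} can also be obtained directly by computing the class $[V \times V^* \times \g,\ \lla \mu(v,w) - \xi, X\rra]$ in two different ways with Lemma \ref{orth}, integrating either over $\g$ (yielding $\BL^{\dim\g}[\mu^{-1}(\xi)]$) or over $V^*$ (yielding $\BL^{\dim V}[a_\varrho, \lla -\pi, \xi\rra]$).
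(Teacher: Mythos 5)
Your main argument is essentially the paper's own proof: unwind $\FF([V\times V^*])$ via \eqref{ftgen}, substitute the defining relation \eqref{mmap}, apply Lemma \ref{orth} with $X=V\times\g$ and $Z=a_\varrho$ to get \eqref{mmft1}, then obtain \eqref{mmft2} by Fourier inversion and pullback along a point (the paper applies $i^*$ once more before pulling back along $\xi$, you instead pull back along $-\xi$; these are the same step). The alternative you sketch at the end — evaluating $[V\times V^*\times\g,\ \lla\mu(v,w)-\xi,X\rra]$ by Lemma \ref{orth} twice, once against the $\g$-variable and once against the $V^*$-variable — is a clean way to reach \eqref{mmft2} directly without invoking Proposition \ref{finv}, at the cost of not producing the global Fourier-transform identity \eqref{mmft1} that the paper wants to isolate for its own sake (cf.\ Remark \ref{motdh}).
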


\begin{proof} 
By (\ref{ftgen}) the naive Fourier transform of $[V\times V^*]$ is 
\begin{align*} \mathcal{F}([V\times V^*]) = [V \times V^* \times \g, \lla \mu\circ pr_{V\times V^*}, pr_\g\rra]. 
\end{align*}
Now by the definition (\ref{mmap}) of $\mu$ we have
\[ [V \times V^* \times \g, \lla \mu\circ pr_{V\times V^*}, pr_\g\rra] =  [V \times V^* \times \g, \lla(\varrho \circ pr_\g)pr_V,pr_{V^*}\rra].\]
Thus Lemma \ref{orth} with $X= V\times \g$ and $Z = a_\varrho$ gives (\ref{mmft1}).
Next we apply $\mathcal{F}$ again and use the inversion Lemma \ref{finv} to get
\[ \BL^{\dim \g} i^*[V\times V^*] = \mathcal{F}(\BL^{\dim V} [a_\varrho]) = \BL^{\dim V} [a_\varrho \times \g^*,\lla \pi\circ pr_{a_\varrho},pr_{\g^*}\rra]. \]
Finally, passing to $\expm_{\g^*}$ to invert $\BL^{\dim \g}$ and using $(i^*)^2= \Id_{\g^*}$ gives
\[[V\times V^*] = \BL^{\dim V-\dim \g}[a_\varrho \times \g^*,\lla -\pi\circ pr_{a_\varrho},pr_{\g^*}\rra].\]
Then (\ref{mmft2}) follows from pulling back both sides along $\xi:\spec(k) \rightarrow \g^*$.
\end{proof}
\begin{rem}\label{motdh} We like to think of Proposition \ref{prop1} as an instance of a motivic localization formula in the spirit of Theorem \ref{dhthm}. Namely if a motivic version of (\ref{dhf}) were to exist, it should ideally express the Fourier transform $\mathcal{F}([V\times V^*])$ as a sum over the fixed points $(V \times V^*)^G$. As $G$ acts linearly on $V \times V^*$ we further can assume $(V \times V^*)^G = \{0\}$. Then (\ref{mmft1}) indeed reflects such a localization phenomenon by the fact, that $\mathcal{F}([V\times V^*])$ is an honest class in $\kvar_\g$ and not $\kexp_\g$.
\end{rem}

\subsection{Hypertoric varieties}\label{htv}

An algebraic construction of hypertoric varieties was first given in \cite[Section 6]{HS02}, however much of our exposition is taken from \cite{PW07}. 

Consider an inclusion of tori $T^m \hookrightarrow T^n$ which is given in coordinates by an $n\times m$-matrix $A$ with integer entries and denote the rows of $A$ by $a_1,\dots,a_n \in \BZ^m$. Then the diagonal action of $T^n$ on $V=k^n$ gives rise to the representation 
\begin{align} \nonumber \rho:T^m &\rightarrow \GL(V)\\
								\label{toricrho}		t &\mapsto \diag(t^{a_1},\dots,t^{a_n}),
\end{align}
where we use the notation $t^b=t_1^{b_1}\dots t_m^{b_m}$ for any $t \in T^m$ and $b\in \BZ^m$. The derivative $\varrho : \ft^m \rightarrow \mathfrak{gl}(V)$ is for any $X\in \ft^m$ given by
\begin{equation}\label{toricvar} \varrho(X) = \diag(\lla X,a_1\rra,\dots,\lla X,a_n\rra).\end{equation}

From this we can now construct a variety as in \ref{genred}. More precisely $T^m$ will act on $V \times V^*$ in a Hamiltonian way with moment map $\mu: V \times V^* \rightarrow \Lie(T^m)= \ft^m $ given by the explicit formula
\begin{equation}\label{toricmm} \mu(v,w) = \sum_{i=1}^n v_iw_i a_i,
\end{equation}
where $(v,w) \in V \times V^*$ and $a_i \in (\ft^m)^*$ via the natural pairing. For a character $\chi \in \Hom(T^m,\BG_m)$ we define the \textit{hypertoric variety} $\FM_{\chi}(A)$ by 
\[ \FM_{\chi}(A) = \mu^{-1}(0) \sslash_\chi T^m.\]

There is a natural central arrangement $\FA \subset \ft^m$ given by the normal vectors $a_1,\dots,a_n \in (\ft^m)^*$, whose combinatorics are closely related to the geometry of $\FM_{\chi}(A)$. 

\begin{prop}\cite[Proposition 6.2]{HS02}\label{htsm} For a suitable character $\chi$ the hypertoric variety $\FM_{\chi}(A)$ is smooth if and only if $\FA$ is unimodular. In this case $T^m$ acts freely on $(V \times V^*)^{\chi-ss}$.
\end{prop}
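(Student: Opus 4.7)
The plan is to reduce both statements to combinatorial properties of the vectors $a_1,\dots,a_n$ via Mumford's numerical criterion (Proposition \ref{kinmum}). For any $(v,w) \in V\times V^*$, introduce the support $S(v,w) = \{i : v_i \neq 0\}\cup\{i : w_i \neq 0\}$. A one-parameter subgroup $\lambda : \BG_m \to T^m$ corresponds to a cocharacter $\eta \in \BZ^m$, and from formula \eqref{toricrho} one reads off that $\lim_{t\to 0}\lambda(t)(v,w)$ exists iff $\lla \eta,a_i\rra \geq 0$ whenever $v_i\neq 0$ and $\lla \eta,a_i\rra \leq 0$ whenever $w_i\neq 0$. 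Mumford's criterion therefore depends only on the pair of subsets $(\{i:v_i\neq 0\},\{i:w_i\neq 0\})$, not on the particular nonzero values.

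Next I would choose $\chi$ generically, meaning $\chi \in \Hom(T^m,\BG_m)\otimes\BQ$ does not lie in any of the finitely many hyperplanes spanned by proper subsets of $\{a_1,\dots,a_n\}$ of rank $< m$; this is possible precisely because the arrangement $\FA$ has rank $m$. For such a generic $\chi$, Mumford's criterion becomes: $(v,w)$ is $\chi$-semistable iff its support $S(v,w)$ contains a subset of size $m$ on which $\{a_i\}$ is linearly independent, i.e.\ a \emph{basis} of $(\ft^m)^*$. Genericity also ensures semistability and stability coincide, so every orbit in the semistable locus is closed.

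Now I analyze stabilizers. The stabilizer of $(v,w)$ in $T^m$ under the action in \eqref{toricrho} is the subgroup
\[ \Stab(v,w) = \{ t \in T^m \mid t^{a_i}=1 \text{ for all } i \in S(v,w) \}, \]
i.e.\ the kernel of the map $T^m \to (\BG_m)^{S(v,w)}$ defined by $(a_i)_{i\in S(v,w)}$. This kernel is trivial if and only if the integer vectors $\{a_i\}_{i\in S(v,w)}$ generate $\BZ^m$ as a lattice. Combining with the previous paragraph: $T^m$ acts freely on $(V\times V^*)^{\chi-ss}$ iff for every $S$ containing a linearly independent subset of size $m$, the vectors $\{a_i\}_{i\in S}$ span $\BZ^m$ over $\BZ$. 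Since enlarging $S$ can only enlarge the spanned sublattice, this is equivalent to the condition that every linearly independent $m$-subset of $\{a_1,\dots,a_n\}$ already spans $\BZ^m$, which is exactly unimodularity of $\FA$.

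It remains to relate freeness to smoothness. The differential of $\mu$ at $(v,w)$ has image equal to the annihilator of $\Lie(\Stab(v,w))$ in $(\ft^m)^*$, as one checks from \eqref{toricmm} and \eqref{toricvar}. Hence if $T^m$ acts with trivial stabilizers on $(V\times V^*)^{\chi-ss}$, then $0$ is a regular value of $\mu$ on the semistable locus, so $\mu^{-1}(0)^{\chi-ss}$ is smooth, and $\FM_\chi(A)$ — a geometric quotient by a free torus action — is smooth as well. Conversely, if $\FA$ is not unimodular, some semistable orbit has nontrivial (finite) stabilizer, producing an orbifold singularity on $\FM_\chi(A)$. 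The main delicate point is the first step: justifying that a generic $\chi$ both exists and reduces Mumford's criterion to the purely combinatorial condition on supports; everything after that is a direct translation between lattice data and geometric data.
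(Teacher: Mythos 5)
Your overall strategy for the ``if'' direction --- use Mumford's criterion (Proposition \ref{kinmum}) to show that a semistable point has support containing a $\BQ$-basis of $(\ft^m)^*$, then use unimodularity to upgrade this to a $\BZ$-basis and conclude the stabilizer is trivial --- is precisely the paper's approach. Your genericity condition on $\chi$ is also essentially equivalent to the paper's: requiring $\chi$ to avoid the span of every subset of $\{a_i\}$ of rank $<m$ amounts to requiring $\lla\chi,\lambda_F\rra\neq 0$ for every corank-one flat $F$, while the paper imposes the slightly stronger sign condition $\lla\chi,\lambda_F\rra<0$ purely for concreteness (so that $\lambda_F$ itself, rather than $\pm\lambda_F$, can be used as the destabilizing one-parameter subgroup).

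There is, however, a genuine error in your second paragraph. You claim that for generic $\chi$, a point $(v,w)$ is $\chi$-semistable \emph{iff} its support $S(v,w)$ contains $m$ linearly independent $a_i$'s. Only one implication is true. In your first paragraph you correctly observe that (semi)stability depends on the ordered pair $\bigl(\{i:v_i\neq 0\},\{i:w_i\neq 0\}\bigr)$, and then in the next step you silently replace this with the unordered union $S(v,w)$, which discards essential information. Concretely, take $A=(1,1)^T$, $m=1$, $\chi=\Id$ (as in Example \ref{copn}); the point $(v,w)=((0,0),(1,0))$ has support $\{1\}$, and $a_1=1$ spans $(\ft^1)^*$, yet $(v,w)$ is destabilized by $\eta=-1$ since the limit exists and $\lla\chi,\eta\rra=-1<0$. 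What is true, and what you actually prove, is the implication ``semistable $\Rightarrow$ $\{a_i\}_{i\in S(v,w)}$ spans $(\ft^m)^*$'', obtained by applying the criterion to one-parameter subgroups $\eta$ with $\lla\eta,a_i\rra=0$ for all $i\in S(v,w)$ (these fix $(v,w)$, so the limit always exists). That one direction is all that is needed for the freeness statement, so your proof of the ``if'' part survives the correction.

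Your ``only if'' direction (non-unimodular $\Rightarrow$ orbifold singularity) is where the flawed ``iff'' actually bites: you need to exhibit a \emph{semistable} point whose support is a non-unimodular independent $m$-set, and which coordinates land in $v$ versus $w$ matters for semistability. This is fixable --- as $\chi$ lies in some orthant cone $\sum_{j\in J}\BR_{\geq 0}a_{i_j}+\sum_{j\notin J}\BR_{\leq 0}a_{i_j}$, one can place the nonzero coordinates in $v$ or $w$ according to $J$ --- but you do not carry it out, nor do you argue that a finite nontrivial stabilizer actually produces a singular point of the GIT quotient. The paper sidesteps this entirely by citing \cite[Proposition 6.2]{HS02} for the ``only if'' implication and only proves freeness of the action under unimodularity. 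You should either do the same or fill in both missing steps.

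Finally, the side remark ``genericity ensures semistability and stability coincide, hence all semistable orbits are closed'' is not substantiated and is not actually needed: once the action on the semistable locus is shown to be free, all orbits have the same dimension $m$ and are therefore automatically closed (as the paper notes in the proof of Proposition \ref{nakfamily}), which is the cleaner route to smoothness of the quotient.
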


\begin{proof} For the only if part we refer to \cite[Proposition 6.2]{HS02}. As in the proof of Proposition \ref{nakfamily}, the if part follows from the second statement, which we prove now.

To define $\chi$ consider the set $Z = \{ F \in L(\FA) \ |\ \rk(F)  =m-1\}$ that is the set of the maximal flats in $L(\FA) \setminus \infty$. For every $F \in Z$ the intersection of all its hyperplanes $H_F$ is a line in $\ft^m$ which is spanned by a (up to a sign) unique primitive $\lambda_F \in \BZ^m$. By changing the sign of $\lambda_F$ if necessary we can then choose $\chi \in \BZ^m$ such that $\lla \chi,\lambda_F\rra < 0$ for all $F \in Z$. 

Let now $(v,w) \in (V \times V^*)^{\chi-ss}$. For every $F \in Z$ the vector $\lambda_F$ defines a one-parameter subgroup of $T^m$ by $t \mapsto t^{\lambda_F}$. By Proposition \ref{kinmum} we therefore see, that $\lim_{t \rightarrow 0}\lambda_F(t)(v,w)$ doesn't exist in $V \times V^*$ for all $F \in Z$. Writing out the definitions we have 
\[ \lambda_F(t)(v,w) = (t^{\lla a_1,\lambda_F \rra}v_1,\dots,t^{\lla a_n,\lambda_F\rra}v_n,t^{-\lla a_1,\lambda_F\rra}w_1,\dots,t^{-\lla a_n,\lambda_F\rra}w_n).\]
From this we deduce that for every $F \in Z$ there must be an $i \in \{1,\dots,n\} \setminus F$ such that, depending on the sign of $\lla a_i,\lambda_F\rra$, either $v_i \neq 0$ or $w_i \neq 0$. In particular, we can construct from this inductively a set $\mathbf{I}=\{i_1,i_2,\dots,i_m\} \subset \{1,\dots,n\}$ such that $\{a_i\}_{i\in \mathbf{I}}$ are linearly independent, hence a $\BZ$-basis of $\BZ^m$ as $\FA$ is unimodular, and for every $i\in \mathbf{I}$ we have either $v_i \neq 0$ or $w_i \neq 0$.

Now let $s=(s_1,\dots,s_m) \in T^m$ be in the stabilizer of $(v,w)$ i.e. $s(v,w)=(v,w)$. Then we see, that whenever $v_i \neq 0$ or $w_i \neq 0$ we must have $s^{a_i} =1$. Hence in particular $s^{a_i} =1$ for all $i\in \mathbf{I}$. Since $\{a_i\}_{i\in \mathbf{I}}$ form a $\BZ$-basis of $\BZ^m$ we deduce that $s=1$ which proves that the stabilizer of $(v,w)$ is trivial.
\end{proof}

\begin{expl}\label{copn}  Consider the diagonal embedding $\BG_m = T^1 \hookrightarrow T^n$. Then $A$ is the $n \times 1$-matrix $A=(1,1,\dots,1)$ and the moment map is $\mu(v,w)=v_1w_1+v_2w_2+\dots +v_nw_n$. Then $\FM_\chi(A)$ will be smooth for any non-trivial character $\chi: \BG_m \rightarrow \BG_m$. If we take for example $\chi = \Id$ we get 
\[ \mu^{-1}(0)^{\chi-ss} = \{ (v,w) \in \mu^{-1}(0) \ |\ v \neq (0,0,\dots,0)\}.\]

Thus the projection onto the $v$-coordinate gives a rank $n-1$ vector bundle $\FM_{\Id}(A) \rightarrow \BP^{n-1}$ and one can check that in fact $\FM_{\Id}(A) \cong T^*\BP^{n-1}$.
\end{expl}

\subsubsection{Motivic classes of smooth hypertoric varieties}

Throughout this section we assume that $\FA$ is unimodular and $\chi$ is chosen as in Proposition \ref{htsm}, so that $\FM_\chi(A)$ is smooth. Under these assumptions we prove the following theorem.

\begin{thm}\label{toricmain} The motivic class of a smooth hypertoric variety $\FM_\chi(A)$ in $\mathscr{M}$ is given by the formula
\begin{equation}\label{htfor} [\FM_\chi(A)] = \frac{\BL^{n-m}}{(\BL-1)^m}\sum_{F\in L(\FA)} \nu_\FA(F,\infty) \BL^{|F|},\end{equation}
where $\nu_\FA:L(\FA)\times L(\FA) \rightarrow \BZ$ denotes the M\"obius function of the central arrangement $\FA$.
\end{thm}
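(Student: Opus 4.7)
The plan is to combine Proposition \ref{nakfamily} with the Fourier transform formula of Proposition \ref{prop1}, then reduce the resulting expression via Möbius inversion on the lattice $L(\FA)$.

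First I would pick a generic $\xi \in (\ft^m)^*$, meaning $\xi \notin \spn\{a_i : i \in F\}$ for every flat $F \in L(\FA)$ with $\rk F < m$. Since $k$ is infinite and these spans form finitely many proper linear subspaces of $(\ft^m)^*$, such $\xi$ exists. For this $\xi$, the explicit formula \eqref{toricmm} together with unimodularity forces $T^m$ to act freely on $\mu^{-1}(\xi)$: if $t \in T^m \setminus \{1\}$ fixes $(v,w) \in \mu^{-1}(\xi)$, then $t^{a_i}=1$ for every $i \in S:=\{i : v_i\neq 0 \text{ or } w_i\neq 0\}$, so by unimodularity $\rk\{a_i\}_{i\in S} < m$; but then $\xi = \sum_{i\in S} v_iw_i a_i$ lies in $\spn\{a_i : i\in S\}$, contradicting genericity. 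Combined with Proposition \ref{htsm}, Proposition \ref{nakfamily}(ii) yields $[\FM_\chi(A)] = [\FM_{0,\xi}]$, and since torus bundles are Zariski-locally trivial we obtain $[\mu^{-1}(\xi)] = (\BL-1)^m [\FM_{0,\xi}]$.

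Next I would apply Proposition \ref{prop1}, which gives
\[ [\mu^{-1}(\xi)] = \BL^{n-m}\,[a_\varrho, -\lla \pi, \xi\rra] \in \expm,\]
where by \eqref{toricvar} one has $a_\varrho = \{(v,X) \in V \times \ft^m : v_i \lla X, a_i\rra = 0 \text{ for all } i\}$. Stratify $a_\varrho$ by the support $S = \{i : v_i \neq 0\}$ of $v$: the $S$-stratum is $(k^\times)^S \times W_S$ with $W_S = \{X \in \ft^m : \lla X, a_i\rra = 0,\ i \in S\}$, and the exponential datum restricts to the linear form $-\lla \cdot,\xi \rra$ on $W_S$. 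Lemma \ref{orth} then shows the stratum contributes $(\BL-1)^{|S|}\BL^{\dim W_S}$ when $\xi|_{W_S}=0$, i.e.\ when $\xi \in W_S^\perp = \spn\{a_i : i \in S\}$, and $0$ otherwise. By our choice of $\xi$, the condition $\xi \in \spn\{a_i : i \in S\} = \spn\{a_i : i \in \overline{S}\}$ holds if and only if $\overline{S} = \infty$, in which case $\dim W_S = 0$. Hence
\[ [a_\varrho, -\lla \pi, \xi\rra] = \sum_{S \subset \{1,\dots,n\},\ \overline{S} = \infty} (\BL-1)^{|S|}.\]

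Finally, set $g(F) = \sum_{S:\overline{S}=F}(\BL-1)^{|S|}$ for a flat $F$. Since $\overline{S} \leq F$ iff $S \subset F$ (using that $F$ is closed under the matroid closure of its subsets), one computes
\[ \sum_{G \leq F} g(G) = \sum_{S \subset F}(\BL-1)^{|S|} = \BL^{|F|}.\]
Möbius inversion on $L(\FA)$ then gives $g(F) = \sum_{G \leq F} \nu_\FA(G,F)\BL^{|G|}$; specializing to $F=\infty$ and assembling the three identities above yields the formula \eqref{htfor}.

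The main delicate point is verifying that $\mu^{-1}(\xi) \to \FM_{0,\xi}$ really is a Zariski-locally trivial $T^m$-bundle (so that the class factors as $(\BL-1)^m [\FM_{0,\xi}]$); once we have smoothness and freeness of the action this reduces to the standard fact that free torus actions give Zariski-locally trivial principal bundles. Everything else is a bookkeeping exercise combining the already-established Fourier transform machinery with the combinatorial Möbius inversion on the arrangement.
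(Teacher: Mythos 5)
Your argument is correct and tracks the paper's proof through the Fourier-transform reduction: Proposition~\ref{nakfamily} together with the freeness/bundle-triviality argument gives $[\FM_\chi(A)] = [\mu^{-1}(\xi)]/(\BL-1)^m$ (this is exactly Proposition~\ref{toricfam}), and Proposition~\ref{prop1} reduces everything to $[a_\varrho,\lla -\pi,\xi\rra]$. The divergence is in how this exponential class is evaluated. The paper stratifies the \emph{base} $\ft^m$ by the relative interiors $\overset{\circ}{H}_F$ of the flats, observes that $\pi$ restricts to a rank-$|F|$ vector bundle over each cell, and then computes $[\overset{\circ}{H}_F,\lla\cdot,\xi\rra]=\nu_\FA(F,\infty)$ by a downward induction that mirrors the recursive definition of $\nu_\FA$ (Lemma~\ref{incl}). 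You instead stratify the \emph{total space} $a_\varrho$ by the support $S$ of $v$, apply Lemma~\ref{orth} on each stratum $(k^\times)^S\times W_S$ to see that only the $S$ with $\overline{S}=\infty$ survive, and then recover the $\nu_\FA$-coefficients by forward M\"obius inversion on $L(\FA)$ via the identity $\sum_{S\subset F}(\BL-1)^{|S|}=\BL^{|F|}$. These are two organizations of the same Fubini-type count: the paper groups supports by the flat they span and handles the exponential sum flat by flat, while you collapse the exponential sum first and reinstate the flat structure only at the M\"obius-inversion step. Your version cleanly decouples the analytic (Lemma~\ref{orth}) and combinatorial (M\"obius) ingredients; the paper's is marginally shorter since it never leaves the exponential-class formalism. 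Both are valid. The only point to tighten is the torus-bundle step you flag yourself: to get $[\mu^{-1}(\xi)]=(\BL-1)^m[\FM_{0,\xi}]$ one needs scheme-theoretic freeness of the $T^m$-action so that GIT descent applies and the quotient is a principal bundle, which is then Zariski-locally trivial because $T^m$ is special; the paper handles this by citing \cite{Re03}, \cite[Proposition 0.9, Amplification 1.3]{MFK94} and \cite[Section 4.4]{serre58}.
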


Notice that, the geometry of $\FM_\chi(A)$ will in general depend on the choice of $\chi$ \cite[Section 9]{HS02}, however by Theorem \ref{toricmain} not its motivic class. This can already be seen from the following proposition.

\begin{prop}\label{toricfam} Assume that $\xi \in (t^m)^*$ does not vanish on $H_F$ for any $F\in L(\FA)\setminus \infty$. Then we have in $\mathscr{M}$
\begin{equation}\label{htfirst} [\FM_{\chi}(A)] = \frac{[\mu^{-1}(\xi)]}{(\BL-1)^m}. \end{equation}
\end{prop}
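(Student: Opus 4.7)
The plan is to apply Proposition \ref{nakfamily} with our $\chi$ and the given $\xi$, and then to recognize $\mu^{-1}(\xi) \to \FM_{0,\xi}$ as a Zariski-locally trivial principal $T^m$-bundle; combining the two gives $[\FM_\chi(A)] = [\FM_{0,\xi}] = [\mu^{-1}(\xi)]/(\BL-1)^m$.

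First I would verify the two freeness hypotheses needed by Proposition \ref{nakfamily}. Freeness of the $T^m$-action on $\mu^{-1}(l_\xi)^{\chi-ss}$ is immediate from Proposition \ref{htsm}, which already gives freeness on all of $(V\times V^*)^{\chi-ss}$. For freeness on $\mu^{-1}(\xi)$ itself, I would mimic the stabilizer analysis at the end of the proof of Proposition \ref{htsm}: if $s \in T^m$ fixes $(v,w) \in \mu^{-1}(\xi)$, then the diagonal form (\ref{toricrho}) of the representation forces $s^{a_i}=1$ for every index in $J := \{i : v_i w_i \neq 0\}$. The moment map formula (\ref{toricmm}) gives $\xi = \sum_{i\in J} v_i w_i a_i \in \spn\{a_i\}_{i\in J}$, so $\xi$ vanishes on $H_F$ where $F := \{i : a_i \in \spn\{a_j\}_{j\in J}\}$ is a flat of $\FA$ containing $J$. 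By the hypothesis on $\xi$ we must have $F=\infty$, which, since $\FA$ is essential, means $\{a_i\}_{i\in J}$ spans $(\ft^m)^*$; unimodularity of $\FA$ then provides a subset $J' \subset J$ of size $m$ with $\{a_i\}_{i\in J'}$ a $\BZ$-basis of $\BZ^m$, and the relations $s^{a_i}=1$ for $i\in J'$ force $s=1$.

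With both conditions checked, Proposition \ref{nakfamily} yields $[\FM_\chi(A)] = [\FM_{0,\xi}]$ in $\mathscr{M}$. To relate $[\FM_{0,\xi}]$ and $[\mu^{-1}(\xi)]$ I would use that a free torus action on an affine variety necessarily has all orbits closed, since every orbit has dimension $m$ while the boundary $\bar O \setminus O$ of a non-closed orbit would have strictly smaller dimension yet be $T^m$-invariant. Thus $\mu^{-1}(\xi) \to \FM_{0,\xi}$ is a geometric quotient and a principal $T^m$-bundle; Hilbert 90 applied to each $\BG_m$-factor makes it Zariski-locally trivial, and the local triviality argument around (\ref{trif}) delivers $[\mu^{-1}(\xi)] = (\BL-1)^m [\FM_{0,\xi}]$ in $\kvar$. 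The main obstacle is the stabilizer analysis in the second step: the purely combinatorial assumption on $\xi$ has to be translated, via the moment map and the definition of flat, into a genuine linear-algebraic spanning condition before unimodularity can be invoked.
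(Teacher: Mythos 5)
Your proposal follows essentially the same route as the paper: verify the freeness hypotheses of Proposition \ref{nakfamily} (using Proposition \ref{htsm} for the semistable locus, and a stabilizer-plus-unimodularity argument for $\mu^{-1}(\xi)$), then recognize $\mu^{-1}(\xi)\to\FM_{0,\xi}$ as a Zariski-locally trivial principal $T^m$-bundle and apply the fibration relation (\ref{trif}). Your freeness argument is a correct and slightly more explicit version of the paper's --- you spell out the flat closure $F$ of $J$ and how the non-vanishing hypothesis on $\xi$ forces $F=\infty$, whereas the paper compresses this to ``together with the choice of $\xi$ we deduce $\spn_{i\in I}\{a_i\}=(\ft^m)^*$.'' For the principal bundle step the paper cites \cite[Lemma 6.5]{Re03} for scheme-theoretic freeness and then MFK and Serre, whereas you argue directly that free torus orbits in an affine variety are closed by the dimension-of-the-boundary argument, and invoke Hilbert 90 for local triviality; this is fine, though for the quotient map to be a principal $T^m$-bundle one strictly needs scheme-theoretic (not just set-theoretic) freeness --- in characteristic $0$, which is in force throughout the chapter, this follows automatically from Cartier's theorem, but it is worth flagging since the paper addresses it explicitly.
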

\begin{proof} 

First we claim that $T^m$ acts freely on $\mu^{-1}(\xi)$. Indeed, let $(v,w) \in \mu^{-1}(\xi)$ be fixed by some $t \in T^m$ and put $I = \{ i \ |\ v_iw_i \neq 0\} \subset \{1,\dots,n\}$. From the moment map equation (\ref{toricmm})
\[ \sum_i v_iw_ia_i = \xi,\]
together with the choice of $\xi$ we deduce that $\spn_{i\in I}\{a_i\} = (t^m)^*$, and by unimodularity  $\{a_i\}_{i\in I}$ contains a $\BZ$-basis of $\BZ^m$. But now $t\cdot(v,w) = (v,w)$ implies $t^{a_i} = 1$ for all $i\in I$, which finally implies $t = 1$ i.e $T^m$ acts freely on  $\mu^{-1}(\xi)$. Combining this with Propositions \ref{nakfamily} and \ref{htsm} we obtain 
\[ [\FM_{\chi}(A)] = [\mu^{-1}(\xi)\sslash_0 T^m] = [ \spec(k[\mu^{-1}(\xi)]^{T^m})].\]

Now as in \cite[Lemma 6.5]{Re03} one can show that $T^m$ acts scheme-theoretically freely on $\mu^{-1}(\xi)$, which implies by \cite[Proposition 0.9, Amplification 1.3]{MFK94}, that $\mu^{-1}(\xi) \rightarrow \spec(k[\mu^{-1}(\xi)]^{T^m})$ is a $T^m$-principal bundle.
As every $T^m$-principal bundle is Zariski-locally trivial \cite[Section 4.4]{serre58} the proposition follows from (\ref{trif}).
\end{proof}

We are thus left with computing $[\mu^{-1}(\xi)]$ which by (\ref{mmft2}) reduces to understanding $[a_\varrho,\lla -\pi,\xi\rra]\in \kexp$, where $\pi:a_\varrho \rightarrow \ft^m$ denotes the projection. This is done by a familiar cut and paste argument. First we have a stratification
 \begin{equation}\label{toricstra}\ft^m = \coprod_{F \in L(\FA)} \overset{\circ}{H}_F, \text{ where }  \overset{\circ}{H}_F = H_F \setminus \bigcup_{i \notin G} H_i.\end{equation}

By (\ref{toricvar}) we see that $\pi: \pi^{-1}(\overset{\circ}{H}_F) \rightarrow \overset{\circ}{H}_F$ is a vector bundle of rank $|F|$ and hence we have
\[ [a_\varrho,\lla -\pi,\xi\rra] = \sum_{F \in L(\FA)} [\pi^{-1}(\overset{\circ}{H}_F), \lla -\pi,\xi\rra] = \sum_{F\in L(\FA)} [\overset{\circ}{H}_F, \lla \cdot,\xi\rra]\BL^{|F|},\]
where $\lla \cdot,\xi\rra: \overset{\circ}{H}_F \rightarrow \BA^1$ denotes the paring with $\xi$. Theorem \ref{toricmain} now follows from the following

\begin{lemma}\label{incl} The class $[\overset{\circ}{H}_F, \lla \cdot,\xi\rra]$ lies in $\kvar$ and equals $\nu_\FA(F,\infty)[\spec(k)]$.
\end{lemma}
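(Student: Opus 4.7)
The plan is to compute $[\overset{\circ}{H}_F, \lla \cdot,\xi \rra]$ by M\"obius inversion on the poset $L(\FA)_{\geq F}$ and then to kill most of the resulting terms using the motivic orthogonality of Lemma~\ref{orth}.

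First I would handle $F = \infty$ directly. Since the arrangement $\FA$ is essential (the $a_i$ span $(\ft^m)^*$ because $T^m \hookrightarrow T^n$ is injective), we have $H_\infty = \bigcap_i H_i = \{0\}$, so $\overset{\circ}{H}_\infty = \{0\}$ and $\lla \cdot,\xi \rra$ is zero on it. Hence $[\overset{\circ}{H}_\infty,\lla \cdot,\xi \rra] = [\spec(k)] = \nu_\FA(\infty,\infty)[\spec(k)]$, matching the formula.

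For $F \neq \infty$, the stratification (\ref{toricstra}) restricts to a disjoint decomposition $H_F = \bigsqcup_{G \geq F} \overset{\circ}{H}_G$, because a flat $G$ satisfies $H_G \subseteq H_F$ precisely when $G \geq F$ in $L(\FA)$. The cut-and-paste relation in $\kexp$ then reads
\[
[H_F,\lla \cdot,\xi \rra] = \sum_{G \geq F} [\overset{\circ}{H}_G,\lla \cdot,\xi \rra],
\]
and classical M\"obius inversion on the interval $[F,\infty] \subset L(\FA)$ yields
\[
[\overset{\circ}{H}_F,\lla \cdot,\xi \rra] = \sum_{G \geq F} \nu_\FA(F,G)\,[H_G,\lla \cdot,\xi \rra].
\]

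The decisive step is now Lemma~\ref{orth}: by the hypothesis of Proposition~\ref{toricfam}, for every flat $G \in L(\FA) \setminus \{\infty\}$ the restriction of $\lla \cdot,\xi \rra$ to the vector space $H_G$ is a non-zero linear form, so Lemma~\ref{orth} (applied with $X = \spec(k)$ and $V = H_G$) forces $[H_G,\lla \cdot,\xi \rra] = 0$. Only the term $G = \infty$ survives, and since $H_\infty = \{0\}$ we have $[H_\infty,\lla \cdot,\xi \rra] = [\spec(k)]$. Putting everything together gives
\[
[\overset{\circ}{H}_F,\lla \cdot,\xi \rra] = \nu_\FA(F,\infty)\,[\spec(k)],
\]
which in particular lies in $\kvar \subset \kexp$. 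I do not foresee a serious obstacle: the whole argument reduces to the non-vanishing of $\xi|_{H_G}$ for $G \neq \infty$, which is precisely the hypothesis on $\xi$ imposed in Proposition~\ref{toricfam}.
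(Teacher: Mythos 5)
Your proof is correct and follows essentially the same path as the paper's: both rely on the same base case $F=\infty$, the same stratification $[H_F,\lla\cdot,\xi\rra]=\sum_{G\geq F}[\overset{\circ}{H}_G,\lla\cdot,\xi\rra]$, and the same application of Lemma~\ref{orth} to kill $[H_G,\lla\cdot,\xi\rra]$ for $G\neq\infty$. The only cosmetic difference is that you apply M\"obius inversion explicitly, whereas the paper observes that the two computations exhibit the defining recursion of $\nu_\FA(\cdot,\infty)$ and concludes by uniqueness.
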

\begin{proof} We can check this directly using the recursive definition of $\nu_\FA$ (\ref{mobius}). For $F= \infty$ we clearly have 
\[ [\overset{\circ}{H}_\infty, \lla \cdot,\xi\rra] = [\spec(k),\lla \cdot,\xi\rra]= [\spec(k)],\]
and for any $F < \infty$ 
\[ \sum_{F \leq F'\leq \infty} [\overset{\circ}{H}_{F'}, \lla \cdot,\xi\rra] = [H_F,\lla \cdot,\xi \rra] = 0,\]
by Lemma \ref{orth}, since by assumption $\xi$ does not vanish on $H_F$.
\end{proof}

\begin{expl} We continue Example \ref{copn}. The corresponding arrangement $\FA$ is given by $n$ times the origin, in particular $L(\FA)= \{\emptyset,\infty\}$. Formula (\ref{htfor}) then reads
\[ [\FM_\chi(A)] = \frac{\BL^{n-1}}{\BL-1}\left(-1 + \BL^n\right) = \BL^{n-1} [\BP^{n-1}],\]
which of course agrees with the class of $T^*\BP^{n-1}$.
\end{expl}

\begin{rem}\label{motdh2} The proof of Theorem \ref{toricmain} relies crucially on Proposition \ref{prop1}, which in turn can be interpreted as a motivic instance of the Duistermaat-Heckman Theorem \ref{dhthm}, see Remark \ref{motdh}. We will briefly explain here, how the varieties $\FM_\chi(A)$ themselves give rise to such localization phenomena, but with more than one fixed point.

First we describe a second arrangement $\FB$, which is commonly associated with $\FM_\chi(A)$. Writing $d= n-m$ and identifying $\BZ^n / \BZ^m$ with $\BZ^{d}$ we obtain a linear map $B:\BZ^n \rightarrow \BZ^d$, where $B$ is a $d\times n$-matrix with integer coefficients. We write $b_1,\dots ,b_n \in \BZ^d$ for the columns of $B$ and $\widetilde{\chi} \in (\BZ^n)^*$ for any lift of $\chi \in \Hom(T^m,\BG_m) \cong (\BZ^m)^* \subset (\ft^m)^*$. Then $\FB \subset (\ft^d)^*$ is the arrangement consisting of the $n$ affine hyperplanes 
\begin{equation}\label{affinarr} H_i = \{ v \in (\ft^d)^* \ |\ \lla b_i,v \rra = \widetilde{\chi}_i\} \ \ \ \ 1\leq i\leq n.\end{equation}

The hypertoric variety $\FM_\chi(A)$ admits a residual torus action of $T^d = T^n/T^m$, which is again Hamiltonian. The moment map is given explicitly by (see \cite{PW07})
\begin{align*} \Phi: \FM_\chi(A) & \rightarrow (\ft^d)^* \\
												[v,w] & \mapsto (v_1w_1,\dots,v_nw_n) \in \ker(A^*) = (\ft^d)^*. \end{align*}

As opposed to the $T^m$-action on $V \times V^*$, the $T^d$-action on $\FM_\chi(A)$ will in general have non-trivial isolated fix points. For a generic point $x:\spec(k) \rightarrow \ft^d$ we thus would expect by our localization philosophy  in $\kexp$ a formula of the form
\[ x^*\FF([\FM_\chi(A)]) = \sum_{p \in \FM_\chi(A)^{T^d}} x^*[\lambda_p][\spec(k), \lla \Phi(p), x \rra],\]
for some $\lambda_p \rightarrow \ft^d$. 

This is indeed the case, as we can combine a inclusion-exclusion argument similar to Lemma \ref{incl} with the following two facts:

\begin{itemize}
\item[(i)] The moment map $\Phi$ induces a bijection between $T^d$-fixed points on $\FM_\chi(A)$  and maximal flats (i.e points) in $L(\FB)$.
\item[(ii)] For any flat $F \in L(\FB)$ and $\overset{\circ}{H}_F \subset (\ft^d)^*$ defined as in (\ref{toricstra}) we have
\[ \Phi^{-1}(\overset{\circ}{H}_F) \cong  \lambda'_F \times \overset{\circ}{H}_F.\]
\end{itemize}

Here (i) can can be found in \cite[Corollary 3.5]{BD00} and (ii) follows from covering $\FM_\chi(A)$ with affine hypertoric varieties as in \cite[Proposition 4.6]{PA16}.

\end{rem}

\subsection{Nakajima quiver varieties}\label{nqv}

We start by recalling the definition of Nakajima quiver varieties. Almost everything can be found in more detail in \cite{Hau2} or in the original sources \cite{nak94}\cite{nak98}.\\
Let $\Gamma = (I,E)$ be a quiver with $I=\{1,2,\dots,n\}$ the set of vertices and $E$ the set of arrows. We denote by $s(e)$ and $t(e)$ the source and target vertex of an arrow $e\in E$. For each $i\in I$ we fix finite dimensional $k$-vector spaces $V_i, W_i$ and write $\bfv=(\dim V_i)_{i\in I}, \bfw=(\dim W_i)_{i\in I} \in \mathbb{N}^I$ for their dimension vectors.

 From this data we construct the vector space
\[ \BV_{\bfv,\bfw}= \bigoplus_{e\in E} \Hom(V_{s(e)},V_{t(e)}) \oplus \bigoplus_{i\in I} \Hom(W_i,V_i),\]
the algebraic group 
\[ G_\bfv = \prod_{i\in I} \GL(V_i)\]
and its Lie algebra
 \[ \mathfrak{g}_\bfv = \bigoplus_{i\in I} \mathfrak{gl}(V_i).\]
We have a natural representation
\[ \rho_{\bfv,\bfw}: G_\bfv \rightarrow \GL(\BV_{\bfv,\bfw})\]
and its derivative 
\[ \varrho_{\bfv,\bfw}: \mathfrak{g}_\bfv \rightarrow \mathfrak{gl}(\BV_{\bfv,\bfw}).\]

For $g=(g_i)_{i\in I}\ ,X=(X_i)_{i\in I} $ and $\phi=(\phi_e,\phi_i)_{e\in E,i\in I}\in \BV_{\bfv,\bfw}$ they are given by the formulas
\begin{align*} 
\rho_{\bfv,\bfw}(g)\phi &= (g_{t(e)}\phi_e g_{s(e)}^{-1}, g_i\phi_i)_{e\in E,i\in I}\\
\varrho_{\bfv,\bfw}(X)\phi &= (X_{t(e)}\phi_e - \phi_e X_{s(e)},X_i \phi_i)_{e\in E,i\in I}.
\end{align*}

We are again in the situation of section \ref{genred} i.e. $G_\bfv$ acts on the vector space $\BV_{\bfv,\bfw}\times	\BV_{\bfv,\bfw}^*$ in a Hamiltonian way with moment map  
\[ \mu_{\bfv,\bfw}: \BV_{\bfv,\bfw} \oplus \BV_{\bfv,\bfw}^* \rightarrow \mathfrak{g}_\bfv^*,\] 
given by (\ref{mmap}).

 Following \cite{nak98} we fix once and for all $\chi$ to be the character of $G_\bfv$ given by $\chi(g) = \prod_{i\in I} \det(g_i)^{-1}$. For $\bfw\neq \bm{0}$ the \textit{Nakajima quiver variety} $\FM(\bfv,\bfw)$ is then defined as
\[ \FM(\bfv,\bfw) = \mu_{\bfv,\bfw}^{-1}(0) \sslash_\chi G_\bfv.\]  

\begin{rem}  For $\bfw =\bm{0}$ the action of $G_\bfv$ will not be faithful, in which case one should choose a different character (see \cite[Remark 3.13]{nak98}), but we do not study this case here.
\end{rem}

\begin{expl}\cite[Proposition 2.8]{Na99}\label{jordan1} Consider the Jordan quiver $\Gamma$ consisting of a single vertex  and a single loop on that vertex with the dimension vectors given by positive integers $\bfv = n$ and $\bfw=1$. In this case we can identify $\mu^{-1}_{n,1}(0)$ with the set of elements $(X,r,Y,s) \in \left(\End(k^n) \oplus k^n\right)^2$ satisfying
\[\mu_{n,1}(X,r,Y,s)= [X,Y] + r s^t =0,\]
where $[X,Y] = XY-YX$ denotes the commutator and $s^t$ the transpose of $s$. 

Now $\chi$-stability will imply $r=0$ and that 
\[\{ f \in k[x,y] \ |\  s^tf(X,Y)v = 0 \text{ for all } v\in k^n\} \subset k[x,y]\]
defines an ideal in $k[x,y]$ of length $n$. This identifies $\FM_{n,1}$ with $\Hilb^n\BA^2$, the Hilbert scheme of $n$ points on $\BA^2$.
\end{expl}

The motivic class of $\FM(\bfv,\bfw)$ can again be determined through Proposition \ref{nakfamily}. Take $\unt_\bfv\in \mathfrak{g}_\bfv^*$ to be the linear functional defined by $\unt_\bfv(X)= \sum_{i\in I} \tr X_i$ for $X\in \mathfrak{g}_\bfv$. 

\begin{prop}\label{qdef} The equality
\[ [\FM(\bfv,\bfw)] = \frac{[\mu_{\bfv,\bfw}^{-1}(\unt_\bfv)]}{[G_\bfv]},\]
holds in $\mathscr{M}$.
\end{prop}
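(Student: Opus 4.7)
The plan is to apply Proposition~\ref{nakfamily} to the non-zero invariant element $\xi = \unt_\bfv \in (\mathfrak{g}_\bfv^*)^{G_\bfv}$ ($G_\bfv$-invariance comes from conjugation-invariance of the trace) to obtain $[\FM(\bfv,\bfw)] = [\FM_{0,\unt_\bfv}]$, and then exploit that $G_\bfv = \prod_i \GL(V_i)$ is a special group to replace the geometric quotient by division by $[G_\bfv]$, exactly as in the proof of Proposition~\ref{toricfam}.

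The hypotheses of Proposition~\ref{nakfamily} require freeness of the $G_\bfv$-action on $\mu_{\bfv,\bfw}^{-1}(\unt_\bfv)$ and on $\mu_{\bfv,\bfw}^{-1}(l_{\unt_\bfv})^{\chi-ss}$. The key input is the explicit form of the moment map equation: identifying $\mathfrak{g}_\bfv^* \cong \mathfrak{g}_\bfv$ via the trace form, a direct computation from (\ref{mmap}) translates $\mu_{\bfv,\bfw}(\phi,\psi) = \unt_\bfv$ into the vertex-wise identities
\begin{equation*} \sum_{e : t(e) = i} \phi_e \psi_e - \sum_{e : s(e) = i} \psi_e \phi_e + \phi_i \psi_i = \Id_{V_i}, \qquad i \in I. \end{equation*}
To establish freeness on $\mu_{\bfv,\bfw}^{-1}(\unt_\bfv)$, I would take a stabilizer $g = (g_i)$ of $(\phi,\psi)$ and consider $U_i = \ker(g_i - \Id_{V_i}) \subset V_i$. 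The relations $g_{t(e)}\phi_e = \phi_e g_{s(e)}$, $g_{s(e)} \psi_e = \psi_e g_{t(e)}$ and $g_i \phi_i = \phi_i$ coming from the stabilizer condition show that the tuple $U = (U_i)$ is preserved by all $\phi_e, \psi_e$ and contains $\text{im}(\phi_i)$ for each $i$. Since $\phi_i \psi_i$ then lands in $U_i$, the moment map equation descends to the quotient $\bar V_i = V_i/U_i$ as $\sum_{e: t(e)=i} \bar\phi_e \bar\psi_e - \sum_{e:s(e)=i}\bar\psi_e\bar\phi_e = \Id_{\bar V_i}$. Taking traces and summing over $i$, the left side collapses to zero by cyclicity of the trace, while the right side equals $\sum_i \dim \bar V_i$, forcing $U = V$ and hence $g = \Id$.

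For freeness on $\mu_{\bfv,\bfw}^{-1}(l_{\unt_\bfv})^{\chi-ss}$, the $G_\bfv$-equivariant scaling $(\phi,\psi) \mapsto (t\phi, \psi)$ provides isomorphisms $\mu_{\bfv,\bfw}^{-1}(\unt_\bfv) \cong \mu_{\bfv,\bfw}^{-1}(t\unt_\bfv)$ for all $t \in k^\times$, reducing the non-zero part of $l_{\unt_\bfv}$ to the case just handled. At $t = 0$, freeness of $G_\bfv$ on $\mu_{\bfv,\bfw}^{-1}(0)^{\chi-ss}$ with King character $\chi = \prod_i \det^{-1}$ and $\bfw \neq 0$ is the standard result for framed Nakajima quiver varieties \cite{nak98}. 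Proposition~\ref{nakfamily} therefore yields $[\FM(\bfv,\bfw)] = [\FM_{0,\unt_\bfv}]$ in $\mathscr{M}$.

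To rewrite this as $[\mu_{\bfv,\bfw}^{-1}(\unt_\bfv)]/[G_\bfv]$, I would proceed as in Proposition~\ref{toricfam}: the set-theoretically free action is in fact scheme-theoretically free (cf.~\cite[Lemma 6.5]{Re03}), so by \cite[Proposition 0.9, Amplification 1.3]{MFK94} the quotient morphism $\mu_{\bfv,\bfw}^{-1}(\unt_\bfv) \to \FM_{0,\unt_\bfv}$ is a principal $G_\bfv$-bundle. Since $G_\bfv$ is a product of general linear groups, this bundle is Zariski-locally trivial \cite[Section 4.4]{serre58}, so (\ref{trif}) yields $[\mu_{\bfv,\bfw}^{-1}(\unt_\bfv)] = [G_\bfv]\cdot[\FM_{0,\unt_\bfv}]$ in $\mathscr{M}$; division by the invertible class $[G_\bfv]$ completes the proof. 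The main obstacle is the subrepresentation-and-trace argument establishing freeness at $\unt_\bfv$; once it is in place, everything else is standard descent along a Zariski-locally trivial bundle by a special group.
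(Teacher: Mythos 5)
Your proof is correct and follows the same overall strategy as the paper: verify the freeness hypotheses, apply Proposition~\ref{nakfamily}, then descend along a Zariski-locally trivial $G_\bfv$-principal bundle. The only difference is in how the freeness hypotheses are established. The paper simply cites \cite[Lemma 3.10]{nak98} for freeness on all of $(\BV_{\bfv,\bfw}\oplus\BV_{\bfv,\bfw}^*)^{\chi\text{-}ss}$ (which a fortiori gives freeness on $\mu^{-1}(l_{\unt_\bfv})^{\chi\text{-}ss}$) and \cite[Corollary 5]{Hau2} for freeness on $\mu^{-1}_{\bfv,\bfw}(\unt_\bfv)$, whereas you reprove the latter directly via the standard subrepresentation-and-trace argument and then reduce the semistable-locus freeness to the $\xi=0$ case by the $G_\bfv$-equivariant rescaling $(\phi,\psi)\mapsto(t\phi,\psi)$. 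Your direct argument for freeness on $\mu^{-1}(\unt_\bfv)$ is exactly the content of the cited result of Hausel (itself going back to Crawley-Boevey and Nakajima) and is checked correctly, including the point that $\im(\phi_i)\subset U_i$ kills the framing term on the quotient. This makes the proof more self-contained at the cost of some length; the paper's version is shorter because the required freeness statements already appear verbatim in the literature.
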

\begin{proof} First by \cite[Lemma 3.10]{nak98} $G_\bfv$ acts freely on $\left(  \BV_{\bfv,\bfw} \oplus \BV_{\bfv,\bfw}^*\right)^{\chi-ss}$ and also on $\mu_{\bfv,\bfw}^{-1}(\unt_\bfv)$ by \cite[Corollary 5]{Hau2}. Hence by Proposition \ref{nakfamily} we get
\[  [\FM(\bfv,\bfw)] = [\mu^{-1}(\unt_\bfv)\sslash_0 G_\bfv].\]

Now again as in Proposition \ref{toricfam}, the quotient $\mu^{-1}(\unt_\bfv) \rightarrow \mu^{-1}(\unt_\bfv)\sslash_0 G_\bfv$ is a $G_\bfv$-principal bundle, hence Zariski-locally trivial \cite[Lemma 5 and 6]{serre58}, this implies the proposition by formula (\ref{trif}).
\end{proof}

\subsubsection{Motivic classes of Nakajima quiver varieties}\label{tmc}

In this section we deduce a formula for the motivic class of $\FM(\bfv,\bfw)$ in terms of the combinatorial data of the quiver $\Gamma$. It will be convenient to consider the  generating series
\begin{eqnarray}\label{defin}\Phi(\bfw) = \sum_{\bfv \in \BN^I} [\FM(\bfv,\bfw)] \BL^{d_{\bfv,\bfw}}T^\bfv \in \mathscr{M}[[T_1,\dots,T_n]], \end{eqnarray}
where we define 
\[d_{\bfv,\bfw} = \dim(\mathfrak{g}_\bfv)-\dim(\BV_{\bfv,\bfw}).\]

Combining Propositions \ref{qdef} and \ref{prop1} we have
\begin{eqnarray}\label{phisim} \Phi(\bfw) = \sum_{\bfv \in \BN^I} \frac{[\FV_1(\bfv,\bfw)]}{[\GL_\bfv]} \BL^{d_{\bfv,\bfw}}T^\bfv =\sum_{\bfv \in \BN^I} \frac{[a_{\varrho_{\bfv,\bfw}},\lla-\pi, \unt_\bfv\rra]}{[\GL_\bfv]}T^\bfv, 
\end{eqnarray}
with the notations 
\[a_{\varrho_{\bfv,\bfw}} = \{ (\phi,X) \in \BV_{\bfv,\bfw} \times \mathfrak{g}_\bfv \ | \ \varrho_{\bfv,\bfw}(X)v=0\}\]
 and $\pi:a_{\varrho_{\bfv,\bfw}}\rightarrow \mathfrak{g}_\bfv$ the natural projection. 

Next we use some basic linear algebra to split up the above generating series into a regular and a nilpotent part. Given a finite dimensional vector space $V$ of dimension $n$ and an endomorphism $X$ of $V$, we can write $V=N(X) \oplus R(X)$, where $N(X) = \ker(X^n)$ and $R(X)=\im(X^n)$. With respect to this decomposition we have $X=X^\n \oplus X^\re$ with $X^\n= X_{|N(X)}$ nilpotent and $X^\re=X_{|R(X)}$ regular. \\
Now let $\bfv'= (v_i')_{i\in I}$ with $\bfv' \leq \bfv $ (i.e the inequality holds for every entry). We define the three varieties
\begin{align*}
a_{\varrho_{\bfv,\bfw}}^{\bfv'} &= \{ (\phi,X) \in a_{\varrho_{\bfv,\bfw}} \ | \ \dim(N(X_i))=v_i' \text{ for }i\in I\} \\
 a_{\varrho_{\bfv,\bfw}}^{\n} &= \{ (\phi,X) \in a_{\varrho_{\bfv,\bfw}} \ | \ X \text{ nilpotent}\}, \\
 a_{\varrho_{\bfv,\bfw}}^\re &= \{ (\phi,X) \in a_{\varrho_{\bfv,\bfw}}\ | \ X \text{ regular}\}.
\end{align*}

\begin{lemma}\label{first} For every $\bfv' \leq \bfv $ we have the following relation in $\expm$
\begin{eqnarray}\label{msp1}  \frac{[a_{\varrho_{\bfv,\bfw}}^{\bfv'},\lla-\pi, \unt_\bfv\rra]}{[G_\bfv]} = \frac{[a_{\varrho_{\bfv',\bfw}}^{\n}]}{[G_{\bfv'}]} \frac{[a_{\varrho_{\bfv-\bfv',0}}^\re, \lla-\pi, \unt_{\bfv-\bfv'}\rra]}{[G_{\bfv-\bfv'}]}.\end{eqnarray}
\end{lemma}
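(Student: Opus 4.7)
The plan is to use the Fitting decomposition $V_i = N(X_i) \oplus R(X_i)$ attached to any $(\phi,X) \in a_{\varrho_{\bfv,\bfw}}^{\bfv'}$, and to exhibit $a_{\varrho_{\bfv,\bfw}}^{\bfv'}$ as an associated bundle over the variety of such decompositions. First I would observe that the equation $\varrho_{\bfv,\bfw}(X)\phi = 0$ forces each $\phi_e$ to commute with $X$, hence to split as $\phi_e = \phi^\n_e \oplus \phi^\re_e$ with respect to the decomposition, while the framing map $\phi_i \colon W_i \to V_i$ must land in $\ker X_i \subseteq N(X_i)$. In other words, all of the $W$-framing data collapses onto the nilpotent summand.

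Next I would fix the standard decomposition $V_i = V_i' \oplus V_i''$ with $\dim V_i' = v_i'$, and consider the locally closed subvariety $Y^{\mathrm{std}} \subset a_{\varrho_{\bfv,\bfw}}^{\bfv'}$ consisting of those $(\phi,X)$ with $N(X_i) = V_i'$ and $R(X_i) = V_i''$ for every $i \in I$. The observation above then yields a natural isomorphism
\[ Y^{\mathrm{std}} \; \cong \; a^{\n}_{\varrho_{\bfv',\bfw}} \times a^{\re}_{\varrho_{\bfv-\bfv',0}}, \]
with the second factor carrying the trivial framing. Moreover $\tr(X_i) = \tr(X_i^\n) + \tr(X_i^\re) = \tr(X_i^\re)$, so the restriction of $\lla -\pi, \unt_\bfv\rra$ to $Y^{\mathrm{std}}$ agrees with the pullback of $\lla -\pi, \unt_{\bfv-\bfv'}\rra$ from the second factor.

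I would then analyze the action map
\[ \Theta \colon G_\bfv \times Y^{\mathrm{std}} \longrightarrow a_{\varrho_{\bfv,\bfw}}^{\bfv'}, \qquad (g,(\phi,X)) \mapsto g \cdot (\phi,X). \]
It is surjective because any Fitting decomposition of the correct dimensions is moved by some $g \in G_\bfv$ to the standard one, and its fibers are exactly the orbits of the free action of the Levi $L = G_{\bfv'} \times G_{\bfv-\bfv'}$ given by $h \cdot (g,(\phi,X)) = (gh^{-1}, h(\phi,X))$. Hence $\Theta$ is a principal $L$-bundle; since $L$ is a product of general linear groups (a special group in the sense of Serre), it is Zariski-locally trivial, so the identity (\ref{trif}) applies in $\kexp$ and gives
\[ [G_\bfv]\,[a^{\n}_{\varrho_{\bfv',\bfw}}]\,[a^{\re}_{\varrho_{\bfv-\bfv',0}}, \lla -\pi, \unt_{\bfv-\bfv'}\rra] = [G_{\bfv'}]\,[G_{\bfv-\bfv'}]\,[a^{\bfv'}_{\varrho_{\bfv,\bfw}}, \lla -\pi, \unt_\bfv\rra]. \]
Dividing by $[G_\bfv]\,[G_{\bfv'}]\,[G_{\bfv-\bfv'}]$ in $\expm$ yields (\ref{msp1}).

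The step I expect to require the most care is the identification of $Y^{\mathrm{std}}$ with the product of the nilpotent and regular pieces, and in particular the clean collapse of the $W$-framing onto the nilpotent factor (this is really what makes the formula separate cleanly). The Zariski-local triviality of $\Theta$ is then a standard consequence of $\GL_n$ being special, and the exponential bookkeeping is immediate from $\tr X^\n = 0$.
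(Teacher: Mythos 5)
Your proof is correct and follows essentially the same strategy as the paper: both exhibit $a_{\varrho_{\bfv,\bfw}}^{\bfv'}$ as the base of a Zariski-locally trivial $L$-fibration with total space $G_\bfv \times a^{\n}_{\varrho_{\bfv',\bfw}} \times a^{\re}_{\varrho_{\bfv-\bfv',0}}$, where $L = G_{\bfv'}\times G_{\bfv-\bfv'}$, and both use the vanishing of traces of nilpotent matrices to reconcile the exponentials. Your $\Theta$ is the same map as the paper's $\Delta$, merely packaged via the slice $Y^{\mathrm{std}}$.

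The one genuine methodological difference is in the last step: the paper produces an explicit Zariski open cover $\bigcup_j U_j$ of $a^{\bfv'}_{\varrho_{\bfv,\bfw}}$ together with algebraic sections $t_j\colon U_j\to G_\bfv$ whose matrices provide bases adapted to the Fitting decomposition of $X_i$, whereas you invoke specialness of $L$. Both are valid; the paper's argument is more self-contained, while yours shifts the burden to the standard fact that quotients by products of $\GL$'s are Zariski-locally trivial. If you go the specialness route you should say a word about why $\Theta$ is an fppf principal $L$-bundle in the first place (free action with closed orbits, smoothness, identification of the set-theoretic quotient with $a^{\bfv'}_{\varrho_{\bfv,\bfw}}$ as a variety), since specialness only upgrades étale/fppf local triviality to Zariski local triviality. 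Your upfront observation that the framing maps $\phi_i$ land in $\ker X_i \subseteq N(X_i)$, so that all framing data collapses onto the nilpotent summand, is the right way to see why the regular factor carries $\bfw = 0$; the paper encodes this implicitly in the very definition of $\Delta$ but never states it, so your version is arguably a cleaner exposition of that point.
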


\begin{proof}  Fix for all $i\in I$ a decomposition $V_i=V_i' \oplus V_i''$ with $\dim(V_i')= v'_i$. This induces inclusions
\[\BV_{\bfv',\bfw}\oplus \BV_{\bfv-\bfv',0} \hookrightarrow \BV_{\bfv,\bfw} \text{  and  } \mathfrak{g}_{\bfv'} \oplus \mathfrak{g}_{\bfv-\bfv'} \hookrightarrow \mathfrak{g}_{\bfv}.\]

We will prove that the morphism
\begin{align*} \Delta: a_{\varrho_{\bfv',\bfw}}^{\n} \times a_{\varrho_{\bfv-\bfv',0}}^\re \times G_\bfv &\rightarrow a_{\varrho_{\bfv,\bfw}}^{\bfv'}\\ 
							(\phi',X',\phi'',X'',g) &\mapsto (\rho_{\bfv,\bfw}(g)(\phi'\oplus \phi''), \Ad_g(X' \oplus X''))
\end{align*}
is a Zariski-locally trivial $G_{\bfv'} \times G_{\bfv-\bfv'}$-fibration. Since for every $(\phi',X') \in a_{\varrho_{\bfv',\bfw}}^{\n}$ we have
\[ \lla-\pi, \unt_{\bfv'}\rra(\phi',X')= \sum_{i\in I} \tr X'_i=0,\]
this will imply the lemma using (\ref{trif}).\\
First notice that $\Delta$ is well defined because
\[\varrho_{\bfv,\bfw} \circ \Ad_g = \Ad_{\rho_{\bfv,\bfw}(g)} \circ \varrho_{\bfv,\bfw}.\]
The $G_{\bfv'} \times G_{\bfv-\bfv'}$-action on the domain of $\Delta$ is given as follows. 
For \linebreak $h=(h',h'')\in G_{\bfv'}\times G_{\bfv-\bfv'}$ and $(\phi',X',\phi'',X'',g)\in a_{\varrho_{\bfv',\bfw}}^{\n} \times a_{\varrho_{\bfv-\bfv',0}}^\re \times G_\bfv$  we set
\[ h\cdot (\phi',X',\phi'',X'',g) = (\rho_{\bfv',\bfw}(h')\phi',\Ad_{h'}X',\rho_{\bfv-\bfv',0}(h'')\phi'',\Ad_{h''}X'',gh^{-1}),\]
where $gh^{-1}$ is understood via the inclusion  $G_{\bfv'} \times G_{\bfv-\bfv'} \hookrightarrow G_{\bfv}$. One checks directly that $\Delta$ is invariant under this action and hence each fiber of $\Delta$ carries a free $G_{\bfv'}\times G_{\bfv-\bfv'}$-action. 

On the other hand, assume $\Delta(\phi_1',X_1',\phi_1'',X_1'',g_1) = \Delta(\phi_2',X_2',\phi_2'',X_2'',g_2)$. This implies
\[ \Ad_{g_2^{-1}g_1}(X_1' \oplus X_1'')=X_2'\oplus X_2''.\]
Since $X_j'$ is nilpotent and $X_j''$ regular for $j=1,2$, the decomposition $V_i=V_i' \oplus V_i''$ is preserved by $g_2^{-1}g_1$ i.e. $g_2^{-1}g_1 \in G_{\bfv'}\times G_{\bfv-\bfv'}$, which shows that each fiber of $\Delta$ is isomorphic to $G_{\bfv'}\times G_{\bfv-\bfv'}$.

Finally, to trivialize $\Delta$ locally we notice, that there is an open covering $a_{\varrho_{\bfv,\bfw}}^{\bfv'}= \cup_j U_j$ and algebraic morphisms $t_j:U_j \rightarrow G_\bfv$ such that for $X\in U_j$ and $i\in I$ the columns of the matrix $t_j(X)_i$ form a basis of $N(X_i)$ and $R(X_i)$.
\end{proof}
Now we use the stratification $a_{\varrho_{\bfv,\bfw}}= \coprod_{\bfv' \leq \bfv} a_{\varrho_{\bfv,\bfw}}^{\bfv'}$ together with Lemma \ref{first} to get

\begin{align}\label{aster} \nonumber \Phi(\bfw) &\stackrel{(\ref{phisim})}{=} \sum_{\bfv \in \BN^I} \frac{[a_{\varrho_{\bfv,\bfw}},\lla-\pi, \unt_\bfv\rra]}{[\GL_\bfv]}T^\bfv\\
\nonumber &= \sum_{\bfv \in \BN^I}\sum_{\bfv' \leq \bfv} \frac{[a_{\varrho_{\bfv,\bfw}}^{\bfv'},\lla-\pi, \unt_\bfv\rra]}{[\GL_\bfv]}T^\bfv\\ 
\nonumber &\stackrel{(\ref{msp1})}{=}  \sum_{\bfv \in \BN^I}\sum_{\bfv' \leq \bfv}\frac{[a_{\varrho_{\bfv',\bfw}}^{\n}]}{[G_{\bfv'}]} \frac{[a_{\varrho_{\bfv-\bfv',0}}^\re, \lla-\pi,\unt_{\bfv-\bfv'}\rra]}{[G_{\bfv-\bfv'}]}T^\bfv \\
&= \Phi_{\n}(\bfw)\Phi_\re,
\end{align}
where we used the notations 
\[ \Phi_{\n}(\bfw) = \sum_{\bfv \in \BN^I} \frac{[a_{\varrho_{\bfv,\bfw}}^{\n}]}{[G_{\bfv}]}T^\bfv\]
and
\[ \Phi_\re = \sum_{\bfv \in \BN^I} \frac{[a_{\varrho_{\bfv,0}}^\re, \lla-\pi, \unt_{\bfv}\rra]}{[G_{\bfv}]}T^\bfv.\]

Notice that equation (\ref{phisim}) also makes sense for $\bfw = \bm{0}$, in which case \cite[Lemma 3]{Hau2} implies $\Phi(\mathbf{0}) = 1$. Therefore
\begin{eqnarray}\label{fract} \Phi(\bfw) = \frac{\Phi_{\n}(\bfw)}{\Phi_{\n}(\bm{0})},\end{eqnarray}	
which leaves us with computing $\Phi_{\n}(\bfw)$.

We denote by $\FP$ the set of all partitions $\lambda = (\lambda^1,\lambda^2,\dots)$, where $\lambda^1\geq \lambda^2\geq\dots$. The size of $\lambda$ is $|\lambda|=\lambda^1+\lambda^2+\dots$ and $\FP_n$ denotes the set of partitions of size $n$. For $\lambda \in \FP_n$ we write $\FC(\lambda)$ for the nilpotent conjugacy class, whose Jordan normal form is given by $\lambda$. For $\bfl = (\lambda_i)\in \FP^I$ with $\lambda_i\in \FP_{v_i}$ we set 

\[a_{\varrho_{\bfv,\bfw}}^{\n}(\bfl) = \{ (\phi,X) \in a_{\varrho_{\bfv,\bfw}}^{\n} \ |\ X_i \in \FC(\lambda_i) \},\]
which gives the stratification

\begin{eqnarray}\label{strat2}a_{\varrho_{\bfv,\bfw}}^{\n} = \coprod_{\substack{\bfl \in \FP^I \\ \lambda_i\in \FP_{v_i}}} a_{\varrho_{\bfv,\bfw}}^{\n}(\bfl).\end{eqnarray}

To compute $[a_{\varrho_{\bfv,\bfw}}^{\n}(\bfl)]$ we look at the projection 

\begin{eqnarray}\label{vb} \pi: a_{\varrho_{\bfv,\bfw}}^{\n}(\bfl) \rightarrow\FC(\bfl)= \prod_{i\in I} \FC(\lambda_i).\end{eqnarray}
The fiber of $\pi$ over $X\in \FC(\bfl)$ is simply $\ker(\varrho_{\bfv,\bfw}(X))$. Because of $\varrho_{\bfv,\bfw} \circ \Ad_g = \Ad_{\rho_{\bfv,\bfw}(g)} \circ \varrho_{\bfv,\bfw}$ the dimensions of those kernels are constant and hence $\pi$ is a vector bundle of rank, say, $\kappa_{\bfv,\bfw}(\bfl)$. 
\begin{lemma}\label{second} Denote by $Z(\bfl) \subset G_\bfv$ the centralizer of (some element in) $\FC(\bfl)$. We have the following relation in $\mathscr{M}$.
\begin{eqnarray}\label{idkn} \frac{[a_{\varrho_{\bfv,\bfw}}^{\n}(\bfl)]}{[G_\bfv]} = \frac{\BL^{\kappa_{\bfv,\bfw}(\bfl)}}{[Z(\bfl)]}\end{eqnarray}
\end{lemma}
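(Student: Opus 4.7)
The plan is to identify both sides as expressing, in two different ways, the volume of the variety $a_{\varrho_{\bfv,\bfw}}^{\n}(\bfl)$ after noting that it fibers nicely over the conjugacy class $\FC(\bfl)$.

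First I would observe that $\FC(\bfl)$ is precisely a single $G_\bfv$-orbit under the adjoint action, and that by definition $Z(\bfl)$ is the stabilizer of a chosen base point $X_0 \in \FC(\bfl)$. This gives a $G_\bfv$-equivariant bijection $G_\bfv/Z(\bfl) \cong \FC(\bfl)$, and the quotient map $q: G_\bfv \to \FC(\bfl)$ is a $Z(\bfl)$-torsor. The key step is to upgrade this to a \emph{Zariski-locally trivial} $Z(\bfl)$-principal bundle, so that the cut-and-paste relation \eqref{trif} applies and yields
\begin{equation*}
[\FC(\bfl)] \;=\; \frac{[G_\bfv]}{[Z(\bfl)]} \qquad \text{in } \mathscr{M}.
\end{equation*}

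For this one recalls that in characteristic zero the centralizer $Z_{\GL(V_i)}(X_i)$ of a nilpotent element of Jordan type $\lambda_i$ is a semidirect product $L_i \ltimes U_i$, where $L_i \cong \prod_{k} \GL_{m_k(\lambda_i)}$ is a Levi factor and $U_i$ is unipotent. Both factors are special algebraic groups ($\GL_n$ is special by the Hilbert 90 theorem, and unipotent groups in characteristic zero are iterated extensions by $\BG_a$, hence special), and the class of special groups is stable under extensions and products. Therefore $Z(\bfl) = \prod_i Z_{\GL(V_i)}(X_i)$ is special, so every $Z(\bfl)$-torsor over a variety is Zariski-locally trivial, in particular $q$ itself.

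Next I would treat the vector bundle $\pi:a_{\varrho_{\bfv,\bfw}}^{\n}(\bfl) \to \FC(\bfl)$ noted in \eqref{vb}. Because $\varrho_{\bfv,\bfw} \circ \Ad_g = \Ad_{\rho_{\bfv,\bfw}(g)} \circ \varrho_{\bfv,\bfw}$, the group $G_\bfv$ acts on the total space, and $\pi$ is $G_\bfv$-equivariant with transitive action on the base. Pulling any Zariski-local section $\sigma : U \to G_\bfv$ of $q$ back through the action produces, over $U \subset \FC(\bfl)$, an isomorphism $\pi^{-1}(U) \cong U \times \ker(\varrho_{\bfv,\bfw}(X_0))$. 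Hence $\pi$ is a Zariski-locally trivial vector bundle of rank $\kappa_{\bfv,\bfw}(\bfl)$, and \eqref{trif} gives $[a_{\varrho_{\bfv,\bfw}}^{\n}(\bfl)] = \BL^{\kappa_{\bfv,\bfw}(\bfl)} [\FC(\bfl)]$. Combining the two identities and dividing by $[G_\bfv]$ yields \eqref{idkn}.

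The main obstacle will be the Zariski-local triviality of $q$; the rest is a formal $G_\bfv$-equivariant bootstrap. Once the specialness of the nilpotent centralizer in characteristic zero is invoked, all subsequent steps reduce to bookkeeping, and in particular no finer class manipulation in $\kexp$ is needed (everything takes place in $\mathscr{M}$).
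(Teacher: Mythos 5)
Your proof takes the same route as the paper: factor the class through the vector bundle $\pi:a_{\varrho_{\bfv,\bfw}}^{\n}(\bfl)\to\FC(\bfl)$, identify $\FC(\bfl)\cong G_\bfv/Z(\bfl)$, and reduce to Zariski-local triviality of the $Z(\bfl)$-torsor $G_\bfv\to G_\bfv/Z(\bfl)$, after which (\ref{trif}) gives the identity. The one substantive difference is how that local triviality is justified: the paper cites Propositions 3.13 and 3.16 of \cite{Mer13}, while you argue directly that $Z(\bfl)$ is a special group, via the Levi decomposition $Z_{\GL(V_i)}(X_i)=L_i\ltimes U_i$ with $L_i\cong\prod_k\GL_{m_k(\lambda_i)}$ and $U_i$ unipotent, then closure of special groups under extensions and products. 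That self-contained argument is correct (Serre's theorem gives specialness of $\GL_n$, characteristic zero gives a filtration of $U_i$ with $\BG_a$ quotients, and the extension closure is standard) and makes the lemma independent of the external reference; it buys nothing beyond transparency, but transparency is worth something here since the lemma is the crux of the generating-series computation. One small point you glossed over: the statement implicitly requires $[Z(\bfl)]$ to be invertible in $\mathscr{M}$, which the paper observes from the explicit formula \eqref{zent}; your Levi-decomposition description gives this equally directly since $[Z(\bfl)]$ is a product of $[\GL_{m}]$'s and a power of $\BL$.
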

\begin{proof}
The formula (\ref{zent}) below shows in particular that $[Z(\bfl)]$ is invertible in $\mathscr{M}$. Since the projection (\ref{vb}) is a vector bundle, we are left with proving $[G_\bfv]/[Z(\bfl)]= [\FC(\bfl)]$. Since $\FC(\bfl)$ is isomorphic to $G_\bfv / Z(\bfl)$, see for example \cite[Chapter 3.9.1]{bo12}, it is enough to prove that the $Z(\bfl)$-principal bundle $G_\bfv \rightarrow  G_\bfv / Z(\bfl)$ is Zariski locally trivial by (\ref{trif}). In fact, this is true for every $Z(\bfl)$-principal bundle, 
 which follows from combining Propositions 3.13 and 3.16 of \cite{Mer13}.
\end{proof}

 
To compute $\kappa_{\bfv,\bfw}(\bfl)$ and $[Z(\bfl)]$, denote by $m_k(\lambda)$ the multiplicity of $k\in \BN$ in a partition $\lambda \in \FP$. Then given any two partitions $\lambda,\lambda' \in \FP$ we define their inner product to be
\[ \lla \lambda,\lambda' \rra = \sum_{i,j\in \BN} \min(i,j)m_i(\lambda)m_j(\lambda').\]
Lemma 3.3 in \cite{Hua} implies now
\begin{eqnarray}\label{kappa} \kappa_{\bfv,\bfw}(\bfl) = \sum_{e\in E} \lla  \lambda_{s(e)},\lambda_{t(e)}\rra + \sum_{i\in I} \lla 1^{w_i},\lambda_i \rra, 
\end{eqnarray}
where $1^{w_i} \in \FP_{w_i}$ denotes the partition $(1,1,\dots,1)$.

For $[Z(\bfl)]$ we can use the formula (1.6) from \cite[Chapter 2.1]{Mac}. There the formula is worked out over a finite field but Lemma 1.7 of \textit{loc. cit.} holds over any field. In our notation this gives (see \cite[Chapter 3]{Hua} for details)
\begin{eqnarray}\label{zent} [Z(\bfl)] =\prod_{i\in I} \BL^{\lla \lambda_i,\lambda_i \rra} \prod_{k\in \BN} \prod_{j=1}^{m_k(\lambda_i)} (1-\BL^{-j}).\end{eqnarray}

Finally, combining (\ref{fract}), (\ref{strat2}), (\ref{idkn}), (\ref{kappa}) and (\ref{zent}) we obtain 

\begin{thm}\label{mainth} For a fixed non-zero dimension vector $\bfw \in \BN^I$  the motivic classes of the Nakajima quiver varieties $\FM(\bfv,\bfw)$ in $\mathscr{M}$ are given by the generating function
	\begin{eqnarray}\label{for1} \sum_{\bfv \in \BN^I} [\FM(\bfv,\bfw)] \BL^{d_{\bfv,\bfw}}T^\bfv = \frac{\sum_{\bfl \in \FP^I} \frac{\prod_{e\in E} \BL^{\lla \lambda_{s(e)},\lambda_{t(e)}\rra} \prod_{i\in I} \BL^{\lla 1^{w_i},\lambda_i \rra} }{\prod_{i\in I} \BL^{\lla \lambda_i,\lambda_i \rra} \prod_k \prod_{j=1}^{m_k(\lambda_i)} (1-\BL^{-j})}T^{|\bfl|}}{\sum_{\bfl \in \FP^I} \frac{\prod_{e\in E} \BL^{\lla \lambda_{s(e)},\lambda_{t(e)}\rra}}{\prod_{i\in I} \BL^{\lla \lambda_i,\lambda_i \rra} \prod_k \prod_{j=1}^{m_k(\lambda_i)} (1-\BL^{-j})}T^{|\bfl|}},\end{eqnarray}
\end{thm}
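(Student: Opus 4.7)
The plan is to reduce the problem to a motivic volume computation on the commuting-type variety $a_{\varrho_{\bfv,\bfw}}$ and then perform a Jordan decomposition in the $\g_\bfv$-variable. By Proposition \ref{qdef} the class $[\FM(\bfv,\bfw)]$ equals $[\mu_{\bfv,\bfw}^{-1}(\unt_\bfv)]/[G_\bfv]$, and Proposition \ref{prop1} rewrites the numerator as $\BL^{\dim\BV_{\bfv,\bfw}-\dim\g_\bfv}[a_{\varrho_{\bfv,\bfw}},\lla -\pi,\unt_\bfv\rra]$ in $\expm$. Absorbing the power of $\BL$ into $d_{\bfv,\bfw}$, the generating series $\Phi(\bfw)$ becomes a sum of motivic character integrals over the $a_{\varrho_{\bfv,\bfw}}$.

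Next I would prove the factorization $\Phi(\bfw) = \Phi_\n(\bfw)\,\Phi_\re$. Stratify $a_{\varrho_{\bfv,\bfw}}$ by the dimension vector $\bfv'$ of the generalised kernel of $X$ and establish the product decomposition (\ref{msp1}) of Lemma \ref{first}: the assignment sending $(\phi,X)$ to its splitting along $N(X)\oplus R(X)$ should be realised as a Zariski-locally trivial $G_{\bfv'}\times G_{\bfv-\bfv'}$-bundle, which lets us divide classes in $\mathscr{M}$. The key simplification is that $\tr X_i$ vanishes on every nilpotent $X_i$, so the exponential twist $\lla-\pi,\unt_\bfv\rra$ restricts trivially to the nilpotent factor and the regular factor decouples from $\bfw$. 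Specialising to $\bfw=\bm{0}$ and applying the identity $\Phi(\bm{0})=1$ coming from \cite[Lemma 3]{Hau2} yields $\Phi_\re = \Phi_\n(\bm{0})^{-1}$, hence $\Phi(\bfw) = \Phi_\n(\bfw)/\Phi_\n(\bm{0})$.

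It remains to compute $\Phi_\n(\bfw)$ term by term. Stratify $a_{\varrho_{\bfv,\bfw}}^\n$ by the $I$-tuple of partitions $\bfl = (\lambda_i)$ recording the Jordan types of the $X_i$. The projection $a_{\varrho_{\bfv,\bfw}}^\n(\bfl)\to\FC(\bfl)$ is a vector bundle of constant rank $\kappa_{\bfv,\bfw}(\bfl)$ because $\ker\varrho_{\bfv,\bfw}(X)$ transforms equivariantly under the conjugation action of $G_\bfv$. Combined with the Zariski-local triviality of the $Z(\bfl)$-bundle $G_\bfv\to G_\bfv/Z(\bfl)\cong\FC(\bfl)$, this produces Lemma \ref{second}. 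Substituting the explicit combinatorial expressions (\ref{kappa}) for $\kappa_{\bfv,\bfw}(\bfl)$ and (\ref{zent}) for $[Z(\bfl)]$ and summing over $\bfl\in\FP^I$ assembles both the numerator and the denominator of (\ref{for1}).

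The principal obstacle throughout is the necessity of upgrading Hausel's finite-field identities to equalities in $\mathscr{M}$: each ratio of motivic classes that appears must be justified via (\ref{trif}), which requires Zariski-local triviality of the relevant principal bundles for the groups $Z(\bfl)$, $G_\bfv$ and $G_{\bfv'}\times G_{\bfv-\bfv'}$. The most delicate instance is Lemma \ref{first}, where one must argue that the abstract Jordan splitting $V_i = N(X_i)\oplus R(X_i)$ varies algebraically over $a_{\varrho_{\bfv,\bfw}}^{\bfv'}$ in a way that trivialises on an affine open cover; this is handled by locally choosing bases of $\ker X_i^{v_i}$ and $\im X_i^{v_i}$ to produce explicit sections into $G_\bfv$.
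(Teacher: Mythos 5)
Your proposal matches the paper's proof step for step: the reduction via Propositions \ref{qdef} and \ref{prop1}, the Jordan-type splitting into nilpotent and regular parts via the Zariski-locally trivial $G_{\bfv'}\times G_{\bfv-\bfv'}$-fibration of Lemma \ref{first}, the specialisation $\Phi(\bm{0})=1$ to eliminate $\Phi_\re$, the stratification by $I$-tuples of partitions, the vector-bundle identification of Lemma \ref{second}, and the final substitution of (\ref{kappa}) and (\ref{zent}). You have also correctly identified the delicate point — establishing Zariski-local triviality of the various principal and vector bundles so that divisions in $\mathscr{M}$ via (\ref{trif}) are legitimate — which is exactly where the paper spends its care.
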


\begin{expl} Consider again the Jordan quiver as in \ref{jordan1}, where we saw $\FM(n,1) \cong \Hilb^n\BA^2$. Then formula (\ref{for1}) gives a generating series for the classes $[\Hilb^n\BA^2]$:
\[ \sum_{n\geq 0} [\Hilb^n\BA^2] \BL^{-n}T^n = \frac{\sum_{\lambda \in \FP} \prod_k \prod_{j=1}^{m_k(\lambda)} (1-\BL^{-j})^{-1}\BL^{\sum_{k}m_k(\lambda)}T^{|\lambda|}}{\sum_{\lambda \in \FP}  \prod_k \prod_{j=1}^{m_k(\lambda)} (1-\BL^{-j})^{-1}T^{|\lambda|}}. \]

On the other hand there is a well known formula for $[\Hilb^n\BA^2]$ due to G\"ottsche \cite{Go01}, which gives
\[ \sum_{n\geq 0} [\Hilb^n\BA^2] \BL^{-n}T^n = \prod_{k\geq 1} \frac{1}{1-\BL T^k}.\]

Thus we see that (\ref{for1}) gives already in this case a quite non-trivial combinatorial statement. Similar identities appear for example in \cite{Hua}.
\end{expl}

\newpage

\section{Open de Rham spaces}\label{ch3}

In this chapter we study meromorphic connections on the trivial rank $n$ bundle on $\BP^1$. By fixing some local data $\mathbf{C}$ at the poles of the connection one obtains a finite dimensional moduli space $\FM_n(\mathbf{C})$, called the \textit{open de Rham space}. Originally they were introduced by Boalch \cite{Bo01} to study isomonodromic deformations, however for us they arise in a slightly different context. 

Our starting point is the conjecture of Hausel, Mereb and Wong \cite{HMW16} on the mixed Hodge polynomial of wild character varieties $\FM_{Betti}$. These character varieties are the target space for the wild Riemann-Hilbert correspondence, which associates to a meromorphic connection its Stokes data \cite{Bo11}. Even though this correspondence is not algebraic, the purity conjecture \cite{HR08} predicts, that $H^*(\FM_n(\mathbf{C}))$ equals the pure part of $H^*(\FM_{Betti})$. 

In our forthcoming paper \cite{HWW17} we find numerical evidence for both conjectures by computing the $E$-polynomial of $\FM_n(\mathbf{C})$ and proving an agreement with the conjectural pure part of the mixed Hodge polynomial of the corresponding $\FM_{Betti}$. In this thesis we only explain how to compute $E(\FM_n(\mathbf{C});x,y)$, or more precisely $[\FM_n(\mathbf{C})] \in \mathscr{M}$, as this fits into the same motivic Fourier transform setting we already used in Chapter \ref{ch2}.

In the first two sections we introduce the relevant notation and prove some computational lemmas on coadjoint orbits. It might therefore be more interesting to first read Section \ref{mercon}, where we define $\FM_n(\mathbf{C})$ and give a description in terms of coadjoint orbits following $\cite{Bo01}$. More precisely a pole of order $k$ can be modeled locally by a coadjoint orbit $\FO \subset \mathfrak{gl}_n(\BC[[z]]/z^k)^*$, and $\FM_n(\mathbf{C})$ is obtained by a symplectic fusion of the individual poles, see Proposition \ref{odrspa}.

Using the motivic convolution construction \ref{motcon}, the computation of $[\FM_n(\mathbf{C})]$ reduces to understand the Fourier transform of the composition 
\[\FO \hookrightarrow \mathfrak{gl}_n(\BC[[z]]/z^k)^* \rightarrow \mathfrak{gl}_n(\BC)^*.\]

This key computation is carried out in Section \ref{ftp} under the assumption $k\geq 2$, which will ensure that the 'motivic function' $\FF([\FO])\in \kexp_{\mathfrak{gl}_n(\BC)}$ is supported on semi-simple conjugacy classes, which fails for $k=1$. Eventually we would like to explain this phenomenon in a similar way as in Remarks \ref{motdh}, \ref{motdh2} i.e. as an instance of a general motivic localization formula, but we are not able to do so at the moment.

In Section \ref{mcodr} we put everything together and give in Theorem \ref{mainodr} an explicit formula for $[\FM_n(\mathbf{C})]$ as a polynomial in $\BL$, assuming that the order of each pole is $\geq 2$. In the last section we combine our results with the work of \cite{HLV11} on order one poles and sketch how to extend our computations, at least over finite fields, to give a formula for $|\FM_n(\mathbf{C})(\BF_q)|$ under the milder assumption, that at least one pole has to be of order $\geq 2$. 

\subsection{Jets and duals}
Let $n\geq 1$ be an integer. We abbreviate $G = GL_n(\BC)$ and $\g=\mathfrak{gl}_n(\BC)$. Furthermore $T \subset G$ will denote the standard maximal torus consisting of diagonal matrices, $\ft \subset \g$ its Lie algebra and $\ft^{reg} \subset \ft$ the subset of elements with distinct eigenvalues. We also have the jet versions
\begin{align*} G_k &= GL_n(\BC[[z]]/z^k) = \left\{ g_0+zg_1+\dots +z^{k-1}g_{k-1} \ |\ g_0 \in G, \ g_1,\dots,g_{k-1} \in \g \right\},\\
\g_k &= \mathfrak{gl}_n(\BC[[z]]/z^k) = \left\{X_0+zX_1+\dots + z^{k-1}X_{k-1}\ |\ X_i \in \g \right\}, 
\end{align*}
and similarly we define $T_k$ and $\ft_k$.

Finally, we have the unipotent subgroup $B_k \subset G_k$ and its Lie algebra $\fb_k$ defined by
\begin{align*} B_k &= \{ \unt + zb_1 +z^2b_2 +\dots + z^{k-1}b_{k-1}\ |\ b_i \in \g \},\\
 \fb_k &= \{ zX_1 +z^2X_2 +\dots + z^{k-1}X_{k-1} \ |\ X_i \in \g\}.\end{align*}
Note that we have $G_k = B_k \rtimes G$, where $G$ acts on $B_k$ by conjugation, and thus a decomposition $\g_k = \fb_k \oplus \g$.

It will be convenient to identify the dual $\g_k^*$ with 
\[ z^{-k}\g_k = \left\{ z^{-k}Y_k + z^{-(k-1)}Y_{k-1}+\dots + z^{-1}Y_1 \ |\ Y_i \in \g \right\},\]
via the trace residue pairing i.e. for  $X \in \g_k$ and $Y \in z^{-k}\g_k$ we set
\begin{equation} \label{trp} \lla Y,X \rra = 	\Res_0 \tr YX = \sum_{i=1}^{k} \tr Y_{i}X_{i-1}.\end{equation}
Under this identification $\g^*$ corresponds to $z^{-1}\g \subset z^{-k}\g_k$ and $\fb_k^*$ to those elements in $z^{-k}\g_k$ having zero residue term. We write
\begin{equation}\label{pis} \pi_{\res}: \g_k^* \rightarrow \g^*, \ \ \pi_{\irr}: \g_k^* \rightarrow \fb_k^* \end{equation}
for the natural projections. The adjoint and coadjoint actions of $G_k$ on $\g_k$ and $\g_k^*$ will both be denoted by $\Ad$ and are defined by the same formula $\Ad_g X = gXg^{-1}$. Notice that with this convention we have $\lla \Ad_gY ,X\rra = \lla Y,\Ad_{g^{-1}}X\rra$.

\subsection{Coadjoint orbit computations}

We write $\g^{od}$ for matrices with zeros on the diagonal and for $X,Y \in \g$ we write $[X,Y] = XY-YX$ for the commutator.
\begin{lemma} \label{intor} \begin{enumerate} 
\item For $X \in \g$ and $Y \in \ft$ we have $[X,Y] \in \g^{od}$.

\item \label{secnd} Let $Y = z^{-k}Y_k + z^{-(k-1)}Y_{k-1}+\dots + z^{-1}Y_1 \in \ft^*_k$ with $Y_k \in \ft^{reg}$ and $g \in G_k$ such that $\Ad_gY - Y \in \ft^*_{k-1}$. Then $g \in T_k$ and $\Ad_gY = Y$. 

\item \label{intorr} Let $g \in T$ and $h \in B_k$ such that $hgh^{-1} \in T_k$. Then $hgh^{-1} = g$.

\end{enumerate}
\end{lemma}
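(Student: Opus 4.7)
The plan is to establish the three parts in turn, with parts (2) and (3) both reducing to part (1) together with an induction on jet orders. Part (1) itself is immediate: writing $Y=\diag(y_1,\ldots,y_n)$, one computes $[X,Y]_{ij}=(y_j-y_i)X_{ij}$, which vanishes on the diagonal, so $[X,Y]\in\g^{od}$.

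For part (2), I would proceed by induction on $j=0,1,\ldots,k-1$, showing that the coefficient $g_j$ of $z^j$ in $g$ lies in $T$ (when $j=0$) or in $\ft$ (when $j\geq 1$). The base case isolates the $z^{-k}$ coefficient of $\Ad_g Y$: only $g_0$ enters, and the equation $g_0 Y_k g_0^{-1}=Y_k$ forced by $\Ad_g Y - Y\in \ft_{k-1}^*$ together with the regularity of $Y_k$ yields $g_0\in T$. For the inductive step, once $g_0\in T$ and $g_1,\ldots,g_{j-1}\in\ft$ are known, the truncation $g_{\leq j-1}=g_0+zg_1+\cdots+z^{j-1}g_{j-1}$ lies in $T_k$ and therefore commutes with $Y\in\ft_k^*$; replacing $g$ by $g\cdot g_{\leq j-1}^{-1}$ I may assume that the first $j-1$ coefficients of $g$ are trivial. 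A direct expansion of $gYg^{-1}$ then shows that the $z^{-(k-j)}$ coefficient of $\Ad_g Y - Y$ is exactly $[g_j,Y_k]$: by part (1) this commutator is off-diagonal, and by hypothesis it is also diagonal, so it vanishes, and regularity of $Y_k$ forces $g_j\in\ft$. Once $g\in T_k$ is obtained, $\Ad_g Y=Y$ is automatic because $T_k$ and $\ft_k^*$ mutually commute.

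Part (3) is an analogous induction, now proving that each $h_j$ lies in the centralizer $\g^g$ of $g$. The key observation is that when $h_1,\ldots,h_{j-1}$ all commute with $g$, one has $hg-gh=z^j[h_j,g]+O(z^{j+1})$, and multiplying by $h^{-1}=1+O(z)$ shows that the $z^j$ coefficient of $hgh^{-1}-g$ equals $[h_j,g]$. By the hypothesis $hgh^{-1}\in T_k$ this commutator is diagonal, and by the same matrix calculation as in part (1) applied with $g\in T$ in place of $Y\in\ft$ it is also off-diagonal, so $[h_j,g]=0$. By induction $h$ commutes with $g$, and hence $hgh^{-1}=g$.

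The main subtlety in both (2) and (3) is the bookkeeping of the expansions of $\Ad_g Y$ and $hgh^{-1}$ to the required order: the clean statement that the new $z^j$ coefficient is just a single commutator only holds after the lower-order coefficients have been absorbed into the commuting regime, which is precisely what the inductive reductions arrange.
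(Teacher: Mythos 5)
Your proof is correct and follows essentially the same route as the paper's: both parts (2) and (3) proceed by a descending induction on jet coefficients, invoking part (1) at each step to force a commutator that is simultaneously diagonal and off-diagonal to vanish, whence regularity of $Y_k$ (resp.\ the diagonality of $g$) pins down the new coefficient. The only cosmetic difference is that you normalize away the already-determined lower coefficients by replacing $g$ with $g\cdot g_{\leq j-1}^{-1}$, whereas the paper keeps $g$ fixed and tracks the residual term $W$ in the rearranged equation $gY=(Y+W)g$; both reduce to the same single-commutator identity.
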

\begin{proof} The first statement is clear. For part \ref{secnd} write $\Ad_gY - Y = W = z^{-(k-1)}W_{k-1}+\dots + z^{-1}W_1 \in \ft^*_{k-1}$. Then by rewriting we obtain 
\begin{equation}\label{rwrwr} gY = (Y+W)g.\end{equation}
The $z^{-k}$ term of (\ref{rwrwr}) reads $g_0Y_k = Y_kg_0$, hence $g_0 \in T$. We now proceed by induction, assuming $W_{k-1} = \dots = W_r=0$ and $ g_1,g_2\dots g_{k-r} \in \ft$ for some $r\leq k$. The $z^{-(r-1)}$ terms of (\ref{rwrwr}) equal
\[ \sum_{j=0}^{k-r+1} g_jY_{r-1+j} = \sum_{j=0}^{k-r+1} (Y_{r-1+j}g_j + W_{r-1+j}g_j).\]
By the induction hypothesis this simplifies to $g_{k-r+1}Y_k  =W_{r-1} g_0 +  Y_kg_{k-r+1}$. Now by the first part $[Y_k, g_{k-r+1}] \in \g^{od}$ and hence $W_{r-1}=0$. But then $Y_k$ and $g_{k-r+1}$ commute, which implies $g_{k-r+1} \in \ft$ since $Y_k \in \ft^{reg}$.

For part \ref{intorr} write $\bar{h} = hgh^{-1} \in T_k$ and consider $\bar{h}h=hg$ term by term. An argument analogous to the one for part \ref{secnd} gives the desired statement.
\end{proof}

Next we study regular semisimple $G_k$-coadjoint orbits i.e. let $C = z^{-k}C_k + \dots + z^{-1}C_1 \in \ft_k^*$ with $C_k \in \ft^{reg}$ and write $\FO_C = \{ \Ad_gC\ |\ g\in G_k\}$ for the coadjoint orbit through $C$. 

\begin{lemma}\label{codecomp} There is an isomorphism
\begin{align*} \Gamma: (G \times B_k^{od})/T &\rightarrow \FO_C \\
								[g,b] &\mapsto \Ad_{gb}C,
\end{align*}
where we put $B_k^{od} =\{ b \in B_k\ |\ b_1,\dots,b_{k-1} \in \g^{od}\}$. Here the action of $T$ on $G \times B_k^{od}$ is given by $(g_0,b)t_0= (g_0t_0, \Ad_{t_0^{-1}}b)$. In particular, $(G \times B_k^{od}) \rightarrow (G \times B_k^{od})/T$ is a Zariski locally trivial $T$-principal bundle.
\end{lemma}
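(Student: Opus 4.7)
The plan is to realise $\FO_C$ as the homogeneous space $G_k/T_k$ and then decompose $G_k$ so as to expose the quotient $(G\times B_k^{od})/T$ appearing in the statement.

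First I would compute the stabiliser. Since $C\in\ft_k^*$ and $T_k$ is abelian, $T_k\subset \Stab_{G_k}(C)$ is immediate. Conversely, if $\Ad_g C = C$ then $\Ad_g C - C = 0 \in \ft_{k-1}^*$, and Lemma \ref{intor}(\ref{secnd}) with $Y=C$ (using $C_k\in\ft^{reg}$) and $W=0$ forces $g\in T_k$. Hence the orbit map identifies $\FO_C$ with $G_k/T_k$.

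Next I would establish the algebraic factorisation
\[
G_k\;\cong\;G\times B_k^{od}\times T_k',\qquad (g,b,s)\mapsto gbs,
\]
where $T_k':=T_k\cap B_k$. The semidirect structure $G_k=B_k\rtimes G$ gives $G\times B_k\cong G_k$. For $B_k=B_k^{od}\cdot T_k'$ I would argue by induction on the $z$-coefficients, using the $T$-stable splitting $\g=\ft\oplus\g^{od}$ and the observation that $cd\in\g^{od}$ whenever $c\in\g^{od}$ and $d\in\ft$. This determines the coefficients of any element of $B_k^{od}\cdot T_k'$ uniquely and polynomially, producing the algebraic inverse. Since $T$ and $T_k'$ commute and intersect trivially, $T_k\cong T\times T_k'$ as a variety.

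Transporting the right $T_k$-action through this factorisation, for $t_0\in T$ and $s_0\in T_k'$ the commutativity of $T_k$ yields
\[
(gbs)(t_0s_0)=(gt_0)\,(\Ad_{t_0^{-1}}b)\,(ss_0),
\]
so $T$ acts exactly as stated in the lemma on the first two factors while $T_k'$ merely translates the third. Taking quotients gives
\[
\FO_C\;\cong\;G_k/T_k\;\cong\;(G\times B_k^{od})/T,
\]
and tracing through, the composite is $\Gamma$. The $T$-action on $G\times B_k^{od}$ is manifestly free (already from the $G$-factor), and since $T\cong\BG_m^n$ is a split torus any $T$-torsor is Zariski-locally trivial by Hilbert 90.

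The main technical obstacle I anticipate is the variety-level isomorphism $B_k^{od}\times T_k'\cong B_k$, which requires the explicit polynomial inversion sketched above; the remaining arguments are formal manipulations with semidirect products and quotients.
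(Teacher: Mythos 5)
Your proposal is correct and follows the same route as the paper: identify $\FO_C \cong G_k/T_k$ via Lemma \ref{intor}.\ref{secnd}, factor $G_k = G \ltimes B_k$ and $T_k = T \times (T_k\cap B_k)$, and use the unique decomposition $B_k = B_k^{od}\cdot(T_k\cap B_k)$ to reduce to $(G\times B_k^{od})/T$. The only cosmetic difference is at the end: the paper deduces Zariski local triviality from $G\to G/T$ being a Zariski locally trivial $T$-bundle, while you invoke Hilbert 90 for the split torus $T$; both are standard and equivalent here.
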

\begin{proof} It follows from Lemma \ref{intor}.\ref{secnd} that $\Stab_{G_k}(C) = T_k$, hence we have an isomorphism $G_k/T_k \cong \FO_C$ which sends $[g] \in G_k/T_k$ to $\Ad_gC$. Since $G_k = G \ltimes B_k$ and $T_k = T \times (T_k \cap B_k)$ we can further write 
\[G_k/T_k \cong \left(G \times  B_k/(T_k \cap B_k)\right)/T.\]
Finally, given $b \in B_k$ a direct computation shows that there are unique $t \in  T_k \cap B_k$ and $b' \in B_k^{od}$ such that $b=b't$. This gives $B_k/(T_k \cap B_k) \cong B_k^{od}$. It is then straightforward to check that the $T$-action on $G \times B_k^{od}$ is as indicated, and the final statement follows since $G \rightarrow G/T$ is a Zariski locally trivial $T$-principal bundle.
\end{proof}


\subsection{Meromorphic connections}\label{mercon}

In this section we introduce irregular connections on $\BP^1$ following \cite[Section 2]{Bo01}.  Fix an effective $\BZ$-divisor $D= k_1a_1+k_2a_2+\dots k_da_d$, where $a_1,\dots,a_d$ are points in $\BP^1$ and $k_1,\dots, k_d \geq 1$. Write $K$ for the canonical divisor on $\BP^1$. A \textit{meromorphic connection with poles along} $D$ on a rank $n$ vector bundle $V \rightarrow \BP^1$ is a $\BC$-linear morphism
 \[ \nabla: V \rightarrow V \otimes K(D), \]
satisfying the Leibniz rule $\nabla(fs) = f\nabla(s) + s \otimes df$, where $f$ is a local holomorphic function and $s$ a local section of $V$.

If $z$ is a local coordinate around $a_i$ we can write, after fixing a trivialization of $V$, $\nabla = d- A$, where $A$ is a meromorphic matrix of $1$-forms. More precisely we can write
\[ A = A_{k_i} \frac{dz}{z^{k_i}} +\dots + A_1\frac{dz}{z} + A_0dz+\dots\]
with $A_i \in \g$. The non-holomorphic part $\sum_{j=1}^{k_i} A_j\frac{dz}{z^j}$ is called the \textit{principal part} of $\nabla$ at $a_i$. Then $\nabla$ is called \textit{regular} if for every $1\leq i \leq d$ the leading coefficient $A_{k_i}$ is diagonalisable with distinct eigenvalues, if $k_i \geq 2$, or with distinct eigenvalues modulo $\BZ$, if $k_i =1$.

\begin{rem}\begin{enumerate} 
\item If $g:U \rightarrow GL_n$ is the transition function for an other trivialization of $V$ on a neighborhood $U$ of $a_i$, then the transformation $A'$ of $A$ is given by (see for example \cite[Lemma III.1.6]{We07})
\[ A'  = g Ag^{-1}+(dg)g^{-1},\]
hence being regular is independent of the choice of trivialization.

\item In \cite[Definition 2.2]{Bo01} the term 'generic' is used instead of 'regular', however 'generic' will have a different meaning for us, see Definition \ref{genericc}.
\end{enumerate}
\end{rem}

In order to obtain finite dimensional moduli spaces we need to fix a \textit{formal type of order }$k_i$ at each pole $a_i$, that is a matrix of meromorphic one forms 
\begin{equation}\label{formalt} C^i = C^i_{k_i} \frac{dz}{z^{k_i}} + \dots + C^i_1\frac{dz}{z} + \dots,\end{equation}
where $C^i_{k_i} \in \ft^{reg}$ and $C^i_{j} \in \ft$ for $j < k_i$. One can think of $d - C^i$ as a meromorphic connection on the trivial rank $n$ bundle over the formal disc $\spec(\BC[[z]])$ around $a_i$.

A meromorphic connection $(V,\nabla)$ with poles along $D$ has \textit{formal type} $C^i$ at $a_i$ if there exists a local trivialization of $V$ around $a_i$ and a formal bundle automorphism $g\in G(\BC[[z]])$ such that we have $\nabla = d - A$ and $g A g^{-1} + (dg) g^{-1} - C^i$ is a diagonal matrix of holomorphic $1$-forms.

From now on the choice of an effective divisor $D = k_1a_1+k_2a_2+\dots k_da_d$ and formal types $C^i$ for $1 \leq i\leq d$ will be abbreviated by $\mathbf{C}$ and the degree of $D$ by $\mathbf{k} = \sum_{i=1}^d k_i$.

\begin{defn}\label{odrsp} The \textit{open de Rham space} $\FM_n(\mathbf{C})$ is the set of isomorphism classes of meromorphic connections $(V,\nabla)$ on $\BP^1$, where $V$ is a trivial bundle of rank $n$ and $\nabla$ has poles along $D$ with prescribed formal types $C^i$ at $a_i$.
\end{defn}
Notice that $\FM_n(\mathbf{C})$ should correspond inside the whole de Rham space (no assumption on the vector bundle) to the locus, where the underlying bundle is semi-stable, as on $\BP^1$ a semi-stable bundle of degree $0$ is trivial. Hence the name open de Rham space.

We will see in Proposition \ref{odrspa} below, that $\FM_n(\mathbf{C})$ admits the structure of an algebraic variety. In order for this variety to be smooth we need to impose a genericity condition on $\mathbf{C}$, more precisely on the residue terms $C^i_1$ of the $C^i$, which will naturally reappear during the computations later, see Lemma \ref{finalc}. We will thus be very explicit about it. Define for $I \subset \{1,2,\dots,n\}$ the matrix $E_I \in \g$ by
 \begin{eqnarray}\label{eij} (E_I)_{ij} = \begin{cases} 1 & \text{ if } i=j\in I \\ 0 & \text{ otherwise.}	\end{cases} \end{eqnarray}
\begin{defn}\label{genericc} We call $\mathbf{C}$ \textit{generic} if $\sum_{i=1}^d \tr C_1^i = 0$ and for every integer $n'<n$ and subsets $I_1,\dots,I_d \subset \{1,\dots,n\}$ of size $n'$ we have 
\begin{eqnarray}\label{gen}\sum_{i=1}^d \lla C_1^i,E_{I_i}\rra \neq 0.\end{eqnarray}
\end{defn}													
In other words there are no invariant subspaces $V_1,\dots,V_d \subset \BC^n$ of the same dimension such that $\sum_i \tr C^i_{1|V_i} = 0$, if $\mathbf{C}$ is generic. It is clear that we can always find such a generic $\mathbf{C}$ and we will see by direct computations, that the invariants we compute do not depend on the choice of $\mathbf{C}$.

We now give an explicit description of $\FM_n(\mathbf{C})$ in terms of $G_k$-coadjoint orbits. First notice that a formal type $C$ as in (\ref{formalt}) naturally defines an element in $\g_k^*$ by taking the principal part and forgetting $dz$. We denote the $G_k$ coadjoint orbit through $C$ by $\FO_C \subset \g_k^*$. The action of $G_k$ on $\FO_C$ is Hamiltonian with respect to the standard symplectic structure on $\FO_C$ and the inclusion $\FO_C \hookrightarrow \g_k^*$ is a moment map. In particular, a moment map for the induced action of $G \subset G_k$ is given by $\pi_{\res}:\FO_C \rightarrow \g^*$, see (\ref{pis}). Consequently, for formal types $C^1,\dots,C^d$ the action of $G$ on $\FO_{C^1} \times \dots \times \FO_{C^d}$ by simultaneous conjugation admits a moment map
\begin{align*} \mu_d: \FO_{C^1} \times \dots \times \FO_{C^d} &\rightarrow \g^*\\
(Y^1,Y^2,\dots,Y^d) &\mapsto  \sum_{i=1}^d \pi_{\res}(Y^i).
\end{align*}

\begin{prop}\label{odrspa}
\begin{enumerate}
\item[(i)]  For any choice of $\mathbf{C}$ there is a  bijection
\[ \FM_n(\mathbf{C}) \cong \mu_d^{-1}(0) /G.\] 
\item[(ii)] If $\mathbf{C}$ is generic, we can identify $\FM_n(\mathbf{C})$ with the points of the smooth affine GIT quotient $\mu_d^{-1}(0) \sslash G = \spec(\BC[\mu_d^{-1}(0)]^G)$. If non-empty, $\FM_n(\mathbf{C})$ is equidimensional of dimension $\mathbf{k}(n^2-n)- 2(n^2-1)$. 
\end{enumerate}
\end{prop}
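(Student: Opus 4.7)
The plan is to identify $\FM_n(\mathbf{C})$ with a symplectic/GIT reduction of a product of coadjoint orbits of the jet groups $G_{k_i}$, and then to use the genericity of $\mathbf{C}$ to ensure the resulting reduction is smooth of the claimed dimension.

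For (i) I would write any connection on the trivial rank $n$ bundle as $\nabla = d - A$, where $A$ is a global $\g$-valued meromorphic $1$-form on $\BP^1$ with poles only along $D$, and note that two such $(V,\nabla)$ are isomorphic exactly when the matrices $A$ differ by a constant gauge transformation $A \mapsto gAg^{-1}$ with $g \in G$. Via the trace-residue pairing \eqref{trp}, the principal part of $A$ at $a_i$ becomes an element $Y^i \in \g_{k_i}^*$, and the formal-type condition translates to saying $Y^i \in \FO_{C^i}$, since formal gauge equivalence of principal parts is exactly the $G_{k_i}$-coadjoint action. The residue theorem on $\BP^1$ then says that a prescribed tuple of principal parts lifts to a global $A$ iff the sum of residues vanishes, which under our identifications reads $\mu_d(Y^1,\dots,Y^d) = 0$. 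This establishes the bijection.

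For (ii) I would invoke the Kirillov--Kostant--Souriau structure: each $\FO_{C^i}$ is a smooth symplectic $G_{k_i}$-variety whose moment map is the tautological inclusion into $\g_{k_i}^*$, so restricting to $G \subset G_{k_i}$ gives a moment map $\pi_{\res} : \FO_{C^i} \to \g^*$, and the diagonal $G$-action on $\prod_i \FO_{C^i}$ has moment map $\mu_d$. The heart of the argument is to show that genericity of $\mathbf{C}$ forces every stabilizer of the $G$-action on $\mu_d^{-1}(0)$ to coincide with the scalar center $Z(G) = \BC^\times$, which acts trivially on every coadjoint orbit. Suppose some $(Y^1,\dots,Y^d) \in \mu_d^{-1}(0)$ is fixed by a non-scalar $g \in G$. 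Then $g$ commutes with every component $Y^i_j \in \g$, so any generalized eigenspace $V \subsetneq \BC^n$ of $g$, of some dimension $n'$ with $0 < n' < n$, is invariant under all $Y^i_j$. Taking the trace of the residue equation $\sum_i \pi_{\res}(Y^i) = 0$ restricted to $V$ yields $\sum_i \tr(Y^i_1 |_V) = 0$.

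The key technical input is the claim that for each $i$ there is a subset $I_i \subset \{1,\dots,n\}$ of size $n'$ with $\tr(Y^i_1|_V) = \lla C^i_1, E_{I_i} \rra$, since the resulting equation $\sum_i \lla C^i_1, E_{I_i}\rra = 0$ contradicts \eqref{gen}. To prove the claim, I would invoke Lemma \ref{codecomp} to write $Y^i = \Ad_{g_i h_i} C^i$ with $g_i \in G$ and $h_i \in B_{k_i}^{od}$, and replace $V$ by $V' = g_i^{-1} V$, so that $V'$ is stable under $\Ad_{h_i} C^i$; inspecting the leading $z^{-k_i}$-coefficient $C^i_{k_i} \in \ft^{reg}$ then forces $V' = \BC^{I_i}$ for some subset $I_i$ of size $n'$. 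Expanding $\Ad_{h_i} C^i$, the $z^{-1}$-coefficient equals $C^i_1$ plus a sum of terms built from commutators $[b_a, C^i_j]$ with $b_a \in \g^{od}$ and $C^i_j \in \ft$; iterated use of Lemma \ref{intor} shows these brackets remain in $\g^{od}$, so their traces on any coordinate subspace vanish, giving $\tr(Y^i_1|_V) = \tr(C^i_1|_{\BC^{I_i}}) = \lla C^i_1, E_{I_i}\rra$. The main obstacle I anticipate is making this expansion rigorous for arbitrary $k_i$, which I expect follows from a short induction on $k_i$ using the structure of $B_{k_i}^{od}$ and the stability of $\g^{od}$ under bracketing with elements of $\ft$.

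Having verified that $G/Z(G)$ acts freely on $\mu_d^{-1}(0)$, standard algebraic-GIT/symplectic reduction (cf.\ \cite{MFK94, Kin94}) identifies the set-theoretic quotient $\mu_d^{-1}(0)/G$ with the smooth affine GIT quotient $\mu_d^{-1}(0) \sslash G = \spec(\BC[\mu_d^{-1}(0)]^G)$: all orbits then have equal dimension $n^2 - 1$ and are consequently closed and separated by $G$-invariants. Smoothness of the quotient follows from smoothness of $\mu_d^{-1}(0)$ together with freeness of the effective $G/Z(G)$-action, the former holding because $\mu_d$, regarded as taking values in $\mathfrak{sl}_n^* \subset \g^*$ (the trace being automatically zero by $\sum_i \tr C^i_1 = 0$), has surjective derivative exactly where the $G/Z(G)$-action is free. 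The dimension count finally reads
\[\dim \FM_n(\mathbf{C}) = \sum_i \dim \FO_{C^i} - 2\dim (G/Z(G)) = \mathbf{k}\,n(n-1) - 2(n^2-1),\]
using $\dim \FO_{C^i} = \dim G_{k_i} - \dim T_{k_i} = k_i(n^2 - n)$, which also follows from Lemma \ref{codecomp}.
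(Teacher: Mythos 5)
Your part (i) and the overall strategy for (ii) (reduce to triviality of the $\PGL_n$-stabilizers on $\mu_d^{-1}(0)$, restrict to an eigenspace $V$ of a stabilizing $g$, and use \eqref{gen}) match the paper. However, the key technical step — establishing $\tr(Y^i_1|_V) = \lla C^i_1, E_{I_i}\rra$ — has a genuine gap. You claim that after reducing to $V' = \BC^{I_i}$ the $z^{-1}$-coefficient of $\Ad_{h_i}C^i$ is $C^i_1$ plus a sum of iterated brackets $[b_a, C^i_j]$ that ``remain in $\g^{od}$ by iterated use of Lemma \ref{intor}.'' This is false on two counts. First, Lemma \ref{intor}(1) only gives $[X,Y]\in\g^{od}$ for $Y\in\ft$; an iterated bracket such as $[b_a,[b_b,C^i_j]]$ has inner term in $\g^{od}$, not $\ft$, so the lemma does not iterate, and indeed such brackets need not be off-diagonal. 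Second, and more fundamentally, the $z^{-1}$-coefficient of $\Ad_{h_i}C^i$ contains terms that are not even iterated brackets: already for $k_i=3$ one finds $[C^i_3,b_1]\,b_1$, which for $n=2$ is a diagonal matrix with generically nonzero entries. So your expansion claim cannot be made ``rigorous by a short induction'' — it is not true as stated.

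What actually makes the trace computation work is the full stabilizer condition $\Ad_g(\Ad_{h_i}C^i) = \Ad_{h_i}C^i$, not merely the invariance of the coordinate subspace $V'$ under the coefficients of $\Ad_{h_i}C^i$. After conjugating so that $g\in T$ and $h_i\in B_{k_i}$, the element $\bar h = h_i^{-1}g h_i$ satisfies $\Ad_{\bar h}C^i = C^i$, so Lemma \ref{intor}(2) forces $\bar h\in T_{k_i}$, and Lemma \ref{intor}(3) then gives $\bar h = g$, i.e.\ $h_i$ commutes with $g$ and therefore preserves $V$. Once $h_i$ preserves $V$ one has $(\Ad_{h_i}C^i)_{|V} = \Ad_{h_{i|V}}(C^i_{|V})$ and the residue-trace equality $\tr(Y^i_1|_V)=\tr(C^i_1|_V)$ is immediate from conjugation-invariance of the trace. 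In your $n=2$, $k_i=3$ example the problematic term $[C^i_3,b_1]b_1$ does in fact vanish on $V'$, but only because the lower-order invariance constraints force $(b_1)_{21}=0$ — a consequence of the paper's argument, not of any off-diagonality of the expansion.

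The remainder of your (ii) — free $\PGL_n$-action implies closed orbits so the set-theoretic quotient equals the GIT quotient, regularity of $0$ for $\mu_d$ and hence smoothness, and the dimension count $\sum_i k_i(n^2-n) - 2(n^2-1) = \mathbf{k}(n^2-n)-2(n^2-1)$ — is correct and agrees with the paper.
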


\begin{proof} The first part is contained in \cite[Proposition 2.1]{Bo01}, but we will reprove the statement for the convenience of the reader. Fix a coordinate $z$ on $\BP^1$ and assume that none of the $a_i$'s are at infinity. Given an element $[V,\nabla] \in \FM_n(\mathbf{C})$ we can write $\nabla = d - A$ (after fixing a trivialization of $V$) with
\[ A = \sum_{i=1}^d A^i_{k_i}\frac{dz}{(a_i-z)^{k_i}} + \dots + A^i_{1}\frac{dz}{z} + \text{ holomorphic terms},\]
where all $A^i_j \in \g$. Thus by looking at the principal parts of $A$ and forgetting $dz$ we obtain for each $1\leq i\leq  d$ an element $A^i \in \g_k^*$. By our assumptions $\nabla$ is regular at each $a_i$ and hence has formal type $C^i$ if and only if $A^i \in \FO_{C^i}$, see for example \cite[Proposition 1]{BJL79}. Furthermore the condition $\mu_d(A^1,\cdots,A^d) = 0$ is equivalent to $\nabla$ not having a pole at infinity. Finally, an isomorphism of trivial bundles over $\BP^1$ is given by an element in $G$, which corresponds to simultaneous conjugation on $\prod_{i=1}^d \FO_{C^i}$.

Assume now $\mathbf{C}$ is generic. We show first, that $\PGL_n = G/\BC^\times$ acts freely on $\mu^{-1}_d(0)$. Let $(A^1,\dots,A^d) \in \mu_d^{-1}(0)$ and $g\in G$ such that $\Ad_gA^i=A^i$ for $1 \leq i \leq d$. We show now, that $g$ is scalar, by looking at some non-zero eigenspace $V$ of $g$. Then clearly $A^i_1$ will preserve $V$ for all $i$ and by the moment map condition we deduce $\sum_i \tr A^i_{1|V} = 0$. The point is now, that for each $i$ there is a subspace $V'_i$ of the same dimension as $V$ such that 
\begin{equation}\label{painf}\tr A^i_{1|V} = \tr C^i_{|V'_i}.\end{equation}
By the genericity of $\mathbf{C}$, \ref{genericc} this implies then $V = V'_i = \BC^n$ and hence $g$ is scalar.

To prove (\ref{painf}) we fix an $i$ and write $A^i =\Ad_hC^i$ for some $h \in G_{k_i}$. By conjugating $A^i$ and $g$ with the constant term $h_0$ of $h$ we can assume without loss of generality $h \in B_{k_i}$ i.e. $h_0=\unt$. Then $A^i_{k_i} = C^i_{k_i} \in \ft^{reg}$ and thus $g\in T$. Next consider $\bar{h} = hgh^{-1}$, which satisfies $\Ad_{\bar{h}} C^i = C^i$. By Lemma \ref{intor}.\ref{secnd} we have $\bar{h} \in T_k$ and then by \ref{intor}.\ref{intorr} $hgh^{-1} = g$. This implies that $h_j$ preserves $V$ for every $0\leq j\leq k_i-1$ and hence we have $\tr A_{|V} = \tr (\Ad_{h}C^i)_{|V} = \tr \Ad_{h_{|V}}C^i_{|V} = \tr C^i_{|V}$. This proves \eqref{painf} and hence $\PGL_n$ acts freely on $\mu_d^{-1}(0)$.

In particular, all the $G$-orbits in $\mu_d^{-1}(0)$ are closed and hence set-theoretic quotient agrees with the points of the GIT quotient $\mu_d^{-1}(0) \sslash G$ \cite[Theorem 6.1]{Do03}. Furthermore as in Section \ref{symg}, freeness of the $\PGL_n$ action implies that $0$ is a regular value of $\mu_d$, which in turn implies smoothness of $\mu_d^{-1}(0)$ and hence of $\FM_n(\mathbf{C})$. By looking at tangent spaces we see that the dimension of $\FM_n(\mathbf{C})$ is given by 
\[  \dim \prod_{i=1}^d \FO_{C^i} - 2 \dim \PGL_n =  \mathbf{k}(n^2-n)- 2(n^2-1).      \]
\end{proof}

\subsection{Fourier transform of a pole}\label{ftp}
In this section we compute the Fourier transform $\FF([\FO_C]) \in \kexp_\g$ of a coadjoint orbit  $\pi_{res}:\FO_C \rightarrow \g^*$ of a formal type $C$, where we use the language of Section \ref{GCM}. Assuming $k\geq 2$ we can give an explicit formula for $\FF([\FO_C])$, but before we need to introduce some more notation.

As in Section \ref{tmc} we denote by $\FP_n$ the set of partitions of $n$. A semi-simple element $X \in \g$ has \textit{type} $\lambda = (\lambda^1,\dots,\lambda^l) \in \FP_n$ if $X$ has $l$ different eigenvalues $a_1,\dots,a_l$ and the multiplicity of $a_i$ is $\lambda_i$ for $1 \leq i \leq l$. We write $\g_\lambda = \{ X \in \g \ |\ X \text{ has type } \lambda \}$ and $i_\lambda: \g_\lambda \hookrightarrow \g$. Finally, we put $N(\lambda) = \sum_i (\lambda^i)^2$.

\begin{thm}\label{ftpuncturet} For any partition $\lambda \in \FP_n$ we have in $\expm_{\g^\lambda}$ the formula

\begin{equation}\label{ftpuncture} i_\lambda^*\FF([\FO_C]) =\frac{ \BL^{n+\frac{1}{2}\left(k(n^2-2n) + (k-2)N(\lambda)\right)}}{ (\BL-1)^{n}}\left[Z_\lambda, \phi^C\right], \end{equation}
where $Z_\lambda = \{ (g,X) \in G \times \g_\lambda \ |\ \Ad_{g^{-1}}X \in \ft\}$ and $\phi^C(g,X) = \lla C_1,\Ad_{g^{-1}} X \rra$.
Furthermore the pullback of $\FF([\FO_C])$ to the complement $\g \setminus \bigsqcup_\lambda \g_\lambda$ equals $0$. 
\end{thm}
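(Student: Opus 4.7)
The plan is to use the parametrization from Lemma \ref{codecomp} and to integrate out the unipotent variable $b \in B_k^{od}$ layer by layer, starting from the top. Since $G \times B_k^{od} \to \FO_C$ is a Zariski-locally trivial $T$-principal bundle, one has
\begin{equation*}
\FF([\FO_C]) \;=\; \frac{1}{[T]}\bigl[G \times B_k^{od} \times \g,\; F\bigr],
\end{equation*}
where $F(g,b,X) = \lla \pi_{res}(\Ad_{gb}C), X\rra = \lla \pi_{res}(\Ad_b C),\, \Ad_{g^{-1}}X\rra$. The strategy is then to evaluate the right-hand side by integrating the $b$-variables one at a time using Lemma \ref{orth}.

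First I would analyze the innermost layer $b_{k-1}$. Writing $b = \unt + zb_1 + \dots + z^{k-1}b_{k-1}$ with each $b_i \in \g^{od}$ and expanding
\begin{equation*}
\pi_{res}(\Ad_b C) \;=\; \sum_{a+c\le k-1} b_a\, C_{a+c+1}\,(b^{-1})_c,
\end{equation*}
an elementary combinatorial check shows that $b_{k-1}$ enters only linearly, contributing exactly the commutator $[b_{k-1},C_k]$ (the constraint $a+c\le k-1$ together with $b_{k-1}$ appearing only in $b_{k-1}$ itself or inside $(b^{-1})_{k-1}$ rules out any $b_{k-1}^2$ term). Rewriting $\lla [b_{k-1},C_k],Y\rra = \lla b_{k-1},[C_k,Y]\rra$ with $Y = \Ad_{g^{-1}}X$, and observing that $C_k\in\ft^{reg}$ forces $[C_k,Y]$ to lie entirely in $\g^{od}$ and to vanish iff $Y\in\ft$, an application of Lemma \ref{orth} to the $b_{k-1}$-variable contributes a factor $\BL^{n^2-n}$ and restricts the class to the locus $\{\Ad_{g^{-1}}X\in\ft\}$. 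In particular, the class $\FF([\FO_C])$ already vanishes on the complement of the semisimple locus $\bigsqcup_\lambda \g_\lambda$, yielding the second statement of the theorem. This is also where the hypothesis $k\ge 2$ is essential: one needs $b_{k-1}$ to exist and to be coupled to the regular-semisimple coefficient $C_k$.

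For $X\in\g_\lambda$ and $Y\in\ft$ I would then iteratively integrate $b_{k-2},\dots,b_1$. The structural point is that, once $Y$ is diagonal, only the diagonal entries of $\pi_{res}(\Ad_b C)$ survive the pairing with $Y$, and the variables $(b_l)_{ij}$ and $(b_l)_{ji}$ for $i\ne j$ couple through these diagonal entries into 2-dimensional quadratic blocks whose coefficients are proportional to $Y_{ii}-Y_{jj}$. Using the motivic Gaussian identity $[\BA^2,\,c\cdot xy]=\BL$ for $c\ne 0$ and $\BL^2$ for $c=0$, together with the count that for $Y$ of type $\lambda$ there are exactly $(N(\lambda)-n)/2$ unordered pairs $\{i,j\}$ with $Y_{ii}=Y_{jj}$, each remaining layer $b_l$ contributes $\BL^{(n^2-2n+N(\lambda))/2}$. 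Combining with the top layer yields the overall $B_k^{od}$-exponent
\begin{equation*}
(n^2-n) + (k-2)\cdot \tfrac{n^2-2n+N(\lambda)}{2} \;=\; n + \tfrac{k(n^2-2n)+(k-2)N(\lambda)}{2},
\end{equation*}
which is precisely the claimed prefactor. The $b$-independent contribution surviving all the integrations is the exponential $\lla C_1, Y\rra = \phi^C$, and together with the Zariski-local trivialities needed to identify $[Z_\lambda,\phi^C]$ with the $G$-direction of what remains, the stated formula follows.

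The main obstacle is verifying the structural claim in the iterative step: that after reducing to $Y\in\ft$, the higher-degree-in-$b$ contributions to the diagonal part of $\pi_{res}(\Ad_b C)$ genuinely assemble into the described 2-dimensional quadratic blocks with vanishing criterion $Y_{ii}=Y_{jj}$. I would attempt this by induction on $k$, confirming the base case $k=2$ (where the integrand is affine-linear in $b$) and $k=3$ (where $\pi_{res}(\Ad_b C) = C_1 + [b_1,C_2] - [b_1,C_3]b_1$ and the quadratic form in $b_1$ diagonalizes explicitly with coefficient $(C_3^j-C_3^i)(Y_{ii}-Y_{jj})$), and then propagating the pattern by peeling off the top layer and reducing to a smaller-$k$ problem for a suitably modified formal type.
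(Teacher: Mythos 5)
Your high-level strategy is the same one the paper uses: parametrize $\FO_C$ via Lemma \ref{codecomp}, eliminate the unipotent variables by orthogonality, observe that integrating out $b_{k-1}$ forces $\Ad_{g^{-1}}X \in \ft$, and then track the remaining $\BL$-powers to land on the stated prefactor. Your treatment of the top layer $b_{k-1}$, the appeal to Lemma \ref{orth}, and the exponent bookkeeping (including the $(N(\lambda)-n)/2$ count of coincident eigenvalue pairs) all come out right.

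However, the structural claim driving your iterative step is wrong. You assert that for each remaining layer $b_l$ the entries $(b_l)_{ij}$ and $(b_l)_{ji}$ couple into a quadratic form $c\,xy$ with $c$ proportional to $Y_{ii}-Y_{jj}$, so that each layer can be integrated out by a motivic Gaussian on its own. But the diagonal entries of $\pi_{\res}(\Ad_bC)$ do not produce such within-layer quadratic blocks. Expanding $\Ad_bC$ and collecting the residue term, the quadratic-in-$b$ piece pairs $b_m$ with $b_{k-1-m}$ via $\tr[b_m,C_k][Y,b_{k-m-1}]$ (this is exactly what the paper's Lemma \ref{keycomp} computes): in coordinates one gets $\sum_{i\ne j}(b_m)_{ij}(C^j_k-C^i_k)(Y_{jj}-Y_{ii})(b_{k-m-1})_{ji}$, a bilinear pairing between two \emph{different} layers. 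Only in the special case $m=(k-1)/2$ with $k$ odd does a layer self-pair, and even then the paper has to split $b_m=b^l+b^u$ into strictly lower and upper triangular parts to extract an honest bilinear form. As stated, your iterative "one layer at a time" integration using $[\BA^2,cxy]$ cannot be carried out: after fixing $Y\in\ft$, the variable $b_{k-2}$ enters only linearly, with coefficients depending on $b_1$, not quadratically in $b_{k-2}$ itself. The correct move is either to integrate $b_{k-2}$ linearly (imposing the condition $[Y,b_1]=0$), and so on down the layers, or to do what the paper does — split $B_k^{od}=B_+^{od}\oplus B_-^{od}$ into upper- and lower-index halves at once, apply Lemma \ref{orth} a single time, and identify the vanishing locus as a rank-$\tfrac{k-2}{2}(N(\lambda)-n)$ vector bundle over $Z_\lambda$. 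Your exponent arithmetic happens to agree because both bookkeepings count the same total number of free and constrained coordinates, but the mechanism you describe for the cancellation (within-layer Gaussian blocks) is not the one that actually occurs.
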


\begin{proof}
 By the formula  (\ref{ftgen}) we have
\[ \FF([\FO_C]) = [\FO_C \times \g, \lla \pi_{irr}\circ pr_{\FO_C} , pr_{\g} \rra] = [\FO_C \times \g, \lla pr_{\FO_C} , pr_{\g} \rra],\]
where for the second equality sign we used the definition of $\lla,\rra$ (\ref{trp}).

By Lemma \ref{codecomp} we can rewrite this in $\expm_\g$ as 

\[\FF([\FO_C]) = (\BL-1)^{-n} \left[G \times B_k^{od} \times \g, \lla \Gamma \circ pr_{G \times B_k^{od}}, pr_{\g} \rra \right].\]

Now notice that for all $(g,b,X) \in G \times B_k^{od} \times \g$ we have 
\[\lla \Gamma(g,b),X \rra = \lla \Ad_{gb}C, X \rra =  \lla \Ad_{b} C, \Ad_{g^{-1}} X\rra.\]
Thus  we finally obtain
\begin{equation}\label{blabla} \FF([\FO_C]) = (\BL-1)^{-n} \left[G \times B_k^{od} \times \g, \lla \Ad_{pr_{B_k^{od}}}C, \Ad_{(pr_G)^{-1}}pr_{\g} \rra \right].\end{equation}

We will simplify this by applying Lemma \ref{orth}. First write $\g^l$ and $\g^u$ for the subspaces of strictly lower and upper triangular matrices in $\g$ respectively, such that $\g^{od} = \g^l \oplus \g^u$. Next consider the decomposition $B_k^{od} = B_+^{od} \oplus B_-^{od}$ with

\[ B_+^{od} = \begin{cases} \{ b \in B_k^{od} \ |\ b_{\frac{k}{2}} = \dots = b_{k-1} = 0 \} \text{ if } k \text{ is even } \\
													\{ b \in B_k^{od} \ |\ b_{\frac{k-1}{2}} \in \g^l,  b_{\frac{k+1}{2}} = \dots = b_{k-1} = 0 \} \text{ if } k \text{ is odd }
\end{cases} \]
 
\[ B_-^{od} = \begin{cases} \{ b \in B_k^{od} \ |\ b_1 = \dots = b_{\frac{k-2}{2}} = 0 \} \text{ if } k \text{ is even } \\
													\{ b \in B_k^{od} \ |\ b_{\frac{k-1}{2}} \in \g^u,  b_{1} = \dots = b_{\frac{k-3}{2}} = 0 \} \text{ if } k \text{ is odd. }
\end{cases} \]
It follows from Lemma \ref{keycomp} below that there are functions 
\[(h_1,h_2): G \times \g \times B_+^{od} \rightarrow (B_-^{od})^* \times \BC\]
such that $\lla \Ad_b C, \Ad_{g^{-1}} X \rra = \lla h_1(g,X,b_+), b_- \rra + h_2(g,X,b_+)$ for all $g \in G, X \in \g$ and $b = (b_+,b_-) \in B_+^{od} \oplus B_-^{od}$. More explicitly $h_1$ and $h_2$ are given by
\begin{align*} h_2(g,X,b_+) &= \lla \Ad_{b_+} C, \Ad_{g^{-1}} X \rra, \\
							\lla h_1(g,X,b_+), b_- \rra &= \lla \Ad_b C- \Ad_{b_+} C, \Ad_{g^{-1}} X\rra.
\end{align*}
Applying now Lemma \ref{orth} to this decomposition, formula (\ref{blabla}) becomes
\[ \FF([\FO_C]) = (\BL-1)^{-n}\BL^{\dim B_-^{od}} [ h_1^{-1}(0), h_2].\]
Again by Lemma \ref{keycomp} we have
 \[h_1^{-1}(0) = \{ (g,X,b_+) \in G \times \g \times B_+^{od} \ |\ \Ad_{g^{-1}}X \in \ft, [b_+,\Ad_{g^{-1}}X]=0\}.\]
The condition $\Ad_{g^{-1}}X \in \ft$ already implies, that $\FF([\FO_C])$ is supported on  $\bigsqcup_\lambda \g_\lambda$. Notice further that for $(g,X,b_+) \in h_1^{-1}(0)$ as $[b_+,\Ad_{g^{-1}}X]=0$, we have 
\[h_2(g,X,b_+) = \lla C, \Ad_{b_+^{-1}} \Ad_{g^{-1}}X\rra = \lla C,\Ad_{g^{-1}}X\rra,\]
in particular $h_2$ is independent of $b_+$. Furthermore for any $\lambda \in \FP_n$ the pullback $i_\lambda^*h_1^{-1}(0) \rightarrow Z_\lambda$ is a vector bundle of rank $\frac{k-2}{2}(N(\lambda) - n)$ and thus 
\[i_\lambda^*[ h_1^{-1}(0), h_2] = \BL^{\frac{k-2}{2}\left(N(\lambda) - n\right)} [Z_\lambda,\phi^C].\]
Together with $\dim B_-^{od} = \frac{k}{2}\left(n^2-n\right)$ the theorem follows.
\end{proof}

We are left with proving Lemma \ref{keycomp}, for which we need the following explicit formula for the inverse of an element $b = \unt + zb_1 +\dots +z^{k-1}b_{k-1} \in B_k$. If we write $b^{-1} = \unt + zw_1 + \dots +z^{k-1} w_{k-1}$, then $w_i$ is given for any $1 \leq i \leq k-1$ by 
\begin{equation}\label{explinv} w_i = \sum_{m=1}^i (-1)^m \sum_{\substack{(j_1,j_2,\dots,j_m) \\ j_1+\dots+j_m = i}} b_{j_1}\cdots b_{j_m}.\end{equation}

Notice that for $\left\lfloor \frac{k+1}{2}\right\rfloor \leq m \leq k-1$, $b_m$ can appear at most once in each summand on the right hand side of (\ref{explinv}). This is the crucial observation in the proof of Lemma \ref{keycomp}.

\begin{lemma}\label{keycomp} For $X \in \g$ the function 
\begin{align*} \phi_X: B_k^{od} &\rightarrow \BC \\
												b & \mapsto \phi_X(b_1,b_2,\dots,b_{k-1}) =  \lla \Ad_bC,X \rra
\end{align*}
is affine linear in $b_{\left\lfloor \frac{k+1}{2}\right\rfloor}, \dots, b_{k-1}$. It is independent of those variables if and only if $X \in \ft$ and $b_1, b_2,\dots,b_{\left\lfloor \frac{k-2}{2}\right\rfloor}$ commute with $X$.

In this case, if $k$ is odd and we decompose $b_{\frac{k-1}{2}} = b^l + b^u$, where $b^l$ and $b^u$ are strictly lower and upper triangular respectively, then $\phi_X$ is affine linear in $b^u$ and independent in of $b^u$ if and only if $b^l$ commutes with $X$.
\end{lemma}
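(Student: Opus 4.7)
The plan is to expand $\phi_X(b) = \lla \Ad_b C, X \rra$ explicitly. Since $X \in \g$ has no $z$-dependence, the pairing (\ref{trp}) picks out only the coefficient of $z^{-1}$ in $bCb^{-1}$, and using (\ref{explinv}) one obtains
\[ \phi_X(b) = \sum_{l=1}^k \sum_{\substack{i+j=l-1 \\ i,j\geq 0}} \tr(b_i C_l w_j X), \]
with the convention $b_0 = w_0 = \unt$ and $w_j$ as in (\ref{explinv}).

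For the affine linearity claim, I would observe that for $s \geq \lfloor(k+1)/2\rfloor$ and any $j \leq k-1$, the index $s$ appears at most once in any monomial $b_{j_1}\cdots b_{j_m}$ of $w_j$, since $\sum_r j_r = j \leq k-1 < 2s$. Moreover, when $i = s$ one has $j = l-1-s \leq k-1-s < s$, so $w_j$ itself is $b_s$-free. This gives affine linearity in each such $b_s$.

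For the independence criterion, I would proceed by decreasing induction on $s$, starting from $s = k-1$. A direct computation reduces the coefficient of $b_{k-1}$ to $\tr(b_{k-1} [C_k, X])$; since $\tr(b_{k-1} M) = 0$ for all $b_{k-1} \in \g^{od}$ forces $M$ to be diagonal, and $[C_k, X]$ is off-diagonal, regularity of $C_k$ yields $X \in \ft$. Assuming $X \in \ft$ and, inductively, $[X, b_r] = 0$ for $r < k-1-s$, I would use $C_l, X \in \ft$ and the cyclicity of trace to telescope the coefficient of $b_s$ into the form $\tr\bigl(b_s\, [C_k, [X, b_{k-1-s}]]\bigr)$. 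The same off-diagonal/regularity argument then shows this vanishes iff $[X, b_{k-1-s}] = 0$. Letting $s$ range over $\{\lfloor(k+1)/2\rfloor, \dots, k-1\}$ gives exactly the condition $X\in\ft$ together with $[X,b_r]=0$ for $r = 1, \dots, \lfloor(k-2)/2\rfloor$.

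For the final statement (with $k$ odd), set $b' = b_{(k-1)/2}$. The only monomial of any $w_j$ (with $j \leq k-1$) containing $b'$ twice is $(b')^2$ in $w_{k-1}$, so $\phi_X$ is at most quadratic in $b'$: the quadratic part arises from Case $A$ at $(i,j,l) = ((k-1)/2, (k-1)/2, k)$, contributing $-\tr(b' C_k b' X)$, and from Case $B$ at $(i,j,l)=(0,k-1,k)$ restricted to the $(b')^2$-term, contributing $+\tr(C_k (b')^2 X)$. These sum to $\tr([C_k,b']b'X)$, which by a direct index computation (symmetrizing $i \leftrightarrow j$) equals $\sum_{i>j}(c_i-c_j)(x_i-x_j)\,b^l_{ij} b^u_{ji}$, where $c_i, x_i$ are the eigenvalues of $C_k$ and $X$. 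This is bilinear in $(b^l, b^u)$, hence linear in $b^u$; under the hypotheses of part one the same telescoping arguments as above apply to the linear-in-$b'$ contributions and cause them to cancel pairwise, so only the quadratic piece remains. Its coefficient of $b^u$ vanishes for all $b^u \in \g^u$ iff $(x_i - x_j) b^l_{ij} = 0$ for all $i > j$, i.e. $[X, b^l] = 0$. The main obstacle throughout is the inductive telescoping in part three and its adaptation to the linear-in-$b'$ part in part four; this is a routine but intricate bookkeeping driven by the identities $[X, C_l] = 0$, the assumed commutations, and the Jacobi identity.
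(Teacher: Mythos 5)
Your overall architecture matches the paper's: expand $\phi_X$ via the explicit inverse formula~\eqref{explinv}, note that each $b_s$ with $s\geq\lfloor(k+1)/2\rfloor$ appears at most linearly since $2s>k-1$, extract the $b_{k-1}$-coefficient $\tr(b_{k-1}[C_k,X])$ to force $X\in\ft$, and then descend inductively. Your target form $\tr\bigl(b_s[C_k,[X,b_{k-1-s}]]\bigr)$ for the $b_s$-coefficient agrees with the paper's $\tr[b_s,C_k][X,b_{k-1-s}]$ by trace cyclicity, and your reading off of the quadratic piece in the $k$-odd case as $\tr([C_k,b']b'X)$ is correct (indeed slightly more careful than the paper, which has an inconsequential factor-of-$2$ slip there; the conclusion is unaffected since vanishing of either form is equivalent to $[X,b^l]=0$).

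The genuine gap is the inductive step that you yourself flag as ``the main obstacle.'' You assert that cyclicity, $[X,C_l]=0$, the induction hypotheses, and Jacobi ``telescope'' the full collection of $b_s$-containing terms in the double sum $\sum_l\sum_{a+c=l-1}\tr(b_aC_lw_cX)$ down to the single trace $\tr(b_s[C_k,[X,b_{k-1-s}]])$, but you do not carry this out, and it is not at all a routine cancellation: the $b_s$-dependent terms come both from $a=s$ (with $w_{l-1-s}$ a polynomial in the lower $b_j$'s) and from every $w_c$ with $c\geq s$ that contains $b_s$ once, and showing these collapse to one commutator is precisely the content of the proof. The paper handles this with a different device: it introduces the auxiliary element $b'\in B_k$ obtained from $b$ by replacing the component $b_s$ with $b_sX$, uses that $\Res_0\tr(b'C(b')^{-1})=\tr(C_1)$ is a constant whose $b_s$-dependence must therefore vanish, and extracts the desired commutator by comparing the $b_s$-dependent parts of $\lla\Ad_bC,X\rra$ and of this constant. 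This comparison trick is what makes the bookkeeping tractable, and it also requires, and exploits, the induction hypothesis $[b_r,X]=0$ for small $r$ to make most terms coincide between the two expressions. Because your claim that the linear-in-$b'$ contributions ``cancel pairwise'' in the $k$-odd case rests on the same unverified telescoping, that gap propagates to part four as well. You should either supply the direct cancellation in full generality or adopt the paper's comparison argument.
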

  
\begin{proof} It follows directly from (\ref{explinv}), that $\phi_X$ depends linearly on $b_i$ for $\left\lfloor \frac{k+1}{2}\right\rfloor \leq i \leq k-1$. 

For $b \in B_k^{od}$, using the notation (\ref{explinv}) we have 
\begin{equation} \label{evrytn} \lla \Ad_bC,X\rra = \tr \sum_{i=1}^k \sum_{j=0}^{i-1} b_j C_i w_{i-j-1}X,\end{equation}
where we use the convention $b_0 = w_0 = \unt$. We start by looking at the dependence of $ \lla \Ad_bC,X\rra$ when varying $b_{k-1}$. The terms in (\ref{evrytn}) containing $b_{k-1}$ are given by 
\[\tr(b_{k-1}C_kX - C_kb_{k-1}X) = \tr [b_{k-1},C_k]X.\]
As $C_k \in \ft^{reg}$, the commutator $[b_{k-1},C_k]$ can take any value $\g^{od}$, thus $\tr [b_{k-1},C_k]X$ is independent of $b_{k-1}$ if and only if $X \in (\g^{od})^\perp = \ft$.

Assume from now on $X \in \ft$. We show now inductively that $\phi_X$ is independent of $b_{\left\lfloor \frac{k+1}{2}\right\rfloor  }, \dots, b_{k-2}$ if and only if $b_1, b_2,\dots,b_{\left\lfloor \frac{k-2}{2}\right\rfloor}$ all commute with $X$.

To do so, fix $\left\lfloor \frac{k+1}{2}\right\rfloor \leq m \leq k-2$ and assume that $b_1,\dots,b_{k-2-m}$ commute with $X$. Consider the element 
\[b' = \unt + zb_1+\dots + z^{m-1}b_{m-1} + z^{m} b_{m}X+z^{m+1} b_{m+1} + \dots + z^{k-1} b_{k-1} \in B_k^{od}.\]
The point is now, that the $b_m$-parts of the explicit formulas for $\lla \Ad_bC,X\rra$ and $\Res_0 \tr (b' C b'^{-1}) = \Res_0\tr( C)= \tr (C_1)$ are very similar. Indeed, from (\ref{evrytn}) we see, that all the terms containing $b_m$ in $\lla \Ad_bC,X\rra$ are contained in 
\begin{equation}\label{lala1} \tr \sum_{i=m+1}^k b_mC_i w_{i-m-1}X + \sum_{r=m}^{k-1} \sum_{i=r+1}^k b_{i-r-1} C_i w_r X.\end{equation}

To write a formula for $\Res_0 \tr( b' C b'^{-1})$ we write $b'^{-1} = \unt +z w'_1+\dots + z^{k-1} w'_{k-1}$. Then we can use a similar expression as (\ref{evrytn}) to conclude that all the terms containing $b_m$ in $\Res_0 \tr (b' C b'^{-1})$ are contained in 
\begin{equation}\label{lala2} \tr \sum_{i=m+1}^k b_m X C_i w'_{i-m-1} + \sum_{r=m}^{k-1} \sum_{i=r+1}^k b_{i-r-1} C_i w'_r.\end{equation}

Next we want to study the dependence of the difference (\ref{lala1}) - (\ref{lala2}) on $b_m$. Notice first, that since $[b_i,X]=0$ for $1\leq i \leq k-2-m$ also $[w_i,X]=0$ for $1\leq i \leq k-2-m$ and furthermore $[w_{k-m-1},X] = [X,b_{k-m-1}]$. From (\ref{explinv}) we also see $w_i' = w_i$ for all $1 \leq i < m$. Finally, we remark that $w_rX -w_r'$ is independent of $b_m$ for $m \leq r \leq k-2$ and the terms containing $b_m$ in $w_{k-1}X-w'_{k-1}$ are given by $b_m [b_{k-m-1},X]$. Combining all this we see that the terms containing $b_m$ in (\ref{lala1}) - (\ref{lala2}) are just 
\[ \tr \left(b_m C_k [X,b_{k-m-1}] + C_k b_m [b_{k-m-1},X]\right) = \tr [b_m,C_k][X,b_{k-m-1}].\]
Since $C_k \in \ft^{reg}$, the commutator $[b_m,C_k]$ can take any value in $\g^{od}$ as we vary $b_m \in \g^{od}$. Hence in order for $\tr [b_m,C_k][X,b_{k-m-1}]$ to be constant, we need $[X,b_{k-m-1}] \in \ft$. Since $X \in \ft$ this is only possible if $[X,b_{k-m-1}] = 0$, which finishes the induction step.
		
Finally, we consider the special case $m = \frac{k-1}{2}$, when $k$ is odd. Then by the same argument as before we obtain, that all the terms in $\lla \Ad_bC,X \rra$  which depend on $b_m$ are $\tr [b_m,C_k][X,b_{m}]$. Now using the decomposition $b_m = b^l+b^u$ we have 
\[\tr [b_m,C_k][X,b_{m}] = 2 \tr(b^u[C_k[X,b^l]]).\]
Since the orthogonal complement of strictly upper triangular matrices are the upper triangular matrices we see that $ \tr(b^u[C_k[X,b^l]])$ is independent of $b^u$ if and only if $[X,b^l] = 0$.
\end{proof}



\subsection{Motivic classes of open de Rham spaces}\label{mcodr}

Let $D=k_1a_1+\dots+k_da_d$ be an effective divisor on $\BP^1$ and $C^1,\dots,C^d$ formal types such that $\mathbf{C}$ is generic. We compute now $[\FM_n(\mathbf{C})] \in \mathscr{M}$, under the assumption that all poles are at least of order two, i.e. $k_i \geq 2$ for $1\leq i \leq d$. For a partition $\lambda = (\lambda_1\geq \dots \geq \lambda_l) \in \FP_n$ we define the numbers $l(\lambda) = l$, $N(\lambda) = \sum_{j=1}^l \lambda_j^2$ and $ m_j(\lambda)$ the multiplicity of $j\in \BN$ in $\lambda$. Furthermore we put 
\[\Stab(\lambda) = \prod_{j=1}^l \GL_{\lambda_j} \subset \GL_n.\] 

\begin{thm}\label{mainodr} The motivic class of $\FM_n(\mathbf{C})$ in $\mathscr{M}$ for a generic $\mathbf{C}$, where all poles are of order at least $2$, is given by 
\begin{align} \label{totalct}   [\FM_n(\mathbf{C})] &= \\
\nonumber \frac{\BL^{\frac{\mathbf{k}}{2}(n^2-2n)+n(d-n)+1 } }{(\BL-1)^{nd-1}} &\sum_{\lambda \in \FP_n} \frac{(-1)^{l(\lambda)-1}(l(\lambda)-1)! (n!)^d}{\prod_{j} (\lambda_j!)^d \prod_{r \geq 1} m_r(\lambda)!} \BL^{\frac{N(\lambda)}{2}(\mathbf{k}-2d)} [\Stab(\lambda)]^{d-1}.\end{align}
\end{thm}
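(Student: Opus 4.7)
The plan is to apply the motivic convolution and Fourier inversion machinery of Section \ref{GCM} to the presentation $\FM_n(\mathbf{C}) \cong \mu_d^{-1}(0)\sslash G$ from Proposition \ref{odrspa}. Because $\mathbf{C}$ is generic, Proposition \ref{odrspa} tells us that $\PGL_n = G/\BG_m$ acts freely on $\mu_d^{-1}(0)$, and the local triviality of $\PGL_n$-principal bundles (as in the proof of Proposition \ref{toricfam}) gives
\[ [\FM_n(\mathbf{C})] \;=\; \frac{(\BL-1)\,[\mu_d^{-1}(0)]}{[G]}. \]
Since $\mu_d$ is simply the sum of the individual residue projections $\pi_{\res}\colon \FO_{C^i}\to\g^*$, Definition \ref{motcon} identifies $[\mu_d^{-1}(0)]$ with the pullback to $0\in\g^*$ of the convolution $[\FO_{C^1}]*\cdots*[\FO_{C^d}]\in\kexp_{\g^*}$. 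Proposition \ref{fouconv} turns this convolution into a product of Fourier transforms, and Proposition \ref{finv} together with the easy identity $0^*\FF(\psi)=p_!\psi$ for $\psi\in\kexp_\g$ and $p\colon\g\to\spec\BC$ yields
\[ [\mu_d^{-1}(0)] \;=\; \BL^{-n^2}\,p_!\Bigl(\prod_{i=1}^d \FF([\FO_{C^i}])\Bigr) \quad\in\quad \expm. \]

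The hypothesis that every pole has order $\geq 2$ now enters through Theorem \ref{ftpuncturet}: each $\FF([\FO_{C^i}])$ is supported on the semisimple locus $\bigsqcup_\lambda \g_\lambda$, and on the stratum $\g_\lambda$ it equals an explicit scalar times $[Z_\lambda,\phi^{C^i}]$. Multiplying these Fourier transforms in $\kexp_{\g_\lambda}$ gives the $d$-fold fibre product $[Z_\lambda^{\times_d},\sum_i\phi^{C^i}]$ over $\g_\lambda$, so the computation reduces to evaluating
\[ p_!\bigl[Z_\lambda^{\times_d},\;{\textstyle\sum_{i=1}^d}\phi^{C^i}\bigr] \]
for every partition $\lambda$ of $n$ and summing. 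For this I would use the isomorphism $Z_\lambda\cong G\times\ft_\lambda$, with $\ft_\lambda := \ft\cap\g_\lambda$, given by $(g,D)\mapsto(g,\Ad_gD)$: it makes $\phi^{C^i}(g,D)=\lla C_1^i,D\rra$ independent of $g$, and presents $Z_\lambda^{\times_d}$ as $G\times\ft_\lambda$ fibred by $(d{-}1)$-fold products of $N_D:=\{h\in G : h^{-1}Dh\in\ft\}$. Each $N_D$ is a disjoint union, indexed by the $n!/\prod_j\lambda_j!$ orderings of the eigenvalues of $D$, of torsors under $\Stab(\lambda)$. Stratifying by the $(d{-}1)$-tuple of orderings $(\tau_2,\dots,\tau_d)$ then rewrites the push-forward as
\[ [G]\cdot[\Stab(\lambda)]^{d-1}\cdot\sum_{\tau_2,\dots,\tau_d}\bigl[\ft_\lambda,\,L_\tau\bigr], \]
where each $L_\tau$ is an affine linear form on the eigenvalue space $\BA^{l(\lambda)}$ whose coefficients are partial trace sums of the entries of the $C_1^i$'s---exactly the linear combinations controlled by Definition \ref{genericc}.

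The main obstacle is this last combinatorial step. To handle it I would decompose $\ft_\lambda$ into its placements (unordered partitions of $\{1,\dots,n\}$ into blocks of sizes $\lambda_1,\dots,\lambda_l$) and perform inclusion-exclusion on the partition lattice of $\{1,\dots,l(\lambda)\}$, so that Lemma \ref{orth} applies flat by flat. The genericity of $\mathbf{C}$ is tailored precisely to force every $L_\tau$ to be non-zero on every flat except the trivial one (where all eigenvalues coincide); the only surviving terms therefore come from the top element of the partition lattice, whose M\"obius coefficient contributes the factor $(-1)^{l(\lambda)-1}(l(\lambda)-1)!$. The $d$ independent orderings (one from $\ft_\lambda$ itself and $d-1$ from the $N_D$ factors) produce the factor $(n!)^d/\prod_j(\lambda_j!)^d$, the relabeling symmetry of equal-size blocks of $\lambda$ contributes $1/\prod_r m_r(\lambda)!$, and the remaining integral $[\BA^1,0]=\BL$ supplies a final $\BL$. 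Combining these contributions with the $\BL^{-n^2}$, the $(\BL-1)/[G]$ from the first step, and the prefactors from Theorem \ref{ftpuncturet} raised to the appropriate powers then simplifies to the formula stated.
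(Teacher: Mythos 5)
Your argument follows the paper's proof closely: you establish the same reduction $[\FM_n(\mathbf{C})]=(\BL-1)[\mu_d^{-1}(0)]/[G]$, pass through convolution and Fourier inversion to reduce to $p_![Z_\lambda^{\times_d},\sum_i\phi^{C^i}]$ via Theorem \ref{ftpuncturet}, and carry out the same decomposition of $Z_\lambda^{\times_d}$ into $\frac{(n!)^d}{\prod_j(\lambda_j!)^d\prod_r m_r(\lambda)!}$ copies of $G\times\Stab(\lambda)^{d-1}\times\BA^{l(\lambda)}_\circ$ (your bookkeeping of one set partition plus $d-1$ orderings of $N_D$ counts the same components as the paper's Lemma \ref{thirda}, which uses $d$-tuples of set partitions and an extra $\prod_r(m_r!)^{d-1}$ factor). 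The only genuine difference is at the final step, where you compute $[\BA^{l}_\circ,L]=(-1)^{l-1}(l-1)!\,\BL$ by M\"obius inversion on the partition lattice rather than the paper's direct induction in Lemma \ref{finalc}; both are valid and of similar length, with yours exposing the M\"obius coefficient of the partition lattice more explicitly.
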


We start by simplifying $[\FM_n(\mathbf{C})]$ in the following standard way.

\begin{lemma} In $\mathscr{M}$ we have 
\begin{equation}\label{firsta} [\FM_n(\mathbf{C})] =  \frac{(\BL-1) [\mu_d^{-1}(0)]}{[G]}. \end{equation}
\end{lemma}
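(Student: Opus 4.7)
The identity rearranges, via the splitting $[\GL_n] = (\BL-1)[\PGL_n]$ in $\mathscr{M}$ (from the Zariski-locally trivial $\BC^\times$-torsor $\GL_n \to \PGL_n$, $\BC^\times$ being special), to
\[ [\mu_d^{-1}(0)] = [\PGL_n] \cdot [\FM_n(\mathbf{C})]. \]
From Proposition \ref{odrspa} the $G$-action on $\mu_d^{-1}(0)$ has constant stabilizer equal to the center $\BC^\times$, so it factors through a free action of $\PGL_n = G/\BC^\times$ with geometric quotient $\FM_n(\mathbf{C})$; in particular $\mu_d^{-1}(0) \to \FM_n(\mathbf{C})$ is a principal $\PGL_n$-bundle. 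Since $\PGL_n$ is not a special group for $n\geq 2$, one cannot apply \eqref{trif} directly as was done in Propositions \ref{nakfamily} and \ref{qdef}.

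The plan is to trade $\PGL_n$ for the special group $G=\GL_n$ via an auxiliary free representation. Let $Z = \mu_d^{-1}(0) \times (\BC^n \setminus \{0\})$ with the diagonal $G$-action; this is free, because any nontrivial scalar in the first-factor stabilizer scales a nonzero vector nontrivially. Since $G$ is special \cite{serre58}, \eqref{trif} yields
\[ (\BL^n - 1) \cdot [\mu_d^{-1}(0)] = [Z] = [G] \cdot [Z/G]. \]
The quotient $Z/G$ fibers over $\FM_n(\mathbf{C})$ with fiber $(\BC^n\setminus\{0\})/\BC^\times = \BP^{n-1}$; one sees this by first quotienting $Z$ by the central $\BC^\times$ (Zariski-locally trivial, as $\BC^\times$ is special) to obtain $\mu_d^{-1}(0) \times \BP^{n-1}$, and then taking the residual free diagonal $\PGL_n$-quotient. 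Once I establish
\[ [Z/G] = [\BP^{n-1}] \cdot [\FM_n(\mathbf{C})] = \frac{\BL^n-1}{\BL-1} \cdot [\FM_n(\mathbf{C})], \]
substituting back and cancelling the invertible factor $\BL^n-1 \in \mathscr{M}^\times$ gives the desired formula.

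The hard part will be this last motivic equality: a priori the fibration $Z/G \to \FM_n(\mathbf{C})$ is only \'etale-locally a trivial $\BP^{n-1}$-bundle, namely the projectivization of a twisted vector bundle whose class encodes that of the $\PGL_n$-torsor $\mu_d^{-1}(0) \to \FM_n(\mathbf{C})$. My plan is to construct Zariski-local sections of this $\PGL_n$-torsor directly from the moduli description: on a suitable Zariski open cover of $\FM_n(\mathbf{C})$ the genericity hypothesis on $\mathbf{C}$ should permit a canonical normalization of a representative in each $G$-orbit---for instance by pinning down an eigenframe of the leading irregular term $\pi_{\irr}$ at one of the higher-order poles---giving the required Zariski-local trivializations of $\mu_d^{-1}(0)$, and hence of $Z/G$; on each such chart the fiber product is a trivial $\BP^{n-1}$-bundle and \eqref{trif} applies to conclude.
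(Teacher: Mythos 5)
Your proposal correctly identifies the central obstruction—that $\PGL_n$ is not special, so one cannot cite Serre's theorem for the torsor $\mu_d^{-1}(0) \to \FM_n(\mathbf{C})$—and the eigenframe idea you sketch is in fact the right ingredient for bypassing it. But as written the argument has a gap, and it also carries an unnecessary detour.

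The detour first: the whole point of the auxiliary space $Z = \mu_d^{-1}(0)\times(\BC^n\setminus 0)$ is to produce an object with a \emph{free} $G$-action; but the payoff you then need, namely $[Z/G]=[\BP^{n-1}][\FM_n(\mathbf{C})]$, is exactly as hard as establishing Zariski-local triviality of the $\PGL_n$-torsor itself. And once that triviality is in hand you already have $[\mu_d^{-1}(0)]=[\PGL_n][\FM_n(\mathbf{C})]$ directly from \eqref{trif}, with no need for $Z$ at all. So the $Z$-construction neither removes the hard step nor shortens the argument once the hard step is done.

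The gap is in how you would carry out the hard step. ``Pinning down an eigenframe of the leading irregular term'' does not by itself pin down a representative of the $G$-orbit: the eigenvalues of $(Y^1)_{k_1}$ are the fixed diagonal entries of $C^1_{k_1}$, so the eigenspaces are canonically labeled, but a matrix diagonalizing $(Y^1)_{k_1}$ is still only determined up to right multiplication by the torus $T$, not up to scalars. So the normalization you describe reduces the structure group from $\PGL_n$ to $T/\BC^\times$, not to the trivial group, and you still have a torsor to trivialize. What saves you is that $T$ and $T/\BC^\times$ are special tori. Concretely, set $W=\{Y^\bullet\in\mu_d^{-1}(0)\ |\ (Y^1)_{k_1}=C^1_{k_1}\}$. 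Lemma \ref{codecomp} (applied at the first pole) shows $\mu_d^{-1}(0)\cong(G\times W)/T$ with $G\times W\to \mu_d^{-1}(0)$ a $T$-torsor, while $W\to\FM_n(\mathbf{C})$ is a $T/\BC^\times$-torsor; both are Zariski-locally trivial since tori are special, and combining $[G][W]=(\BL-1)^n[\mu_d^{-1}(0)]$ with $[W]=(\BL-1)^{n-1}[\FM_n(\mathbf{C})]$ gives \eqref{firsta}. This also establishes, a posteriori, that the $\PGL_n$-torsor is indeed Zariski-locally trivial—so your instinct was sound, but the reduction to a \emph{special} group has to be made explicit; ``canonical normalization'' alone is not enough.

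For comparison, the paper's proof never passes through $\PGL_n$ at all: it restricts the $T^d_k$-torsor $G_k^d\to\prod_i\FO_{C^i}$ over $\mu_d^{-1}(0)$ to get a roof $X$ which is simultaneously a $T^d_k$-torsor over $\mu_d^{-1}(0)$ and a $G$-torsor over $G\setminus X$, with $G\setminus X$ a $(T_k^d/\BC^\times)$-torsor over $\FM_n(\mathbf{C})$; all three structure groups are special, and the motivic identity drops out. Your reconstruction, properly completed, is a smaller version of the same roof trick, using only the leading term at one pole rather than the full $G_k^d$.
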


\begin{proof} Similar to Lemma \ref{codecomp} we consider the $T^d_k$-principal bundle $\alpha:G_k^d \rightarrow \prod_{i=1}^d \FO_{C^i}$. Notice that $\alpha$ is $G$-equivariant with respect the free $G$-action on $G_k^d$ given by left-multiplication. By restriction we obtain a $G$-equivariant $T^d_k$-principal bundle $X \rightarrow \mu_d^{-1}(0)$. Taking the (affine GIT-)quotient by $G$, we obtain a $\BC^\times \setminus T^d_k/$-principal bundle $G \setminus X \rightarrow \FM_n(\mathbf{C})$. Also $X \rightarrow G \setminus X$ is a $G$-principal bundle, as it is the restriction of $G_k^d \rightarrow G\setminus G_k^d$. As the groups $T^d_k, G_k$ and $\BC^\times \setminus T^d_k$ are special \cite[Section 4.3]{serre58}, all the principal bundles here are Zariski locally trivial and we get
\[ [\FM_n(\mathbf{C})] = \frac{[G\setminus X]}{[\BC^\times \setminus T^d_k]} = \frac{[X]}{[G][\BC^\times \setminus T^d_k]} = \frac{[\mu_d^{-1}(0)][T^d_k](\BL-1)}{[G][T^d_k]} = \frac{(\BL-1) [\mu_d^{-1}(0)]}{[G]} .\]
\end{proof}

By (\ref{firsta}) it is enough to determine $[\mu_d^{-1}(0)] = 0^*[\FO_{C^1}\times \cdots \times \FO_{C^d}]$, where we consider $0:\spec(\BC) \rightarrow \g^{*}$ as a morphism. Using motivic convolution and in particular Proposition \ref{fouconv} we have an equality of motivic Fourier transforms

\begin{equation}\label{foffs} \FF\left(\left[\prod_{i=1}^d  \FO_{C^i}\right] \right)= \FF\left([\FO_{C^1}]*\dots * [\FO_{C^d}] \right) = \prod_{i=1}^d \FF([\FO_C]) \in \kexp_\g.\end{equation}

Notice that the last product is relative to $\g$, hence we have by Theorem \ref{ftpuncturet} for every $\lambda \in \FP_n$ 

\begin{align*} i_\lambda^*\prod_{i=1}^d \FF([\FO_C]) &= (\BL -1)^{-nd} \BL^{dn + \frac{1}{2}(\mathbf{k}(n^2-2n) +N(\lambda)(\mathbf{k}-2d))}\prod_{i=1}^d\left[Z_\lambda, \phi^{C^i}\right] \\
&= (\BL -1)^{-nd} \BL^{dn + \frac{1}{2}(\mathbf{k}(n^2-2n) +N(\lambda)(\mathbf{k}-2d))} \left[Z^d_\lambda,\sum_{i=1}^d \phi^{C^i} \right], \end{align*}
with the notations $\mathbf{k} = \sum k_i$, $Z_\lambda^d$ for the $d$-fold product $Z_\lambda \times_\g \dots \times_\g Z_\lambda$ and $\sum_i \phi^{C^i}$ for the function taking $(z_1,\dots,z_d)\in Z_\lambda^d$ to $\sum_i \phi^{C^i}(z_i)$.
By Fourier inversion \ref{finv} we thus get

\begin{align} \nonumber [\mu_d^{-1}(0)]& = 0^* \FF\left(\prod_{i=1}^d \FF([\FO_C])\right) \\
	\label{seca}&=  (\BL -1)^{-nd} \BL^{dn + \frac{1}{2}\mathbf{k}(n^2-2n) - n^2} \sum_{\lambda \in \FP_n} \BL^{\frac{1}{2}N(\lambda)(\mathbf{k}-2d)} \left[Z^d_\lambda,\sum_i \phi^{C^i} \right]
\end{align}
This leaves us with understanding $\left[Z^d_\lambda,\sum_i \phi^{C^i} \right]$ as an element of $\kexp$.

We start by taking a closer look at $Z_\lambda = \{ (g,X) \in G \times \g_\lambda \ |\ \Ad_{g^{-1}}X \in \ft\}$. If we put $\ft_\lambda = \ft \cap \g_\lambda$ we have an isomorphism 
\begin{align} \label{decoup} Z_\lambda &\xrightarrow{\sim} \ft_\lambda \times G \\
							\nonumber		(g,X) &\mapsto (\Ad_{g^{-1}} X, g).
\end{align}

Next we need to fix some notation to describe $\ft_\lambda$ combinatorially. To parametrize the eigenvalues of elements in $\ft_\lambda$ define for any $m \in \BN$ the open subvariety $\BA_{\circ}^m \subset \BA^m$ as the complement of $\cup_{i \neq j} \{x_i = x_j\}$.

Furthermore we need some discrete data. A \textit{set partition of $n$} is a partition $I = \{I_1,I_2,\dots ,I_l\}$ of $\{ 1,2,\dots ,n\}$ i.e $I_i \cap I_j = \emptyset$ for $i \neq j$ and $\cup_i I_i = \{ 1,2,\dots ,n\}$. For $\lambda = (\lambda_1 \geq\dots \geq \lambda_l) \in \FP_n$ we write $\FP_\lambda$ for the set of set partitions $I=\{I_1,\dots,I_l\}$ of $n$ such that $\{ |I_1|,\dots,|I_l|\} = \{\lambda_1,\dots,\lambda_l\}$. We stress that the $I_i$'s are not ordered and hence $|\FP_\lambda| = \frac{n!}{\prod_{i = 1}^l \lambda_i!\prod_{j\geq 1} m_j(\lambda)!}$, where $ m_j(\lambda)$ denotes the multiplicity of $j$ in $\lambda$. 

With this notations we get a parametrization
\begin{align} \label{param} p:\FP_\lambda \times \BA_\circ^l &\xrightarrow{\sim} \ft_\lambda \\
									\nonumber	(I, \alpha) &\mapsto \sum_{j=1}^l \alpha_j E_{I_j},
\end{align}
where for any subset $J \subset \{1,\dots,n\}$ $E_J$ is defined as in (\ref{eij}) and we require $|I_j| = \lambda_j$. Notice that $p$ is not uniquely defined this way as we might switch $I_a$ and $I_b$ in a given $I$, if $|I_a|=|I_b|$. As this doesn't matter for us, we will just fix a $p$ once and for all.

For any $I \in \FP_\lambda$ we also define $\ft_\lambda^I$ as the image of the restriction $p_I = p_{|{\{I\} \times \BA_\circ^l}}: \{ I\} \times \BA_\circ^l \xrightarrow{\sim} \ft_\lambda^I$.

\begin{lemma}\label{thirda} The following relation holds in $\kexp$
\[ \left[Z_\lambda^d, \sum_{i=1}^d \phi^{C^i} \right] = \prod_{r\geq 1} (m_\lambda(r)!)^{d-1} [G \times \Stab(\lambda)^{d-1}] \sum_{(I^1,\dots,I^d)\in (\FP_\lambda)^d} \left[\BA_\circ^{l}, \sum_{i=1}^d \lla C_1^i,p_{I^i}\rra\right]. \]
\end{lemma}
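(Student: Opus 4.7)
The plan is to use the decoupling $Z_\lambda \cong G \times \ft_\lambda$ from \eqref{decoup}, under which $\phi^C(g,X)$ depends only on $Y=\Ad_{g^{-1}}X$, and then stratify by the block-partition structure of $\ft_\lambda$. First I would rewrite a point of $Z_\lambda^d$ (the $d$-fold fiber product over $\g$) as $(g_1,Y_1,h_2,\ldots,h_d)$ with $h_i := g_1^{-1}g_i$; the constraint $\Ad_{g_i^{-1}}X\in\ft$ then reads $\Ad_{h_i^{-1}}Y_1\in\ft$ and the function becomes $\lla C_1^1,Y_1\rra + \sum_{i\ge 2}\lla C_1^i,\Ad_{h_i^{-1}}Y_1\rra$. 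Since $g_1$ enters freely, this yields
\[
[Z_\lambda^d,\textstyle\sum_i\phi^{C^i}] = [G]\cdot [\tilde Z,\tilde f]
\]
for the reduced variety $\tilde Z=\{(Y_1,h_2,\ldots,h_d) : Y_1\in\ft_\lambda,\ h_i^{-1}Y_1 h_i\in\ft\}$.

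Next I would stratify $\tilde Z$ by $(I^1,\ldots,I^d)\in\FP_\lambda^d$ according to $Y_1\in\ft_\lambda^{I^1}$ and $\Ad_{h_i^{-1}}Y_1\in\ft_\lambda^{I^i}$, and parametrize $Y_1=p_{I^1}(\alpha)$ via \eqref{param}. On such a stratum the locus of valid $h_i$ splits into $|W_\lambda|=\prod_r m_r(\lambda)!$ cosets of $\Stab_G(Y_1)\cong \Stab(\lambda)$, one for each permutation $\tau_i\in W_\lambda := \prod_r S_{m_r(\lambda)}$ matching eigenspaces of $Y_1$ with equal-sized coordinate blocks of $I^i$. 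These cosets are independent of $\alpha$ (only block sizes of the eigenspace decomposition enter), so each sub-stratum $(I^1,\ldots,I^d,\tau_2,\ldots,\tau_d)$ is isomorphic to $\BA_\circ^l\times\Stab(\lambda)^{d-1}$, and a direct computation using $p_{I^i}(\tau_i\cdot\alpha)=p_{\tau_i\cdot I^i}(\alpha)$ identifies the pullback of the function as the linear form
\[
F_{(I,\tau)}(\alpha)\;=\;\lla C_1^1,p_{I^1}(\alpha)\rra+\sum_{i\ge 2}\lla C_1^i,p_{I^i}(\tau_i\cdot\alpha)\rra,
\]
independent of the $\Stab(\lambda)$-coordinates.

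The final step — and the main obstacle — is to show that $\sum_{\tau\in W_\lambda^{d-1}}[\BA_\circ^l,F_{(I,\tau)}] = |W_\lambda|^{d-1}\cdot[\BA_\circ^l,\sum_i\lla C_1^i,p_{I^i}\rra]$, absorbing the $\tau$-sum into the prefactor $\prod_r(m_r(\lambda)!)^{d-1}$. Writing $F_{(I,\tau)}(\alpha)=\sum_k c_k^{(I,\tau)}\alpha_k$, genericity of $\mathbf{C}$ (Definition \ref{genericc}) guarantees that $\sum_k c_k^{(I,\tau)}=\sum_i\tr C_1^i=0$ while every proper nontrivial partial sum $\sum_{k\in J}c_k^{(I,\tau)}=\sum_i\lla C_1^i, E_{K^i_J}\rra$ is non-zero; crucially, $|K^i_J|=\sum_{k\in J}\lambda_k$ is independent of both $i$ and $\tau$ because $\tau_i\in W_\lambda$ permutes only within equal-size blocks. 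All $F_{(I,\tau)}$ thus share the same ``genericity pattern,'' and to conclude I would invoke (or establish alongside) a separate lemma — essentially the content of Lemma \ref{finalc} — asserting that $[\BA_\circ^l,f]$ for a generic linear form depends only on this pattern; the invariance follows by inclusion-exclusion on the hyperplane arrangement $\{\alpha_i=\alpha_j\}\subset\BA^l$, applying Lemma \ref{orth} to kill strata where the restricted linear form is nontrivial. Granted this, every term in the $\tau$-sum contributes the same class, yielding the claimed formula.
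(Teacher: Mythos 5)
Your argument is correct and follows the same overall approach as the paper: use the decoupling $Z_\lambda\cong\ft_\lambda\times G$ together with the parametrization of $\ft_\lambda$ by $\FP_\lambda\times\BA_\circ^l$, and then observe that the $d$-fold fiber product over $\g_\lambda$ breaks into $\prod_r(m_r(\lambda)!)^{d-1}$ pieces each isomorphic to $\BA_\circ^l\times G\times\Stab(\lambda)^{d-1}$. The paper dispatches all of this with ``one can then check\dots and keeping track of the isomorphisms gives the desired equality,'' whereas you spell out the bookkeeping with the change of variables $h_i = g_1^{-1}g_i$.

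Where you add genuine content is in the last step, and this deserves emphasis: the variety isomorphism alone does \emph{not} give the equality of classes in $\kexp$, because on the component labeled by $\tau=(\tau_2,\dots,\tau_d)\in W_\lambda^{d-1}$ the pulled-back function is $F_{(I,\tau)}(\alpha) = \lla C_1^1,p_{I^1}(\alpha)\rra + \sum_{i\ge2}\lla C_1^i,p_{I^i}(\tau_i\cdot\alpha)\rra$, which is literally a different linear form from the $\tau=\mathrm{id}$ term appearing on the right-hand side. Your observation that all the $F_{(I,\tau)}$ nevertheless give the same class in $\kexp$, because genericity of $\mathbf{C}$ forces every proper partial sum of coefficients to be nonzero (the relevant sizes $\sum_{k\in J}\lambda_k$ being $\tau$-invariant since $\tau_i$ permutes only within equal-multiplicity blocks) so that Lemma~\ref{finalc} applies uniformly, is exactly what is needed. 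This is not cosmetic: without genericity the statement is false. For instance with $n=2$, $\lambda=(1,1)$, $d=2$, $C_1^1=\diag(1,-1)$, $C_1^2=\diag(-1,1)$, the two components carry the functions $0$ and $2\alpha_1-2\alpha_2$, whose classes on $\BA_\circ^2$ are $\BL^2-\BL$ and $-\BL$ respectively, so the two $\tau$-terms are not interchangeable. In short: the proposal is a correct and more careful account of the paper's proof, making explicit the role of the genericity hypothesis (in force throughout Section~\ref{mcodr}) that the paper's one-line argument leaves implicit.
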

\begin{proof} 
Combining (\ref{decoup}) and (\ref{param}) we have $Z_\lambda \cong \bigsqcup_{I \in \FP_\lambda} \BA^l_\circ \times G$. One can then  check that $(\BA^l_\circ \times G) \times_{\g_\lambda} (\BA^l_\circ \times G)$ has $\prod_{r\geq 1} m_\lambda(r)!$ connected components, each of which is isomorphic to $\BA^l_\circ \times G \times \Stab(\lambda)$. Applying this reasoning $d-1$ times and keeping track of the isomorphisms gives the desired equality. \end{proof}

																								

\begin{proof}[Proof of Theorem \ref{mainodr}]  Combining (\ref{firsta}), (\ref{seca}) and Lemma \ref{thirda} we are left with computing the character sum $\left[\BA_\circ^{l}, \sum_{i=1}^d \lla C_1^i,p_{I^i}\rra \right]$ for a fixed $d$-tuple $(I^1,\dots,I^d)$ of set partitions of $n$. For $\alpha \in \BA_\circ^l$ we can write 
\[ \sum_{i=1}^d \lla C_1^i,p_{I^i}(\alpha)\rra = \sum_{i=1}^d \lla C_1^i, \sum_{j=1}^l \alpha_j E_{I_j^i} \rra = \sum_{j=1}^l \alpha_j \sum_{i=1}^d \lla C_1^i,E_{I^i_j}\rra. \]

Now for a fixed $1\leq j \leq l$ we have by definition $|E_{I_j^1}|=\dots =|E_{I_j^d}|$. Thus by our genericity assumption \eqref{gen} the numbers $\beta_j = \sum_{i=1}^d \lla C_1^i,E_{I^i_j}\rra$ satisfy the assumptions of Lemma \ref{finalc} below, and we deduce

\begin{equation}\label{tadaa} \left[\BA_\circ^{l}, \sum_{i=1}^d \lla C_1^i,p_{I^i}\rra\right] = (-1)^{l-1} (l-1)! \BL, \end{equation}
which proves Theorem \ref{mainodr}.\end{proof}

\begin{lemma} \label{finalc} Let $\beta_1,\dots,\beta_m$ be complex numbers such that $\sum_{j=1}^m \beta_j = 0$ and for $J \subset \{1,2,\dots,m\}$ be a proper subset  $\sum_{j\in J} \beta_j \neq 0$. Then for the function $\lla \cdot, \beta\rra: \BA^m_\circ \rightarrow \BC$, $\alpha \mapsto \sum_{j=1}^m \alpha_j\beta_j$ we have
\[ [\BA^m_{\circ}, \lla \cdot, \beta\rra ] = (-1)^{m-1} (m-1)! \BL \in \kexp.\]
\end{lemma}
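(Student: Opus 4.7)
My plan is to stratify $\BA^m$ by the set partition of $\{1,\dots,m\}$ that records which coordinates coincide, and then invert this stratification using M\"obius inversion in the partition lattice together with Lemma~\ref{orth}. For a set partition $\pi$ of $\{1,\dots,m\}$, let $\BA^m_\pi\subset\BA^m$ be the locally closed subvariety where $\alpha_i=\alpha_j$ if and only if $i,j$ lie in the same block of $\pi$, so that $\BA^m=\bigsqcup_\pi\BA^m_\pi$ and the discrete partition $\hat 0$ gives precisely $\BA^m_\circ$. For the partial order in which $\sigma\ge\pi$ means $\sigma$ coarsens $\pi$, the closed stratum $\BA^m_{\ge\pi}=\bigsqcup_{\sigma\ge\pi}\BA^m_\sigma$ is isomorphic to $\BA^{|\pi|}$ by sending a tuple to the common value on each block; under this identification the form $\lla\cdot,\beta\rra$ pulls back to $\lla\cdot,\beta^\pi\rra$, where $\beta^\pi=(\beta^\pi_B)_{B\in\pi}$ with $\beta^\pi_B=\sum_{j\in B}\beta_j$.

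Passing to $\kexp$, the identity $[\BA^m_{\ge\pi},\lla\cdot,\beta\rra]=\sum_{\sigma\ge\pi}[\BA^m_\sigma,\lla\cdot,\beta\rra]$ together with M\"obius inversion in the partition lattice $\Pi_m$ gives
\[ [\BA^m_\circ,\lla\cdot,\beta\rra]=\sum_{\pi\in\Pi_m}\mu_{\Pi_m}(\hat 0,\pi)\,[\BA^{|\pi|},\lla\cdot,\beta^\pi\rra]. \]
The key observation is that, by Lemma~\ref{orth}, each summand on the right vanishes whenever $\beta^\pi\ne 0$. If $\pi=\hat 1$ is the one-block partition, then $\beta^\pi=\sum_j\beta_j=0$ by hypothesis, and the corresponding summand is $\mu_{\Pi_m}(\hat 0,\hat 1)\cdot[\BA^1,0]=(-1)^{m-1}(m-1)!\,\BL$ by the classical formula for the M\"obius function of the partition lattice. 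For any other $\pi$, every block $B\in\pi$ is a nonempty proper subset of $\{1,\dots,m\}$, so by the genericity hypothesis each entry $\beta^\pi_B$ is nonzero; in particular $\beta^\pi\ne 0$ and the corresponding summand vanishes.

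Only the $\pi=\hat 1$ term therefore survives, yielding $[\BA^m_\circ,\lla\cdot,\beta\rra]=(-1)^{m-1}(m-1)!\,\BL$ as required. The conceptual point is that the genericity hypothesis on the $\beta_j$'s is tailored exactly so that every partition other than $\hat 1$ is killed by the orthogonality Lemma~\ref{orth}; once that is recognized the argument reduces to the standard value of $\mu_{\Pi_m}(\hat 0,\hat 1)$. I expect no real obstacle beyond verifying that the motivic M\"obius inversion is being applied correctly in $\kexp$, which is justified by additivity of $[\,\cdot\,,f]$ under locally closed stratifications.
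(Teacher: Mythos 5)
Your proof is correct, but it is a genuinely different argument from the one in the paper. The paper's proof is a short induction on $m$: it views $\BA^m_\circ$ as an open subvariety of $\BA^{m-1}_\circ\times\BA^1$, observes that $[\BA^{m-1}_\circ\times\BA^1,\lla\cdot,\beta\rra]=0$ since $\beta_m\neq 0$, and hence that $[\BA^m_\circ,\lla\cdot,\beta\rra]$ is minus the class of the complement $\BA^{m-1}_\circ\times\BA^1\setminus\BA^m_\circ$, which splits into $m-1$ copies of $\BA^{m-1}_\circ$ (one for each choice $\alpha_m=\alpha_i$), each carrying a linear form whose coefficients again satisfy the genericity hypothesis; the induction then closes up immediately. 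You instead stratify $\BA^m$ by the lattice $\Pi_m$ of set partitions, perform M\"obius inversion over the coarsening order, and observe that Lemma~\ref{orth} annihilates every flat except the one-block partition, whose contribution is $\mu_{\Pi_m}(\hat 0,\hat 1)\BL=(-1)^{m-1}(m-1)!\BL$. Both proofs are valid; yours is a closed-form computation rather than a recursion, and it makes the role of the genericity hypothesis more transparent, since the hypothesis is literally the statement that $\beta^\pi\neq 0$ for every $\pi\neq\hat 1$. The paper's version is more elementary in that it avoids the partition lattice and its M\"obius function, and the induction automatically keeps track of how the genericity hypothesis is preserved under merging coordinates, which is a point you leave implicit (though it is easy). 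One minor thing worth spelling out if you were to write this up: the stratification $\BA^m=\bigsqcup_\pi\BA^m_\pi$ is by locally closed subvarieties and additivity of the class $[\,\cdot\,,f]$ in $\kexp$ is precisely relation (i) in the definition of $\kexp$, which justifies the identity $[\BA^m_{\ge\pi},\lla\cdot,\beta\rra]=\sum_{\sigma\ge\pi}[\BA^m_\sigma,\lla\cdot,\beta\rra]$ that the inversion rests on.
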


\begin{proof} We use induction on $m$. For $m=1$ we have $\beta_1 =0$ and $\BA^1_\circ = \BA^1$, hence the statement is clear. For the induction step consider $\BA^{m}_\circ$ as a subvariety of $\BA^{m-1}_\circ \times \BA^1$. As $\beta_m \neq 0$ we have $[\BA^{m-1}_\circ \times \BA^1, \lla \cdot, \beta\rra] = 0$, hence 
\[ [\BA^m_{\circ}, \lla \cdot, \beta\rra ] = - [\BA^{m-1}_\circ \times \BA^1 \setminus \BA^{m}_\circ,  \lla \cdot, \beta\rra ].\] 
Now notice that the complement $\BA^{m-1}_\circ \times \BA^1 \setminus \BA^{m}_\circ$ has $m-1$ connected components, each of which is isomorphic $\BA^{m-1}_\circ$, which implies the formula.
\end{proof}

\subsection{Remarks on finite fields and purity}

The description of $\FM_n(\mathbf{C})$ in Proposition \ref{odrspa} allows us to consider open de Rham spaces over any field, in particular over a  finite field $\BF_q$.   By taking a spreading out of $\FM_n(\mathbf{C})$ over some finitely generated $\BZ$-algebra we see that if the characteristic of $\BF_q$ is large enough, Theorem \ref{mainodr} also hold when we replace every motivic class with the number of rational points over $\BF_q$  \cite[Appendix]{HR08}. 

We can even say more. Namely the proof of Theorem \ref{ftpuncturet} implies that the Fourier transform of the count function $\#_C: \g^* \rightarrow \BZ$, $Y \mapsto |\pi_{res}^{-1}(Y)(\BF_q)|$ associated to the coadjoint orbit $\pi_{res}:\FO_C \rightarrow \g^*$ is supported on semi-simple elements in $\g$, whose eigenvalues are in the field $\BF_q$. Given such an $X \in \g$ of type $\lambda \in \FP_n$, the $\BF_q$-version of formula (\ref{ftpuncture}) reads 
\begin{equation}\label{ffft} \FF(\#_C)(X) = \frac{q^{n+\frac{1}{2}\left(k(n^2-2n)+(k-2)N(\lambda)\right)}}{(q-1)^n} \left|\Stab(\lambda)(\BF_q)\right| \sum_{t \in \Ad_GX \cap \ft} \Psi(\lla C_1,t\rra),\end{equation}
 where $\Ad_GX \subset \g$ denotes the orbit of $X$ under the adjoint action and $\FF$ and $\Psi$ are defined as in Section \ref{GCM}.

Now even though our argument in Section \ref{ftp} does not work for $k=1$, formula (\ref{ffft}) continues to hold for $X\in \g$ semi-simple with eigenvalues in $\BF_q$. Indeed in this case $\#_C$ is the characteristic function of the coadjoint orbit $\FO_C \subset \g^*$ and the formula (2.5.5) in \cite{HLV11} for its Fourier transform agrees with (\ref{ffft}), when we put $k=1$. 

Hence for a generic $\mathbf{C}$ with at least one pole of order $\geq 2$, we can compute $|\FM_n(\mathbf{C})(\BF_q)|$ as in Section \ref{mcodr}, since we have to evaluate the product in (\ref{foffs}) only on semi-simple elements whose eigenvalues are in $\BF_q$ (if all poles are of order $1$, one also has to consider non semi-simple elements).

\begin{cor}\label{onlyp} For a generic $\mathbf{C}$ with at least one pole of order $\geq 2$, the number of $\BF_q$-rational points of $\FM_{n}(\mathbf{C})$ is given by formula (\ref{totalct}) when we replace $\BL$ by $q$.

In particular, in this case $\FM_n(\mathbf{C})$ is non-empty and connected.
\end{cor}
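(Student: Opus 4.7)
The plan is to rerun the argument of Theorem \ref{mainodr} at the level of $\BF_q$-point counts, exploiting that the key support property of the Fourier transform established in Theorem \ref{ftpuncturet} persists whenever at least one pole has order $\geq 2$.

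First I would reduce to a count of $|\mu_d^{-1}(0)(\BF_q)|$ via the point-count analogue of (\ref{firsta}), namely $|\FM_n(\mathbf{C})(\BF_q)| = (q-1)|\mu_d^{-1}(0)(\BF_q)|/|G(\BF_q)|$. This uses only the freeness of the $\PGL_n$-action on $\mu_d^{-1}(0)$ supplied by Proposition \ref{odrspa}, which is valid over any field of sufficiently large characteristic. Next, by finite-field Fourier inversion together with the convolution identity of Proposition \ref{fouconv} transcribed to counts,
\[|\mu_d^{-1}(0)(\BF_q)| \;=\; q^{-n^2}\sum_{X \in \g(\BF_q)} \prod_{i=1}^d \FF(\#_{C^i})(X),\]
where $\#_{C^i}$ is the count function of the coadjoint orbit $\pi_{\res}:\FO_{C^i}\to\g^*$.

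The proof of Theorem \ref{ftpuncturet} carries over verbatim to $\BF_q$, so for every $k_i \geq 2$ the factor $\FF(\#_{C^i})(X)$ vanishes unless $X$ is semisimple with eigenvalues in $\BF_q$. Under the hypothesis that some $k_i \geq 2$, the entire product is therefore supported on that locus, and on it each factor is uniformly described by (\ref{ffft})---by Theorem \ref{ftpuncturet} when $k_i \geq 2$, and by the matching with \cite[(2.5.5)]{HLV11} when $k_i = 1$. With the Fourier transform now explicit on its support, the remainder of the computation mimics Section \ref{mcodr} word for word with $\BL$ replaced by $q$: stratify by semisimple type $\lambda$, parametrize $\ft_\lambda(\BF_q)$ via set partitions as in (\ref{param}), and apply the elementary $\BF_q$-analogue of Lemma \ref{finalc}, namely $\sum_{\alpha\in \BA^m_\circ(\BF_q)}\Psi(\langle\alpha,\beta\rangle)=(-1)^{m-1}(m-1)!\,q$ under the stated genericity of $\beta$. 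Collecting terms produces the right-hand side of (\ref{totalct}) with $\BL\mapsto q$.

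For the concluding sentence, I would observe that the resulting expression is a polynomial in $q$ whose leading contribution comes from the $\lambda = (n)$ summand, giving leading term $q^{d_n}$ with coefficient $1$, where $d_n = \dim \FM_n(\mathbf{C}) = \mathbf{k}(n^2-n)-2(n^2-1)$. Since $\FM_n(\mathbf{C})$ is smooth and equidimensional of dimension $d_n$ by Proposition \ref{odrspa}, Lang--Weil forces it to be geometrically irreducible, and standard spreading-out transfers this back to the $\BC$-fiber. The main obstacle I anticipate is verifying on the nose that (\ref{ffft}) at $k=1$ matches \cite[(2.5.5)]{HLV11} on the common support of semisimple elements with $\BF_q$-rational eigenvalues---once that identification is pinned down, everything else is a mechanical re-enactment of the motivic argument together with a classical character-sum count.
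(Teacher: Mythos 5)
Your plan tracks the paper's argument very closely. The main part—reduce to $|\mu_d^{-1}(0)(\BF_q)|$ via the $\BF_q$-analogue of (\ref{firsta}), Fourier-invert with $\FF = \prod_i \FF(\#_{C^i})$, observe that a single pole of order $\geq 2$ forces the product to be supported on $\BF_q$-rational semisimple classes, and there feed in formula (\ref{ffft}) (from Theorem \ref{ftpuncturet} for $k_i\geq 2$, from \cite[(2.5.5)]{HLV11} for $k_i=1$), then re-run the Section \ref{mcodr} computation with $\BL$ replaced by $q$—is precisely the explanation the paper gives in the paragraph preceding the corollary, and the proof of the corollary simply refers to that paragraph.

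The one place you diverge is the connectivity conclusion. The paper invokes Katz's theorem \cite[Theorem 6.1.2]{HR08} to upgrade the polynomial point-count to an identity of $E$-polynomials and then reads off non-emptiness and connectedness from Lemma \ref{connoc} applied over $\BC$. You instead observe that the leading term is monic of degree $\dim\FM_n(\mathbf{C})$, apply Lang--Weil to conclude geometric irreducibility of the $\bar{\BF}_q$-fiber, and spread out. Both routes are legitimate. The Katz route is cleaner in this paper's framework because Lemma \ref{connoc} does the transfer to $\BC$ in one stroke and avoids having to justify separately that the number of geometric components of the affine, non-proper $\FM_n(\mathbf{C})$ is preserved under specialization; your Lang--Weil route is more elementary in that it sidesteps mixed Hodge theory, but to finish you would need to cite constructibility of geometric $\pi_0$ in flat families to get from almost all closed fibers back to the generic and hence complex fiber. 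This is a real but small gap in your sketch; either patch it with the constructibility reference or switch to the paper's $E$-polynomial argument, which is already available since you have polynomial count.
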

\begin{proof} The explanation why (\ref{totalct}) continues to hold is given in the previous paragraph. By Katz's theorem \cite[Theorem 6.1.2]{HR08} (or in the motivic case by applying (\ref{reale})) we see that the same formula (\ref{totalct}) also gives the $E$-polynomial $E(\FM_{n}(\mathbf{C});x,y)$, when $\BL$ is replaced everywhere with $xy$. By a direct inspection we then see that $E(\FM_{n}(\mathbf{C});t,t)$ is a monic polynomial of degree $2\mathbf{k}(n^2-n)- 4(n^2-1) = 2 \dim \FM_{n}(\mathbf{C})$, which implies that $\FM_{n}(\mathbf{C})$ is non-empty and by Lemma \ref{connoc} also connected.
\end{proof}

It is somewhat unfortunate that we have to use finite fields to be able to include order $1$ poles in our computations. We plan to come back to this problem in the future and hopefully prove formula (\ref{ffft}) in the motivic setting also for $k=1$.

We finish by looking at some special cases of (\ref{totalct}). For $n=2$ the formula reads  

\[ [\FM_2(\mathbf{C})] = \frac{\BL^{\mathbf{k}-3} (\BL^{\mathbf{k}-d-1}(\BL+1)^{d-1}-2^{d-1})}{\BL-1}.\]
For small values of $d$ $[\FM_2(\mathbf{C})]$ is then given by 
\begin{align*} d=2; \ \ &  \BL^{\mathbf{k}-3}\left( \BL^{	\mathbf{k}-3} + 2\BL^{\mathbf{k}-4}+ 2\BL^{\mathbf{k}-5}+\dots +2\right), \\
d=3; \ \ &  \BL^{\mathbf{k}-3} \left( \BL^{\mathbf{k}-3} + 3\BL^{\mathbf{k}-4} + 4 \BL^{\mathbf{k} - 5}+\dots + 4\right),\\
d=4; \ \ &  \BL^{\mathbf{k}-3} \left( \BL^{\mathbf{k}-3} + 4\BL^{\mathbf{k}-4} + 7 \BL^{\mathbf{k} - 5} + 8\BL^{\mathbf{k}-6} +\dots + 8\right).
\end{align*}

It turns out that in all examples we can compute, the coefficients of $[\FM_{n}(\mathbf{C})]$ as a polynomial in $\BL$ will always be positive, in particular the coefficients of $E(\FM_{n}(\mathbf{C});x,y)$ seem to be positive. By (\ref{epd}) a sufficient condition for this is that the compactly supported cohomology of $\FM_{n}(\mathbf{C})$ is pure i.e. $h_c^{p,q;i} = 0$ unless $p+q=i$. If all poles in $\mathbf{C}$ are of order 1 this is proven in \cite[Theorem 2.2.6]{HLV11} using the description of $\FM_n(\mathbf{C})$ as a quiver variety.  In \cite{HWW17} we obtain a quiver like description of $\FM_n(\mathbf{C})$ for poles of any order, giving more evidence and a possible strategy for the following natural conjecture.

\begin{conj}\label{purcon} For any generic $\mathbf{C}$ the (compactly supported) mixed Hodge structure of $\FM_n(\mathbf{C})$ is pure.
\end{conj}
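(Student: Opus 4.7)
The natural strategy, suggested by the authors' reference to the quiver-like description in \cite{HWW17}, is to realize $\FM_n(\mathbf{C})$ as a Nakajima-type quiver variety (or a close variant thereof) and then invoke the purity statement for quiver varieties proved in \cite[Theorem 2.2.6]{HLV11}.

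The first step would be to build such a quiver description at each pole. By Lemma \ref{codecomp}, the coadjoint orbit $\FO_{C^i}$ of a formal type of order $k_i$ decomposes as $(G \times B_{k_i}^{od})/T$; unfolding $B_{k_i}^{od}$ as a chain of vector spaces equipped with commuting endomorphism data suggests modeling the pole of order $k_i$ as a type $A_{k_i}$ leg of a ``star-shaped'' quiver. By gluing $d$ such legs of lengths $k_1,\dots,k_d$ to a central node of dimension $n$, one obtains a quiver $\Gamma_{\mathbf{C}}$ with dimension vectors $(\bfv,\bfw)$ such that $\FM_n(\mathbf{C})$ becomes isomorphic to a Nakajima quiver variety $\FM(\bfv,\bfw)$ in the sense of Section \ref{nqv}, for an appropriate generic stability parameter encoding the subleading terms of the $C^i$. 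When all $k_i=1$ this recovers the star-shaped quiver variety description of \cite{HLV11}.

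Given such an identification, purity follows from the standard argument for Nakajima quiver varieties: the $\BC^\times$-action on $\BV_{\bfv,\bfw} \oplus \BV_{\bfv,\bfw}^*$ scaling only the cotangent factor descends to a Hamiltonian action on $\FM(\bfv,\bfw)$ with proper attracting map onto the ``core'' (a Nakajima-Lagrangian), and the resulting Bialynicki--Birula decomposition together with cohomological purity of the core forces $H^*_c(\FM_n(\mathbf{C}))$ to be pure. The compactness/properness ingredient here is guaranteed by the moment-map equation $\mu=0$ combined with the fact that $\chi$-semistability ensures every orbit has a limit under the scaling action, exactly as in \cite{nak98}.

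The main obstacle is the first step: producing the quiver description in a way that is faithful to the symplectic form and the generic stability parameter. The regular semisimple (order $1$) case of \cite{HLV11} works cleanly because the coadjoint orbit of a diagonalizable matrix is the cotangent bundle of a partial flag variety, which admits an immediate quiver realization. For $k_i \geq 2$ the orbit $\FO_{C^i}$ is only a symplectic analogue of a twisted cotangent bundle (via Boalch's description), so extracting a genuine Nakajima-type presentation requires translating the unipotent $B_{k_i}$-parameters into a chain of linear maps between auxiliary vector spaces and verifying that the stability condition on $\FM(\bfv,\bfw)$ matches the genericity hypothesis of Definition \ref{genericc}. Once that translation is in place, matching symplectic forms and checking the moment map identification is expected to be a direct computation, and the combinatorial output in Theorem \ref{mainodr} together with the polynomial-count consequence in Corollary \ref{onlyp} provides a strong cross-check on the dimension vectors that must appear.
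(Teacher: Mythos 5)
This statement is labeled as a \emph{conjecture} in the thesis, and the surrounding text makes clear that it is left open: the author writes only that the quiver-like description to appear in \cite{HWW17} gives ``more evidence and a possible strategy'' for it, and Corollary~\ref{onlyp} together with the explicit formula~(\ref{totalct}) is offered as circumstantial evidence (nonnegativity of coefficients), not a proof. So there is no proof in the paper to compare against. Your proposal is, in essence, a re-statement of the very strategy the paper already gestures at --- realize $\FM_n(\mathbf{C})$ as a Nakajima-type quiver variety and then invoke a Bialynicki--Birula/core argument \`a la \cite[Theorem~2.2.6]{HLV11} --- and you candidly flag the first step as the obstacle. That obstacle is exactly what makes this a conjecture and not a theorem.

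Two substantive caveats on the sketch. First, the identification of a pole of order $k_i\geq 2$ with a type $A_{k_i}$ leg of a star-shaped quiver is not something that follows from Lemma~\ref{codecomp}; that lemma gives $\FO_{C^i}\cong (G\times B_{k_i}^{od})/T$, which is a statement about the orbit as a variety, not a Nakajima moment-map presentation. In the $k_i=1$ case of \cite{HLV11} the orbit really is (a twist of) $T^*(G/P)$, which has a direct quiver realization, but for $k_i\geq 2$ the fibres $B_{k_i}^{od}$ carry the irregular data in a way that is not obviously a chain of linear maps with a symplectic pairing; you would have to produce the auxiliary vector spaces, the arrows, the symplectic form, and check that the moment map and the stability parameter (and Definition~\ref{genericc}) match --- none of which is done here. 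Second, even granting a quiver presentation, the purity argument needs the central fibre of the affinization map to be compact and the $\BC^\times$-attracting map to be proper; for Nakajima quiver varieties this is built into the hyper-K\"ahler/GIT structure, but $\FM_n(\mathbf{C})$ is an affine variety with no obvious compact core, and Proposition~\ref{odrspa} gives only smoothness and affineness. Absent these verifications, the proposal is a plausible roadmap --- the same one the author proposes --- rather than a proof, and the conjecture remains open.
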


\newpage

\section{Push-forward Measures of Moment Maps over Local Fields}\label{localpf}

In this final Chapter we consider the same situation as in Section \ref{genred}, but with base field a local field $F$ and instead of the motivic measure we consider the natural Haar measure $\h$ on $F$. These two measures are not unrelated. Namely $\h$ will induce a canonical measure $d_X$ on any variety $X$ which is smooth over the ring of integers $\FO \subset F$. By a theorem of Weil \cite{WE12} the volume of $X(\FO)$ is up to a factor equal to $|X(\BF_q)|$, where $\BF_q$ is the residue field of $F$, which is always assumed to be finite. As counting over finite fields is essentially a realization of the motivic measure, we see that $d_X$ is in some sense a refinement of the motivic measure.

As a natural question we will thus study the analogue of Proposition \ref{prop1}, i.e. the push forward of the Haar measure along the moment map
\begin{equation}\label{mmag}\mu: V \times V^* \rightarrow \g.\end{equation}
By Weil's theorem we do not expect to see anything new at the $\FO$-smooth fibers of $\mu$, which is why we focus our attention to $\mu^{-1}(0)$. More precisely we are interested in the 'relative volume' $B_\mu$ of $\mu^{-1}(0)$ i.e. the value of the density  $\mu_*(\h_{V \times V^*})/\h_\g$ at $0$ and the geometric information it contains. We are not able to give a precise answer to this question here, but we hope that the computations and conjectures we present will be a starting point for interesting future research directions.

We now explain the structure of this chapter in more detail. The first section contains the necessary background on local fields, we introduce in particular the Fourier transform operator and the corresponding inversion formula.

In the second section we define the local Igusa zeta function $\FI_f(s)$ of a polynomial map $f: \BA^n \rightarrow \BA^m$ between affine spaces, our main example being $f=\mu$ a moment map as in (\ref{mmag}). Not only is $\FI_\mu(s)$ a strictly finer invariant than $B_\mu$, but while $B_\mu$ can be infinite, $\FI_\mu(s)$ is always a rational function in $q^{-s}$ and if $B_\mu$ is finite we can recover it as a residue of $\FI_\mu(s)$ at a simple pole. Another reason to consider $\FI_f(s)$ is the functional equation it satisfies, which will explain certain symmetries of $B_\mu$.

In Section \ref{pfmmm} we use our localization philosophy, or more precisely a $p$-adic analogue of Proposition \ref{prop1}, to give a formula for computing $\FI_\mu(s)$. In general it seems quite hard to evaluate this formula, but if we restrict ourselves to the moment maps that appear in the construction of hypertoric varieties (\ref{toricmm}) we can be very explicit. In this case we can express $\FI_\mu(s) = \FI_\FA(s)$ in terms of the combinatorics of the associated hyperplane arrangement $\FA$, which is the content of Section \ref{hzf}.

A direct consequence of this explicit formula is that the (real parts of the) poles of $\FI_\FA(s)$ are contained in a finite set of negative integers $\mathscr{P}_\FA$. In Section \ref{poles} we give a criterion, for when an integer in $\mathscr{P}_\FA$ is an actual pole	 of $\FI_\FA(s)$ and deduce that the two largest and the smallest number in $\mathscr{P}_\mu(s)$ will always be poles of $\FI_\FA(s)$. The interest in the poles of $\FI_\FA(s)$, or more generally 
$\FI_f(s)$, comes from Igusa's long standing monodromy conjecture \cite{Ig88}. One version of the conjecture states that the poles of $\FI_f(s)$ should 
agree with the roots of the so-called Bernstein-Sato polynomial $b_f$ of $f$. The conjecture has been checked in many cases when $f$ is a single polynomial, but in general only a few examples are known \cite{HMY07}. It would thus be interesting to see, if one can use analogous localization ideas to compute the roots of $b_\mu$, which is a question we will try to answer in the future.

In the last two sections we come back to the relative volume $B_\mu$. In \ref{relap} we use the description of $B_\mu$ as a residue of $\FI_\FA(s)$ to prove that the 'numerator' $B'_\mu$ of $B_\mu$ is palindromic as a polynomial in $q$. Based on numerical evidence we furthermore conjecture that $B'_\mu$ has positive coefficients. 

Finally, in Section \ref{irhd} we consider hyperplane arrangements which come from a quiver $\Gamma$. Here our motivation comes from a result of Crawley-Boevey and Van den Bergh \cite{CV04}, which says that the number of indecomposable representations of $\Gamma$ over $\BF_q$ for an indivisible dimension vector is up to a factor equal to the number of stable points on $\mu^{-1}(0)(\BF_q)$. In our case, we consider idecomposable $1$-dimensional representations of $\Gamma$ over the finite quotient rings $\FO \rightarrow \FO/\m^\alpha$, where $\m \subset \FO$ denotes the maximal ideal. Using a formula of Mellit \cite{Me162} we show that the asymptotic number of such representations as $\alpha \rightarrow \infty$ is given by a rational function  $A_\Gamma(q)$. We finish by giving some numerical evidence for the conjecture that the numerator $A'_\Gamma(q)$ of $A_\Gamma(q)$ equals $B'_\mu(q)$.

\subsection{Local fields and some harmonic analysis}

In this section we recall some basic facts about local fields. Everything we say here can be found in various places, for example \cite{Se13,Ta75}.

By a \textit{local field} $F$ we will always mean a locally compact, non-discrete, totally disconnected field, where locally compact means that both abelian groups $(F,+)$ and $F^\times$ are locally compact. With this definition there are two kinds of local fields:
\begin{enumerate}
\item[] \textbf{$\mathbf{\Char(F) = 0:}$} $F$ is a finite extension of a $p$-adic field $\BQ_p$ for some prime number $p$.
\item[] \textbf{$\mathbf{\Char(F) >0:}$} $F$ is the field of rational functions over a finite field $\BF_q$ i.e. $F = \BF_q((X))$.
\end{enumerate}

We fix now a local field $F$ once and for all. Write $\nu:F \rightarrow \BZ \cup \{\infty\}$ for the valuation and $|\cdot |: F \rightarrow \BQ$ for the norm. 
The latter is multiplicative and satisfies the non-archimedean triangle inequality
\begin{equation}\label{nontri}
|x+y| \leq \max\{|x|,|y|\} \text{ for all } x,y \in F.
\end{equation}
Moreover, we have an equality in \eqref{nontri} whenever $|x| \neq |y|$.

Hence the unit ball $\FO = \{ x \in F \ |\ |x| \leq 1 \}$ is a sub-ring of $F$ called \textit{the ring of integers}. It is a regular local ring of dimension $1$ with maximal ideal $\m = \{x\in F \ |\ |x| <1 \}$. In particular, $\m$ is principal and we fix for convenience a generator $\pi$ i.e. $(\pi) = \m$. The quotient $\FO/\m$ is called the \textit{residue field} and is isomorphic to a finite field $\BF_q$. The characteristic of $\BF_q$ is called the \textit{residue characteristic of $F$}. More generally we have for any $\alpha \in \BZ_{\geq 0}$ a finite ring $\FO_\alpha = \FO / \m^\alpha$.

 The units $\FO^\times$ of $\FO$ are the complement of $\m$ i.e $\FO^\times = \{ x \in \FO \ |\ |x|=1\}$. Then every $x\in F^\times$ can be written uniquely as $x = u\pi^\alpha$ for some $u \in \FO^\times$ and $\alpha \in \BZ$. As $|\pi| = q^{-1}$ we see that the image of the norm map $|\cdot |: F \rightarrow \BQ$ is $q^\BZ \cup \{0\}$.



We will need three more pieces of data naturally associated with $F$. The first is a natural section of the projection $\FO \rightarrow \BF_q$ called the \textit{Teichm\"uller lift} $\sigma: \BF_q \rightarrow \FO$, which is characterized by $\sigma(0) =0$ and $\sigma_{|\BF_q^\times}$ being multiplicative. Every $x \in \FO$ then has a unique presentation as convergent power series
\[ x = \sum_{\alpha=0}^\infty  \sigma(x_\alpha) \pi^\alpha, \ \ \ x_\alpha \in \BF_q.\] 

Hence we can speak of the coefficient of $x$ at $\pi^\alpha$ as an element in $\BF_q$. In particular, we have decompositions 
\begin{equation}\label{ofdecomp}\FO = \bigsqcup_{\zeta \in \BF_q} \sigma(\zeta) + \m, \ \ \ \ \FO^\times = \bigsqcup_{\zeta \in \BF_q^\times} \sigma(\zeta) + \m. \end{equation}

Next, since $(F,+)$ is locally compact, we can consider the Haar measure $\mathscr{H}$ on $F$ and more generally $\h_n$ on $F^n$ for any $n\geq 1$, normalized by $\h_n(\FO^n) =1$ . For any measurable subset $A \subset F^n, x \in F$ and $y \in F^n$ we have 
\begin{equation}\label{scaling} \h_n(xA) = |x|^n \h_n(A), \ \ \ \ \h_n(y + A) = \h_n(A).\end{equation}

All integrals we consider will be with respect to this Haar measure, and we will in general only indicate the variable, over which we integrate. The following lemma is an easy consequence of (\ref{scaling}) and we will use it many times without explicitly mentioning.
\begin{lemma}\label{everyp} For any $\alpha \in \BZ$ we have 
\[ \h\left(\pi^\alpha\FO \right) = \h\left(\{ x \in \FO \ |\ |x| \leq q^{-\alpha}\} \right) = q^{-\alpha},\]
\[\h\left(\pi^\alpha\FO^\times\right) = \h\left(\{ x \in \FO \ |\ |x| = q^{-\alpha}\} \right) = q^{-\alpha}\left(1-q^{-1}\right).\]
\end{lemma}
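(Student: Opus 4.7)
The plan is to reduce everything to the scaling rule \eqref{scaling}, the normalization $\h(\FO)=1$, and the decomposition $\FO=\m\sqcup \FO^\times$ coming from \eqref{ofdecomp}. No new ideas are needed; the content is purely bookkeeping.

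First I would verify the set-theoretic identifications. Every $x\in F^\times$ has a unique presentation $x=u\pi^\beta$ with $u\in\FO^\times$ and $\beta\in\BZ$, and then $|x|=q^{-\beta}$. Hence $x\in\pi^\alpha\FO$ iff $\beta\geq\alpha$ iff $|x|\leq q^{-\alpha}$, and $x\in\pi^\alpha\FO^\times$ iff $\beta=\alpha$ iff $|x|=q^{-\alpha}$. This gives both equalities on the left-hand sides of the two claimed identities (with the convention that for negative $\alpha$ one simply drops the superfluous restriction to $\FO$).

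For the first volume, apply \eqref{scaling} to the scalar $\pi^\alpha$ and the set $\FO$ together with $|\pi|=q^{-1}$ and $\h(\FO)=1$:
\[\h(\pi^\alpha\FO)=|\pi^\alpha|\,\h(\FO)=q^{-\alpha}.\]
For the second volume, the decomposition \eqref{ofdecomp} gives $\FO=\FO^\times\sqcup\m$, so
\[\h(\FO^\times)=\h(\FO)-\h(\m)=1-\h(\pi\FO)=1-q^{-1},\]
using the first volume formula with $\alpha=1$. Another application of \eqref{scaling} then yields
\[\h(\pi^\alpha\FO^\times)=|\pi^\alpha|\,\h(\FO^\times)=q^{-\alpha}(1-q^{-1}).\]

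I do not expect any real obstacle. The only minor subtlety is keeping track of the case $\alpha<0$ in the set-theoretic description (the displayed sets are no longer subsets of $\FO$), but the scaling argument applies uniformly for every $\alpha\in\BZ$, so the numerical identities hold in all cases.
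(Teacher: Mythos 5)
Your proposal is correct and matches the paper's proof: both equalities follow from the scaling property \eqref{scaling} together with $\h(\FO)=1$, and the second additionally uses $\FO^\times=\FO\setminus\m$. The extra set-theoretic verification and the remark about $\alpha<0$ are fine, if slightly more detailed than the paper's terse argument.
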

\begin{proof} Both equations follow from (\ref{scaling}), for the second one we also use $ \FO^\times = \FO \setminus \m$.
\end{proof}

Finally, we fix a non-trivial additive character $\Psi:F \rightarrow \BC^\times$ normalized by the condition $\ker(\Psi) = \m$. As in the finite field case, the integrals we compute will not depend on the actual choice of $\Psi$.

We finish this section by introducing the Fourier transform on $F^n$. Write $\mathcal{S}(F^n)$ for the $\BC$-vector space of locally constant, complex valued functions with compact support on $F^n$. The \textit{Fourier transform} $\FF(f)$ of a function $f \in \mathcal{S}(F^n)$ is a function on $F^n$ defined by
\[ \FF(f)(y) = \int_{F^n} f(x) \Psi(\lla y,x\rra)dx \text{ for all } y \in F^n,\]
where $\lla,\rra:F^n \times F^n \rightarrow F$ denotes the standard inner product.

\begin{lemma}\cite[Lemma 8.1.3]{Ig00}\label{localfin} The Fourier transform defines a linear isomorphism $\FF: \mathcal{S}(F^n) \rightarrow \mathcal{S}(F^n)$ and satisfies 
\[ \FF(\FF(f))(x) = q^{-n}f(-x)\]
 for all $f\in\mathcal{S}$ and $x\in F^n$.
\end{lemma}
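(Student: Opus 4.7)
The plan is to reduce everything to a one-variable computation on a basis of $\mathcal{S}(F^n)$. Since $\mathcal{S}(F^n)$ consists of locally constant, compactly supported functions on a totally disconnected space, it is spanned by characteristic functions of ``boxes'' of the form $\mathbf{1}_{a+(\pi^N\FO)^n}$ with $a\in F^n$ and $N\in\BZ$. By linearity of $\FF$ it suffices to verify the lemma on such indicators.

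First I would record the key one-variable integral: for $y\in F$ and $N\in\BZ$,
\[ \int_{\pi^N\FO}\Psi(yu)\,du \;=\; \begin{cases} q^{-N} & \text{if } y\in \pi^{1-N}\FO,\\ 0 & \text{otherwise.} \end{cases} \]
The first case is immediate from $\ker(\Psi)=\m$: the condition $y\in\pi^{1-N}\FO$ means $y\pi^N\FO\subset\m$, so the integrand is constantly $1$. In the opposite case, $u\mapsto \Psi(yu)$ is a non-trivial continuous character of the compact group $\pi^N\FO$, and the integral of a non-trivial character over a compact abelian group vanishes by translation invariance of the Haar measure combined with the scaling formula \eqref{scaling}.

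Next I would compute $\FF(\mathbf{1}_{a+(\pi^N\FO)^n})$ by translating $x\mapsto a+u$ and factoring the pairing coordinatewise:
\[ \FF(\mathbf{1}_{a+(\pi^N\FO)^n})(y) \;=\; \Psi(\lla y,a\rra)\prod_{i=1}^n\int_{\pi^N\FO}\Psi(y_i u_i)\,du_i \;=\; q^{-nN}\,\Psi(\lla y,a\rra)\,\mathbf{1}_{(\pi^{1-N}\FO)^n}(y). \]
This already proves the first claim: $\FF(\mathbf{1}_{a+(\pi^N\FO)^n})$ is compactly supported, and it is locally constant because $y\mapsto\Psi(\lla y,a\rra)$ is locally constant (its level sets contain $y+(\pi^{M}\FO)^n$ for $M$ large enough that $\lla (\pi^M\FO)^n,a\rra\subset\m$). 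By linearity $\FF$ maps $\mathcal{S}(F^n)$ into itself.

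Finally, I would iterate the computation: applying $\FF$ to the formula above gives
\[ \FF(\FF(\mathbf{1}_{a+(\pi^N\FO)^n}))(x) \;=\; q^{-nN}\int_{(\pi^{1-N}\FO)^n}\Psi(\lla y,a+x\rra)\,dy, \]
and the same one-variable identity (with $N$ replaced by $1-N$) evaluates this to $q^{-nN}\cdot q^{-n(1-N)}\,\mathbf{1}_{(\pi^N\FO)^n}(a+x)=q^{-n}\mathbf{1}_{a+(\pi^N\FO)^n}(-x)$, which is the desired inversion. Bilinearity in the basis then yields $\FF\circ\FF = q^{-n} i^*$ on all of $\mathcal{S}(F^n)$, and in particular shows $\FF$ is a bijection. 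There is no real obstacle beyond keeping careful track of the exponents of $q$ in the two applications of the orthogonality integral.
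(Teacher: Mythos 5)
Your proof is correct. Note that the paper itself does not prove this lemma; it simply cites Igusa's book. Your argument is the standard self-contained one: reduce to indicators of boxes $a+(\pi^N\FO)^n$ (which do span $\mathcal{S}(F^n)$ by compactness plus local constancy), compute $\FF$ on a box via the orthogonality integral, observe the result is again in $\mathcal{S}$, and iterate. Your key one-variable identity is exactly the paper's own Lemma \ref{lmbl} (the case $a=0$ of your box Fourier transform), so you are in effect giving the same mechanism the paper uses elsewhere, just carrying it through to the inversion formula that the paper outsources to Igusa. One tiny inaccuracy: the vanishing of $\int_{\pi^N\FO}\Psi(yu)\,du$ when $y\notin\pi^{1-N}\FO$ needs only translation invariance of Haar measure (pick $a_0\in\pi^N\FO$ with $\Psi(ya_0)\neq 1$ and shift by $a_0$); the scaling formula \eqref{scaling} is not needed there. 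This does not affect correctness.
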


For any subset  $A \subset F^n$ we write  $\chi_A$ for the characteristic function of $A$. In practice all the Fourier transforms we need can be computed from the 
 following lemma using the linearity of $\FF$.

\begin{lemma}\label{lmbl} For every $\alpha \in \BZ$ and  $y \in F^n$ we have 
\[ \FF(\chi_{\pi^\alpha \FO^n})(y) =  \int_{\pi^\alpha \mathcal{O}^n} \Psi(\langle y,x\rangle) dx = q^{-\alpha n}\chi_{\pi^{-\alpha+1}\FO^n}(y).\]
\end{lemma}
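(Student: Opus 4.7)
The plan is to reduce everything to the one-dimensional case and then evaluate via the standard orthogonality principle for characters of compact groups, exploiting that $\ker(\Psi) = \m$.

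First, I would factor the integral. Since $\Psi$ is an additive character, $\Psi(\lla y,x\rra) = \prod_{i=1}^n \Psi(y_i x_i)$, and since $\pi^\alpha\FO^n = (\pi^\alpha \FO)^n$ with the Haar measure $\h_n$ being the product measure, Fubini gives
\[ \int_{\pi^\alpha\FO^n} \Psi(\lla y,x\rra)\,dx = \prod_{i=1}^n \int_{\pi^\alpha\FO} \Psi(y_i x_i)\,dx_i. \]
Thus it suffices to prove the one-variable identity $\int_{\pi^\alpha\FO}\Psi(yx)\,dx = q^{-\alpha}\chi_{\pi^{-\alpha+1}\FO}(y)$ for every $y\in F$, since the product of the resulting factors $q^{-\alpha}\chi_{\pi^{-\alpha+1}\FO}(y_i)$ equals $q^{-\alpha n}\chi_{\pi^{-\alpha+1}\FO^n}(y)$.

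Next I would perform the change of variables $x = \pi^\alpha x'$, which by \eqref{scaling} multiplies the measure by $q^{-\alpha}$ and transforms the integral into
\[ q^{-\alpha}\int_{\FO}\Psi\bigl(\pi^\alpha y \, x'\bigr)\,dx'. \]
Setting $z = \pi^\alpha y$, the remaining task is to compute $I(z) := \int_\FO \Psi(zx)\,dx$. The map $x\mapsto \Psi(zx)$ is a continuous character of the compact group $(\FO,+)$, so by the standard orthogonality relation $I(z)$ equals $\h(\FO)=1$ if this character is trivial on $\FO$, and $0$ otherwise. Now the character is trivial on $\FO$ iff $z\FO\subset\ker(\Psi)=\m$, iff $z\in\m$.

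Translating back, $z=\pi^\alpha y\in\m$ is equivalent to $y\in\pi^{1-\alpha}\FO=\pi^{-\alpha+1}\FO$, which yields $I(\pi^\alpha y) = \chi_{\pi^{-\alpha+1}\FO}(y)$, hence the one-variable identity, and after taking the product the full lemma. There is no real obstacle here: the only subtlety is the orthogonality step, which follows immediately from the observation that a non-trivial character of a compact abelian group integrates to zero (applied either directly, or by splitting $\FO$ into cosets of $\m$ as in \eqref{ofdecomp} and using that $\Psi$ restricted to $\sigma(\BF_q)+\m$ depends only on the lift class).
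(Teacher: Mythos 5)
Your proof is correct and follows essentially the same route as the paper's: both factor the integral into a product of one-dimensional integrals and then apply the orthogonality of characters to see that each factor is $q^{-\alpha}$ when $y_i\pi^\alpha\in\m$ and vanishes otherwise. The paper phrases the vanishing by descending to a non-trivial character on the finite quotient $\pi^\alpha\FO/\pi^{\beta+1}\FO$, whereas you invoke the general orthogonality for compact groups after a change of variables; these are interchangeable formulations of the same step.
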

\begin{proof} Since $\Psi$ is a character we can write
\[ \int_{\pi^\alpha\mathcal{O}^n} \Psi(\langle y,x\rangle)dx = \prod_{i=1}^n \int_{\pi^\alpha\FO} \Psi(y_ix_i)dx_i.\] 
If for some $1\leq i\leq n$ we have $|y_i| = q^{\beta}$ with $\beta \geq \alpha$, then $x_i \mapsto \Psi(y_ix_i)$ will descend to a non-trivial character on the finite abelian group $\pi^\alpha \FO/ \pi^{\beta +1} \FO$ and hence $\int_{\FO} \Psi(x_iy_i)dy_i=0$. This implies the lemma.
\end{proof}


\subsection{Local Igusa zeta funcions}\label{liz}

In this section we fix a polynomial map $f:\BA^n \rightarrow \BA^m$ given by $f_1,\dots,f_m \in \FO[x_1,\dots,x_n]$. As mentioned already, we are interested in the push-forward measure $f_*(\h_n)$ and how it compares to $\h_m$. As it turns out this is described by some interesting arithmetics of $f$. For $\alpha \geq 0$ denote by $B_{f,\alpha}$ the number of solutions to $ f \equiv 0$ modulo $\pi^\alpha$ i.e. 
\[B_{f,\alpha} = |\{ x \in \FO^n_\alpha \ |\ f(x) = 0\}|.\]
Looking at the projection $\FO^n \rightarrow \FO^n_\alpha$ for every $\alpha \geq 0$, we see 
\begin{equation}\label{projct} f_*(\h_n)\left(\pi^\alpha \FO^m\right)= \h_n\left(\{x \in  \FO^n \ |\ ||f(x)||\leq q^{-\alpha}\}\right) = B_{f,\alpha}q^{-n\alpha}. \end{equation}
In particular, the "quotient" $f_*(\h_n)/\h_m$ at the origin is given by 

\[ B_f  = \lim_{\alpha\rightarrow \infty} \frac{f_*(\h_n)\left(\pi^\alpha \FO^m \right)}{\h_m(\pi^\alpha \FO^m)}  = \lim_{\alpha \rightarrow \infty} q^{-\alpha(n-m)}B_{f,\alpha}\]

We think of $B_f$ as the \textit{relative volume} of $f^{-1}(0)$, which is in general a singular variety. If $f^{-1}(0)$ is smooth then it follows from a theorem of Weil \cite[Theorem 2.2.5]{WE12}, that $B_f = q^{-\dim f^{-1}(0)} |f^{-1}(0)(\BF_q)|$. In the singular case $B_f$ seems to have some interesting properties, as we try to illustrate with the following example.

\begin{expl}\label{m1m} Take $f:\BA^{2m+2} \rightarrow \BA^m$ to be given for all $x,y \in \FO^{m+1}$ by 
\[ f(x,y) = (x_1y_1-x_2y_2, x_2y_2 -x_3y_3,\dots,x_my_m - x_{m+1}y_{m+1}). \]
Then it is not hard to see, that we have
\begin{align}\label{bfas} B_{f,\alpha} = \sum_{\lambda \in \FO_\alpha} |\{(z,w) \in \FO^2_\alpha  \ |\ zw=\lambda\}|^{m+1}. \end{align}
A direct computation then shows
\begin{equation}\label{fooo} |\{(z,w) \in \FO^2_\alpha \ |\ zw=\lambda\}| = \begin{cases} (\beta+1)(q-1)q^{\alpha-1} \ \text{ if } |\lambda| = q^{-\beta} \neq 0 \\
             (\alpha+1)q^\alpha - \alpha q^{\alpha-1} \ \text{ if } \lambda = 0. \end{cases} \end{equation}
As it turns out the formula for $B_{f,\alpha}$ will not be particularly nice, however the limit $B_f = \lim_{\alpha \rightarrow \infty} q^{-\alpha(m+2)}B_{f,\alpha}$ seems to be much better behaved. First the $\lambda = 0$ term in (\ref{bfas}) goes to zero and we get
\begin{align*} B_{f} &= \lim_{\alpha \rightarrow \infty} q^{-\alpha(m+2)} \sum_{\beta = 0}^{\alpha-1} (q-1)q^{\alpha-\beta-1} (\beta+1)^{m+1}(q-1)^{m+1} q^{(\alpha-1)(m+1)}\\
&= (1-q^{-1})^{m+2} \sum_{\beta=0}^{\infty} (\beta+1)^{m+1} q^{-\beta} = q^{-m} E_{m+1}(q), \end{align*}
where $E_n$ denotes the $n$-th Eulerian polynomial \cite{Pe15}. These polynomials appear in many places, notably as the Poincar\'e polynomials of toric varieties associated with the permutahedra. In particular, they are palindromic and have positive coefficients. The first few are given by 
\begin{align*} &E_1(t) = 1, \ \ \ E_2(t) = t+1, \ \ \ E_3(t) = t^2 + 4t + 1, \\
&E_4(t) = t^3 + 11t^2+ 11t+ 1, \ \ \ E_5(t) = t^4+26 t^3 + 66 t^2 + 26t+1. \end{align*}						
\end{expl}

From the definition of $B_f$ it is not at all clear, that $B_f < \infty$ and indeed in general it will not be. In fact this already fails in the following simple example.

\begin{expl}\label{divex} Consider the multiplication map $f:\BA^2 \rightarrow \BA^1$ given by $f(x,y) = xy$. From (\ref{fooo}) we see that the number $B_{f,\alpha}$ of solution to $xy=0$ in $\FO^2_\alpha$ is given by $(\alpha+1)q^\alpha - \alpha q^{\alpha-1}$ and hence
\[ B_f = \lim_{\alpha \rightarrow \infty} q^{-\alpha}B_{f,\alpha} = \lim_{\alpha \rightarrow \infty} (\alpha+1) - q^{-1}\alpha = \infty.\]
\end{expl}

A both interesting and convenient way to deal with this problems is to introduce an extra complex variable. The resulting object is called the \textit{local Igusa zeta function} associated with $f$ and is defined as
\[ \FI_f(s) = \int_{\FO^n} ||f(x)||^s dx,\]
where $|| (y_1,\dots,y_m) ||= \max(|y_1|,\dots,|y_m|)$ and $s\in \BC$ with real part greater than $0$. Because of the non-Archimedian norm $\FI_f(s)$ depends only on the ideal $(f_1,\dots,f_m) \subset \FO[x_1,\dots,x_n]$ hence one can think of $\FI_f(s)$ as being associated to the variety $f^{-1}(0)$. 

The relation with the push-forward measure comes from the almost tautological formula \cite[Section 3.6]{Bo07}
\begin{equation}\label{tautp}      \int_{\FO^n} ||f(x)||^s dx   = \int_{\FO^m} ||y||^s df_*(\h_n)   \end{equation}

Maybe not so surprisingly $\FI_f(s)$ is also closely related to the $B_{f,\alpha}$'s. Namely we define the \textit{Poincar\'e series} of $f$ by
\[ P_f(t) = \sum_{\alpha \geq 0} B_{f,\alpha}q^{-n\alpha} t^\alpha.\]
Then using again (\ref{projct}) we see

\begin{align} \nonumber \FI_f(s) &= \int_{\FO^n} ||f(x)||^s dx = \sum_{\alpha\geq 0} q^{-\alpha s} \h_n\left(\{x\ |\ ||f(x)||=q^{-\alpha}\}\right)\\
																			\nonumber		&= \sum_{\alpha\geq 0} q^{-\alpha s}\left( \h_n\left(\{x\ |\ ||f(x)||\leq q^{-\alpha}\}\right)- \h_n\left(\{x\ |\ ||f(x)||\leq q^{-\alpha-1}\}\right)\right)\\
																				\label{idk}	&= \sum_{\alpha\geq 0} q^{-\alpha s}\left( B_{f,\alpha} q^{-\alpha n} - B_{f,\alpha+1} q^{-(\alpha+1)n}\right)\\
																			\label{igupo}		&= (P_f(q^{-s})-1)(1-q^s) + 1.
\end{align}

The real advantage in introducing $\FI_f(s)$ comes from its definition as an integral, which makes it possible to use analytic and geometric methods to study it. We quickly explain some basic structure results on $\FI_f(s)$, assuming for the rest of this section that $\Char(F)=0$.

The key Idea of Igusa \cite{Ig74} was to use an embedded resolution of $f$ to prove the following theorem, originally in the case $m=1$ (for the multivariate case see \cite{Lo89}).

\begin{thm}\label{igrat}  The Igusa zeta function $\FI_f(s)$ is a rational function in $q^{-s}$. The real parts of its poles in $\BC$, as a function of $s$, are negative rational numbers.
\end{thm}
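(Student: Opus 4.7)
The plan is to follow Igusa's original strategy via an embedded resolution of singularities, adapted to the multivariate case as in \cite{Lo89}. The first step is to invoke Hironaka's theorem (available since $\Char(F)=0$): there exists a proper birational morphism $h:Y\rightarrow \BA^n_\FO$ from a smooth $\FO$-variety $Y$ such that the pulled back ideal sheaf $h^*(f_1,\dots,f_m)\FO_Y$ is locally principal, and such that its zero locus together with the exceptional locus of $h$ is a simple normal crossings divisor $\sum_{i\in T} N_i E_i$. Simultaneously the Jacobian of $h$ is monomial in the $E_i$'s, say $\div \Jac(h) = \sum_{i\in T}(\nu_i-1) E_i$ for some positive integers $\nu_i$.

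Next I would apply the $p$-adic change of variables formula to rewrite
\begin{equation*}
\FI_f(s) = \int_{\FO^n} \|f(x)\|^s\, dx = \int_{Y(\FO)} \|f(h(y))\|^s\, |\Jac(h)(y)|\, dy.
\end{equation*}
The key point is that because $h^*(f_1,\dots,f_m)$ is locally principal, locally $\|f \circ h\|$ is just $|g|$ for a single monomial generator $g$, so the maximum disappears and the integrand becomes a genuine monomial in the local coordinates. One then covers $Y(\FO)$ by finitely many compact open subsets $U_\alpha$, on each of which there are local $\FO$-coordinates $y_1,\dots,y_n$ such that the exceptional components $E_i$ meeting $U_\alpha$ are coordinate hyperplanes $\{y_i=0\}$, and the integrand takes the form $\prod_i |y_i|^{N_i s + \nu_i - 1}$ up to a unit.

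After this reduction, the integral on each piece factors (essentially by Fubini and \eqref{ofdecomp}) into products of elementary one-variable integrals of the shape $\int_{\FO^\times}|y|^{as+b-1}\, dy$ and $\int_{\FO}|y|^{as+b-1}\, dy$, each of which I would evaluate using Lemma \ref{everyp} as a geometric series in $q^{-s}$, summing to a rational function of the form $\tfrac{c_\alpha}{\prod_{i\in T_\alpha}(1-q^{-(N_i s+\nu_i)})}$ with $c_\alpha \in \BZ[q^{-s},q]$. Summing over the finite cover gives $\FI_f(s)$ as a rational function in $q^{-s}$, and inspection shows that every pole satisfies $\Re(s) = -\nu_i/N_i$ for some $i\in T$, a negative rational number.

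The main technical obstacle is the multivariate aspect: one really does need the principalization of the whole ideal $(f_1,\dots,f_m)$, not just resolution of the individual $f_i$'s, so that the non-smooth function $\|\cdot\|=\max|\cdot|$ becomes a single monomial absolute value after pullback. A secondary subtlety is that $h$ is defined only over some finite extension of $\FO$ and one must verify that the local coordinate charts trivialize over $\FO$-points up to sets of measure zero; this is handled, as in \cite{Ig00}, by partitioning $Y(\FO)$ according to which stratum of the SNC divisor a point reduces to modulo $\m$, using the Teichm\"uller lifts and decomposition \eqref{ofdecomp} to make the factorization into one-variable integrals completely explicit.
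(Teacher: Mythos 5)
The paper does not prove Theorem~\ref{igrat} itself; it cites the result, attributing the $m=1$ case to Igusa \cite{Ig74} and the multivariate case to Loeser \cite{Lo89}, with the one-sentence indication that the proof goes via embedded resolution. Your sketch is a correct reconstruction of exactly that argument: Hironaka principalization of the ideal $(f_1,\dots,f_m)$ with SNC exceptional divisor, the $p$-adic change-of-variables formula, local monomialization of $\|f\circ h\|$ and $|\Jac(h)|$, and termwise summation of geometric series producing denominators $1-q^{-(N_is+\nu_i)}$, hence poles with real part $-\nu_i/N_i<0$. Your emphasis that one must principalize the whole ideal rather than resolve each $f_i$ separately is precisely the right multivariate subtlety, and your handling of the cover of $Y(\FO)$ by residue classes of the SNC strata is the standard way to reduce to one-variable integrals, so the proposal matches the cited approach.
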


The numerical data of a resolution of $f$ will give a set of possible poles for $\FI_f(s)$ which is in general larger that the actual set of poles. The description of the actual poles is an intriguing open problem and the content of various monodromy conjectures (see \cite{De90} for a survey). This is the reason we will spend some time on the description of the poles of $\FI_f(s)$ in the cases we can compute, see Section \ref{poles}.

The same numerical data also describe the asymptotic behavior of $B_{f,\alpha}$ as $\alpha \rightarrow \infty$ \cite{VZ08}. For us however the following much simpler criterion will do.

\begin{lemma}\label{fresh} Assume that the largest poles of $\FI_f(s)$ is at $s=-m$. Then $B_f$ is finite if and only if $s=-m$ is a simple pole, in which case we have 
\[ B_f = \frac{q^m}{q^m-1} \Res_{s=-m}\FI_f(s)=  \frac{q^m(q^{s+m}-1)}{q^m-1}\FI_f(s)_{|s=-m}.\]
\end{lemma}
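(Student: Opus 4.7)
The plan is to express $B_f$ as a rescaled limit of Taylor coefficients of the Poincar\'e series $P_f(u) = \sum_{\alpha \geq 0} a_\alpha u^\alpha$ with $a_\alpha = B_{f,\alpha}q^{-n\alpha} \geq 0$, and to extract this limit from the partial fraction decomposition of $P_f$ viewed as a rational function of $u$. By definition,
\[ B_f = \lim_{\alpha \to \infty} q^{-\alpha(n-m)}B_{f,\alpha} = \lim_{\alpha \to \infty} q^{m\alpha}a_\alpha,\]
so the question reduces to the asymptotic growth of $a_\alpha$.

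First I would rewrite \eqref{igupo} with $u = q^{-s}$ as $P_f(u) = 1 + u(\FI_f(s) - 1)/(u-1)$. Theorem \ref{igrat} ensures $P_f$ is a rational function of $u$. In the non-degenerate situation when $\h_n(f^{-1}(0)) = 0$ (which is forced by the hypothesis, since otherwise $P_f$ would acquire an extra pole at $u = 1$ of smaller modulus than $q^m$), one has $\FI_f(0) = 1$ and the apparent singularity of $P_f$ at $u = 1$ is removable. Consequently the singularities of $P_f(u)$ coincide with the poles of $\FI_f$ expressed in the $u$-variable, and by assumption the one of smallest modulus is at $u = q^m$.

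Assuming first that $s = -m$ is a simple pole of $\FI_f$, the partial fraction decomposition reads $P_f(u) = c/(u - q^m) + H(u)$, where $H$ is a rational function all of whose poles satisfy $|u| > q^m$. A direct residue computation from the displayed formula for $P_f$ yields $c = q^m \widetilde{r}/(q^m - 1)$, where $\widetilde{r}$ is the complex-analytic residue of $\FI_f(s)$ at $u = q^m$. Expanding $c/(u-q^m) = -cq^{-m}\sum_{\alpha \geq 0}(u/q^m)^\alpha$ and noting that $[u^\alpha]H(u) = O(\rho^{-\alpha})$ for some $\rho > q^m$ gives
\[ B_f = \lim_{\alpha\to\infty}q^{m\alpha}a_\alpha = -c/q^m = -\widetilde{r}/(q^m - 1).\]
To match the formula in the statement, I would then convert $\widetilde{r}$ into the paper's convention: since $q^{s+m} - 1 = (q^m - u)/u$, a short calculation gives $\lim_{s\to-m}(q^{s+m}-1)\FI_f(s) = -\widetilde{r}/q^m$, whence $B_f = q^m\Res_{s=-m}\FI_f(s)/(q^m - 1)$; the second displayed equality is the unraveled definition of $\Res_{s=-m}\FI_f(s)$.

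For the converse, if $s = -m$ is a pole of order $k \geq 2$, then $P_f(u)$ has a pole of order $k$ at $u = q^m$ and partial fractions yield $a_\alpha = C\alpha^{k-1}q^{-m\alpha} + O(\alpha^{k-2}q^{-m\alpha})$ for some constant $C$. The non-negativity of the $a_\alpha$ forces $C > 0$, so $q^{m\alpha}a_\alpha \to \infty$ and hence $B_f = \infty$. The main subtle point will be carefully tracking the conversion between the complex-analytic residue of $\FI_f$ at $u = q^m$ and the paper's residue convention $\Res_{s=-m}\FI_f(s) := \lim_{s\to -m}(q^{s+m}-1)\FI_f(s)$, which differ by a factor of $-1/q^m$ coming from the change of variables $s \mapsto u = q^{-s}$.
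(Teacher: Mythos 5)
Your argument is correct and does reach the same conclusion, but it takes a genuinely different route from the paper. The paper never decomposes $P_f(u)$ into partial fractions; instead it multiplies the defining series of $P_f$ by $(q^{s+m}-1)$, reindexes to obtain
\[
(q^{s+m}-1)P_f(q^{-s}) = q^{s+m} + \sum_{\alpha \geq 0} q^{-\alpha(s+n)}\bigl(q^{m-n}B_{f,\alpha+1} - B_{f,\alpha}\bigr),
\]
and then observes that at $s=-m$ the tail telescopes directly to $\lim_N q^{-(N+1)(n-m)}B_{f,N+1} = B_f$. This is slicker because it bypasses the residue-convention bookkeeping you have to do (converting $\widetilde{r}=\Res_{u=q^m}\FI_f$ into the paper's $\Res_{s=-m}\FI_f := (q^{s+m}-1)\FI_f\big|_{s=-m}$, which indeed differ by the factor $-1/q^m$ you identify). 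Your route via partial fractions and coefficient asymptotics of rational generating functions is more standard and perhaps more transparent about why higher-order poles force divergence (the $\alpha^{k-1}$ growth with positive leading constant because $a_\alpha > 0$), whereas the paper merely asserts that "a similar argument" works in that case. Both approaches rest on the same two ingredients: the rationality of $\FI_f$ (Theorem~\ref{igrat}) and the identity~\eqref{igupo} tying $\FI_f$ to $P_f$.

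One step deserves a correction. You claim $\h_n(f^{-1}(0))=0$ is "forced by the hypothesis, since otherwise $P_f$ would acquire an extra pole at $u=1$ of smaller modulus than $q^m$." But the hypothesis constrains poles of $\FI_f(s)$, not of $P_f(u)$, and the factor $(1-q^s)$ in~\eqref{igupo} cancels the $u=1$ singularity of $P_f$ so that $\FI_f$ stays finite at $s=0$ (one just gets $\FI_f(0)=1-\h_n(f^{-1}(0))$, not a pole). The pole of $P_f$ at $u=1$ is therefore perfectly compatible with your stated hypothesis. The correct justification is more mundane: the hypothesis presupposes $\FI_f$ has poles at all, hence $f\not\equiv 0$, hence $f^{-1}(0)$ lies in the zero set of a nonzero polynomial, which has Haar measure zero in $\FO^n$. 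With that fix your argument is sound.
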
 

\begin{proof} Consider the generating series 
\begin{align*} (q^{s+m}-1) P_f(q^{-s})& = \sum_{\alpha \geq 0} (q^{s+m}-1) B_{f,\alpha} q^{-\alpha(s+n)} \\
&= q^{s+m} + \sum_{\alpha \geq 0} q^{-\alpha(s+n)} \left( q^{m-n}B_{f,\alpha+1}- B_{f,\alpha}\right).
\end{align*}
If $s=-m$ is a simple pole, then it follows from our assumptions and (\ref{igupo}), that $(q^{s+m}-1) P_f(q^{-s})$ converges for $|s| \leq m$. 
We can then compute the value at $s=-m$ as follows:
\begin{align*} (q^{s+m}-1) P_f(q^{-s})_{s=-m} &= \lim_{N\rightarrow \infty } 1 + \sum_{\alpha =  0}^N q^{-\alpha(n-m)} \left( q^{m-n}B_{f,\alpha+1}- B_{f,\alpha}\right) \\
&= \lim_{N \rightarrow \infty} q^{-(N+1)(n-m)}B_{f,N+1} = B_f.\end{align*}
If $s=-m$ is a higher order pole, a similar argument shows that $B_f$ diverges.
\end{proof}

\begin{expl} Continuing Example \ref{divex} we can use the formula $B_{f,\alpha} = (\alpha+1)q^\alpha - \alpha q^{\alpha-1}$ to compute $\FI_f(s)$ via (\ref{idk}):
\begin{align*} \FI_f(s) &= \sum_{\alpha \geq 0} q^{-\alpha s} \left( \left((\alpha+1) - \alpha q^{-1} \right) q^{- \alpha} - \left((\alpha+2) - (\alpha+1)q^{-1}\right)q^{-(\alpha+1)} \right)\\
&= (1-q^{-1})^2 \sum_{\alpha \geq 0} (\alpha+1)q^{-\alpha(s+1)} = \frac{(1-q^{-1})^2}{(1-q^{-(s+1)})^2}.
\end{align*}
In particular, $\FI_f(s)$ has a poles of order $2$ at $s=-1$ which by Lemma \ref{fresh} explains the divergence of $B_f$.
\end{expl}

Finally, we discuss an analogue of the functional equation satisfied by the Weil zeta function. For this consider a homogeneous polynomial $f \in \FO[x_1,\dots,x_n]$. We denote for every $e \geq 1$ by $F^{(e)}$ the unique unramified extension of $F$ of degree $e$ and by $I^{(e)}_f(s)$ the Igusa zeta function of $f$ computed over $F^{(e)}$. We call $\FI_f(s)$ \textit{universal over $F$} if there exists $\mathcal{I}_f(u,v) \in \BQ(u,v)$ such that for every $e \geq 1$ we have $\FI^{(e)}_f(s) = \mathcal{I}_f(q^{-es},q^{-e})$. Under these assumptions the following is a simplified version of a theorem of Denef and Meuser.

\begin{thm}\cite{DM91}\label{feqi} For almost all residue characteristics, if $\FI_f(s)$ is universal over $F$ it satisfies the functional equation 
\[ \FI_f(u^{-1},v^{-1}) = u^{\deg f} \FI_f(u,v).\]
\end{thm}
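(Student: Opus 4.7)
The plan is to follow the classical approach of Denef and Meuser, combining embedded resolution of singularities with Poincar\'e duality on the strata of the exceptional divisor. Choose an embedded resolution $h\colon Y\to\BA^n_\FO$ of the hypersurface $V(f)$; such a resolution exists over $\FO$ for all but finitely many residue characteristics. Let $E_i$, $i\in T$, be the irreducible components of the reduced preimage $h^{-1}(V(f))$, which form a strict normal crossings divisor. To each $E_i$ attach the numerical data $(N_i,\nu_i)$, where $N_i$ is the multiplicity of $E_i$ in the divisor of $f\circ h$ and $\nu_i-1$ is the multiplicity of $E_i$ in the relative canonical divisor. The $p$-adic change of variables formula together with Hensel's lemma gives Denef's formula
\[ \FI_f(s)=q^{-n}\sum_{I\subset T} c_I(q)\prod_{i\in I}\frac{q-1}{q^{N_is+\nu_i}-1}, \]
where $c_I(q)=|\bar E_I^{\circ}(\BF_q)|$ counts the $\BF_q$-points of the stratum $\bar E_I^{\circ}=\bigl(\bigcap_{i\in I}\bar E_i\bigr)\setminus\bigl(\bigcup_{j\notin I}\bar E_j\bigr)$ in the reduction of $Y$. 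Running the same computation over every unramified extension $F^{(e)}$ and invoking the universality hypothesis forces each $c_I$ to be a genuine polynomial in $q$.

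Next I would exploit the homogeneity of $f$. Since $f$ is homogeneous of degree $d$, the blow-up $\pi\colon\widetilde{\BA}^n\to\BA^n$ of the origin satisfies $\pi^*f=z^d\widetilde f$, with $z$ a local equation for the exceptional $\BP^{n-1}$ and $\widetilde f$ the strict transform, and any embedded resolution of $\widetilde f$ on $\widetilde{\BA}^n$ produces an embedded resolution of $f$. The resulting components split into a distinguished $E_0$ with $(N_0,\nu_0)=(d,n)$ mapping onto $\BP^{n-1}$ together with further components pulled back from an embedded resolution of the projective hypersurface $V(f)\subset\BP^{n-1}$. Each stratum $\bar E_I^{\circ}$ then fibres either over a stratum in $\BP^{n-1}$ (when $0\in I$) or over such a stratum with an extra $\BG_m$-factor (when $0\notin I$), and this fibration structure together with universality of the $c_I$ implies, via Deligne's theorem on weights, that each $c_I$ comes from a pure Hodge structure and therefore satisfies a Poincar\'e duality identity of the shape $q^{\dim\bar E_I^{\circ}}c_I(q^{-1})=c_{I^{\vee}}(q)$ for an involution $I\mapsto I^{\vee}$ on subsets of $T$ induced by the fibration structure on the compactifications.

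Finally, I would substitute $u\mapsto u^{-1}$ and $v\mapsto v^{-1}$ directly in Denef's formula. Each elementary factor $\frac{q-1}{q^{N_is+\nu_i}-1}$ transforms to an explicit multiple of itself times the monomial $-u^{N_i}v^{\nu_i}$ in the original variables, and combining the total monomial contribution $\prod_{i\in I}u^{N_i}v^{\nu_i}$ with the Poincar\'e duality factor for $c_I$ should yield, thanks to the distinguished values $N_0=d$, $\nu_0=n$ and the stratum-by-stratum dimension count, a uniform factor $u^d$ in front of the $I^{\vee}$-term. Summing over all $I$ and reindexing by $I^{\vee}$ then produces the desired identity $\FI_f(u^{-1},v^{-1})=u^d\FI_f(u,v)$.

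The hard part is the combinatorial bookkeeping in the last paragraph: one has to verify that the Poincar\'e duality involution $I\mapsto I^{\vee}$ is compatible with the way the numerical invariants $(N_i,\nu_i)$ transform, and that the signs arising from Poincar\'e duality combine consistently with those arising from the transformation of the elementary factors for every $I$. This is precisely where the homogeneity hypothesis is essential, since the existence of the distinguished component $E_0$ with $(N_0,\nu_0)=(d,n)$ is what makes the total exponent of $u$ equal to $\deg f$ uniformly across strata rather than varying with $I$.
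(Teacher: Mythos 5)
The paper does not actually prove Theorem \ref{feqi}: it is quoted from Denef--Meuser \cite{DM91}, and the only explanation given is the one--sentence sketch in Remark \ref{mulv} (embedded resolution of the projective hypersurface $\{f=0\}\subset\BP^{n-1}$ and the functional equation of the Weil zeta function of the now projective exceptional divisors). Your reconstruction follows the same overall strategy (resolution via blow-up of the origin, Denef's explicit formula, Weil functional equation, and the distinguished component $E_0$ with $(N_0,\nu_0)=(d,n)$ controlling the overall exponent), so at a high level you have identified the right ingredients.

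There is, however, a genuine gap in the middle step. The identity you posit, namely $q^{\dim\bar E_I^\circ}\,c_I(q^{-1})=c_{I^\vee}(q)$ for some involution $I\mapsto I^\vee$, does not hold, and the reasoning behind it is incorrect: the locally closed strata $\bar E_I^\circ$ are open, so their cohomology is \emph{mixed}, not pure, and Deligne's theorem gives no such Poincar\'e duality term by term. (Already for a single copy of $\BG_m$ one gets $q\,c(q^{-1})=-c(q)$, and in general the transform of $c_I(q^{-1})$ is a nontrivial linear combination of the $c_J(q)$ for $J\supseteq I$, not a single term.) What Denef and Meuser actually do is pass to the \emph{closed} intersections $\bar E_I=\bigcap_{i\in I}\bar E_i$, which are smooth projective precisely because one resolves the projective hypersurface $\{f=0\}\subset\BP^{n-1}$; the Weil functional equation applies cleanly to these, and one then recovers the $c_I$ by M\"obius inversion $c_I=\sum_{J\supseteq I}(-1)^{|J|-|I|}|\bar E_J(\BF_q)|$. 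The final combinatorics therefore mixes all $J\supseteq I$ rather than pairing $I$ with a single $I^\vee$; your claim that one can simply reindex by an involution replaces the hard combinatorial step by an assertion that is false, and the proof does not close. A correct argument must track the full inclusion--exclusion and check that the monomial factors $u^{-N_i}v^{1-\nu_i}$ produced by the substitution $(u,v)\mapsto(u^{-1},v^{-1})$ in each elementary factor, together with the factors of $q^{\dim\bar E_J}$ from the Weil functional equation for the closed strata, telescope to the single overall factor $u^{\deg f}$.
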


\begin{rem}\label{mulv} The proof uses an embedded resolution of the projective hypersurface $\{f=0\} \subset \BP^{n-1}$ and the functional equation of the Weil zeta function of the exceptional divisors, which are now projective as well. The same argument should hence also prove a version of Theorem \ref{feqi} when $f=(f_1,\dots,f_m)$ is a collection of homogeneous polynomials of the same degree.
\end{rem}

\subsection{Push forward measures of moment maps}\label{pfmmm}

In this section we derive an analogue of Proposition \ref{prop1} over local fields in the following general set up. Let $\varrho: \FO^m \rightarrow \mathfrak{gl}_n(\FO)$ be an $\FO$-linear map, where $\mathfrak{gl}_n(\FO)$ denotes the $\FO$-module of $n\times n$-matrices with entries in $\FO$. Define a 'moment map' $\mu: \FO^n \times \FO^n \rightarrow \FO^m$ by the equation
\[ \lla \mu(x,y), z \rra = \lla \varrho(z)x,y \rra,\]
for all $x,y \in \FO^n$ and $z \in \FO^m$. Notice that the standard paring $\lla,\rra$ induces an isomorphism $\Hom_\FO(\FO^n,\FO) \cong \FO^n$ and hence $\mu$ is uniquely defined this way.

Finally, we define a function $a_\varrho: F^m \rightarrow \BR_{\geq 0}$ by 
\begin{equation}\label{aro} a_\varrho(z) = \h_n(\varrho(z)\FO^n \cap \m^n) = \int_{\FO^n} \chi_{\m^n}\left(\varrho(z)x\right) dx, \end{equation}
for $z \in F^n$. Here we also use the notation $\chi_A$ for the characteristic function of $A \subset F^n$.

\begin{prop}\label{ppf} The Fourier transform of the push-forward measure $\mu_*(\h_{2n})$ is given by $a_\varrho$. More precisely we have for every compact measurable $A \subset \FO^m$ 
\[ \mu_*(\h_{2n})(A) = q^m\int_{F^m} \FF(\chi_A)(z) a_\varrho(z) dz.\]
\end{prop}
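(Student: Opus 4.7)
The plan is to start from the tautological expression
\[ \mu_*(\h_{2n})(A) = \h_{2n}(\mu^{-1}(A)) = \int_{\FO^n \times \FO^n} \chi_A(\mu(x,y))\, dx\, dy,\]
and then to use Fourier inversion on $F^m$ to replace the characteristic function $\chi_A$ by its Fourier representative. By Lemma \ref{localfin} applied in dimension $m$ we have $\FF(\FF(\chi_A))(-w) = q^{-m}\chi_A(w)$, so writing out the outer Fourier transform as an integral gives
\[ \chi_A(w) = q^m \int_{F^m} \FF(\chi_A)(z)\, \Psi(-\lla w, z\rra)\, dz.\]
The integral over $F^m$ is actually over a compact set because $\FF(\chi_A) \in \mathcal{S}(F^m)$ has compact support; this is what will let me apply Fubini freely below.

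Next I would substitute $w = \mu(x,y)$ and use the defining property $\lla \mu(x,y), z\rra = \lla \varrho(z)x, y\rra$ to rewrite the exponent, obtaining
\[ \mu_*(\h_{2n})(A) = q^m \int_{F^m} \FF(\chi_A)(z) \int_{\FO^n}\int_{\FO^n} \Psi(\lla -\varrho(z)x, y\rra)\, dy\, dx\, dz.\]
The compact support of $\FF(\chi_A)$ makes Fubini routine here. The inner $y$-integral is the Fourier transform of $\chi_{\FO^n}$ evaluated at $-\varrho(z)x$, so by Lemma \ref{lmbl} with $\alpha=0$ it equals $\chi_{\m^n}(-\varrho(z)x) = \chi_{\m^n}(\varrho(z)x)$.

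Substituting back gives
\[ \mu_*(\h_{2n})(A) = q^m \int_{F^m} \FF(\chi_A)(z) \left( \int_{\FO^n} \chi_{\m^n}(\varrho(z)x)\, dx \right) dz,\]
and the inner bracket is exactly $a_\varrho(z)$ by definition \eqref{aro}. This finishes the argument. The only delicate points are keeping track of the sign in Fourier inversion (harmless since $\m^n$ is symmetric under negation) and the appearance of the factor $q^m$, which comes from the normalization $\FF\circ \FF = q^{-m}\cdot i^*$ in dimension $m$; there is no serious obstacle beyond these bookkeeping matters.
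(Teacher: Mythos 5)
Your proof is correct and follows the same route as the paper: Fourier inversion on $\chi_A$ in $F^m$, the moment map identity $\lla \mu(x,y),z\rra = \lla \varrho(z)x,y\rra$, integration over $y$ via Lemma \ref{lmbl} to produce $\chi_{\m^n}(\varrho(z)x)$, and then integration over $x$ to yield $a_\varrho$. Your explicit justification of Fubini via the compact support of $\FF(\chi_A)$ and your note on the sign/symmetry of $\m^n$ are welcome clarifications of points the paper leaves implicit.
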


\begin{proof}  By definition of the push-forward measure we can write 
\begin{equation}\label{moom} \mu_*(\h_{2n})(A) = \h_{2n}(\mu^{-1}(A)) = \int_{\FO^n \times \FO^n} \chi_A(\mu(x,y)) dxdy.\end{equation}
For fixed $x,y \in \FO^n$ we have by the Fourier inversion Lemma \ref{localfin} 
\begin{align*} \chi_A(\mu(x,y)) &= q^m\int_{F^m} \FF(\chi_A)(z)\Psi(- \lla \mu(x,y),z \rra)dz \\
&= q^m\int_{F^m} \FF(\chi_A)(z)\Psi(- \lla \varrho(z)x,y \rra)dz.\end{align*}
By integrating along $y \in \FO^n$ we thus have by lemma \ref{lmbl} 
\begin{align*} \int_{\FO^n} \chi_A(\mu(x,y)) dy &=  q^m\int_{F^m} \FF(\chi_A)(z) \left(\int_{\FO^n}\Psi(- \lla \varrho(z)x,y \rra) dy\right) dz \\ 
&=q^m \int_{F^m}\FF(\chi_A)(z) \chi_{\m^n}(\varrho(z)x) dz.\end{align*}
Plugging this into (\ref{moom}) we obtain the proposition:
\begin{align*} \mu_*(\h_{2n})(A) &= q^m\int_{\FO^n} \int_{F^m} \FF(\chi_A)(z) \chi_{\m^n}(\varrho(z)x) dzdx \\
&= q^m\int_{F^m} \FF(\chi_A)(z) a_\varrho(z) dz. 
\end{align*}
\end{proof}

\begin{rem} Ideally we would like to phrase Proposition \ref{ppf} differently by writing
	\begin{align*} \mu_*(\h_{2n})(A)& = q^m\int_{F^m} \FF(\chi_A)(z) a_\varrho(z) dz \\
 &= q^m\int_{F^m} \int_A \Psi(\lla a,z\rra)a_\varrho(z) dadz = q^m \int_A \FF(a_\varrho)(a) da. \end{align*} 
Then we could say, that the density of $\mu_*(\h_{2n})$ with respect to $\h_m$ is given by the Fourier transform of $a_\varrho$, which would be the exact analogue of Proposition \ref{prop1}. However $a_\varrho$ will in general not be integrable (Example \ref{divex} gives such a case), and hence we cannot interchange the integration over $F^m$ and $A$ in general. 
\end{rem}


\begin{expl}
An interesting special case of Proposition \ref{ppf} comes from taking $A= \sigma(\zeta) + \m^m$ where $\zeta = (\zeta_1,\zeta_2,\dots,\zeta_m) \in \BF_q^m$. Then by definition of the push-forward measure and similar as in (\ref{projct}) we have (the $\widetilde{\cdot}$ will always denote the reduction to the residue field $\BF_q$).
\[ \mu_*(\h_{2n})(A) = \h_{2n}(\mu^{-1}(A)) = q^{-2n}|\widetilde{\mu}^{-1}(\zeta)(\BF_q)|.\]
On the other hand we have essentially by Lemma \ref{lmbl} 
\[\FF(\chi_A)(z) = q^{-m}\chi_{\FO^m}(z) \Psi(\lla z,\sigma(\zeta)\rra)\]
 and thus Proposition \ref{ppf} reads 

\[ \mu_*(\h_{2n})(A) = \int_{\FO^m} \Psi(\lla z,\sigma(\zeta)\rra) a_\varrho(z) \]


Notice now, that the integrand is invariant on cosets of $\m^m$ and by (\ref{projct}) we have the formula $a_\varrho(z) = q^{-n}|\ker(\widetilde{\varrho})(\widetilde{z})(\BF_q)|$. Hence we recover \cite[Proposition 1]{Hau1}:
\[   |\widetilde{\mu}^{-1}(\zeta)(\BF_q)| = q^{n-m}\sum_{\widetilde{z} \in \BF_q^m} |\ker(\widetilde{\varrho})(\widetilde{z})(\BF_q)| \widetilde{\Psi}(\lla \zeta,\widetilde{z}\rra).\]
\end{expl}

As we already mentioned in (\ref{tautp}), the Igusa zeta function $\FI_\mu(s)$ is closely related to the push-forward measure $\mu_*(\h_{2n})$ and hence Proposition \ref{ppf} implies the following general formula.

\begin{cor}\label{crucro}
\[\FI_{\mu}(s) = \frac{q^{(s+m)}-q^s}{q^{(s+m)}-1} + \frac{q^{m}-q^{s+m}}{q^{s+m}-1} \int_{F^m\setminus \m^m} a_\varrho(z) ||z||^{-(s+m)}dz\]
\end{cor}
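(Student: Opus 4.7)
The starting point is the tautological identity
\[\FI_\mu(s) = \int_{\FO^m} \|z\|^s\,d\mu_*(\h_{2n})(z)\]
of (\ref{tautp}). Writing $V_\alpha = \mu_*(\h_{2n})(\pi^\alpha \FO^m)$ and decomposing $\FO^m$ into level sets $\pi^\alpha\FO^m\setminus\pi^{\alpha+1}\FO^m$, an Abel summation (valid for $\Re(s)$ large enough) gives
\[\FI_\mu(s) = V_0 + (1-q^s)\sum_{\alpha\geq 1} q^{-\alpha s} V_\alpha,\]
with $V_0 = \h_{2n}(\FO^{2n}) = 1$. The plan is to rewrite each $V_\alpha$ via Proposition~\ref{ppf} and then explicitly carry out the sum.

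Applying Proposition~\ref{ppf} to $A = \pi^\alpha \FO^m$ and computing the Fourier transform by Lemma~\ref{lmbl} (which gives $\FF(\chi_{\pi^\alpha\FO^m}) = q^{-\alpha m}\chi_{\pi^{-\alpha+1}\FO^m}$) yields
\[V_\alpha = q^{m-\alpha m}\int_{\pi^{-\alpha+1}\FO^m} a_\varrho(z)\,dz\qquad (\alpha \geq 1).\]
Substituting, shifting $\gamma = \alpha-1$, and interchanging sum and integral (legal in the region of absolute convergence) transforms the remaining task into evaluating, for each fixed $z\in F^m$, the geometric sum
\[\sum_{\gamma\geq \max(0,\log_q\|z\|)} q^{-\gamma(s+m)}.\]
This equals $(1-q^{-(s+m)})^{-1}$ when $z \in \FO^m$, and $\|z\|^{-(s+m)}(1-q^{-(s+m)})^{-1}$ when $z \notin \FO^m$, so
\[\sum_{\alpha\geq 1} q^{-\alpha s} V_\alpha = \frac{q^{-s}}{1-q^{-(s+m)}}\left[\int_{\FO^m} a_\varrho(z)\,dz + \int_{F^m\setminus\FO^m}\|z\|^{-(s+m)}a_\varrho(z)\,dz\right].\]

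The final step is a careful bookkeeping on the nested domains $\m^m \subset \FO^m \subset F^m$. The key observation is that for $z \in \m^m$ one has $\varrho(z)\FO^n \subset \m^n$, so $a_\varrho \equiv 1$ on $\m^m$ and $\int_{\m^m} a_\varrho = q^{-m}$. Combined with the trivial remark that $\|z\|^{-(s+m)} = 1$ on $\FO^m\setminus\m^m$, the bracketed expression above collapses to
\[q^{-m} + \int_{F^m\setminus\m^m}\|z\|^{-(s+m)}a_\varrho(z)\,dz.\]
Substituting back into the Abel-summed expression for $\FI_\mu(s)$ and checking the two rational identities $(1-q^s)q^{-s}q^{s+m} = q^m-q^{s+m}$ and $1 + q^{-m}(q^{-s}-1)/(1-q^{-(s+m)}) = (q^{s+m}-q^s)/(q^{s+m}-1)$ yields the formula of the corollary. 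I expect the only real subtlety to be the last bookkeeping step around the boundary $\|z\|=1$, where one exploits the constancy of $a_\varrho$ on $\m^m$ to absorb the stray constant $q^{-m}$ into a single integral over $F^m\setminus\m^m$; the rest is routine manipulation of the resulting geometric series.
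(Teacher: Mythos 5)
Your proof is correct and follows essentially the same route as the paper: both pass through the identical intermediate formula $\FI_\mu(s) = 1 + (q^{-s}-1)\sum_{\alpha\geq 0} q^{-\alpha(s+m)}\int_{\pi^{-\alpha}\FO^m} a_\varrho(z)\,dz$ (you reach it by explicit Abel summation, the paper by telescoping after substituting), and both then rely on the geometric series and the observation $a_\varrho\equiv 1$ on $\m^m$. The only cosmetic difference is the order: the paper peels off the $\m^m$ contribution before interchanging sum and integral over $F^m\setminus\m^m$, whereas you interchange over all of $F^m$ first and absorb the $\m^m$ piece at the end using $\|z\|^{-(s+m)}=1$ on $\FO^m\setminus\m^m$.
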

\begin{proof} By the tautological equation (\ref{tautp}) we have
\[\FI_\mu(s) =  \int_{\FO^m} ||w||^s \mu_*(\h_{2n})= \sum_{\alpha \geq 0} q^{-\alpha s}\mu_*(\h_{2n})\left(\pi^\alpha \FO^m \setminus \pi^{\alpha +1}\FO^m \right) .\]
With Proposition \ref{ppf} and Lemma \ref{lmbl} this becomes
\begin{align*} \FI_\mu(s) &= q^m \sum_{\alpha \geq 0} q^{-\alpha s} \left(\int_{F^m} \FF(\chi_{\pi^\alpha \FO^m})(z) a_\varrho(z) dz - \int_{F^m} \FF(\chi_{\pi^{\alpha+1} \FO^m})(z) a_\varrho(z) dz\right)  \\
&= q^m \sum_{\alpha \geq 0} q^{-\alpha (s+m)}\left( \int_{\pi^{-\alpha+1}\FO^m} a_\varrho(z)dz  - q^{-m} \int_{\pi^{-\alpha} \FO^m} a_\varrho(z)dz \right) \\
&= 1 + (q^{-s}-1) \sum_{\alpha \geq 0} q^{-\alpha (s+m)} \int_{\pi^{-\alpha}\FO^m} a_\varrho(z) dz.
    \end{align*}
Now the domain of each integral in each summand contains $\m^m$. Using  $a_{\varrho|\m^n} \equiv 1$ we can evaluate the integrals over $\m^m$ first and get
\[ 	\sum_{\alpha \geq 0} q^{-\alpha (s+m)} \int_{\m^m} a_\varrho(z) dz = q^{-m}\sum_{\alpha \geq 0} q^{-\alpha (s+m)} = \frac{q^{-m}}{1-q^{-(s+m)}} \]
and then 
\[ \FI_{\mu}(s) = \frac{q^{s+m}-q^s }{q^{s+m}-1}+ (q^{-s}-1) \sum_{\alpha \geq 0} q^{-\alpha (s+m)} \int_{\pi^{-\alpha}\FO^m \setminus \m^m} a_\varrho(z) dz.\]
Finally, we can reorder the domains of integration according to the norm of $z$ and obtain
\begin{align*} \sum_{\alpha \geq 0} q^{-\alpha (s+m)} &\int_{\pi^{-\alpha}\FO^m \setminus \m^m} a_\varrho(z) dz = \sum_{\beta \geq 0} \left(\int_{\pi^{-\beta}\FO^m \setminus \pi^{-\beta+1} \FO^m } a_\varrho(z)dz\right) \sum_{\alpha \geq \beta} q^{-\alpha(s+m)}\\
&= \frac{q^{s+m}}{q^{s+m}-1}  \sum_{\beta \geq 0} \left(\int_{\pi^{-\beta}\FO^m \setminus \pi^{-\beta+1} \FO^m } a_\varrho(z)dz\right) q^{-\beta(s+m)}\\
&= \frac{q^{s+m}}{q^{s+m}-1} \int_{F^m \setminus \m^m} a_\varrho(z) ||z||^{-(s+m)}dz.
\end{align*}
\end{proof}

\begin{expl}\label{npts} Consider the case $m=1$ and $\varrho: \FO \rightarrow \mathfrak{gl}_n(\FO)$ given by $\varrho(z) = z \unt_n$. Then the moment map $\mu$ is given by $\mu(x,y) = \sum_{i=1}^n x_iy_i$ for all $x,y \in \FO^n$ and $\FI_\mu(s)$ is now straightforward to compute:

First we have for any $z \in F\setminus \m$ and $x\in \FO^n$ the equivalence $\varrho(z)x \in \m^n \Leftrightarrow |x_i| \leq (q|z|)^{-1} \ \forall i$ and hence $a_\varrho(z) = (q|z|)^{-n}$. Then 
\begin{align*} \int_{F \setminus \m} a_\varrho(z)|z|^{-(s+1)}dz &= q^{-n} \int_{F \setminus \m} |z|^{-(s+n+1)}dz \\
&= q^{-n} \sum_{\alpha \geq 0} q^{-\alpha(s+n+1)} q^{\alpha}(1-q^{-1}) = \frac{q^{s}(1-q^{-1})}{q^{s+n}-1}.
\end{align*}	
Plugging this into Corollary \ref{crucro} we obtain the formula
\[ \FI_\mu(s) = \frac{(q-1)(q^n-1)q^{2s}}{(q^{s+1}-1)(q^{s+n}-1)}.\]

This example appears already in \cite{Ig00} and was part of the initial motivation for looking at zeta functions of moment maps. 
\end{expl}

\subsection{Hypertoric zeta functions}\label{hzf}

In this section we determine the Igusa zeta function for the moment maps that appear in the construction of hypertoric varieties \ref{htv}. Let $\FA$ be a central hyperplane arrangement of rank $m$, where the hyperplanes $H_1,\dots,H_n$ are given by normal vectors $a_1,\dots,a_n \in \BZ^m$. From now on we will work under the following assumption:

\begin{ass}\label{assum} If $A \in M_{n\times m}(\BZ)$ denotes matrix whose rows are the $a_i$, we will always assume that the residue characteristic of $F$ is larger than any of the minors of $A$.
\end{ass}

As a first consequence of this assumption we notice that the intersection lattice $L(\FA)$ does not depend on whether we consider $\FA$ over $F$ or over $\BF_q$.

Associated to $\FA$ we have a moment map $\mu: \FO^n \times \FO^n \rightarrow \FO^m$ defined by (\ref{toricmm}) i.e.
\[ \mu(x,y) =  \sum_{i=1}^n x_iy_ia_i \text{ for all } x,y\in \FO^n. \]

The Igusa zeta function associated with $\FA$ is then defined as 
\[\FI_\FA(s) = \FI_\mu(s) = \int_{\FO^{2n}} ||\mu(x,y)||^sdxdy.\]

This is really an invariant of $\FA$ and does not depend on the choice of normal vectors as long as \ref{assum} is satisfied. Also the assumption that $\FA$ is essential is not necessary here, as $||\mu||$ depends only on the span of the $a_i$.

In this case $a_\varrho:F^m \rightarrow \BR_{\geq 0}$ defined in (\ref{aro}) takes a rather simple form. Namely using (\ref{toricvar}) we have for $z \in F^m$

\[ a_\varrho(z) = \int_{\mathcal{O}^n}\chi_{\m^n}(\varrho(z)v)dv 
														= \prod_{i=1}^n \int_{\FO} \chi_{\m}(\lla z,a_i\rra v_i)dv_i
														= \prod_{i=1}^n \min\left\{1,\frac{1}{q|\lla z,a_i\rra|}\right\}.\]

The determination of $\FI_\FA(s)$ thus reduces by Corollary \ref{crucro} to computing
\begin{equation}\label{jms} \FJ_\FA(s) = \int_{F^m \setminus \m^m} \prod_{i=1}^n \min\left\{1,\frac{1}{q|\lla z,a_i\rra|}\right\} ||z||^{-(s+m)}dz.\end{equation}

The point is now, that the integrand of $\FJ_\FA(s)$ clearly takes only countably many different values and we can partition $F^m \setminus \m^m$ accordingly. 
Let $\mathbf{c}=(c_1,\dots,c_n) \in \BZ_{\geq 0}^n\setminus 0$ and define
\begin{eqnarray}\label{Z_c} Z_{\FA,\mathbf{c}} = \left\{ z \in F^m\ \middle|\ \ \ |\lla z,a_i\rra| \begin{cases} = q^{c_i-1} &\text{ if } c_i>0\\
																																	\leq q^{-1} &\text{ if } c_i = 0.\end{cases}\right\}.\end{eqnarray}	
																	
With this we can write $F^m \setminus \m^m = \bigsqcup_{\bfc \in \BZ_{\geq 0}^n\setminus 0} Z_{\FA,\bfc}$. Now in general for a matrix $B \in M_{k\times l}(\FO)$, which has full rank when reduced over $\BF_q$, we always have
\begin{equation}\label{fullrk} ||Bv||= ||v|| \text{ for all } v \in \FO^l \end{equation}
by the non-archimedean triangle inequality (\ref{nontri}). In particular, when we apply this to the matrix $A$ from \ref{assum} we get 

\begin{equation}\label{gmax} ||z|| = \max_{1 \leq i \leq n} |\lla z,a_i\rra|.\end{equation}
Then (\ref{jms}) becomes

\begin{eqnarray}\label{bisum} 	\FJ_{\FA}(s)	= \sum_{ \bfc \in \BZ_{\geq 0}^n\setminus 0} q^{-\sum_i c_i -(\ol{\bfc}-1)(s+m)} \h_m(Z_{\FA,\bfc}), \end{eqnarray}			
where we use the notation $\bar{\bfc} = \max \{c_1,\dots,c_n\}$. We will determine $\FJ_{\FA}(s)$ using a recursion for which it will be more convenient to consider 
\[ \FJ'_{\FA}(s) = q^{-(s+m)} \FJ_{\FA}(s) =  \sum_{ \bfc \in \BZ_{\geq 0}^n\setminus 0} q^{-\sum_i c_i -\ol{\bfc}(s+m)} \h_m(Z_{\FA,\bfc}).\]

Now in general there are linear relations between the $a_i$ and hence $Z_{\FA,\mathbf{c}}$ will be empty for many choices of $\mathbf{c}$. Again because of (\ref{nontri}) we need for example 
\[I = \{ i \in \{1,\dots,n\}\ |\ c_i < \overline{\bfc}\}\]
 to be a flat in $\FA$ for $Z_{\FA,\mathbf{c}}$ to be non-empty. Assuming $I \neq \emptyset$ we put $\bfc_I = (c_i)_{i\in I} \in \BZ_{\geq 0}^I$ and $\overline{\bfc}_I= \max_{i\in I}(c_i)$. 

\begin{lemma}\label{volz} The volume of $ Z_{\FA,\bfc}$ is given by
\[  \h_m(Z_{\FA,\bfc}) = \chi_{\FA^I}(q)q^{\rk(\FA^I)(\ol{\bfc}-2)}  \h_{\rk(\FA_I)}(Z_{\FA_I,\bfc_I}),\]
where we set $\h_{\rk(\FA_I)}(Z_{\FA_I,\bfc_I}) = q^{-\rk(\FA_I)}$ if $I = \emptyset$ or $\ol{\bfc}_I = 0$.
\end{lemma}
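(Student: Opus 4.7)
The plan is to do an explicit change of variables on $F^m$ that decouples the conditions coming from the flat $I$ from those coming from its complement, and then to recognise the two resulting factors as $\h_{\rk(\FA_I)}(Z_{\FA_I,\bfc_I})$ and a quantity computed by Theorem \ref{stcha} applied to the arrangement $\FA^I$. Note that since $\ol{\bfc}$ is attained, $I \neq \{1,\ldots,n\}$, so we can extend a basis of $\spn_{i\in I}\{a_i\}$ inside the $a_j$'s with $j\notin I$.

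More precisely, I would choose indices $i_1,\ldots,i_r \in I$ with $a_{i_1},\ldots,a_{i_r}$ a basis of $\spn_{i\in I}\{a_i\}$ (where $r=\rk(\FA_I)$), and extend to $i_{r+1},\ldots,i_m \notin I$ so that $a_{i_1},\ldots,a_{i_m}$ is a basis of $V^*$. By Assumption \ref{assum} the determinant of this basis is a unit in $\FO$ and every $a_j$ has $\FO$-coefficients $(\alpha_{j,\cdot},\beta_{j,\cdot})$ in it, so the map $z \mapsto u = (\lla z,a_{i_k}\rra)_{k=1}^{m}$ is a measure-preserving isomorphism $F^m \xrightarrow{\sim} F^r \times F^{m-r}$. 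Write $u=(u_I,u^I)$.

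For $i \in I$ we have $a_i \in \spn\{a_{i_1},\ldots,a_{i_r}\}$, so $\lla z,a_i\rra$ depends only on $u_I$, and the conditions in \eqref{Z_c} indexed by $i\in I$ cut out precisely $Z_{\FA_I,\bfc_I}\subset F^r$ after this identification (the cases $I=\emptyset$ or $\ol{\bfc}_I=0$ reduce to $u_I\in \m^r$, of volume $q^{-r}$, matching the stated convention). For $j\notin I$ we have $\lla z,a_j\rra = \alpha_j\!\cdot\! u_I + \beta_j\!\cdot\! u^I$; but for every $i\in I$ one has $|u_i|\leq q^{\ol{\bfc}_I-1}<q^{\ol{\bfc}-1}$, so by the ultrametric triangle inequality the condition $|\lla z,a_j\rra|=q^{\ol{\bfc}-1}$ is equivalent to $|\beta_j\!\cdot\! u^I|=q^{\ol{\bfc}-1}$. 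By construction $\beta_j$ is the image of $a_j$ in $V^*/\spn_{i\in I}\{a_i\}$, i.e.\ the normal $b_j$ of the hyperplane of $\FA^I$ corresponding to $j$. Fubini then gives
\[ \h_m(Z_{\FA,\bfc}) \;=\; \h_r(Z_{\FA_I,\bfc_I})\cdot \h_{m-r}(W),\qquad W = \bigl\{u^I\in F^{m-r}\ \bigm|\ |\lla u^I,b_j\rra|=q^{\ol{\bfc}-1}\ \forall j\notin I\bigr\}. \]

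To finish, I would compute $\h_{m-r}(W)$ by rescaling. Applying \eqref{gmax} to the essential arrangement $\FA^I$, the conditions force $\|u^I\|=q^{\ol{\bfc}-1}$, so the substitution $u^I = \pi^{1-\ol{\bfc}}v$ identifies $W$ with the preimage in $\FO^{m-r}$ of $c(\widetilde{\FA^I})(\BF_q)$ under reduction modulo $\m$. By Theorem \ref{stcha} this preimage has volume $\chi_{\FA^I}(q)\cdot q^{-(m-r)}$, and the Jacobian of the rescaling is $q^{(m-r)(\ol{\bfc}-1)}$, giving
\[ \h_{m-r}(W) \;=\; \chi_{\FA^I}(q)\,q^{(m-r)(\ol{\bfc}-2)} \;=\; \chi_{\FA^I}(q)\,q^{\rk(\FA^I)(\ol{\bfc}-2)}, \]
as $m-r = \rk(\FA^I)$ when $\FA$ is essential. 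The one point that needs a little care is justifying that Assumption \ref{assum} suffices both for the $\FO$-integrality of the $\alpha_{j,\cdot},\beta_{j,\cdot}$ and for applying \eqref{gmax} to the normals of $\FA^I$; this is however automatic, since every minor arising in these operations is, up to a unit, a minor of the original integer matrix $A$, hence a unit in $\FO$.
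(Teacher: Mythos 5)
Your proof is correct, and it takes a genuinely different route from the paper's. The paper's argument stays entirely inside the compact model: it first observes that $Z_{\FA,\bfc}$ is a union of cosets of $\m^m$ inside $\pi^{-\ol{\bfc}+1}\FO^m$, passes to the finite quotient $\left(\pi^{-\ol{\bfc}+1}\FO/\m\right)^m$, and then reduces the count to an application of Theorem~\ref{stcha} by pushing further down a chain of finite projections (first the special cases $I=\emptyset$ and $\ol{\bfc}_I=0$, then the general case via $\left(\pi^{-\ol{\bfc}+1}\FO/\m\right)^m\to\left(\pi^{-\ol{\bfc}+1}\FO/\pi^{-\ol{\bfc}_I+1}\FO\right)^m$). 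Your argument instead performs a unimodular change of coordinates on $F^m$ adapted to the flat $I$, splits $F^m\cong F^{\rk\FA_I}\times F^{\rk\FA^I}$, shows $Z_{\FA,\bfc}$ decomposes as a product across this splitting by the ultrametric inequality, and applies Fubini; the second factor is again handled by Theorem~\ref{stcha} after a rescaling. Both arguments are sound and use the same two ingredients (Assumption~\ref{assum} and Theorem~\ref{stcha}); yours has the advantage of treating all cases uniformly and making the appearance of the restriction $\FA^I$ and the localization $\FA_I$ structurally transparent, whereas the paper's proof is a more hands-on coset count that avoids introducing explicit coordinates. Your closing remark that all relevant minors are, up to units, minors of $A$ (so Assumption~\ref{assum} covers both the integrality of the change of basis and the applicability of~\eqref{gmax} to $\FA^I$) is exactly the right point to flag and is correct.
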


\begin{proof}  As we already remarked in (\ref{gmax}) we have $||z|| = \max_{1 \leq i \leq n} |\lla z,a_i\rra| = q^{\ol{\bfc}-1}$ for every $z \in Z_{\FA,\mathbf{c}}$, hence in particular $Z_{\FA,\mathbf{c}} \subset \pi^{-\ol{\bfc}+1} \FO^m$. Furthermore for $z \in Z_{\FA,\mathbf{c}}$ and $w \in \m^m$ we have by definition $z+w \in Z_{\FA,\mathbf{c}}$. In other words, if we write
\[ \phi^m: F^m \rightarrow \left(F / \m\right)^m,\]
then every non-empty fiber of $\phi^m_{| Z_{\FA,\mathbf{c}}}$ is a translation of $\m^m$ and thus 
\begin{equation}\label{voldis} \h_m(Z_{\FA,\bfc}) = q^{-m} | Z'_{\FA,\mathbf{c}}|,\end{equation}
where we abbreviate $ Z'_{\FA,\mathbf{c}}=\phi^m(  Z_{\FA,\mathbf{c}})$.
Now assume first $I= \emptyset$ i.e. $c_1=\dots=c_n= \ol{\bfc}$. Then the image of $Z'_{\FA,\mathbf{c}}$ under the projection 
\[(\pi^{-\ol{\bfc}+1} \FO /\m)^m \rightarrow (\pi^{-\ol{\bfc}+1} \FO / \pi^{-\ol{\bfc}+2} \FO)^m \cong \BF_q^m\]
are exactly the points which do not lie in any hyperplane of $\FA \subset \BF_q^m$. Thus by Theorem \ref{stcha} we obtain  $|Z'_{\FA,\mathbf{c}}| = \chi_\FA(q) q^{m(\ol{\bfc}-1)}$ and  $\h_m(Z_{\FA,\bfc}) = \chi_\FA(q) q^{m(\ol{\bfc}-2)}$.

Next assume $I \neq \emptyset$ but $\ol{\bfc}_I = 0$. Then every $z \in  Z'_{\FA,\mathbf{c}}$ satisfies 
\begin{equation}\label{fatvs}\lla z,a_i \rra = 0 \text{ for all } i\in I.\end{equation}

Furthermore by \ref{assum} we can write the set of all $z \in (\pi^{-\ol{\bfc}+1} \FO /\m)^m$ satisfying (\ref{fatvs}) as $(\pi^{-\ol{\bfc}+1} \FO /\m)^{\rk(\FA^I)}$ in a suitable basis. Now the same argument as for $I= \emptyset$ shows 
\[ \h_m(Z_{\FA,\bfc}) = q^{-m}|Z'_{\FA,\mathbf{c}}| = \chi_{\FA^I}(q) q^{\rk(\FA^I)(\ol{\bfc}-1)-m} =\chi_{\FA^I}(q) q^{\rk(\FA^I)(\ol{\bfc}-2)} q^{-\rk(\FA_I)} .\]

The general case  works the same way by considering the image of $Z'_{\FA,\mathbf{c}}$ under the projection $\left(\pi^{-\ol{\bfc}+1} \FO / \m\right)^m \rightarrow \left(\pi^{-\ol{\bfc}+1} \FO / \pi^{-\ol{\bfc}_I+1} \FO\right)^m$.

\end{proof}

\begin{prop}\label{gorec} $\FJ'_{\FA}(s)$ satisfies the recursion
\[ \FJ'_{\FA}(s) = q^{-m}\sum_{\substack{I \in L(\FA)\\ I \neq \infty}} \frac{q^{-\rk(\FA^I)} \chi_{\FA^I}(q)}{\left(q^{s+\delta_I} - 1\right) } \left(q^{\rk(\FA_I)}\FJ'_{\FA_I}\left(s+\delta_I-\rk(\FA_I)\right)  + 1  \right),\]
where we put $\delta_I = n-|I|+ \rk(\FA_I)$ and $\FJ'_{\FA_\emptyset} =0$.
\end{prop}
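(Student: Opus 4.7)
The plan is to start from the explicit sum
\[ \FJ'_{\FA}(s) = \sum_{ \bfc \in \BZ_{\geq 0}^n\setminus 0} q^{-\sum_i c_i -\ol{\bfc}(s+m)} \h_m(Z_{\FA,\bfc}) \]
and stratify it according to the associated flat $I(\bfc) = \{ i \mid c_i < \ol{\bfc} \}$. As noted in the preamble, $Z_{\FA,\bfc}$ is empty unless $I(\bfc) \in L(\FA)$, and the case $I=\infty$ cannot occur because $\ol{\bfc}$ must be attained by some coordinate. For fixed $I\neq\infty$ the corresponding $\bfc$'s are parametrized by an integer $k = \ol{\bfc}\geq 1$ together with a vector $\bfc_I \in \BZ_{\geq 0}^I$ subject to $\ol{\bfc}_I < k$; the remaining coordinates are forced to equal $k$, so $\sum_i c_i = (n-|I|)k + \sum_{i\in I} c_i$.

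Next I would plug Lemma \ref{volz} into this stratification. Writing $r_I = \rk(\FA_I)$ and using $\rk(\FA^I) = m-r_I$ (valid because $\FA$ is essential), the exponent of $q$ coming from the $k$-dependent part simplifies to
\[ -(n-|I|)k - k(s+m) + (m-r_I)(k-2) = -k(s+\delta_I) - 2(m-r_I), \]
where $\delta_I = n - |I| + r_I$ as in the statement. Thus the contribution of the flat $I$ reads
\[ q^{-2(m-r_I)} \chi_{\FA^I}(q) \sum_{k\geq 1} q^{-k(s+\delta_I)} \sum_{\bfc_I:\, \ol{\bfc}_I < k} q^{-\sum_{i\in I} c_i} \h_{r_I}(Z_{\FA_I,\bfc_I}). \]
I would then split the inner sum into the term $\bfc_I = \mathbf{0}$, whose volume is $q^{-r_I}$ by the convention of Lemma \ref{volz}, and the terms with $\bfc_I \neq \mathbf{0}$.

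For the $\bfc_I = \mathbf{0}$ piece, summing the geometric series $\sum_{k\geq 1} q^{-k(s+\delta_I)}$ yields $(q^{s+\delta_I}-1)^{-1}$, producing the ``$+1$'' inside the parentheses of the target formula (after collecting the factor $q^{-r_I}$ into the overall normalization $q^{-m}q^{-\rk(\FA^I)} = q^{-2m+r_I}$). For the $\bfc_I\neq \mathbf{0}$ piece, the constraint $k > \ol{\bfc}_I$ makes the geometric sum start at $k = \ol{\bfc}_I + 1$, giving
\[ \sum_{k > \ol{\bfc}_I} q^{-k(s+\delta_I)} = \frac{q^{-\ol{\bfc}_I(s+\delta_I)}}{q^{s+\delta_I}-1}. \]
The key observation at this point, and what I expect to be the main technical hurdle, is that the reparametrization $s' = s + \delta_I - r_I$ turns $\ol{\bfc}_I(s+\delta_I)$ into $\ol{\bfc}_I(s'+r_I)$, which is precisely the exponent appearing in the defining series for $\FJ'_{\FA_I}(s')$. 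Hence the $\bfc_I \neq \mathbf{0}$ contribution assembles into $(q^{s+\delta_I}-1)^{-1}\FJ'_{\FA_I}(s+\delta_I-r_I)$. Adding both pieces and distributing the prefactor $q^{-2(m-r_I)}\chi_{\FA^I}(q)$ as $q^{-m}q^{-\rk(\FA^I)}\chi_{\FA^I}(q)$ multiplied by $q^{r_I}$ inside the bracket yields exactly the claimed recursion, where the $I=\emptyset$ term (with $\FJ'_{\FA_\emptyset} = 0$ by convention and $\delta_\emptyset = n$) contributes the pure constant $\chi_\FA(q)q^{-2m}/(q^{s+n}-1)$.
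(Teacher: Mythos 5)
Your proposal is correct and follows essentially the same route as the paper: stratify the defining series by the flat $I(\bfc)=\{i\mid c_i<\ol{\bfc}\}$, apply Lemma \ref{volz} to factor out $\chi_{\FA^I}(q)q^{\rk(\FA^I)(\ol{\bfc}-2)}$, sum the geometric series in $\ol{\bfc}>\ol{\bfc}_I$, and recognize the remaining $\bfc_I$-sum (separating $\bfc_I=0$ from $\bfc_I\neq 0$) as $q^{-r_I}+\FJ'_{\FA_I}(s+\delta_I-r_I)$. You spell out the exponent bookkeeping and the $I=\emptyset$ convention a bit more explicitly, but the argument is the one in the paper.
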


\begin{proof} Using Lemma \ref{volz} this is now a straightforward computation. Fix a flat $I \in L(\FA)$ with $I \neq \infty = \{1,\dots,n\}$  and set $\BZ^n_I = \left\{ \bfc \in \BZ_{\geq 0}^n \ |\ c_i  < \ol{\bfc} \Leftrightarrow i \in I\right\}$. Then we have
\begin{align*}  \sum_{ \bfc \in \BZ_I^n} &q^{-\sum_i c_i -\ol{\bfc}(s+m)} \h_m(Z_{\FA,\bfc}) = \sum_{\bfc_I \in \BZ^I_{\geq 0}} q^{-\sum_{i\in I} c_i} \sum_{\substack{\ol{\bfc}\in \BZ_\geq 0 \\ \ol{\bfc} > \ol{\bfc}_I}} q^{-\ol{\bfc}(s+n-|I|+m)} \h_m(Z_{\FA,\bfc})\\
&= \chi_{\FA^I}(q) q^{-2\rk(\FA^I)} \sum_{\bfc_I \in \BZ^I_{\geq 0}} q^{-\sum_{i\in I} c_i} \h_{\rk(\FA_I)}(Z_{\FA_I,\bfc_I}) \sum_{\ol{\bfc} > \ol{\bfc}_I} q^{-\ol{\bfc}\left(s+\delta_I \right)}\\
&= \frac{ \chi_{\FA^I}(q)q^{-2\rk(\FA^I)}}{q^{s+\delta_I} -1 } \sum_{\bfc_I \in \BZ^I_{\geq 0}} q^{-\sum_{i\in I} c_i  -\ol{\bfc}_I\left(s+\delta_I \right)} \h_{\rk(\FA_I)}(Z_{\FA_I,\bfc_I})\\
&= \frac{ \chi_{\FA^I}(q)q^{-m-\rk(\FA^I)}}{q^{s+\delta_I} -1 }  \left(q^{\rk(\FA_I)}\FJ'_{\FA_I}\left(s+\delta_I-\rk(\FA_I)\right)  + 1  \right),
\end{align*} 
where we used in the last line our convention $ \h_{\rk(\FA_I)}(Z_{\FA_I,\bfc_I}) = q^{-\rk(\FA_I)}$ for $\bfc_I = 0$. Now the proposition follows by summing over all flats $I\neq \infty$.
\end{proof}

Of course a recursion for $\FJ'_{\FA}(s)$ implies one for $\FI_{\FA}(s)$, which turns out to be a bit cumbersome however. Instead we give an explicit formula by iterating the recursion from Proposition \ref{gorec}. Recall that for any flat $I \in L(\FA)$ we can identify $L(\FA_I)$ with the sublattice of $L(\FA)$ consisting of flats contained in $I$.

\begin{thm}\label{iguthm} The Igusa zeta function of an essential hyperplane arrangement $\FA$ of rank $m$ is given by

\begin{equation}\label{igusaf} \FI_\FA(s) =  \frac{q^m-1}{q^m-q^{-s}} + \frac{1-q^{s}}{1-q^{-(s+m)}} \sum_{\infty = I_0 \supsetneq I_1 \supsetneq \dots \supsetneq I_r} q^{-\rk(\FA^{I_r})} \prod_{i=1 }^r \frac{\chi_{\FA_{I_{i-1}}^{I_{i}}}(q)}{q^{s+\delta_{I_i}}- 1},\end{equation}
where the sum is over all proper chains of flats in $L(\FA)$ of length $r\geq 1$.
\end{thm}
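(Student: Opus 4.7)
My plan is to derive the formula by iterating the recursion from Proposition~\ref{gorec} and then converting the result for $\FJ'_\FA(s)$ into one for $\FI_\FA(s)$ via Corollary~\ref{crucro}. The first observation is that Corollary~\ref{crucro} immediately rewrites
\[ \FI_\FA(s) = \frac{q^m-1}{q^m-q^{-s}} + \frac{q^m(1-q^s)}{1-q^{-(s+m)}}\,\FJ'_\FA(s), \]
after using $\FJ_\FA(s) = q^{s+m}\FJ'_\FA(s)$ and rescaling numerator and denominator of the first summand by $q^s$ and of the second by $q^{-(s+m)}$. So the problem reduces to showing that
\[ q^m\,\FJ'_\FA(s) = \sum_{\infty = I_0 \supsetneq I_1 \supsetneq \dots \supsetneq I_r} q^{-\rk(\FA^{I_r})}\prod_{i=1}^r \frac{\chi_{\FA_{I_{i-1}}^{I_i}}(q)}{q^{s+\delta_{I_i}}-1}. \]

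To establish this I would iterate Proposition~\ref{gorec}. Applying the recursion once picks a flat $I_1\subsetneq\infty$ and replaces $\FJ'_\FA(s)$ by a sum of a ``boundary'' term (the $+1$ branch, corresponding to a chain of length $1$) and a ``continuation'' term involving $\FJ'_{\FA_{I_1}}$ at a shifted argument $s_1 := s + \delta_{I_1} - \rk(\FA_{I_1})$. The key bookkeeping step is the observation that if we set $s_i := s_{i-1} + \delta'_{I_i} - \rk(\FA_{I_i})$ where $\delta'_{I_i} = |I_{i-1}|-|I_i|+\rk(\FA_{I_i})$ is computed inside $\FA_{I_{i-1}}$, then the $-\rk(\FA_{I_i})$'s telescope against the $+\rk(\FA_{I_i})$'s and one finds
\[ s_i = s + \sum_{j=1}^i\bigl(|I_{j-1}| - |I_j|\bigr) = s + n - |I_i|, \qquad s_{i-1}+\delta'_{I_i} = s + \delta_{I_i}. \]
Thus every denominator produced along a chain is exactly $q^{s+\delta_{I_i}}-1$, independent of where it appears.

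A parallel bookkeeping governs the prefactors. At each intermediate step $i<r$, the factor $q^{\rk(\FA_{I_i})}$ sitting in front of $\FJ'_{\FA_{I_i}}$ cancels the initial $q^{-\rk(\FA_{I_i})}$ of the next application of the recursion. What remains is the initial $q^{-\rk(\FA_{I_0})}=q^{-m}$ together with the product of the rank-exponents $q^{-\rk(\FA_{I_{i-1}}^{I_i})}$. Using $\rk(\FA_{I_{i-1}}^{I_i}) = \rk(\FA_{I_{i-1}}) - \rk(\FA_{I_i})$ (valid because essentiality passes to localizations), this product telescopes to $q^{-(m-\rk(\FA_{I_r}))}$, so the total prefactor on the length-$r$ chain becomes $q^{-m}\cdot q^{\rk(\FA_{I_r})-m} = q^{-m}\cdot q^{-\rk(\FA^{I_r})}$, which is exactly what we need after multiplying through by $q^m$.

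The heaviest part of the writeup is precisely this careful telescoping of ranks and the shift parameters; it is routine but very error-prone, and would be the main obstacle in practice. Once these two telescopes are checked, a short induction on $r$ shows that the fully unfolded recursion gives a sum over all proper chains $\infty=I_0\supsetneq I_1\supsetneq\dots\supsetneq I_r$ of length $r\geq 1$, with the claimed summand. Substituting into the expression for $\FI_\FA(s)$ derived from Corollary~\ref{crucro} yields formula~\eqref{igusaf}.
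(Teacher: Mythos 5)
Your proposal is correct and matches the paper's intended derivation: the paper states Theorem~\ref{iguthm} immediately after Proposition~\ref{gorec} with the remark that the formula comes from iterating the recursion, and the telescoping of both the shift parameters ($s_{i-1}+\delta'_{I_i}=s+\delta_{I_i}$) and the rank prefactors ($q^{-m}$ times the telescoped product giving $q^{-m}q^{-\rk(\FA^{I_r})}$) is exactly what makes the unfolded recursion collapse to the stated chain sum. The rewriting of Corollary~\ref{crucro} into the form $\FI_\FA(s)=\frac{q^m-1}{q^m-q^{-s}}+\frac{q^m(1-q^s)}{1-q^{-(s+m)}}\FJ'_\FA(s)$ is also correct, so substituting $q^m\FJ'_\FA(s)=\sum_{\text{chains}}(\cdots)$ yields~\eqref{igusaf}.
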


\begin{expl}\label{ncent} When $\FA$ is the central arrangement in $F$ consisting of $n$ times the origin i.e. $a_1=\dots=a_n=1$ we recover Example \ref{npts}. In this case $L(\FA) = \{\emptyset,\infty\}$, and the sum in (\ref{igusaf}) has only one term corresponding to the chain $\infty \supsetneq \emptyset$. 

From this example we also see that $\FI_\FA(s)$ is really an invariant of $\FA$ and not just of $L(\FA)$.
\end{expl}

\begin{expl}\label{m2m} Next we consider the arrangement in $F^2$ defined by the three normal vectors $a_1=(1,0), a_2=(-1,1), a_3=(0,-1)$. The moment map is then given by 
\[      \mu(x,y) = \begin{pmatrix} x_1y_1-x_2y_2 \\ x_2y_2 - x_3y_3  \end{pmatrix}. \]
We have $L(\FA) = \{\emptyset,\{1\},\{2\},\{3\},\infty\}$, and hence there are $4$ chains of length $1$ and $3$ of length $2$. The contribution of each of them to the sum in (\ref{igusaf}) is given by
\begin{align*}
\infty \supsetneq \emptyset&: \ \ \ \frac{(q-1)(q-2)}{q^2(q^{s+2}-1)}\\ 
\infty \supsetneq \{i\}&: \ \ \ \frac{(q-1)}{q(q^{s+2}-1)} \\
\infty \supsetneq \{i\} \supsetneq \emptyset &: \ \ \ \frac{(q-1)^2}{q^2(q^{s+2}-1)^2}.
\end{align*}
Putting this together we get 
\[ \FI_\FA(s) =   \frac{(q - 1)^2q^{s}\left(q^{s}(q^6 + 2q^5 + 2q^4 - 2q^3 ) + 2q^3 - 2q^2 - 2q - 1\right)}{(q^{s+2} -1) (q^{s+3} -1 )^2}.\] 
\end{expl}

We give a final example to illustrate that in particular the numerators of $\FI_\FA(s)$ get complicated very quickly and do not seem to have any interesting structure apart from the symmetry predicted by Theorem \ref{feqi}.

\begin{expl}\label{hugm} Consider the arrangement in $F^3$ given by the six normal vectors
\[ a_1=\begin{pmatrix} 1 \\1 \\ 0 \end{pmatrix},a_2=\begin{pmatrix} 0\\1\\1 \end{pmatrix} ,a_3=\begin{pmatrix} 1\\0\\1 \end{pmatrix} ,a_4=\begin{pmatrix} 1\\-1\\0 \end{pmatrix} ,a_5=\begin{pmatrix} 0\\1\\-1 \end{pmatrix} ,a_6=\begin{pmatrix} -1\\0\\1 \end{pmatrix} .\]
With the help of Sage we find that the denominator of $\FI_\FA(s)$ is given by
\[ ( q^{s+3} -1) ( q^{s+5} -1)( q^{s+6} -1)^{3}.  \]
The numerator is the following:
\begin{align*}(q-1)^2q^{2s} \Big[ q^{3s}( q^{24} + 2 q^{23} + 3 q^{22} + 3 q^{21} + 3 q^{20}- q^{19} - 11 q^{18}+6 q^{17})\\
+ q^{2s}( 3 q^{19}   + 9 q^{18}   - 12 q^{17}   - 9 q^{16}  - 9 q^{15}  - 9 q^{14}  - 3 q^{13} + 9 q^{12} + 3 q^{11}  )\\
+q^s( - 3 q^{13}  - 9 q^{12}   + 3 q^{11}  + 9 q^{10}  + 9 q^{9}  + 9 q^{8}  + 12 q^{7}  - 9 q^{6}  - 3 q^{5} )\\
- 6 q^{7}  + 11 q^{6}  +  q^{5}  - 3 q^{4}  - 3 q^{3}  - 3 q^{2}  - 2 q  - 1\Big]\end{align*}
\end{expl}

\subsection{Poles}\label{poles}

Formula (\ref{igusaf}) shows that $I_\FA(s)$ is a rational function in $q^{-s}$. We start by studying the poles of $I_\FA(s)$ (as a function in $s$), more precisely the real parts of those poles. 

Then if follows from our formula that the poles of $I_\FA(s)$ are amongst the negative integers $\mathscr{P}_\FA=\left\{ -\delta_I \ | \ I \in L(\FA) \right\} $, as opposed to just negative rational numbers as in Theorem \ref{igrat}. 

Of course for a given $\epsilon \in \p_\FA$ the different summands in (\ref{igusaf}) with a $(q^{s-\epsilon}-1)^{-1}$-term might cancel. We explain now a criterion which will show in many cases, that there is no such cancellation. Consider the subset
\[L(\FA)_\epsilon = \{ I \in L(\FA) \ |\ -\delta_I=\epsilon\},\]
with the induced ordering i.e. $I \geq I'$ iff $I \supseteq I'$. The following observation gives some control over $L(\FA)_\epsilon$. 

\begin{lemma}\label{alstructure}\begin{enumerate} \item[(i)] For any two flats $I \supset 	I'$ in $L(\FA)$ we have $\delta_I \leq \delta_{I'}$ with equality if and only if $\{ a_i \ |\ i \in I \setminus I'\}$ are linearly independent in $F^m / H_I$.

In particular, if $I,	I' \in L(\FA)_\epsilon$, any $J\in L(\FA)$ with $I\supset J \supset I'$ is in $L(\FA)_\epsilon$
\item[(ii)] There are unique flats $J_1,\dots,J_d \in L(\FA)_\epsilon$ such that every $I\in L(\FA)_\epsilon$ contains exactly one of the $J_i$'s

\end{enumerate}
\end{lemma}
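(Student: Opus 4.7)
Part (i) will follow from a direct computation. Writing $U_F = \spn\{a_j : j \in F\}$ for a flat $F$, so that $\rk(\FA_F) = \dim U_F$, for $I \supset I'$ we have
\[\delta_I - \delta_{I'} = (\dim U_I - \dim U_{I'}) - (|I| - |I'|).\]
Since $U_I = U_{I'} + \spn\{a_i : i \in I \setminus I'\}$, the first difference is at most $|I \setminus I'|$, with equality precisely when the $a_i$ for $i \in I \setminus I'$ are linearly independent modulo $U_{I'}$. This gives both claims of (i); the ``in particular'' assertion is then immediate, since for $I \supset J \supset I'$ with $\delta_I = \delta_{I'}$, the chain $\delta_I \leq \delta_J \leq \delta_{I'}$ forces equality throughout.

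For (ii), the plan is to first establish the key intersection lemma: if $J_1, J_2, I \in L(\FA)_\epsilon$ with $J_1, J_2 \subseteq I$, then $J_1 \cap J_2 \in L(\FA)_\epsilon$. Note that $J_1 \cap J_2$ is automatically a flat, since for flats one has $J_k = \{i : a_i \in U_k\}$ and thus $J_1 \cap J_2 = \{i : a_i \in U_1 \cap U_2\}$, which coincides with the flat it generates. Granting the lemma, for each $I \in L(\FA)_\epsilon$ the nonempty finite subposet $\{J \in L(\FA)_\epsilon : J \subseteq I\}$ is closed under pairwise intersections and so admits a unique minimum $J(I)$. The required collection $\{J_1,\dots,J_d\}$ is taken to be the distinct values of $J(I)$ as $I$ ranges over $L(\FA)_\epsilon$. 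Each $I$ contains $J(I)$; and if $J(I') \subseteq I$ for another $I'$, then minimality of $J(I)$ gives $J(I) \subseteq J(I')$, while minimality of $J(I')$ among $L(\FA)_\epsilon$-flats beneath itself forces $J(I) = J(I')$, so $I$ contains exactly one of the $J_i$. Uniqueness of $\{J_1,\dots,J_d\}$ as a set follows from their characterization as the minimal elements of $L(\FA)_\epsilon$.

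The main technical step is the intersection lemma, which I would prove by verifying the equality criterion from (i) directly. Partition $I = A \sqcup B \sqcup C \sqcup D$ with $A = J_1 \cap J_2$, $B = J_1 \setminus J_2$, $C = J_2 \setminus J_1$, $D = I \setminus (J_1 \cup J_2)$, and set $W = \spn\{a_j : j \in A\}$ and $V = \spn\{a_i : i \in I\}$. Suppose $\sum_{i \in B \cup C \cup D} c_i a_i \in W$. Projecting to $V/U_1$ eliminates the terms with $i \in B \subseteq J_1$, yielding $\sum_{i \in C \cup D} c_i \bar a_i = 0$; by (i) applied to $J_1 \subseteq I$, the residues $\bar a_i$ for $i \in C \cup D$ are linearly independent in $V/U_1$, so $c_i = 0$ for $i \in C \cup D$. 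The residual relation $\sum_{i \in B} c_i a_i \in W \subseteq U_2$ then vanishes in $V/U_2$, and applying (i) to $J_2 \subseteq I$ forces $c_i = 0$ for $i \in B$ as well. The delicate point is lining up the two projections so that each relation collapses onto a set where the hypothesis from (i) is directly applicable; once the partition $A \sqcup B \sqcup C \sqcup D$ is set up, both reductions are transparent.
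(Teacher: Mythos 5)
Your proof is correct and follows essentially the same route as the paper: compute $\delta_I - \delta_{I'}$ to get (i), then derive (ii) from the key observation that if $J_1, J_2, I \in L(\FA)_\epsilon$ with $J_1, J_2 \subseteq I$, then $J_1 \cap J_2 \in L(\FA)_\epsilon$. Two small differences worth noting. First, you correctly read the equality condition in (i) as independence modulo $U_{I'} = \spn\{a_j : j\in I'\}$; the statement's ``$F^m/H_I$'' is a slight abuse since the $a_i$ live in the dual, and the intended space is $U_I/U_{I'}$, which is what you work with. Second, for the intersection lemma the paper first passes to $\FA^{J_1\cap J_2}$ to reduce to $J_1\cap J_2 = \emptyset$ and then argues; your partition $I = A\sqcup B\sqcup C\sqcup D$ carries out the same two-step elimination (first $C\cup D$ via independence mod $U_1$, then $B$ via independence mod $U_2$) directly without restriction, and you additionally spell out the passage from the intersection lemma to the existence and uniqueness of the minimal elements $J_1,\dots,J_d$, which the paper leaves to the reader. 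Both are mild expository choices, not a different argument.
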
 
\begin{proof} For $(i)$ we can write
\[ \delta_{I'} - \delta_{I} = |I|-|I'| + \rk(\FA_{I'}) - \rk(\FA_{I}) = |I\setminus I'| - \rk(\FA_{I}^{I'}).\]

Now the normal vectors defining $\FA_{I'}^{I}$ are exactly the images of $\{ a_i \ |\ i \in I \setminus I'\}$ in $F^m /H_{I}$ and $(i)$ follows.

Statement $(ii)$ follows from the observation that given $I,J_1,J_2 \in L(\FA)_\epsilon$ such that $I \supset J_i$ for $i=1,2$, we have $J_1 \cap J_2 \in L(\FA)_\epsilon$. Indeed, first we can reduce this to the case where $J_1 \cap J_2 = \emptyset$ by restricting to $\FA^{J_1 \cap J_2}$. Then using (i) we can write $I = J_i \sqcup I'_i$, where $\{ a_j \ |\ j\in I'_i\}$ are linearly independent in $F^m/ H_{J_i}$. As $J_1\cap J_2 = \emptyset$ we conclude that $J_2 \subset I'_1, J_1 \subset I'_2$ and hence $\{ a_i\ |\ i\in I\}$ are linearly independent in $F^m$ and $\delta_I = \delta_\emptyset$, which finishes the proof. 
\end{proof}

Next we write $l(\epsilon)$ for the length of a maximal chain in $L(\FA)_\epsilon$. Each maximal chain is of the form $I_0 \supsetneq I_1 \supsetneq \dots \supsetneq I_{l(\epsilon)} = J_i$ for some $1\leq i\leq d$, where $J_i$ is as in Lemma \ref{alstructure}. For a fixed $1 \leq i\leq d$ we write $J_{i,1},J_{i,2},\dots J_{i,k_i}$ for the different $I_0$'s that appear in a maximal chain with smallest flat $J_i$. Notice that again by Lemma \ref{alstructure}  $\{ a_e \ |\ e \in J_{i,j} \setminus J_i\}$ are $l(\epsilon)$ linearly independent vectors, hence any maximal chain from $J_{i,j}$ to $J_i$ is constructed by adding the $a_e$'s one by one. In particular, there are $l(\epsilon)!$ different maximal chains from $J_{i,j}$ to $J_i$.

\begin{prop}\label{pocrit} A negative integer $\epsilon \in \p_\FA \setminus \{-m\}$ is a pole of $\FI_\FA(s)$ of order $l(\epsilon) + 1$ if $\sum_{i,j} \nu_\FA(J_{i,j},\infty) \neq 0$.

\end{prop}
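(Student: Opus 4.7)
The strategy is to extract the leading Laurent coefficient of $\FI_\FA(s)$ at $s=\epsilon$ directly from the explicit formula in Theorem~\ref{iguthm}. Since $\epsilon<0$ and $\epsilon\neq -m$, the prefactor $(1-q^s)/(1-q^{-(s+m)})$ is analytic and non-zero at $s=\epsilon$, so all poles at $s=\epsilon$ must come from the denominators $q^{s+\delta_{I_i}}-1$ appearing in the chain sum. Each such factor contributes a simple zero at $s=\epsilon$ precisely when $I_i\in L(\FA)_\epsilon$. Consequently a chain $\infty = I_0\supsetneq I_1\supsetneq\dots\supsetneq I_r$ contributes a pole of order equal to $|\{t\geq 1 : I_t\in L(\FA)_\epsilon\}|$.

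Next I would bound this order and identify the chains realising the maximum. By Lemma~\ref{alstructure}(i), if two of the $I_t$ lie in $L(\FA)_\epsilon$, every intermediate flat in the chain does as well (sandwiching $\delta$ between $-\epsilon$ and $-\epsilon$); so the indices with $I_t\in L(\FA)_\epsilon$ form a consecutive block in $\{1,\dots,r\}$. This block has size at most $l(\epsilon)+1$, hence the pole at $s=\epsilon$ has order at most $l(\epsilon)+1$. Chains attaining this maximum are exactly those containing a maximal chain of $L(\FA)_\epsilon$ as a consecutive sub-chain; by the description preceding the proposition statement, such a sub-chain has top $J_{i,j}$ and bottom $J_i$ for some pair $(i,j)$, and for fixed $(i,j)$ there are exactly $l(\epsilon)!$ ways to order the intermediate flats. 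For each intermediate step within the block, Lemma~\ref{alstructure}(i) guarantees that exactly one index is removed and the restriction arrangement $\FA_{I_{t-1}}^{I_t}$ is a single hyperplane in a one-dimensional space, contributing $\chi = q-1$; summing over orderings yields a factor $(q-1)^{l(\epsilon)} l(\epsilon)!$.

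A maximal-order chain may also have an \emph{initial} part, from $\infty$ down to $J_{i,j}$ through flats outside $L(\FA)_\epsilon$, and a \emph{tail} part, from $J_i$ down to a lower flat again through flats outside $L(\FA)_\epsilon$; these two portions contribute factors of $(q^{\epsilon+\delta}-1)^{-1}$ (which are \emph{non-zero} numerical constants, since the intermediate flats are outside $L(\FA)_\epsilon$) times $\chi$-polynomials. Gathering everything, the coefficient of $(q^{s-\epsilon}-1)^{-(l(\epsilon)+1)}$ in the Laurent expansion of $\FI_\FA(s)$ at $s=\epsilon$ equals
\[
\frac{(1-q^\epsilon)(q-1)^{l(\epsilon)}\,l(\epsilon)!}{1-q^{-(\epsilon+m)}}\sum_{i=1}^d\sum_{j=1}^{k_i} T_{i,j}(q)\,U_i(q),
\]
where $T_{i,j}(q)$ is a rational function of $q$ coming from the sum over initial chains from $\infty$ to $J_{i,j}$ (reducing to $\chi_{\FA^{J_{i,j}}}(q)$ when the initial part is empty), and $U_i(q)$ is a rational function of $q$ coming from the sum over tail chains below $J_i$ (reducing to $q^{-\rk(\FA^{J_i})}$ when the tail is empty).

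The main obstacle is to deduce from the hypothesis that $\sum_{i,j} T_{i,j}(q)U_i(q)$ is a non-zero rational function of $q$. The key point is that $\chi_{\FA^{J_{i,j}}}(0)=\nu_\FA(J_{i,j},\infty)$, so the constant term of each $T_{i,j}(q)$, after controlling the initial correction contributions via their power-of-$q$ behaviour, is a rational function whose evaluation at the appropriate Laurent-degree is governed by $\nu_\FA(J_{i,j},\infty)$. Stratifying the sum by the value of $\rk(\FA^{J_i})$ and extracting the Laurent coefficient of $\sum_{i,j}T_{i,j}(q)U_i(q)$ at the corresponding $q$-power yields $\sum_{(i,j):\rk(\FA^{J_i})=\kappa}\nu_\FA(J_{i,j},\infty)$ for each value $\kappa$ appearing; the hypothesis $\sum_{i,j}\nu_\FA(J_{i,j},\infty)\neq 0$ forces at least one of these layer sums to be non-zero, so the residue is non-zero and $\epsilon$ is a pole of $\FI_\FA(s)$ of exact order $l(\epsilon)+1$.
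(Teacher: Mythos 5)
Your structural analysis of the chain sum is essentially correct and parallels the paper: the consecutive-block observation via Lemma~\ref{alstructure}(i) bounds the pole order by $l(\epsilon)+1$, the chains attaining the maximum are exactly those containing a maximal chain of $L(\FA)_\epsilon$ as a consecutive sub-chain running from some $J_{i,j}$ down to $J_i$, and the $l(\epsilon)!$ orderings of the middle block (each step removing a single hyperplane with $\chi = q-1$) are a real feature. However, there is a genuine gap in the last paragraph, which you yourself flag as the main obstacle but do not overcome: you need $\sum_{i,j}T_{i,j}(q)U_i(q)\neq 0$ as a rational function, and the ``stratify by $\rk(\FA^{J_i})$ and extract a Laurent coefficient'' argument is not actually carried out. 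The $T_{i,j}$ and $U_i$ encode sums over arbitrary initial chains from $\infty$ to $J_{i,j}$ and tails below $J_i$, each contributing $\chi$-polynomials divided by factors $(q^{\epsilon+\delta}-1)$; there is no identified $q$-degree at which only the $\nu_\FA(J_{i,j},\infty)$ contributions survive, and the hypothesis $\sum_{i,j}\nu_\FA(J_{i,j},\infty)\neq 0$ does not by itself rule out cancellation among the layers you describe.

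The paper resolves exactly this difficulty by a single asymptotic device: it evaluates $\lim_{q\to\infty}(1-q^{-(\epsilon+m)})R(q)$, where $R(q)=(q^{s-\epsilon}-1)^{l(\epsilon)+1}\FI_\FA(s)|_{s=\epsilon}$. In this limit every tail factor $(q^{\epsilon+\delta_{I_t}}-1)^{-1}$ with $\delta_{I_t}>-\epsilon$ tends to $0$, so chains with a non-trivial tail vanish; every initial factor $(q^{\epsilon+\delta_{I_t}}-1)^{-1}$ with $\delta_{I_t}<-\epsilon$ tends to $-1$; and the normalized numerator $q^{-\rk(\FA^{I_r})}\prod\chi_{\FA_{I_{t-1}}^{I_t}}(q)$ tends to $1$ since the characteristic polynomials are monic and their degrees telescope. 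The whole expression collapses to $-l(\epsilon)!\sum_{i,j}\sum_{l}(-1)^{l-1}N_l(J_{i,j},\infty) = -l(\epsilon)!\sum_{i,j}\nu_\FA(J_{i,j},\infty)$ by Lemma~\ref{nuchain}, which is non-zero by hypothesis. Your $\chi(0)$ observation would correspond to a $q\to 0$ limit (the paper's Remark after the proposition notes this gives the same criterion), but either way you need such a limit to kill the $T_{i,j}$, $U_i$ complications in one stroke; your written argument does not supply it.
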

\begin{proof} We see directly from (\ref{igusaf}), that the order of the pole $\epsilon$ is less than or equal to $l(\epsilon)$ as $(q^{s-\epsilon}-1)^{-1}$ can appear at most $l(\epsilon)$ times in one summand. For $\epsilon$ to be exactly of order $l(\epsilon)$ is equivalent to 
\[R(q) = (q^{s-\epsilon}-1)^{l(\epsilon)+1}I_\FA(s)_{|s=\epsilon} \neq 0.\] 

We now study the contribution of a single summand corresponding to the chain $ \mathbf{I} = (\infty = I_0 \supsetneq I_1 \supsetneq \dots \supsetneq I_r)$ (notation as in (\ref{igusaf})). Clearly $\mathbf{I}$ has to contain a maximal chain from $L(\FA)_\epsilon$ in order to have a non-zero contribution to $R(q)$, so we assume this from now on. Then we can look at the contribution of $\mathbf{I}$ to the limit $\lim_{q \rightarrow \infty}(1-q^{-(\epsilon+m)}) R(q)$, where we include the factor $(1-q^{-(\epsilon+m)})$ to cancel the factor in front of the sum in (\ref{igusaf}) . The polynomial $\prod_{i=1 }^r \chi_{\FA_{I_{i-1}}^{I_{i}}}(q)$ is of degree $\rk(\FA^{I_r})$, which can be seen directly from the definition (\ref{chadef}). Furthermore if $I_r \notin L(\FA)_\epsilon$ the summand will contain a factor $(q^{\delta_{I_r}+\epsilon}-1)^{-1}$, which will tend to zero as $q \rightarrow \infty$. Combining these two facts and the description of maximal chains in $L(\FA)_\epsilon$ above leads to 
\begin{align*} \lim_{q \rightarrow \infty} (1-q^{-(\epsilon+m)})R(q) &= l(\epsilon)!\sum_{i,j} \sum_{\infty= I_0 \supsetneq I_1 \supsetneq \dots \supsetneq I_{l-1} \supsetneq J_{i,j}} (-1)^{l-1} \\
&= -l(\epsilon)! \sum_{i,j} \nu_\FA(J_{i,j},\infty).\end{align*}
Here we also used Lemma \ref{nuchain} for the last equality. By our assumption we then have $\lim_{q \rightarrow \infty} (1-q^{-(\epsilon+m)})R(q) \neq 0$, and hence also $R(q) \neq 0$. 
\end{proof}

\begin{rem} The proof of Proposition \ref{pocrit} considers a limit for $q \rightarrow \infty$ and it is natural to look also at a similar limit when $q \rightarrow 0$. It turns out however, that this will produce exactly the same criterion.
\end{rem}

In the next theorem we summarize all the information we have about the poles of $I_\FA(s)$.

\begin{thm}\label{ipoles} Let $\FA$ be a central arrangement of rank $m$ consisting of $n$ hyperplanes and $\epsilon$ one of the three numbers $-m,-n, \max \p_\FA \setminus \{-m\}$.  Then $\epsilon$ is a pole of $I_\FA(s)$ of order $l(\epsilon)+1$.
\end{thm}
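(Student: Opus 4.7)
The plan is to dispatch each of the three values of $\epsilon$ by reducing to (or adapting) Proposition \ref{pocrit}, whose criterion $\sum_{i,j} \nu_\FA(J_{i,j},\infty) \neq 0$ amounts to checking that the relevant M\"obius numbers cannot cancel.

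First I would handle $\epsilon = -n$. Here $L(\FA)_{-n}$ consists exactly of the independent flats of $\FA$, since $\delta_I = n$ is equivalent to $|I| = \rk(\FA_I)$, and its unique minimum is $\emptyset$, so $d = 1$ and $J_1 = \emptyset$. Since $|J_{1,j} \setminus J_1| = l(-n)$ for each $j$ by the remark preceding Proposition \ref{pocrit}, all $J_{1,j}$ share the cardinality $l(-n)$ and hence the rank $\rk(\FA^{J_{1,j}}) = m - l(-n)$. Proposition \ref{surpos} then forces every $\nu_\FA(J_{1,j},\infty)$ to share the sign $(-1)^{m-l(-n)}$, so the sum is nonzero and Proposition \ref{pocrit} yields the pole of order $l(-n)+1$.

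For $\epsilon = \epsilon_{\max} := \max \p_\FA \setminus \{-m\}$ the same strategy should apply, but one first has to verify that the minimal elements $J_1,\dots,J_d$ of $L(\FA)_{\epsilon_{\max}}$ all share a common cardinality, so that every $\rk(\FA^{J_{i,j}}) = m - \rk(\FA_{J_i}) - l(\epsilon_{\max})$ has the same parity. The key input is Lemma \ref{alstructure}(i): since $\epsilon_{\max}$ is the second-largest element of $\p_\FA$, every flat strictly containing a maximal $J \in L(\FA)_{\epsilon_{\max}}$ must lie in $L(\FA)_{-m}$, and the smallest such flat has cardinality exactly $|J|+c+1$ with $c = -\epsilon_{\max}-m \geq 1$. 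Exploiting this rigid link between the two strata should pin down the common cardinality of the $J_i$, after which Proposition \ref{surpos} supplies the sign-constancy and Proposition \ref{pocrit} concludes. Establishing this common-cardinality claim is the main obstacle; while it holds in every example I have checked, a clean proof may require matroid-theoretic input such as the theory of cyclic flats of the underlying matroid.

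The case $\epsilon = -m$ is not directly covered by Proposition \ref{pocrit} because the free summand $(q^m-1)/(q^m-q^{-s})$ of \eqref{igusaf} contributes its own simple pole at $s=-m$. I would exploit that $L(\FA)_{-m}$ is closed upward in $L(\FA)$ with $\infty$ as unique maximum, so Lemma \ref{alstructure}(ii) forces a unique minimum $J_1$. Multiplying $\FI_\FA(s)$ by $(q^{s+m}-1)^{l(-m)+1}$ and letting $s \to -m$, the free summand vanishes whenever $l(-m) \geq 1$, and only those chains in \eqref{igusaf} whose initial $l(-m)+1$ flats form a maximal chain in $L(\FA)_{-m}$ from $\infty$ down to $J_1$ survive. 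A leading-order analysis in $q$, parallel to the proof of Proposition \ref{pocrit}, then evaluates the limit to the positive integer $N_{l(-m)}(J_1,\infty)$, so the pole has order exactly $l(-m)+1$; the exceptional case $l(-m) = 0$ is handled by the same analysis together with the free summand's contribution $(q^m-1)/q^m$.
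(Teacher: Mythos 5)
Your treatment of $\epsilon=-n$ is correct and is exactly the paper's argument: $L(\FA)_{-n}$ consists of the independent flats, $J_1=\emptyset$ is the unique minimum, each $J_{1,j}$ has rank $l(-n)$, and Proposition \ref{surpos} makes the $\nu_\FA(J_{1,j},\infty)$ all of the same sign so the criterion of Proposition \ref{pocrit} applies. Your $\epsilon=-m$ argument is also sound; it is somewhat more explicit than the paper's (you identify the leading coefficient of $(q^{s+m}-1)^{l(-m)+1}\FI_\FA(s)|_{s=-m}$ as $N_{l(-m)}(J,\infty)$, whereas the paper simply observes that for $q$ large each summand of \eqref{igusaf} is eventually positive, using that characteristic polynomials are monic, so there can be no cancellation), but both lead to the same conclusion.

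The genuine gap is the case $\epsilon_{\max}=\max\p_\FA\setminus\{-m\}$, where you explicitly defer the key claim (that all the maximal flats $J_{i,j}$ of $L(\FA)_{\epsilon_{\max}}$ have a common rank) to hypothetical matroid-theoretic machinery such as cyclic flats. The paper in fact settles this with a short direct argument that you appear to have missed: one first observes that $L(\FA)_{-m}$ has a \emph{unique} minimal element $J$, characterised by the property that $a_i\notin\spn_{k\neq i}\{a_k\}$ for every $i\notin J$ (this is the flat obtained by deleting all coloops); one then computes $\rk(J)=\rk(\FA_J)=m-\rk(\FA^J)=m-l(-m)$. Since $\epsilon_{\max}$ is the second-largest element of $\p_\FA$, every flat strictly containing a $J_{i,j}$ already lies in $L(\FA)_{-m}$ and therefore contains $J$, so no flat sits strictly between $J_{i,j}$ and $J$; this forces $\rk(J_{i,j})=m-l(-m)-1$ for every $i,j$. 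With all the $\rk(\FA^{J_{i,j}})$ equal, Proposition \ref{surpos} gives constant sign of the $\nu_\FA(J_{i,j},\infty)$ and Proposition \ref{pocrit} finishes as in the other cases. So the missing step was not a deep matroid fact but a pigeonhole observation about the stratum $L(\FA)_{-m}$; you correctly isolated which combinatorial claim was needed, but the paper's resolution of it is elementary rather than requiring cyclic-flat theory.
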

\begin{proof} Assume first $\epsilon = -m$. In this case we can directly look at
\[\lim_{s \rightarrow -m} (q^{s+m}-1)^{l(m)+1}I_\FA(s),\]
where we consider the limit from above i.e. $s > -m$. As the characteristic polynomial of any hyperplane arrangement is monic we see that every summand in (\ref{igusaf}) tends either to $0$ or $+\infty$ individually (assuming $q$ is large enough), hence $-m$ is a pole of order $l(m)+1$.

For $\epsilon \in \left\{ -n, \max \p_\FA \setminus \{-m\} \right\}$ we use the criterion from Proposition \ref{pocrit}. Together with Proposition \ref{surpos} the statement will then follow from the fact that all $J_{i,j}$ in $L(\FA)_\epsilon$ have the same rank.

For $\epsilon = -n = \delta_\emptyset$ we can see this since $L(\FA)_{-n}$ consists exactly of those flats $I$ for which $\{a_i\}_{i \in I}$ are linearly independent. Hence the rank of each $J_{i,j}$ is simply $l(\epsilon)$.  

For $\epsilon = \max \p_\FA \setminus \{-m\}$ we first notice that there is a unique minimal flat $J \in L(\FA)_{-m}$. Indeed, $J$ is defined by the property $a_i \notin \spn_{k \neq i} \{a_k\}$ for all  $i\notin J$. 
We can then see that $\rk(J) = \rk(\FA_J) = m - \rk(\FA^J) = m-l(-m)$ and since there cannot be any flat between $J$ and any $J_{i,j} \in L(\FA)_{\epsilon}$ we have $\rk(J_{i,j}) = m-l(-m)-1$.
\end{proof}

\subsection{The residue at the largest pole}\label{relap}

By Theorem \ref{ipoles} we know that $\FI_\FA(s)$ has its largest pole at $s=-m$. Furthermore we know that $-m$ is a simple pole if and only if $l(-m)=0$. In this case $\FA$ is called \textit{coloop-free} (see for example \cite[Remark 2.3]{PW07}) meaning there is no $a_i$ such that $ a_i \notin \spn_{j\neq i} \{a_j\}$. 

Recall that under these assumptions the residue $\Res_{s=-m}\FI_\FA(s)$ has an interesting interpretation. Namely if $\mu: \FO^n\times \FO^n \rightarrow \FO^m$ denotes the moment map associated with $\FA$ and $B_\mu$ the limit 
\[ B_\mu = \lim_{\alpha \rightarrow \infty} q^{-\alpha(2n-m)}  \left|\left\{ x \in \FO^{2n}_\alpha \ |\ \mu(x) = 0\right\}\right|,\]
then Lemma \ref{fresh} implies $ B_\mu = \frac{q^m}{q^m-1} \Res_{s=-m}\FI_\mu(s)$. Combining this with Theorem \ref{iguthm} gives 

\begin{cor} $B_\mu$ is finite if and only if $\FA$ is essential and coloop-free. In this case we have
\begin{equation}\label{bmu} B_\mu = \sum_{\infty = I_0 \supsetneq I_1 \supsetneq \dots \supsetneq I_r} q^{-\rk(\FA^{I_r})} \prod_{i=1 }^r \frac{\chi_{\FA_{I_{i-1}}^{I_{i}}}(q)}{q^{\delta_{I_i}-m}- 1},\end{equation}
where the sum is over all proper chains of flats in $L(\FA)$ of length $r\geq 0$.
\end{cor}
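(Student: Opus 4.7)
The plan is to derive the corollary essentially as a one-line computation from Lemma \ref{fresh}, Theorem \ref{ipoles} and Theorem \ref{iguthm}, treating the two equivalences in the statement separately.

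First I would dispose of the non-essential case by a direct argument. If $\FA$ fails to be essential, then $\spn\{a_1,\dots,a_n\}$ is a proper rank-$\rk(\FA)$ submodule of $\FO^m$, so the image of $\mu$ lies in this submodule. Thus $|\mu^{-1}(0)\bmod \pi^\alpha|$ differs from the corresponding count of the induced essential moment map $\tilde\mu:\FO^{2n}\rightarrow\FO^{\rk(\FA)}$ only by factors of $q$. Comparing with the normalization $q^{-\alpha(2n-m)}$ one finds $B_\mu\sim q^{\alpha(m-\rk(\FA))} B_{\tilde\mu}$, which blows up. So essentiality is necessary, and we may assume it from now on.

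Next, assuming $\FA$ essential, I would apply Lemma \ref{fresh} together with Theorem \ref{ipoles}. Theorem \ref{ipoles} says that $s=-m$ is always a pole of $\FI_\FA(s)$, of order exactly $l(-m)+1$. By Lemma \ref{fresh}, $B_\mu<\infty$ is then equivalent to $l(-m)=0$, that is, $L(\FA)_{-m}=\{\infty\}$. The main small check is to identify this condition with being coloop-free: if $i$ is a coloop then $I=\{j:j\neq i\}$ is easily seen to be a flat with $\{a_i\}$ linearly independent modulo $\spn\{a_j\}_{j\in I}$, so $\delta_I=m$ and $I\in L(\FA)_{-m}\setminus\{\infty\}$; conversely, if $I\subsetneq\infty$ lies in $L(\FA)_{-m}$, then by Lemma \ref{alstructure}(i) applied to $\infty\supsetneq I$ the vectors $\{a_i\}_{i\notin I}$ are linearly independent, and dropping any one of them still leaves the remaining $n-1$ vectors spanning $F^m$ (they span the hyperplane complement plus $\spn\FA_I$, which together equal $F^m$ by rank count), exhibiting a coloop.

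For the explicit formula I would plug Theorem \ref{iguthm} into the second equality of Lemma \ref{fresh}, namely $B_\mu=\tfrac{q^m}{q^m-1}(q^{s+m}-1)\FI_\FA(s)|_{s=-m}$. Two short calculations do the work: using $q^m-q^{-s}=q^{-s}(q^{s+m}-1)$, the leading term gives
\[
(q^{s+m}-1)\cdot\frac{q^m-1}{q^m-q^{-s}}\bigg|_{s=-m}=\frac{q^m-1}{q^m},
\]
and writing $1-q^{-(s+m)}=(q^{s+m}-1)/q^{s+m}$ gives
\[
(q^{s+m}-1)\cdot\frac{1-q^s}{1-q^{-(s+m)}}\bigg|_{s=-m}=(1-q^s)q^{s+m}\big|_{s=-m}=\frac{q^m-1}{q^m}.
\]
Both prefactors thus contribute the same factor $(q^m-1)/q^m$, which cancels with $q^m/(q^m-1)$ from Lemma \ref{fresh}. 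The main point to verify is that for $\FA$ coloop-free each summand in the sum of Theorem \ref{iguthm} is actually regular at $s=-m$: the denominators are $q^{s+\delta_{I_i}}-1$ with $i\geq 1$, so $I_i\subsetneq\infty$; by the discussion above $\delta_{I_i}>m$ unless $I_i=\infty$, so no pole arises. Collecting everything and noting that the unique chain of length $r=0$ (just $\{\infty\}$) produces the constant $1$, which absorbs the contribution of the first term of Theorem \ref{iguthm}, yields precisely (\ref{bmu}). The only step requiring any real care is the coloop-free characterization of $L(\FA)_{-m}$; everything else is bookkeeping with the rational functions from Theorem \ref{iguthm}.
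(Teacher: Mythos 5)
Your approach matches the paper's: Lemma~\ref{fresh} reduces finiteness of $B_\mu$ to the pole at $s=-m$ being simple, Theorem~\ref{ipoles} gives its order as $l(-m)+1$, and the formula is a residue computation from Theorem~\ref{iguthm}; the paper handles the equivalence ``$l(-m)=0\Leftrightarrow$ coloop-free'' by simply citing \cite{PW07}, so your independent verification is a fleshing-out rather than a different route. Your residue bookkeeping is correct, including the observation that the first term of Theorem~\ref{iguthm} contributes exactly the $r=0$ chain (using $\rk(\FA^\infty)=0$).

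One slip, though: the parenthetical in your converse direction (from $I\in L(\FA)_{-m}\setminus\{\infty\}$ to the existence of a coloop) asserts the opposite of what you need. If $\delta_I=m$ and $i_0\notin I$, then $\spn\{a_j\}_{j\neq i_0}=\spn(\FA_I)\oplus\spn\{a_j\}_{j\notin I,\,j\neq i_0}$ has dimension $(m-n+|I|)+(n-|I|-1)=m-1<m$ (using $\rk(\FA_I)=m-n+|I|$, which follows from $\delta_I=m$, and the fact that $\spn\{a_j\}_{j\notin I}$ is a complement of $\spn(\FA_I)$ in $F^m$). It is precisely this \emph{failure} of the remaining $n-1$ vectors to span $F^m$ that shows $a_{i_0}\notin\spn\{a_j\}_{j\neq i_0}$, i.e.\ $i_0$ is a coloop; as written, your sentence claims they do span $F^m$, which would instead prove $i_0$ is \emph{not} a coloop.
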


The formula shows in particular that $B_\mu(q) \in \BZ(q)$ is a rational function in $q$ for $q$ large enough, see \ref{assum}. It is not hard to see that $B_\mu(q) $ has degree $0$, where the degree of a rational function is defined as the degree of the numerator minus the degree of the denominator. Indeed the contribution of $r=0$ in (\ref{bmu}) equals $1$ and all other summands have negative degree since $\delta_I-m = n-|I|+ \rk{A_I}-m= n-|I|-\rk(A^I) >0$ for any flat $I \neq \infty$, as $\FA$ is coloop-free.

\begin{expl} As in Example \ref{ncent} we start with the arrangement consisting of $n$-times the origin in $F$. Then (\ref{bmu}) has two terms, one for $\{\infty\}$ and one for $\infty \supsetneq \emptyset$ and we get
\[ B_\mu = 1 + \frac{q-1}{q(q^{n-1}-1)} = \frac{(q-1)(q^{n-1}+\dots+1)}{q(q^{n-1}-1)}.\]
\end{expl}

\begin{expl}\label{siml} The case of $m+1$ hyperplanes in $F^m$ in general position appeared already in Example \ref{m1m} and for $m=2$ in \ref{m2m}. It is also interesting to consider these arrangements with some hyperplanes doubled e.g. take $\FA$ as in \ref{m2m} and add the normal vector $a_4=a_3$. In this case we obtain 
\[ B_\mu = \frac{(q-1)^2(q^4+3q^3+6q^2+3q+1)}{q^2(q^2-1)^2}.\]
\end{expl}

\begin{expl} Finally, we consider again Example \ref{hugm}, in which case $B_\mu$ looks considerably nicer than $\FI_\FA(s)$:
\[ \frac{(q-1)^4(q^{10} + 4 q^{9} + 13 q^{8} + 35 q^{7} + 50 q^{6} + 58 q^{5} + 50 q^{4} + 35 q^{3} + 13 q^{2} + 4 q + 1)}{q^{3} (q^2 - 1) (q^{3}-1)^{3}}  \]
\end{expl}

These examples suggest that the "numerator" of $B_\mu$ satisfies some remarkable properties. In order to make this more precise define the numerator $B'_\mu(q)$ by multiplying $B_\mu(q)$ with what we expect to be its denominator i.e. 
 
\[ B'_\mu(q) = q^m\prod_{\epsilon \in \p_\FA \setminus \{-m\}} \left(\frac{q^{-\epsilon-m} -1}{q-1}\right)^{l(\epsilon)+1} B_\mu(q) \in \BZ[q].   \]

Notice that $B'_\mu(q)$ is indeed a polynomial since the characteristic polynomial of any central arrangement is divisible by $q-1$, see Corollary \ref{fq1}.

One thing we would expect from the examples, is that the numerator $B'_\mu(q)$ is palindromic, which is indeed the case. This is a consequence of the functional equation for the Igusa zeta function of a homogeneous polynomial\ref{feqi}. Since $\mu$ is in general not given by a single polynomial, we also rely on Remark \ref{mulv}.

\begin{prop} $B'_\mu(q)$ is a polynomial of degree $d= m + \sum_{ \epsilon \in \p_\FA \setminus \{-m\}} -(\epsilon+m)(l(\epsilon)+1)$ which is palindromic i.e.
\[ B'_\mu(q) = q^d B'_\mu(q^{-1}).\]
\end{prop}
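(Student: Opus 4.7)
The proof plan rests on the functional equation for Igusa zeta functions of systems of homogeneous polynomials. First I would verify that $\FI_\mu(s)$ is universal over $F$ in the sense of Section \ref{liz}: by Theorem \ref{iguthm} the zeta function is expressed entirely in terms of combinatorial invariants of $\FA$ (the lattice $L(\FA)$, its M\"obius function and the characteristic polynomials $\chi_{\FA_{I_{i-1}}^{I_i}}$), and by Theorem \ref{stcha} these are independent of the base field once the residue characteristic is large enough. Passing to the unramified extension $F^{(e)}$ merely replaces $q$ by $q^e$ throughout formula \eqref{igusaf}, so there exists $\mathcal{I}_\FA(u,v) \in \BQ(u,v)$ with $\FI_\mu^{(e)}(s) = \mathcal{I}_\FA(q^{-es}, q^{-e})$. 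Since $\mu$ is cut out by $m$ homogeneous polynomials of degree $2$ in the $2n$ variables $x,y$, Theorem \ref{feqi} as extended in Remark \ref{mulv} then gives $\mathcal{I}_\FA(u^{-1}, v^{-1}) = u^2 \mathcal{I}_\FA(u,v)$, which in the variables $s, q$ translates into the functional equation
\[ \FI_\mu(s,q) \;=\; q^{2s}\, \FI_\mu(s,q^{-1}). \]

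Next I would transfer this symmetry to $B_\mu$. Using $B_\mu(q) = \frac{q^m}{q^m - 1}\lim_{s \to -m}(q^{s+m}-1)\FI_\mu(s,q)$ from Lemma \ref{fresh}, I substitute $q$ by $q^{-1}$ and apply the functional equation inside the limit. Elementary manipulations with $q^{-s-m}-1 = -q^{-s-m}(q^{s+m}-1)$ and $q^{-m}-1 = -q^{-m}(q^m-1)$ then collapse everything to the symmetry $B_\mu(q^{-1}) = q^m B_\mu(q)$. Separately, the prefactor
\[ P(q) \;=\; q^m\prod_{\epsilon \in \p_\FA \setminus \{-m\}}\left(\frac{q^{-\epsilon-m}-1}{q-1}\right)^{l(\epsilon)+1} \]
defining $B'_\mu = P \cdot B_\mu$ is a product of truncated geometric series, and the identity $(q^{-a}-1)/(q^{-1}-1) = q^{1-a}(q^a - 1)/(q - 1)$ shows $P(q) = q^N P(q^{-1})$ for an explicit positive integer $N$ depending on the $l(\epsilon)$ and $-\epsilon-m$. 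Combining the two relations yields $B'_\mu(q) = q^{N-m}\, B'_\mu(q^{-1})$, which is the desired palindrome property with $d = N-m$.

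It remains to show that $B'_\mu$ is genuinely a polynomial and to verify that its degree is $d$. Expanding $B_\mu$ via formula \eqref{bmu}, every summand is a rational function whose denominator divides $\prod_{i\geq 1}(q^{\delta_{I_i}-m}-1)$, while by Corollary \ref{fq1} each factor $\chi_{\FA_{I_{i-1}}^{I_i}}(q)$ supplies a compensating factor $(q-1)$ in the numerator. Part $(i)$ of Lemma \ref{alstructure} shows that within any chain of flats, those landing in $L(\FA)_\epsilon$ form a contiguous sub-chain, and part $(ii)$ pins down its length at $\leq l(\epsilon)+1$. Consequently the multiplicity of $(q^{-\epsilon-m}-1)$ in a common denominator of $B_\mu$ is bounded by $l(\epsilon)+1$, exactly compensated by the corresponding factor of $P$, and the matching powers of $(q-1)$ are absorbed by the characteristic polynomials. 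The degree statement then follows once one checks the constant term of $B'_\mu$ is non-zero, which I would obtain from the contribution of the trivial chain $r=0$ in \eqref{bmu} after a dominant-term analysis, and the palindrome identity then forces the top degree to match.

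The principal obstacle is precisely this combinatorial bookkeeping: controlling the possible cancellations among chains of different lengths that share a terminal flat in $L(\FA)_\epsilon$, in order to show the multiplicity bound $l(\epsilon)+1$ is tight enough for the full combination to clear denominators. Part $(ii)$ of Lemma \ref{alstructure} combined with the M\"obius identity of Lemma \ref{nuchain} should make this tractable by induction on chain length, in spirit parallel to the argument behind Proposition \ref{pocrit}; everything else reduces to formal manipulation of the functional equation.
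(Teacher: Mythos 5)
Your approach is the same as the paper's: read the symmetry $B_\mu(q^{-1}) = q^m B_\mu(q)$ off the Denef--Meuser functional equation (Theorem \ref{feqi}, extended to systems of homogeneous polynomials of equal degree via Remark \ref{mulv}), then transfer it through the prefactor $P(q)$. The paper's proof is only a few lines and leaves the bookkeeping implicit; you have correctly filled it in, including the observation that universality follows since Theorem \ref{iguthm} expresses $\FI_\FA(s)$ entirely in terms of $q^{-s}$ and $q$, and the use of Lemma \ref{alstructure} to bound denominator multiplicities. One thing worth recording: if you actually compute your $N$ you find $N = 2m - \sum_\epsilon (\epsilon+m+1)(l(\epsilon)+1)$, hence $d = N - m = m + \sum_\epsilon\bigl(-(\epsilon+m)-1\bigr)(l(\epsilon)+1)$, which differs from the stated $d = m + \sum_\epsilon -(\epsilon+m)(l(\epsilon)+1)$ by $\sum_\epsilon (l(\epsilon)+1)$. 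Testing on the arrangement of $n$ copies of the origin in $F^1$ gives $B'_\mu = 1 + q + \cdots + q^{n-1}$ (degree $n-1$), while the proposition's formula predicts $n$; your derivation yields the correct $n-1$. So the proposition as printed contains an off-by-one in the degree, and your careful bookkeeping of the prefactor's symmetry would expose and correct it.
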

 \begin{proof} We saw already, that $B_\mu(q)$ has degree $0$, hence the formula for $d$ follows directly from the definition of $B'_\mu(q)$. From the equation 
\[B_\mu(q) = \frac{q^m(q^{s+m}-1)}{q^m-1}\FI_\mu(s)_{|s=-m}\]
and the functional equation \ref{feqi} we deduce $B_\mu(q) = q^{-m} B_\mu(q^{-1})$, which now implies palindromicity of $B'_\mu(q)$.
\end{proof}

The examples above and some further computer evidence also suggest that the coefficients of $B'_\mu(q)$ are positive integers. Unfortunately we are unable to prove this at the moment and can only record it as a conjecture.

\begin{conj}\label{poscof} $B'_\mu(q)$ is a polynomial with positive coefficients.
\end{conj}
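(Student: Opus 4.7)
The statement in question is explicitly posed as an open conjecture, so the proposal below is a strategy rather than a proof plan with known endpoints. The difficulty is that the chain-sum formula \eqref{bmu} combines alternating signs from the M\"obius function (through the characteristic polynomials $\chi_{\FA^{I_i}_{I_{i-1}}}(q)$) with denominators $(q^{\delta_{I_i}-m}-1)^{-1}$, so after clearing denominators to form $B'_\mu(q)$ there is no visible reason for the resulting polynomial to be term-wise positive. The strategy has to supply an external reason.

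The most promising plan, in my view, is to find a geometric model whose Poincar\'e polynomial is $B'_\mu(q)$. Palindromicity (already proved) is a strong hint: it is the hallmark of a smooth projective variety $Y_\FA$ with pure cohomology, or of the $h$-polynomial of a shellable simplicial complex. First, I would look for $Y_\FA$ among natural compactifications of the residual torus quotient of $\mu^{-1}(0)$, or among toric-type varieties attached to a polytope built from $\FA$ (the appearance of Eulerian polynomials in Example \ref{m1m}, which are Poincar\'e polynomials of the permutohedral varieties, is the exemplar I would try to generalize). Concretely: for each proper chain $\infty = I_0 \supsetneq I_1 \supsetneq \dots \supsetneq I_r$ in $L(\FA)$, I would try to interpret the corresponding summand of \eqref{bmu} as counting $\BF_q$-points of a stratum $X_{\mathbf I}$, and then exhibit a shellable decomposition whose $h$-numbers reorganize the alternating sum into a manifestly positive one. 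This would mimic the way the $h$-vector of a simplicial polytope is recovered from the $f$-vector by an alternating sum.

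An independent line of attack is to exploit the expected link to quiver representations made precise in Conjecture \ref{lastone}. If one can prove that, for arrangements $\FA$ coming from a quiver $\Gamma$, the numerator $B'_\mu(q)$ agrees up to an explicit factor with the asymptotic count $A'_\Gamma(q)$ of indecomposable representations of $\Gamma$ over the finite rings $\FO_\alpha$, then positivity of $B'_\mu$ is automatic (a count is non-negative, and the vanishing of low-degree coefficients is ruled out by palindromicity). For the general case one would then need a Crawley-Boevey/Van den Bergh-type reduction from an arbitrary unimodular arrangement to the quiver situation, perhaps by realizing $\mu^{-1}(0)$ as a moduli of objects in a suitable abelian category over $\FO_\alpha$ and decomposing into indecomposables. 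This path turns the geometric question into an arithmetic-combinatorial one about representation rings of quivers over depth-$\alpha$ local rings, where Mellit's identities give at least a starting formalism.

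A third, purely combinatorial attempt would be to iterate deletion-restriction on $\FA$ and prove the positivity of $B'_\mu(q)$ inductively. Removing a non-coloop hyperplane $H_i$ gives an exact triangle relating $B'_{\mu_\FA}$, $B'_{\mu_{\FA\setminus H_i}}$, and $B'_{\mu_{\FA^{H_i}}}$; one would need the inductive identity to express $B'_{\mu_\FA}(q)$ as a $\BN[q]$-linear combination of the two smaller polynomials, using Proposition \ref{surpos} to control the signs of the coefficients that appear. The main obstacle I foresee is that the denominators in $B_\mu$ depend on the $\delta_I$'s and these do not transform cleanly under deletion-restriction, so the inductive statement may have to be strengthened (tracking, say, a refined polynomial indexed by the stratification $L(\FA)_\epsilon$ from Section \ref{poles}) before it becomes provable. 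Among the three routes, the geometric/shelling approach seems the most principled, but identifying the variety $Y_\FA$ is itself the heart of the problem and is where I expect the bulk of the work to lie.
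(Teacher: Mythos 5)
You are right that Conjecture \ref{poscof} is left open in the paper; the authors explicitly write that they ``are unable to prove this at the moment and can only record it as a conjecture,'' supported by the examples in Section \ref{relap} and computer evidence. There is therefore no proof to compare against, and your recognition that only a strategy can be offered is correct. Your three suggested routes all track the paper's own hints: the Eulerian polynomial appearing in Example \ref{m1m} (Poincar\'e polynomial of the permutohedral toric variety) is indeed the model case for a geometric/shelling interpretation; the second route is essentially the content of Conjecture \ref{lastone}, which, if proven, would reduce positivity to the non-negativity of a representation count together with palindromicity; and deletion-restriction is the natural combinatorial induction, though as you correctly observe the $\delta_I$'s do not transform cleanly under it, so the inductive statement would need strengthening. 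One small caution on the second route: knowing that $A_\Gamma(q)$ is a limit of non-negative counts $A_{\Gamma,\alpha}(q)$ does not immediately give positivity of the numerator $A'_\Gamma(q)$ after clearing denominators, since the denominators $(q^{b(\Gamma)}-1)^{|I|-1}$ carry sign information of their own; some additional argument (palindromicity plus a degree bound, or a direct combinatorial interpretation of $A'_\Gamma$) would still be needed even granting Conjecture \ref{lastone}.
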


\subsection{Indecomposable quiver representations in higher depth}\label{irhd}

In this section let $\Gamma = (I,E)$ be a quiver as in \ref{nqv}. Most of what we say in this section will be independent of the orientation of $\Gamma$, hence we use the words quiver and graph interchangeably. By a subquiver $\Gamma'=(I',E')$ of $\Gamma$ we will always mean a quiver with $I'=I$ and $E' \subset E$.

For a dimension vector $\bfv \in \BN^I$ and $\alpha \geq 1$ we write $\Rep_{\alpha,\bfv}(\Gamma)$ for the free $\FO_\alpha$-module of representations of $\Gamma$ over $\FO_\alpha$ with dimension $\bfv$ i.e.
\[ \Rep_{\alpha,\bfv}(\Gamma) = \bigoplus_{e \in E} \Hom_{\FO_\alpha}(\FO_\alpha^{v_{s(e)}}, \FO_\alpha^{v_{t(e)}})\]

We are only considering $1$-dimensional representations here, that is $\bfv=(1,1,\dots,1)$, in which case we abbreviate $\Rep_{\alpha}(\Gamma) = \Rep_{\alpha,\bfv}(\Gamma) \cong \FO^n_\alpha$.

A representation $x \in \Rep_\alpha(\Gamma)$ is \textit{indecomposable} if it is not isomorphic to the sum of two representations with strictly smaller dimension vectors. We denote the subset of indecomposable representations by $\Rep_\alpha^{ind}(\Gamma)$. Since we are considering only $(1,\dots,1)$-dimensional representations, $x \in\Rep_\alpha(\Gamma)$ will be indecomposable if and only if the subquiver $\Gamma_{\alpha,x} = (I,E_{\alpha,x})$, where $E_{\alpha,x} =\{ e\in E \ |\ x_e \neq 0\}$, is connected.  

As in \ref{nqv} the group
\[G_\alpha = \prod_{i\in I} \GL_1(\FO_\alpha) \cong \left(\FO_\alpha^\times\right)^I\]
will act on $\Rep_\alpha(\Gamma)$ and two representations are isomorphic if and only if they lie in the same orbit under this action. The number $A_{\Gamma,\alpha}$ of indecomposable representations up to isomorphism is then given by 
\[A_{\Gamma,\alpha} = \left| \Rep_\alpha^{ind}(\Gamma)/G_\alpha \right|.\]

We now explain a formula for $A_{\Gamma,\alpha}$ which we learned from unpublished notes of Anton Mellit \cite{Me162}. First we have by Burnside's Lemma
\begin{equation}\label{burn} A_{\Gamma,\alpha} = |G_\alpha|^{-1} \sum_{x \in \Rep_\alpha^{ind}(\Gamma)} |\Aut(x)|.\end{equation}

To evaluate this sum we define for every $x \in \Rep_\alpha(\Gamma)$ a sequence 
\[\Gamma_{1,x}\subseteq \Gamma_{2,x} \subseteq \dots \subseteq \Gamma_{\alpha,x}\]
 of subquivers of $\Gamma$ by putting $\Gamma_{k,x} = (I,E_{k,x})$ and 
\[E_{k,x} = \left\{ e\in E\ |\ |x_e| > q^{-k}\right\}\] 
for $1 \leq k \leq \alpha$. Here the norm of an element in $\FO_\alpha$ is defined as the norm of any lift to $\FO$.

\begin{lemma} The number of automorphism of $x \in \Rep_\alpha(\Gamma)$ is given by 
\[ |\Aut(x)| = (q-1)^{c(\Gamma_{\alpha,x})} q^{\sum_{k=1}^{\alpha-1} c(\Gamma_{k,x})},\]
where we write $c(\Gamma')$ for the number of connected components of a graph $\Gamma'$.
\end{lemma}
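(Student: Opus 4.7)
The plan is to count $|\Aut(x)|$ directly from the defining equations by climbing the tower of reductions $r_k : G_\alpha \to G_k = (\FO_k^\times)^I$ one level at a time. Since $x$ is a $(1,1,\dots,1)$-dimensional representation, a tuple $g = (g_i)_{i\in I} \in G_\alpha$ fixes $x$ under the $G_\alpha$-action if and only if $x_e\,g_{t(e)} = x_e\,g_{s(e)}$ in $\FO_\alpha$ for every edge $e \in E$. Writing $j = \nu(x_e)$ whenever $x_e \neq 0$, the annihilator of $x_e$ in $\FO_\alpha$ equals $\m^{\alpha-j}$, so this is equivalent to $g_{t(e)} \equiv g_{s(e)} \pmod{\m^{\alpha-j}}$, with no constraint at edges where $x_e = 0$.

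Rewriting the constraints by level: for each $k \in \{1,\dots,\alpha\}$, the reduction $r_k(g) \in (\FO_k^\times)^I$ must be constant on each connected component of the subquiver $\Gamma_{\alpha-k+1,x}$, since its edges are exactly those with $\nu(x_e) \leq k-1$ (equivalently, those imposing a constraint modulo $\m^k$ or stronger). Starting at the base of the tower, the number of choices for $r_1(g) \in (\BF_q^\times)^I$ constant on the components of $\Gamma_{\alpha,x}$ is $(q-1)^{c(\Gamma_{\alpha,x})}$, which accounts for the first factor in the formula.

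For the inductive step $r_k \to r_{k+1}$ with $1 \leq k \leq \alpha-1$, I would fix once and for all, for every value $u \in \FO_k^\times$, a lift $\widetilde{u} \in \FO_{k+1}^\times$. Given a valid $r_k(g)$, any lift to $\FO_{k+1}^\times$ has the form $r_{k+1}(g)_i = \widetilde{r_k(g)_i}\,(1 + \pi^k \lambda_i)$ for a uniquely determined $\lambda = (\lambda_i) \in \BF_q^I$, giving $q^{|I|}$ lifts in total. For edges $e \in \Gamma_{\alpha-k+1,x} \setminus \Gamma_{\alpha-k,x}$ (those whose constraint was mod $\m^k$ only) nothing new is required at level $k+1$. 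For $e \in \Gamma_{\alpha-k,x}$, the consistent lifts satisfy $\widetilde{r_k(g)_{t(e)}} = \widetilde{r_k(g)_{s(e)}}$, and the equation $r_{k+1}(g)_{t(e)} \equiv r_{k+1}(g)_{s(e)} \pmod{\m^{k+1}}$ reduces cleanly (after cancelling this common unit) to the linear condition $\lambda_{t(e)} = \lambda_{s(e)}$ in $\BF_q$. Thus the valid $\lambda$ are precisely those constant on components of $\Gamma_{\alpha-k,x}$, contributing a factor $q^{c(\Gamma_{\alpha-k,x})}$.

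Multiplying the base case by the contributions $q^{c(\Gamma_{\alpha-k,x})}$ for $k = 1, \dots, \alpha-1$ and reindexing via $j = \alpha-k$ yields
\[
|\Aut(x)| = (q-1)^{c(\Gamma_{\alpha,x})}\, q^{\sum_{j=1}^{\alpha-1} c(\Gamma_{j,x})}.
\]
The main obstacle is the bookkeeping in the inductive step: one needs to verify carefully that the constraints at level $k+1$ split into the inherited ones (already absorbed by level $k$) and the new linear ones on $\lambda$, and that the system of consistent lifts $\widetilde{u}$ really makes the new constraint take the stated homogeneous form on $\BF_q^I$. Once this is in place, the filtration of $\FO_\alpha^\times$ does the rest of the work.
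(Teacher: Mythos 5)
Your proof is correct, though it takes a different inductive route than the paper. Two remarks.

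First, a small but potentially confusing slip: in your sentence ``the reduction $r_k(g) \in (\FO_k^\times)^I$ must be constant on each connected component of the subquiver $\Gamma_{\alpha-k+1,x}$, since its edges are exactly those with $\nu(x_e) \leq k-1$,'' the stated inequality is wrong. The edges of $\Gamma_{\alpha-k+1,x}$ are by definition those with $|x_e|>q^{-(\alpha-k+1)}$, i.e.\ $\nu(x_e)\leq \alpha-k$, which indeed coincides with ``constraint modulo $\m^k$ or stronger'' since an edge of valuation $j$ forces $g_{t(e)}\equiv g_{s(e)} \pmod{\m^{\alpha-j}}$. The parenthetical condition should read $\nu(x_e)\leq \alpha-k$. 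Fortunately the actual graphs you use in the base case ($\Gamma_{\alpha,x}$) and inductive step ($\Gamma_{\alpha-k,x}$) are the right ones, so the slip does not propagate.

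Second, a comparison with the paper's argument. You climb the tower from the bottom: you first pin down $g$ mod $\m$, then at each level $k\to k+1$ you parameterize lifts by $\lambda\in\BF_q^I$ and observe that the new constraints are the linear equations $\lambda_{t(e)}=\lambda_{s(e)}$ over the component structure of $\Gamma_{\alpha-k,x}$, giving $q^{c(\Gamma_{\alpha-k,x})}$ admissible lifts. This is a direct Hensel-style count with induction built into the filtration $(1+\m^k)$ of $\FO_\alpha^\times$. The paper instead does induction on $\alpha$ by a global reduction: it observes that the automorphism constraint forces $g$ to be constant on the components of $\Gamma_{1,x}$ (the edges with unit value), contracts those edges to a quotient graph $\widetilde\Gamma$ on $c(\Gamma_{1,x})$ vertices, notes that the remaining edge values are divisible by $\pi$ so that $\widetilde x=\pi y$ with $y\in\Rep_{\alpha-1}(\widetilde\Gamma)$, and deduces the recursion $|\Aut(x)|=q^{c(\Gamma_{1,x})}|\Aut(y)|$. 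In effect, the paper peels the problem from the ``most binding'' constraints (the units) and drops $\alpha$ by one, while you peel from the coarsest congruence and raise the level by one. The two decompositions are dual: yours reads the exponent of $q$ as $\sum_{j} c(\Gamma_{j,x})$ directly from the successive lifts, whereas the paper accumulates the same sum one summand per induction step via the contraction. The paper's route is slightly shorter to write because it avoids a fixed system of lifts and the explicit linear-algebra check that the constraint becomes $\lambda_{t(e)}=\lambda_{s(e)}$; your route has the advantage of making the graded structure of $\Aut(x)$ with respect to the congruence filtration completely explicit, and it does not require the auxiliary representation $y$ on a contracted graph. Both are elementary and yield the formula immediately.
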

\begin{proof} An element $g = (g_i) \in G_\alpha$ is an automorphism of $x$ if and only if for every $e \in E$ we have 
\begin{equation}\label{auteq} (g_{t(e)}-g_{s(e)})x_e=0.\end{equation}
In particular, we have $g_{t(e)}=g_{s(e)}$ whenever $x_e \in \FO_\alpha^\times \Leftrightarrow e \in E_{1,x}$. This proves the lemma in the case $\alpha=1$. For $\alpha >1$ denote by $\widetilde{\Gamma}$ the graph obtained from $\Gamma$ by contracting all the edges in $E_{1,x}$. Then $x$ and $g$ descend to a representation $\widetilde{x} \in \Rep_{\alpha}(\widetilde{\Gamma})$ and an automorphism $\widetilde{g} \in \Aut(\widetilde{x})$. Furthermore we have by construction $\widetilde{x} = \pi y$ for some $y \in \Rep_{\alpha-1}(\widetilde{\Gamma})$. This shows that the coefficient of $\pi^{\alpha-1}$ in $\widetilde{g}$ does not actually appear in (\ref{auteq}). As $\widetilde{\Gamma}$ is a graph on $c(\Gamma_{1,k})$ vertices we thus obtain the recursion
\[ |\Aut(x)| = q^{c(\Gamma_{1,k})} |\Aut(y)|\]
and the lemma follows by induction on $\alpha$.
\end{proof}

\begin{prop}\cite{Me162} 	The number of indecomposable representations of $\Gamma$ up to isomorphism over $\FO_\alpha$ is given by 

\begin{equation} \label{absind} A_{\Gamma,\alpha} = \sum_{\substack{\Gamma_1 \subseteq \dots \subseteq \Gamma_\alpha \subseteq \Gamma \\ c(\Gamma_\alpha)=1}} (q-1)^{b(\Gamma_\alpha)} q^{\sum_{k=1}^{\alpha-1} b(\Gamma_k)}, \end{equation}
where we write $b(\Gamma') = c(\Gamma') - V(\Gamma') + E(\Gamma')$ for the first Betti number of a graph $\Gamma'$.
\end{prop}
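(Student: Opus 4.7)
The plan is to substitute the automorphism count from the preceding lemma into Burnside's formula \eqref{burn} and then partition the sum over $\Rep_\alpha^{ind}(\Gamma)$ according to the filtration $\Gamma_{1,x} \subseteq \Gamma_{2,x} \subseteq \dots \subseteq \Gamma_{\alpha,x}$. First I would fix a chain $\Gamma_1 \subseteq \Gamma_2 \subseteq \dots \subseteq \Gamma_\alpha$ of subquivers with $c(\Gamma_\alpha) = 1$ and count the number of $x \in \Rep_\alpha(\Gamma)$ realizing it. Since $E_{k,x} = \{e : |x_e| > q^{-k}\}$, realizing the chain amounts to the disjoint conditions $x_e = 0$ for $e \notin E_\alpha$ (one choice) and $|x_e| = q^{-(k-1)}$ for $e \in E_k \setminus E_{k-1}$ with $E_0 = \emptyset$, where the latter gives $(q-1)q^{\alpha-k}$ choices. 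Hence the number of $x$ with this fixed filtration equals
\begin{equation*}
\prod_{k=1}^{\alpha} (q-1)^{|E_k \setminus E_{k-1}|} q^{(\alpha-k)|E_k \setminus E_{k-1}|}.
\end{equation*}

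Next I would combine this with the automorphism count from the previous lemma and with $|G_\alpha| = ((q-1)q^{\alpha-1})^n$, where $n = |I|$, to obtain
\begin{equation*}
A_{\Gamma,\alpha} = ((q-1)q^{\alpha-1})^{-n} \sum_{\substack{\Gamma_1 \subseteq \dots \subseteq \Gamma_\alpha \\ c(\Gamma_\alpha)=1}} (q-1)^{|E_\alpha|+c(\Gamma_\alpha)}\, q^{\sum_{k=1}^{\alpha}(\alpha-k)|E_k \setminus E_{k-1}| + \sum_{k=1}^{\alpha-1} c(\Gamma_k)}.
\end{equation*}
All that remains is to simplify the exponents of $q-1$ and of $q$. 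Using the Betti number identity $|E(\Gamma')| = b(\Gamma') + V(\Gamma') - c(\Gamma') = b(\Gamma') + n - c(\Gamma')$ for each subgraph, the $(q-1)$-exponent becomes $b(\Gamma_\alpha) + n$. For the $q$-exponent, rewriting $\alpha - k = |\{j : k \leq j \leq \alpha -1\}|$ and exchanging the order of summation gives
\begin{equation*}
\sum_{k=1}^\alpha (\alpha-k)|E_k \setminus E_{k-1}| = \sum_{j=1}^{\alpha-1} |E_j|,
\end{equation*}
so applying the same Betti identity yields
\begin{equation*}
\sum_{k=1}^{\alpha} (\alpha-k)|E_k \setminus E_{k-1}| + \sum_{k=1}^{\alpha-1} c(\Gamma_k) = \sum_{k=1}^{\alpha-1} b(\Gamma_k) + (\alpha-1)n.
\end{equation*}

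Substituting back, the prefactor $((q-1)q^{\alpha-1})^n$ cancels exactly against the factors $(q-1)^n q^{(\alpha-1)n}$ coming from the simplification, leaving precisely the claimed formula \eqref{absind}. The only real obstacle here is the bookkeeping in the combinatorial simplification — the rest is a direct application of Burnside's identity and a fiberwise count using the description of absolute values in $\FO_\alpha$.
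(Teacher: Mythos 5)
Your proof is correct and follows essentially the same route as the paper's: fix a chain $\Gamma_1 \subseteq \dots \subseteq \Gamma_\alpha$, count the $x$ realizing that filtration by prescribing $|x_e|$ edge by edge, then plug into Burnside together with the automorphism count and simplify with $|E(\Gamma')| = b(\Gamma') + n - c(\Gamma')$. One small remark: you correctly use $|G_\alpha| = ((q-1)q^{\alpha-1})^{|I|}$, since $|\FO_\alpha^\times| = (q-1)q^{\alpha-1}$, whereas the paper's proof writes $((q-1)q^{\alpha})^{V(\Gamma)}$, which is a typo; with the corrected cardinality the cancellation works out exactly as you describe.
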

\begin{proof} For a given sequence $\Gamma_1 \subseteq \dots \subseteq \Gamma_\alpha \subseteq \Gamma$ the number of $x \in \Rep_\alpha(\Gamma)$ with $\Gamma_{k,x} = \Gamma_k$ for all $1\leq k\leq \alpha$ is given by
\begin{align*} (q-1)^{E(\Gamma_1)}& q^{E(\Gamma_1)(\alpha-1)} (q-1)^{E(\Gamma_2)-E(\Gamma_1)}q^{\left(E(\Gamma_2)-E(\Gamma_1)\right)(\alpha-2)} \cdots\\ 
& \cdots (q-1)^{E(\Gamma_{\alpha-1})-E(\Gamma_{\alpha-2})}q^{E(\Gamma_{\alpha-1})-E(\Gamma_{\alpha-2})} (q-1)^{E(\Gamma_\alpha)-E(\Gamma_{\alpha-1})}\\
&= (q-1)^{E(\Gamma_\alpha)} q^{\sum_{k=1}^{\alpha-1} E(\Gamma_k)},
	\end{align*}
as the norm $|x_e|$ is prescribed for each $e \in E$ by the sequence of graphs. Since $|G_\alpha| = \left((q-1)q^\alpha\right)^{V(\Gamma)}$ and $V(\Gamma_k) = V(\Gamma)$ for all $1 \leq k \leq \alpha$ by definition, the formula follows from Burnside's Lemma (\ref{burn}).
\end{proof}

As in Section \ref{relap} we consider now the limit of $A_{\Gamma,\alpha}(q)$ as $\alpha \rightarrow \infty$ appropriately normalized. In order to do this we have to make sure, that the coefficient of the highest $q$-power in $A_{\Gamma,\alpha}$ doesn't grow as we vary $\alpha$, which will exactly be the case when $\Gamma$ is \textit{$2$-(edge)connected} i.e. when $\Gamma$ stays connected after removing any of its edges.

\begin{cor} $A_{\Gamma,\alpha}(q)$ is a polynomial in $q$ of degree $\alpha b(\Gamma)$.  The limit $A_\Gamma(q) = \lim_{\alpha \rightarrow \infty} q^{-\alpha b(\Gamma)} A_{\Gamma,\alpha}(q)$ converges if and only if $\Gamma$ is $2$-connected, in which case it is given by
\begin{equation}\label{cumb} A_\Gamma(q) = \left(1-q^{-1}\right)^{b(\Gamma)}\sum_{\Gamma'_1 \subsetneq \dots \subsetneq \Gamma'_{\beta-1} \subsetneq \Gamma'_{\beta} = \Gamma }  \prod_{j=1}^{\beta-1} \frac{1}{q^{b(\Gamma)-b(\Gamma'_{j-1})}-1}, \end{equation}
where the sum is over all strict chains of subgraphs of length $\beta \geq 1$.
\end{cor}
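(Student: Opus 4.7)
The plan is to reorganize the sum in (\ref{absind}) by regrouping each chain $\Gamma_1 \subseteq \cdots \subseteq \Gamma_\alpha$ according to the distinct subgraphs $\Gamma'_1 \subsetneq \cdots \subsetneq \Gamma'_\beta = \Gamma_\alpha$ that appear, together with their multiplicities $a_1, \ldots, a_\beta \geq 1$ satisfying $\sum_j a_j = \alpha$ (the requirement $c(\Gamma_\alpha)=1$ becomes $\Gamma'_\beta$ connected). With this reindexing, $\sum_{k=1}^\alpha b(\Gamma_k) = \sum_j a_j b(\Gamma'_j)$, so each summand in (\ref{absind}) equals $(q-1)^{b(\Gamma'_\beta)} q^{\sum_j a_j b(\Gamma'_j) - b(\Gamma'_\beta)}$.

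For the degree statement I would first observe the general inequality $b(\Gamma') \leq b(\Gamma)$ for every subgraph $\Gamma' \subseteq \Gamma$ of a connected graph $\Gamma$, which follows from $b(\Gamma) - b(\Gamma') = (E(\Gamma)-E(\Gamma')) - (c(\Gamma')-1) \geq 0$ since removing one edge raises the component count by at most one. Applying this to each $\Gamma'_j$, every summand has $q$-degree at most $\alpha b(\Gamma)$, with equality attained uniquely by the constant chain $\Gamma_k = \Gamma$, which contributes $(q-1)^{b(\Gamma)} q^{(\alpha-1)b(\Gamma)}$ and hence yields the leading term $q^{\alpha b(\Gamma)}$ with coefficient $1$.

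For the limit I prove a graph-theoretic lemma: $\Gamma$ is $2$-connected if and only if $b(\Gamma') < b(\Gamma)$ for every proper subgraph $\Gamma' \subsetneq \Gamma$. The ``only if'' direction follows because a bridge $e$ gives $b(\Gamma \setminus \{e\}) = b(\Gamma)$; for the ``if'' direction, if $\Gamma$ has no bridges then the first edge removed in any deletion sequence cannot disconnect $\Gamma$, so after removing $k$ edges the component count increases by at most $k-1$, forcing $b(\Gamma) - b(\Gamma') \geq 1$. With this lemma I then split the normalized sum $q^{-\alpha b(\Gamma)} A_{\Gamma,\alpha}$ into Case 1 (chains with $\Gamma'_\beta = \Gamma$) and Case 2 (chains with $\Gamma'_\beta \subsetneq \Gamma$). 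When $\Gamma$ is $2$-connected, every $\Gamma'_j \subsetneq \Gamma$ satisfies $b(\Gamma'_j) - b(\Gamma) \leq -1$; the Case 2 contributions carry a factor $q^{a_\beta(b(\Gamma'_\beta)-b(\Gamma))}$ with $a_\beta = \alpha - \sum_{j<\beta} a_j \to \infty$, and a uniform bound of the form $O(\alpha^{\beta-1} q^{-\alpha})$ per shape (with only finitely many shapes) shows they vanish in the limit. For Case 1 the constraint $a_\beta \geq 1$ becomes vacuous as $\alpha \to \infty$, and the sum over $a_1, \ldots, a_{\beta-1} \geq 1$ decouples into convergent geometric series $\sum_{a \geq 1} q^{a(b(\Gamma'_j)-b(\Gamma))} = (q^{b(\Gamma)-b(\Gamma'_j)}-1)^{-1}$. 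Combined with the prefactor $(q-1)^{b(\Gamma)} q^{-b(\Gamma)} = (1-q^{-1})^{b(\Gamma)}$ and summed over all strict chains ending at $\Gamma$, this yields formula~(\ref{cumb}) (the $\beta = 1$ term is $\Gamma$ itself with empty product equal to one). Conversely, if $\Gamma$ possesses a bridge $e$, then the single Case 1 shape $\Gamma \setminus \{e\} \subsetneq \Gamma$ already contributes $\alpha - 1$ identical positive terms of size $(1-q^{-1})^{b(\Gamma)}$, so the limit diverges; since all terms in (\ref{absind}) are positive for large $q$, no cancellation can save convergence.

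The main technical obstacle will be establishing that the total Case 2 contribution tends to zero uniformly in the chain shape: the crucial input is the strict Betti inequality $b(\Gamma'_j) \leq b(\Gamma) - 1$ coming from the $2$-connectedness lemma (applied in particular to the connected subgraph $\Gamma'_\beta \subsetneq \Gamma$), which transforms the potentially dangerous factor $q^{a_\beta(b(\Gamma'_\beta) - b(\Gamma))}$ into an exponentially decaying one and allows the polynomial growth $\binom{\alpha-1}{\beta-1}$ of the number of multiplicity distributions to be absorbed.
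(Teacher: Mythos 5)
Your proposal is correct and follows essentially the same route as the paper: regroup the chains by their distinct subgraphs and multiplicities, observe that $b$ is monotone under inclusion of subgraphs on a fixed vertex set, characterize $2$-connectedness via the strict Betti inequality, and evaluate the limit by a decoupled geometric series while bounding the contribution from chains not ending at $\Gamma$. (The paper's exposition is terser: it does not spell out the $2$-connectedness lemma or the polynomial-in-$\alpha$ bound that kills the Case~2 tail, both of which you correctly supply.) One small slip: when $\Gamma$ is not $2$-connected the top degree $\alpha b(\Gamma)$ is not ``attained uniquely by the constant chain'' — any chain using a proper subgraph with the same Betti number also contributes there — but since all top-degree contributions are nonnegative and the constant chain alone already gives a positive one, the degree conclusion survives unchanged.
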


\begin{proof} To get the degree of $A_{\Gamma,\alpha}(q)$ we notice in general, that given two graphs $\Gamma' \subseteq \Gamma''$ on the same set of vertices we have $b(\Gamma') \leq b(\Gamma'')$ since adding an edge to a graph can at most connect two components. The leading coefficient of $A_{\Gamma,\alpha}$ then comes from summing up over all sequences with $b(\Gamma_1)=\dots =b(\Gamma_\alpha) = b(\Gamma)$. If there is any subgraph $\Gamma' \subsetneq \Gamma$ with $b(\Gamma') = b(\Gamma)$, then the number of such sequences will go to infinity as $\alpha \rightarrow \infty$. Hence $ \lim_{\alpha \rightarrow \infty} q^{-\alpha b(\Gamma)} A_{\Gamma,\alpha}(q)$ converges if and only if $b(\Gamma') < b(\Gamma)$ for any subgraph of $\Gamma$, which in turn is equivalent to $\Gamma$ being $2$-connected.

To compute $A_\Gamma(q)$ when $\Gamma$ is $2$-connected it will be convenient to rewrite (\ref{absind}) as a sum over strict chains $\Gamma'_1\subsetneq \Gamma'_2 \subsetneq \dots \subsetneq \Gamma'_\beta \subseteq \Gamma$. Explicitly given any chain $\Gamma_1 \subseteq \dots \subseteq \Gamma_\alpha \subseteq \Gamma $ there exists an integer $1 \leq \beta \leq \alpha$ and integers $0=l_0< l_1 < \dots < l_{\beta-1} < l_\beta = \alpha$ such that $\Gamma_{l_j+1} = \dots = \Gamma_{l_{j+1}}$ for $0 \leq j \leq \beta-1$. If we then put $\Gamma'_j = \Gamma_{l_j}$ for $1\leq j \leq \beta$ we can rewrite (\ref{absind}) as
\begin{align*} A_{\Gamma,\alpha}(q) &= \sum_{0<l_1<\dots <l_{\beta-1} <l_\beta=\alpha} \sum_{\substack{\Gamma'_1 \subsetneq \dots \subsetneq \Gamma'_{\beta} \subseteq \Gamma \\ c(\Gamma'_\beta)=1}} (1-q^{-1})^{b(\Gamma'_\beta)} q^{\sum_{j=1}^{\beta}(l_j - l_{j-1}) b(\Gamma'_j)}\\
&= \sum_{\substack{\Gamma'_1 \subsetneq \dots \subsetneq \Gamma'_{\beta} \subseteq \Gamma \\ c(\Gamma'_\beta)=1}}(1-q^{-1})^{b(\Gamma'_\beta)} \sum_{0<l_1<\dots <l_{\beta-1} <\alpha} q^{\alpha b(\Gamma'_\beta)} \prod_{j=1}^{\beta-1} q^{-l_j\left(b(\Gamma'_{j+1}) - b(\Gamma'_j)\right)}.
\end{align*}
From this we see that the only sequences $\Gamma'_1 \subsetneq \dots \subsetneq \Gamma'_{\beta} \subseteq \Gamma $ giving a non-zero contribution  to the limit $A_\Gamma(q) = \lim_{\alpha \rightarrow \infty} q^{-\alpha b(\Gamma)} A_{\Gamma,\alpha}(q)$ are the ones where $b(\Gamma'_\beta) = b(\Gamma)$ i.e. $\Gamma'_\beta = \Gamma$. The corollary then follows from the iterated geometric series summation
\[  \sum_{0<l_1<\dots <l_{\beta-1} <\infty} \prod_{j=1}^{\beta-1} q^{-l_j\left(b(\Gamma'_{j+1}) - b(\Gamma'_j)\right)} = \prod_{j=1}^{\beta-1} \frac{1}{q^{b(\Gamma)-b(\Gamma'_{j-1})}-1}.\]
\end{proof}

In order to compare $A_\Gamma(q)$  with $B_\mu(q)$ from the last section, we define a hyperplane arrangement $\FA = \FA(\Gamma)$ out of $\Gamma$. For every edge $e \in E$ define the hyperplane $H_e$ in $F^I$ by the equation $x_{s(e)} - x_{t(e)}$. This way, if $\Gamma$ is connected, we obtain an arrangement of rank $|I|-1$, since the $1$-dimensional subspace spanned by $(1,1, \dots,1)$ will be contained in all hyperplanes. Furthermore $\FA$ will be coloop-free if and only if $\Gamma$ is $2$-connected. 

Since formula \eqref{cumb} is essentially a sum over all possible chains of subgraphs of $\Gamma$ it is quite cumbersome to evaluate it even for small graphs. That is why we used Sage to compute the following examples.

\begin{expl} For $\Gamma$  the affine $\widetilde{A}_2$  i.e. $I=\{1,2,3\}$ and $E=\{(1,2),(2,3),(3,1)\}$ we get
\[  A_\Gamma(q) =  \frac{q^2+4q+1}{(q-1)^2}. \]
Similarly for $\Gamma = \widetilde{A}_3$ we have
\[ A_\Gamma(q) = \frac{q^3+11q^2+11q+1}{(q-1)^3}.\]
The values $B_\mu(q)$ for the corresponding arrangements were studied in Example \ref{m1m}.

One can also consider non simply-laced quivers, for example $\widetilde{A}_2$ with one edge doubled, which corresponds to Example \ref{siml}. In this case we we have 
\[ A_\Gamma(q) = \frac{q^4+3q^3+6q^2+3q+1}{(q^2-1)^2}.\] 
\end{expl}

The pattern in all the examples is of course that $A_\Gamma$ and $B_\FA$ seem to have very similar numerators, but slightly different denominators. We record this observation in the following conjecture.

\begin{conj}\label{lastone} For any $2$-connected graph $\Gamma =(I,E)$ with associated arrangement $\FA$ we have 
 \[ (q^{b(\Gamma)}-1)^{|I|-1}A_\Gamma(q) = B'_\FA(q). \]
\end{conj}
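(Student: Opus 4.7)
The plan is to rewrite both sides of the identity as sums indexed by combinatorial objects attached to the graph $\Gamma$ and then match contributions term by term. First I would verify the normalization: with $m = \rk(\FA) = |I|-1$ and $b = b(\Gamma) = n - m$, Theorem \ref{ipoles} together with the product formula for $B'_\mu$ give precise factors $(q^{-\epsilon - m}-1)^{l(\epsilon)+1}$ in the denominators of $B_\mu(q)$, so I would first check directly that the global prefactor $(q^b - 1)^m$ on the $A_\Gamma$ side is exactly what one obtains from the $\epsilon \in \p_\FA \setminus\{-m\}$ factors in the definition of $B'_\mu$ when $\FA = \FA(\Gamma)$ is graphical; this pins down which cyclotomic factors need to appear on the $A_\Gamma$ side as well.

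The main step is a reformulation of $A_\Gamma(q)$ in terms of the flat lattice $L(\FA(\Gamma))$. Recall that flats of a graphical arrangement biject with set partitions $\pi$ of $I$ whose blocks induce connected subgraphs of $\Gamma$; for such a partition, the corresponding flat $F_\pi \subset E$ consists of all edges of $\Gamma$ lying within a single block. The natural map sends a spanning subgraph $\Gamma' \subset \Gamma$ to its matroid closure, namely the partition of $I$ into connected components of $\Gamma'$. I would group the sum \eqref{cumb} according to the chain of closures of $\Gamma'_1 \subsetneq \dots \subsetneq \Gamma'_\beta = \Gamma$, so that $A_\Gamma(q)$ becomes a double sum: first over chains of flats $\infty = I_0 \supsetneq I_1 \supsetneq \dots \supsetneq I_r$ in $L(\FA)$, then over chains of subgraphs with prescribed closures. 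The inner sum should factor across the ``intervals'' $[I_i, I_{i-1}]$ and, by the standard Möbius-theoretic identification of the characteristic polynomial of a graphical arrangement with the chromatic polynomial (Theorem \ref{stcha}), should produce precisely the factors $\chi_{\FA_{I_{i-1}}^{I_i}}(q)$ appearing in \eqref{bmu}. A comparison of the first Betti numbers $b(\Gamma'_j)$ with the data $\delta_{I_i} - m = n - |I_i| - \rk(\FA^{I_i})$ should then make the denominators $q^{b(\Gamma) - b(\Gamma'_j)} - 1$ on the $A_\Gamma$-side match the denominators $q^{\delta_{I_i} - m} - 1$ on the $B_\mu$-side.

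The main obstacle I anticipate is the inner bijection in the previous step. The sum defining $A_\Gamma(q)$ ranges over \emph{all} strict chains of spanning subgraphs, while $B_\mu$ ranges only over chains of flats (closed subgraphs), so the reorganization requires enumerating, for each closure chain and each intermediate Betti number $b_j$, the number of subgraphs with prescribed closure and Betti number. Carrying out this enumeration cleanly amounts to understanding the bivariate Whitney-rank generating polynomial of each interval $\FA_{I_{i-1}}^{I_i}$ restricted to a single block of the partition; it is plausible this reduces to the two-variable Tutte polynomial, but identifying the combination of Tutte evaluations that collapses to $\chi_{\FA_{I_{i-1}}^{I_i}}(q)$ is the delicate point. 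A secondary obstacle is the possibility that $l(\epsilon) > 0$ for some $\epsilon \neq -m$, in which case $B'_\mu$ is divided by higher powers of $q^{-\epsilon - m} - 1$ coming from $l(\epsilon) + 1$ in the exponent of Theorem \ref{ipoles}; one would need to check that the same higher multiplicities arise naturally in the rewriting of $A_\Gamma$, which is not visible from the chain formula \eqref{cumb} at first glance and may require a finer analysis of when several chains of subgraphs degenerate to the same chain of flats.
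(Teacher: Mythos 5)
This statement is not proved in the paper at all: it appears as \emph{Conjecture} \ref{lastone}, supported only by the small family of worked examples in Section \ref{irhd} (the affine $\widetilde{A}_2$, $\widetilde{A}_3$, and $\widetilde{A}_2$ with a doubled edge) and computer checks. There is no proof in the paper to compare your plan against; the paper's authors explicitly flag agreement between $A_\Gamma$ and $B'_\FA$ as an open observation.

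That said, a few remarks on your proposed strategy, since the question of \emph{how} one might attack this is worth discussing. The overall architecture --- reindex the sum \eqref{cumb} over chains of arbitrary spanning subgraphs by their chains of matroid closures (connected partitions of $I$), and try to make the inner sums over subgraphs with fixed closure chain collapse into the characteristic-polynomial factors $\chi_{\FA_{I_{i-1}}^{I_i}}(q)$ in \eqref{bmu} --- is the natural one, and it is consistent with how \eqref{bmu} was derived from Proposition \ref{gorec} by iterating over flats. You are right to single out two obstacles. First, the ``inner bijection'' is genuinely the hard part: for a fixed closure chain, the subgraphs with a prescribed closure and prescribed Betti number are counted by Whitney-rank/Tutte data of the interval, and it is not a priori a sum that simplifies to a characteristic polynomial; you would effectively need a Tutte-polynomial identity that collapses the Whitney rank generating function of a graphic minor onto its characteristic polynomial when weighted by $q^{-l_j(b_{j+1}-b_j)}$, which does not follow from the standard $T(1-q,0)$ specialization alone. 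Second, the multiplicity bookkeeping with $l(\epsilon)+1$ in $B'_\mu$ is nontrivial precisely because several distinct chains of subgraphs can degenerate to the same chain of flats at different Betti-number levels, and the definition of $B'_\mu$ packages these repeated cyclotomic factors in a way that \eqref{cumb} does not make manifest. A sanity check you should do before investing in the reindexing is to verify the exact cyclotomic content of $(q^{b(\Gamma)}-1)^{|I|-1}$ against $q^m \prod_\epsilon \bigl((q^{-\epsilon-m}-1)/(q-1)\bigr)^{l(\epsilon)+1}$ for a graphic $\FA$: for $\widetilde{A}_3$ (Example \ref{m1m} with $m=3$) the factor on the left is $(q-1)^3$ while the corresponding denominator bookkeeping on the right mixes $q^{\delta_I - m}-1$ with $\delta_I - m$ not all equal to $1$, so the claim is already a nontrivial identity of rational functions before any combinatorial rewriting; if you cannot verify the degree-$0$ prefactor match purely from the definitions, the rest of the plan will not go through as stated. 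In short, your plan is a sound outline of a \emph{possible} proof, but as you concede it leaves exactly the combinatorially delicate steps open, which is consistent with the fact that the paper itself only records this as a conjecture.
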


\newpage

\bibliographystyle{amsalpha}
\bibliography{reference}
\end{document}